\newtheorem{theorem}{Theorem}
\newtheorem{lemma}[theorem]{Lemma}
\newtheorem{corollary}[theorem]{Corollary}
\newtheorem{proposition}[theorem]{Proposition}
\newtheorem{remark}[theorem]{Remark}
\newtheorem{definition}[theorem]{Definition}
\newtheorem{theoremletter}{Theorem}
\newtheorem{lemmaletter}{Lemma}
\newenvironment{acknowledgement}{\noindent\textbf{Acknowledgments.}}{}
\newcommand{\innerthmname}{}
\newenvironment{statement}[1]
{\renewcommand{\innerthmname}{#1}\innerthm}
{\endinnerthm}
\theoremstyle{definition}
\def\namedlabel#1#2{\begingroup
	#2%
	\def\@currentlabel{#2}%
	\phantomsection\label{#1}\endgroup
}
\newcommand{\ud}{\mathrm{d}}
\DeclareMathOperator{\supp}{supp}
\DeclareMathOperator{\cat}{cat}
\DeclareMathOperator{\diam}{diam}
\title[Multiplicity of solutions to the Allen--Cahn--Hilliard system]{Multiplicity of solutions to the multiphasic Allen--Cahn--Hilliard system with a small volume constraint on closed parallelizable manifolds} 
\author[J.H. Andrade]{Jo\~{a}o Henrique Andrade*}
\author[J. Conrado]{Jackeline Conrado}
\author[S. Nardulli]{Stefano Nardulli}
\author[P. Piccione]{Paolo Piccione}
\author[R. Resende]{Reinaldo Resende}
\address[J.H. Andrade]{
	Department of Mathematics,
	University of British Columbia
	\newline\indent 
	V6T 1Z2, Vancouver-BC, Canada
	\newline\indent
	and
	\newline\indent
	Institute of Mathematics and Statistics,
	University of S\~ao Paulo
	\newline\indent 
	05508-090, S\~ao Paulo-SP, Brazil
	\newline\indent 
	and
	\newline\indent 
	Department of Mathematics,
	Federal University of Para\'{\i}ba 
	\newline\indent 
	58051-900, Jo\~ao Pessoa-PB, Brazil}
\email{\href{mailto:andradejh@math.ubc.ca}{andradejh@math.ubc.ca}}
\email{\href{mailto:andradejh@ime.usp.br}{andradejh@ime.usp.br}}
\email{\href{mailto:andradejh@mat.ufpb.br}{andradejh@mat.ufpb.br}}
\address[J. Conrado]{Institute of Mathematics and Statistics,
	University of S\~ao Paulo
	\newline\indent 
	05508-090, S\~ao Paulo-SP, Brazil}
\email{\href{mailto:jconrado@usp.br}{jconrado@usp.br}}
\address[S. Nardulli]{Department of Mathematics,
	Federal University of ABC
	\newline\indent 
	09210-580, S\~ao Paulo-SP, Brazil}
\email{\href{mailto:stefano.nardulli@ufabc.edu.br}{stefano.nardulli@ufabc.edu.br}}
\address[P. Piccione]{Institute of Mathematics and Statistics,
	University of S\~ao Paulo
	\newline\indent 
	05508-090, S\~ao Paulo-SP, Brazil}
\email{\href{mailto:piccione@ime.usp.br}{piccione@ime.usp.br}}
\address[R. Resende]{Institute of Mathematics and Statistics,
	University of S\~ao Paulo
	\newline\indent 
	05508-090, S\~ao Paulo-SP, Brazil}
\email{\href{mailto:resende@ime.usp.br}{resende@ime.usp.br}}
\thanks{*Corresponding author.}
\subjclass[2020]{35J20, 35J25, 58E05, 49Q20, 58E99, 53A10, 49Q05, 28A75}
\keywords{Lusternik--Schnirelmann and Morse theories, Allen--Cahn--Hilliard system, $\Gamma$-convergence, Multiphasic potential, Isoperimetric clusters,  Compact manifolds}
\begin{document}
	\begin{abstract}
		We prove the existence of multiple solutions to the Allen--Cahn--Hilliard (ACH) vectorial equation (with two equations) involving a triple-well (triphasic) potential with a small volume constraint on a closed parallelizable Riemannian manifold.
		More precisely, we find a lower bound for the number of solutions depending on some topological invariants of the underlying manifold.
		The phase transition potential is considered to have a finite set of global minima, where it also vanishes, and a subcritical growth at infinity.
		Our strategy is to employ the Lusternik--Schnirelmann and infinite-dimensional Morse theories for the vectorial energy functional. 
		To this end, we exploit that the associated ACH energy $\Gamma$-converges to the weighted multi-perimeter for clusters, which combined with some deep theorems from isoperimetric theory yields the suitable setup to apply the photography method.
		Along the way, the lack of a closed analytic expression for the multi-isoperimetric function for clusters imposes a delicate issue.
		Furthermore, using a transversality theorem, we also show the genericity of the set of metrics for which solutions to the ACH system are nondegenerate.
	\end{abstract}
	
	\maketitle
	
	\bigskip
	\begin{center}
		\footnotesize
		\tableofcontents
	\end{center}
	
	\section{Introduction}
	
	It is well-known that any open bounded domain in the flat Euclidean space contains an isoperimetric cluster enclosing fixed (sufficiently small) volumes \cite{MR420406}.
	Roughly speaking, an isoperimetric cluster is a solution to a minimal partition problem, or more generally, a minimizing critical point of the (vectorial) perimeter. 
	A classical problem in differential geometry is to count the number of isoperimetric clusters enclosing a small volume on a Riemannian manifold or more generally the number of critical clusters for the perimeter (of clusters) under a vectorial volume constraint. In the light of the $\Gamma$-convergence theory, our approach to attack the latter problem consists of a first step in studying a problem with a relaxation parameter, which will be described below and as a second step in sending this parameter to zero.  In this paper, we study the relaxed problem. 
	The second step will be the subject of a forthcoming paper.
	
	Let $n,N,m\in\mathbb N$ be such that $n,N\geqslant2$ and $m\geqslant1$ (otherwise explicitly mentioned this will always be the case in this paper). Let $(M^n,g)$ be a (smooth) closed (compact and without boundary) Riemannian manifold. 
	We study the existence and multiplicity of vectorial $m$-map solutions $\mathbf{u}=(\mathrm u_1,\dots,\mathrm u_m)\in C_g^{\infty}(M,\mathbb{R}^m)$ to the following Allen--Cahn--Hilliard (ACH) system
	\begin{flalign}\tag{$ACH_{\varepsilon,\mathbf{v},m,N,g}$}\label{oursystem}
		\begin{cases}
			\ud\boldsymbol{\mathcal{E}}_{\varepsilon,\boldsymbol{W}}(\mathbf{u})=\mathbf{\Lambda} \quad {\rm on} \quad M,\\
			\boldsymbol{\mathcal{V}}_g(\mathbf{u})=\mathbf{v},
		\end{cases}
	\end{flalign}
	where $\mathbf{\Lambda}=(\Lambda_1,\dots,\Lambda_m)\in\mathbb{R}^m$ is a Lagrange multiplier, $m\in\mathbb{N}$ is the number of equations, $\boldsymbol{\mathcal{V}}_g:{C}_g^{\infty}(M,\mathbb{R}^m)\rightarrow\mathbb{R}^m$ is the volume functional given by $\boldsymbol{\mathcal{V}}_g(\mathbf{u})=(\int_{M}\mathrm u_1\ud\mathcal{L}^n_g,\dots,\int_{M}\mathrm u_m\ud\mathcal{L}^n_g)$, and 
	$\mathbf{v}=(\mathrm{v}_1,\dots,\mathrm{v}_m)\in\mathbb{R}_+^m:=\{x\in\mathbb{R}^m:x_i\geqslant0\}$ with $\mathrm{v}=|\mathbf{v}|:=\sum_{i=1}^m\mathrm v_i\ll1$.
	The functional $\boldsymbol{\mathcal{E}}_{\varepsilon,\boldsymbol{W}}:L_g^1(M,\mathbb{R}^m)\rightarrow\overline{\mathbb{R}}$ is called the vectorial ACH energy 
	\begin{equation}\label{vectorialenergy}
		\boldsymbol{\mathcal{E}}_{\varepsilon,\boldsymbol{W}}(\mathbf{u}):=
		\begin{cases}
			\int_{M}\left(\varepsilon |{\nabla}_g \mathbf{u}|^2+ {\varepsilon^{-1}}\boldsymbol{W}(\mathbf{u})\right) \ud \mathcal{L}^n_g, & \ \mbox{if} \ \mathbf{u}\in \boldsymbol{\mathfrak{M}}_{\mathbf{v}}\\
			\infty,& \ \mbox{if} \ \mathbf{u}\in L_g^1(M,\mathbb{R}^m)\setminus \boldsymbol{\mathfrak{M}}_{\mathbf{v}},
		\end{cases}
	\end{equation}
	where ${\nabla}_g \mathbf{u}=({\nabla}_g \mathrm u_1,\dots,{\nabla}_g \mathrm u_m)$ is the gradient acting on $m$-maps,
	$0<\varepsilon\ll1$ is the relaxation (temperature) parameter, $\ud\mathcal{L}^n_g$ is the volume measure induced by $g$ (also denoted $\mathrm{v}_g$),
	$\boldsymbol{W}\in C^{\infty}(\mathbb{R}^m)$ is a multi-well (multiphasic) potential vanishing at a finite set of (global) minima points $\mathcal{Z}\subset\mathbb{R}_+^m$ containing the origin  and such that $\#\mathcal{Z}=N$ (or $\mathcal{H}_g^0(\mathcal{Z})=N$)\footnote{The notation $\mathcal{H}_g^d$ for $d=0,\dots, n$ stands for the $d$-dimensional Hausdorff measure on $M$.  
		Also, recall that $\mathcal{H}_g^0=\#$ (the counting measure) and $\mathcal{H}_g^n=\mathcal{L}_g^n$ (the Lebesgue measure)} (see Definition~\ref{def:multiwellpotential}), and
	\begin{equation}\label{constraintmanifold}
		\boldsymbol{\mathfrak{M}}_{\mathbf{v}}=\left\{\mathbf{u}\in {H}_g^{1}(M,\mathbb{R}^m) :  \boldsymbol{\mathcal{V}}_{g}(\mathbf{u})=\mathbf{v}\right\}.
	\end{equation}
	From now on, we simply denote $\boldsymbol{\mathcal{E}}_{\varepsilon,\boldsymbol{W}}=\boldsymbol{\mathcal{E}}_{\varepsilon}$.
	
	Here, for any $1 \leqslant  q<\infty$, let us consider $L_{g}^{q}(M,\mathbb{R}^m)$ the Banach space, completion of $C_g^{\infty}(M,\mathbb{R}^m)$ with respect to the norm $\|\mathbf{u}\|_{L_{g}^{q}(M,\mathbb{R}^m)}:=(\int_{M}|\mathbf{u}|^{q} \ud\mathcal{L}^n_{g})^{1/q}$. We can also define the Sobolev space
	${H}_g^{1}(M,\mathbb{R}^m)=\{\mathbf{u}=(\mathrm u_1,\dots,\mathrm u_m) : |\mathbf{u}|\in H_g^{1}(M)\}$ furnished with the inner product $\langle\mathbf u,\mathbf v\rangle_{{H}_g^{1}(M,\mathbb{R}^m)}=\langle\nabla_g |\mathbf u|,\nabla_g |\mathbf v|\rangle$ and the metric $\|\mathbf u\|_{H^1_g(M,\mathbb R^m)}:=(\int_{M}|\nabla_g|\mathbf{u}||^{q} \ud\mathcal{L}^n_{g})^{1/q}$.
	The higher order Sobolev spaces $W^{\ell,q}_g(M,\mathbb R^m)$ for $1\leqslant\ell<\infty$ are defined similarly.
	We omit subscripts when dealing with the standard Euclidean metric. 
	
	In this language, a simple computation shows that \eqref{oursystem} is the Euler--Lagrange equation associated to the energy \eqref{vectorialenergy} with Fr\'echet derivative $\ud\boldsymbol{\mathcal{E}}_{\varepsilon}:H_g^1(M,\mathbb{R}^m)\rightarrow{\mathbb{R}}\cup\{\infty\}$ defined by
	\begin{equation*}
		\ud\boldsymbol{\mathcal{E}}_{\varepsilon}(\mathbf{u})=-\varepsilon {\Delta}_g \mathbf{u}+ \varepsilon^{-1}\nabla \boldsymbol{W}(\mathbf{u}),
	\end{equation*}
	where ${\Delta}_g\mathbf{u}=(\Delta_g\mathrm u_1,\dots,\Delta_g\mathrm u_m)$ denotes the vectorial Laplace--Beltrami operator.
	
	We restrict ourselves to the suitable class of subcritical multi-well (multiphasic) potentials   
	\begin{definition}\label{def:multiwellpotential}
		Let $n,m,N\in\mathbb{N}$ with $n,N\geqslant2$ and $m\geqslant1$. A nonnegative real function $\boldsymbol{W}\in C^{2}(\mathbb{R}^m)$
		is denoted by $\boldsymbol{W}\in\mathcal{W}^+$.
		Also, we denote $\boldsymbol{W}\in\mathcal{W}^+_{N}$, if it has a finite set of vanishing global minima, denoted by $\mathcal{Z}=\{\mathbf{p}_{1},\dots,\mathbf{p}_{N}\}\subset\mathbb{R}^m_+$. In other terms, $\boldsymbol{W}(\mathbf{p}_i)=0$, $\nabla\boldsymbol{W}(\mathbf{p}_i)=0$, and $\nabla^2\boldsymbol{W}(\mathbf{p}_i)>0$ for all $i=1,\dots,N$. Also, it satisfies that
		\begin{itemize}	
			\item  $m=N-1$, $\mathbf{p}_N=0$ and $\{\mathbf{p}_{1},\dots,\mathbf{p}_{N-1}\}$ is a linearly independent set such that 
			\begin{equation*}\tag{${{\rm W}_0}$}\label{W0}
				\omega_{ij}(\boldsymbol{W}) < \omega_{i\ell}(\boldsymbol{W}) + \omega_{\ell j}(\boldsymbol{W}) \quad {\rm for \ all} \quad  i,j,\ell\in\{1,\dots, N\} \quad {\rm and} \quad \ell\notin\{i,j\},
			\end{equation*}
			where $\omega_{ij}(\boldsymbol{W})=\ud_{\boldsymbol{W}}(\mathbf{p}_i,\mathbf{p}_j)$ and the degenerate metric $\ud_{\boldsymbol{W}}:\mathbb R^m\times \mathbb R^m\rightarrow\mathbb{R}$ is defined by
			\begin{equation*}
				\ud_{\boldsymbol{W}}(\mathbf{p}_i,\mathbf{p}_j):=\inf_{\mathbf{c}\in C^{1}([0,1],\mathbb{R}^m)}\left\{\int_{0}^{1}\boldsymbol{W}^{1/2}(\mathbf{c}(t))|\mathbf{c}^{\prime}(t)|\ud t : \begin{aligned} \mathbf{c}(&t)\in\mathbb{R}^m_+ \ {\rm for \ all} \ t\in[0,1], \\ \mathbf{c}&(0)=\mathbf{p}_i \; {\rm and} \; \mathbf{c}(1)=\mathbf{p}_j \end{aligned} \right\}.
			\end{equation*}
			When $\boldsymbol{W}\in\mathcal{W}_N^{+}$ satisfies \eqref{W0}, we denote $\boldsymbol{W}\in\mathcal{W}_{N,0}^+$;
			
			\item there exists $k_1>0$ such that 
			\begin{equation*}\tag{${{\rm W}_1}$}\label{W1}
				|\nabla \boldsymbol{W}(z)|\leqslant k_1(1+|z|^{p-1}) \quad {\rm for \ all} \quad z\in\mathbb{R}^m
			\end{equation*}
			and for any $1<p<2^{*}$ if $n\geqslant 3$ $($or $p<\infty$ if $n=2$$)$ with $2^{*}:=\frac{2n}{n-2}$ the critical Sobolev exponent of the embedding $H^1(\mathbb R^n)\hookrightarrow L^q(\mathbb R^n)$ for $q>1$. When $\boldsymbol{W}\in\mathcal{W}_N^{+}$ satisfies \eqref{W1}, we denote $\boldsymbol{W}\in\mathcal{W}_{N,1}^{+}$;
			\item there exists $k_2>0$ such that 
			\begin{equation}\tag{${{\rm W}_2}$}\label{W2}
				|\nabla^2 \boldsymbol{W}(z)|\leqslant k_2(1+|z|^{p-2}) \quad {\rm for \ all} \quad z\in\mathbb{R}^m
			\end{equation}
			and for any $1<p<2^{*}$ if $n\geqslant 3$ $($or $p<\infty$ if $n=2$$)$. When $\boldsymbol{W}\in\mathcal{W}_N^{+}$ satisfies \eqref{W2}, we denote $\boldsymbol{W}\in\mathcal{W}_{N,2}^+$;
			\item there exist $p_1, p_2, k_3, k_4,R>0$ such that
			\begin{equation}\tag{${{\rm W}_3}$}\label{W3}
				k_3|z|^{p_1}<\boldsymbol{W}(z)<k_4|z|^{p_2} \quad {\rm for \ all} \quad |z|\geqslant R,
			\end{equation}  
			where $2<p_1<2^{\#}$ with $p_1\leqslant p_2\leqslant 2(p_1-1)$ and $2^{\#}:=\frac{2n-1}{n-1}$. When $\boldsymbol{W}\in\mathcal{W}_N^{+}$ satisfies \eqref{W3}, we denote $\boldsymbol{W}\in\mathcal{W}_{N,3}^+$.
		\end{itemize}	
		In this fashion, we define the admissible class of potentials $\widetilde{\mathcal{W}}^+_N=\cap_{i=0}^3\mathcal{W}_{N,i}^+$.
		
	\end{definition}
	
	\begin{remark}
		It is not hard to check that  $\widetilde{\mathcal{W}}^+_N\neq\emptyset$. 
		In fact, one can always find a nonnegative polynomial with degree six and suitably chosen coefficients $\boldsymbol{P}\in C^{\infty}(\mathbb R^{N-1})$ such that $\boldsymbol{P}(\mathbf{p}_i)=0$, $\nabla\boldsymbol{P}(\mathbf{p}_i)=0$, and $\nabla^2\boldsymbol{P}(\mathbf{p}_i)>0$ for all $i=1,\dots,N$.
		Next, we fix $R\gg1$ and $0<\tau\ll1$ such that $\overline{\boldsymbol{W}}\in C(\mathbb R^m)$, where
		\begin{equation*}
			\overline{\boldsymbol{W}}(z)=
			\begin{cases}
				\boldsymbol{P}(z),& \quad {\rm if} \quad 0<|z|<R,\\
				|z|^{2+\tau},& \quad {\rm if} \quad |z|\geqslant R.
			\end{cases}
		\end{equation*}
		Finally, using an approximation theorem there exists ${\boldsymbol{W}}\in C^2(\mathbb R^m)$ such that ${\boldsymbol{W}}\in \widetilde{\mathcal{W}}^+_N$.
		
		The assumption $m=N-1$ is chosen suitably so that in the sharp interface limit, we have the phase separation for a mixture of $N-1$ immiscible fluids.
		Also, we highlight that since $\mathrm{d}_{\boldsymbol{W}}$ is a metric it is always true that $\omega_{ij}(\boldsymbol{W}) \leqslant \omega_{i\ell}(\boldsymbol{W})+ \omega_{\ell j}(\boldsymbol{W})$ for all $ i,j,\ell\in\{1,\cdots, N\}$ and $\ell\notin\{ i,j\}$, which implies that \eqref{W0} is a generic condition. 
		This can be regarded as a physical hypothesis on the immiscibility of the fluids.
		In Section \ref{sec:isoperimetricclusters}, we will explain a more geometric $($isoperimetric$)$ reason to consider this assumption.
		We refer to this as the immiscibility condition.
	\end{remark}
	
	Now, we introduce the notion of nondegeneracy of solutions
	
	\begin{definition}
		Let $(M^n,g)$ be a closed Riemannian manifold and $\boldsymbol{W}\in\mathcal{W}_N^+$.
		We say that a solution $(\mathbf{u}, \mathbf{\Lambda}) \in H_g^1(M,\mathbb{R}^m) \times \mathbb{R}^m$ to \eqref{oursystem} is nondegenerate when the only pair $(\mathbf{w}, \mathbf{\Lambda}) \in H_g^1(M,\mathbb{R}^m) \times \mathbb{R}^m$ solving the linearized problem
		\begin{flalign}\label{linearizedsystem}
			\begin{cases}
				\ud^2\boldsymbol{\mathcal{E}}_{\varepsilon}(\mathbf{u})\mathbf{w}=\mathbf{\Lambda} \quad {\rm in} \quad M\\
				\boldsymbol{\mathcal{V}}_g(\mathbf{u})=\mathbf{v},
			\end{cases}
		\end{flalign}
		is the trivial one $(\mathbf{w}, \mathbf{\Lambda})=(0,0)$, where $\ud^2\boldsymbol{\mathcal{E}}_{\varepsilon}(\mathbf{u})\mathbf{w}=-\varepsilon \Delta_g \mathbf{w}+ \varepsilon^{-1}\nabla^2 \boldsymbol{W}(\mathbf{u})\mathbf{w}$.
	\end{definition}
	
	Our main result proves the existence of multiple pairs $(\mathbf{u}_{\varepsilon,\mathbf{v}},\mathbf{\Lambda}_{\varepsilon,\mathbf{v}})\in H_g^{1}(M,\mathbb{R}^m)\times\mathbb{R}^m$ satisfying \eqref{oursystem} for $0<\varepsilon,|\mathbf{v}|\ll1$. 
	The strategy is to search for critical points of the vectorial ACH functional in \eqref{vectorialenergy} restricted to the weakly closed set $\boldsymbol{\mathfrak{M}}_{\mathbf{v}}$ in  \eqref{constraintmanifold}.
	Namely,
	for each pair $(\varepsilon,\mathbf{v})\in\mathbb{R}^{m+1}_+$, let us consider the associated moduli space,
	\begin{equation*}
		\boldsymbol{\mathfrak{N}}_{\varepsilon,\mathbf{v}}=\left\{(\mathbf{u}_{\varepsilon,\mathbf{v}},\mathbf{\Lambda}_{\varepsilon,\mathbf{v}})\in H^{1}_g(M,\mathbb{R}^m)\times\mathbb{R}^m : (\mathbf{u}_{\varepsilon,\mathbf{v}},\mathbf{\Lambda}_{\varepsilon,\mathbf{v}}) \ \mbox{solves \eqref{oursystem} on} \  \boldsymbol{\mathfrak{M}}_{\mathbf{v}}\right\}, 
	\end{equation*}
	and the counting function $\boldsymbol{\eta}(\varepsilon,\mathbf{v}):\boldsymbol{\mathfrak{N}}_{\varepsilon,\mathbf{v}}\rightarrow\mathbb{N}\cup\{\infty\}$ defined by $\boldsymbol{\eta}(\varepsilon,\mathbf{v}):=\#\boldsymbol{\mathfrak{N}}_{\varepsilon,\mathbf{v}}$. 
	We show the existence of $\varepsilon_*,\mathrm{v}_*>0$ such that for $(\varepsilon,\mathbf{v})\in(0,\varepsilon_*)\times(0,\mathrm{v}_*)^{m}\subset(0,\varepsilon_*)\times{\rm  conv}(\mathcal{Z})$ the function $\boldsymbol{\eta}(\varepsilon,\mathbf{v})$ has a lower bound depending on the topology of the manifold, where ${\rm  conv}(\mathcal{Z})$ is the convex hull of the set $\mathcal{Z}$. 
	More accurately, this bound depends on the Lusternik--Schnirelman category of $M$ denoted by $\cat\left(M\right)$, and on $\mathscr{P}_1\left(M\right):=\sum_{k\in\mathbb{N}}\beta_k\left(M\right)$, where $\beta_k\left(M\right)$ is the $k$-th Betti number of $M$.
	Additionally, we prove that the set of metrics for which the nondegeneracy property holds is large (in the topological sense) on the space of smooth metrics over $M$, denoted by ${\rm Met}^{\infty}(M)$.
	
	\begin{statement}{Theorem~1}\label{maintheorem1}
		Let $(M^n,g)$ be a closed parallelizable Riemannian manifold and
		$\boldsymbol{W}\in\widetilde{\mathcal{W}}^+_{N}$ with $N=3$. 
		There exists $\mathrm{v}_*=\mathrm{v}_*(M,g)>0$ such that for any $\mathbf{v}\in(0,\mathrm{v}_*)^2\subset {\rm  conv}(\mathcal{Z})$, one can find $\varepsilon_*(M,g,\boldsymbol{W},\mathrm{v})>0$ satisfying for every $\varepsilon\in(0,\varepsilon_*)$,
		\begin{itemize}
			\item[\rm{(i)}]\label{theorem1i} $\boldsymbol{\eta}({\varepsilon,\mathbf{v}})\geqslant\cat\left(M\right)+1$, if the solutions are counted without multiplicity;
			\item[\rm{(ii)}] $\boldsymbol{\eta}({\varepsilon,\mathbf{v}})\geqslant2\mathscr{P}_1\left(M\right)-1$, if such solutions are nondegenerate.
		\end{itemize}
		Moreover, for any fixed $g_0\in {\rm Met}^{\infty}(M)$ the set
		\begin{equation*}
			\boldsymbol{\mathcal{G}}_{\boldsymbol{W},\mathbf{v}}=\left\{(\varepsilon,g)\in(0,\infty)\times {\rm Met}^{\infty}(M) : \begin{aligned} &{\rm any \ solution} \ (\mathbf{u},\mathbf{\Lambda})\in H^1_{g_0}(M,\mathbb{R}^m)\times\mathbb{R}^m \ {\rm to} \\  &\quad \eqref{oursystem} \ {\rm is} \ {\rm nondegenerate} \end{aligned} \right\}
		\end{equation*}
		is Baire generic with respect to the Gromov--Hausdorff topology.
	\end{statement}
	
	\begin{remark}
		When $n=3$, the condition of being parallelizable can be interchanged by orientability, by Stiefel’s theorem $($see \cite{MR0440554}$)$. 
		Also, $\mathbb S^n$ is parallelizable if, and only if, $n=1,3,7$.
		Also, for $N=3$, thanks to the solution of the \emph{weighted} double bubble conjecture in $\mathbb{R}^n$ $($see \cite{lawlor2014double}$)$ this hypothesis can be weakened to the existence of a smooth global section of the unit tangent bundle $UTM$.
	\end{remark}
	
	\begin{remark} 
		About the assumption $N=3$. It is worth to observe that the techniques that we use to prove the preceding theorem only needs the following property: isoperimetric weighted cluster with small volumes have small diameter in the tangent space. In fact, we use this property to prove that the photography map is well-defined, continuous, and to prove the $\Gamma$-convergence in an easy way. 
		This property holds in the Euclidean space for $N=3$ thanks to \cite{lawlor2014double}, therefore, if one assume this property for $N>3$, the preceding theorem holds true. 
		Even if, it is not known whether this property is always valid for $N>3$ or not, we expect that it indeed holds for every $N\in\mathbb{N}$, due to several special cases where it holds ({\it e.g.} \cite{milman2022structure,lawlor2014double}). However, the proof of this property for $N\in\mathbb{N}$ goes beyond of the scope of this manuscript and it will be the subject of a forthcoming paper. 
		Furthermore, we emphasize that based on the recent classification results of \cite{milman2022structure}, our techniques apply for the case $2\leqslant N\leqslant \min(5, n+1)$ when the potential is such that 
		$\omega_{ij}(\boldsymbol{W})=1$ for $i\neq j$ and $\omega_{ij}(\boldsymbol{W})=0$ for $i\neq j$, where $i,j\in\{1,\dots,N\}$.
		In particular, Theorem \ref{maintheorem1} holds in this context.
	\end{remark}
	
	Let us now compare our main result with its scalar counterpart. More precisely, when $m=1$ and $N=2$, System \eqref{oursystem} becomes the classical ACH equation with a small volume constraint, 
	\begin{flalign}\tag{$ACH_{\varepsilon,\mathrm{v},g}$}\label{ourequation}
		\begin{cases}
			-\varepsilon\Delta_g\mathrm u+\varepsilon^{-1}W^{\prime}(\mathrm u)=\Lambda, \quad {\rm on} \quad M,\\
			\int_M\mathrm u\ud\mathcal{L}_g^n=\mathrm{v},
		\end{cases}
	\end{flalign}
	where $0<\varepsilon,\mathrm{v}\ll1$, $\Lambda\in\mathbb{R}$ is a Lagrange multiplier, and $W\in\widetilde{\mathcal{W}}^+_{2}$ is a symmetric double-well potential. 
	The study of qualitative properties for \eqref{ourequation} was addressed in \cite{arXiv:2007.07024,arXiv:2012.13843} (see also \cite{MR4073210} for the Euclidean case).
	We consider ${\mathfrak{M}}_{\mathrm{v}}=\{\mathrm{u}\in {H}_g^{1}(M) :  \int_M\mathrm u\ud\mathcal{L}_g^n=\mathrm{v}\}$, ${\mathfrak{N}}_{\varepsilon,\mathrm{v}}=\{(\mathrm{u}_{\varepsilon,\mathrm{v}},{\Lambda}_{\varepsilon,\mathrm{v}})\in H^{1}_g(M)\times\mathbb{R} : (\mathrm{u}_{\varepsilon,\mathrm{v}},{\Lambda}_{\varepsilon,\mathrm{v}}) \ \mbox{solves \eqref{ourequation} on} \  \mathfrak{M}_{{\mathrm v}}\}$, and the counting function ${\eta}(\varepsilon,\mathrm{v}):=\#\mathfrak{N}_{\varepsilon,\mathrm{v}}$.
	In the same spirit of our main theorem, the following result is proved
	
	\begin{theoremletter}[\cite{arXiv:2007.07024,arXiv:2012.13843}]\label{thm:benci-nardulli-piccione-osorio}
		Let $N=2$ and let $(M^n,g)$ be a closed Riemannian manifold and
		$\boldsymbol{W}\in\widetilde{\mathcal{W}}^+_{2}$. 
		There exists $\mathrm{v}_*=\mathrm{v}_*(M,g)>0$ such that for every $\mathrm{v}\in(0,\mathrm{v}_*)$, one can find $\varepsilon_*(M,g,W,\mathrm{v})>0$ satisfying for every $\varepsilon\in(0,\varepsilon_*)$:
		\begin{itemize}
			\item[\rm{(i)}] $\eta({\varepsilon,\mathrm{v}})\geqslant\cat\left(M\right)+1$, if the solutions are counted without multiplicity;
			\item[\rm{(ii)}] $\eta({\varepsilon,\mathrm{v}})\geqslant2\mathscr{P}_1\left(M\right)-1$, if such solutions are nondegenerate.
		\end{itemize}
		Moreover, for any fixed $g_0\in{\rm Met}^{\infty}(M)$, the moduli space of metrics
		\begin{equation*}
			\mathcal{G}_{{W},\mathrm{v}}=\left\{(\varepsilon,g)\in(0,\infty)\times {\rm Met}^{\infty}(M) : \begin{aligned} &{\rm any \ solution} \ (\mathrm{u},{\Lambda})\in H^1_{g_0}(M)\times\mathbb{R}\ {\rm to} \ \eqref{ourequation} \\  &\quad\quad\quad\quad\quad {\rm is} \ {\rm nondegenerate} \end{aligned} \right\}
		\end{equation*}
		is Baire generic.
	\end{theoremletter}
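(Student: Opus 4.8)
The plan is to use the \emph{photography method}: realize the solutions of \eqref{ourequation} as critical points of the constrained functional $\mathcal{E}_\varepsilon|_{\mathfrak{M}_{\mathrm v}}$, where $\mathcal{E}_\varepsilon(\mathrm u)=\int_M(\varepsilon|\nabla_g\mathrm u|^2+\varepsilon^{-1}W(\mathrm u))\,\ud\mathcal L^n_g$, bound their number by comparing the topology of a low sublevel set of this functional with the topology of $M$, and deduce the genericity assertion from an abstract transversality argument. The first step is functional-analytic: under \eqref{W1}--\eqref{W3}, $\mathcal{E}_\varepsilon$ is of class $C^2$ on the affine (hence contractible) Hilbert submanifold $\mathfrak{M}_{\mathrm v}\subset H^1_g(M)$; it is coercive, since $W\geqslant0$ with $W(s)\geqslant k_3|s|^{p_1}$, $p_1>2$, for large $|s|$, while the constraint fixes the mean of $\mathrm u$, so a Poincar\'e--Wirtinger inequality bounds $\|\mathrm u\|_{H^1_g(M)}$ on sublevels; and it satisfies the Palais--Smale condition at every level, the subcritical growth together with the compactness of $H^1_g(M)\hookrightarrow L^q_g(M)$, $q<2^{*}$, giving strong subconvergence of Palais--Smale sequences. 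Moreover $\ud^2\mathcal{E}_\varepsilon(\mathrm u)=-\varepsilon\Delta_g+\varepsilon^{-1}W''(\mathrm u)$ is a compact perturbation of a positive operator, so every critical point has finite Morse index. By the constrained Modica--Mortola theorem, $\mathcal{E}_\varepsilon$ $\Gamma$-converges in $L^1_g(M)$ as $\varepsilon\to0$ to $c_W\,P(\,\cdot\,;M)$ on functions $\mathrm u\in BV(M;\{0,\mathbf p_1\})$ with $\int_M\mathrm u\,\ud\mathcal L^n_g=\mathrm v$, i.e.\ to a constant multiple of the relative perimeter of $E=\{\mathrm u=\mathbf p_1\}$, a set of volume $\mathrm v/\mathbf p_1$, with $c_W$ the Modica--Mortola surface-tension constant; in particular $\min_{\mathfrak M_{\mathrm v}}\mathcal E_\varepsilon\to c_W\,I_M(\mathrm v/\mathbf p_1)$, where $I_M$ is the isoperimetric profile of $(M,g)$, and $I_M(\mathrm w)=I_{\mathbb R^n}(\mathrm w)(1+o_{\mathrm w}(1))$ as $\mathrm w\to0$ with $I_{\mathbb R^n}(\mathrm w)=n\omega_n^{1/n}\mathrm w^{(n-1)/n}$, by the classical small-volume expansion. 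I would then fix a small $\mathrm v\in(0,\mathrm v_*)$ and a small $\theta>0$ and work with the sublevel set $\Omega_{\varepsilon,\mathrm v}:=\{\mathrm u\in\mathfrak M_{\mathrm v}:\mathcal E_\varepsilon(\mathrm u)\leqslant(1+\theta)\,c_W\,I_{\mathbb R^n}(\mathrm v/\mathbf p_1)\}$.

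The second step is the photography map $\Phi_{\varepsilon,\mathrm v}\colon M\to\Omega_{\varepsilon,\mathrm v}$. To $p\in M$ I associate the function obtained by transplanting, into the geodesic ball $B_{r(\mathrm v)}(p)$ with $r(\mathrm v)$ chosen so that $\mathcal L^n_g(B_{r(\mathrm v)}(p))=\mathrm v/\mathbf p_1$, a copy of the one-dimensional optimal transition profile from $0$ to $\mathbf p_1$ placed across $\partial B_{r(\mathrm v)}(p)$ at scale $\varepsilon$ (the recovery sequence of Modica--Mortola), multiplied by a cutoff and corrected by an $O(\varepsilon)$ term so that the result lies exactly on $\mathfrak M_{\mathrm v}$. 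Since the model datum is a geodesic ball, which is rotationally symmetric, no choice of frame on $M$ enters, so the construction makes sense and is continuous in $p$ for an arbitrary closed manifold --- this is why Theorem~\ref{thm:benci-nardulli-piccione-osorio} needs no parallelizability hypothesis --- and the $\Gamma$-$\limsup$ estimate gives $\sup_{p\in M}\mathcal E_\varepsilon(\Phi_{\varepsilon,\mathrm v}(p))\leqslant c_W\,I_{\mathbb R^n}(\mathrm v/\mathbf p_1)(1+o_\varepsilon(1))$, so $\Phi_{\varepsilon,\mathrm v}(M)\subset\Omega_{\varepsilon,\mathrm v}$ for $\varepsilon$ small.

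The third step is the barycenter map $\beta\colon\Omega_{\varepsilon,\mathrm v}\to M$. Fixing an isometric embedding $x\colon M\hookrightarrow\mathbb R^K$ with nearest-point retraction $\pi_M$ of a tubular neighborhood, I set $\beta(\mathrm u):=\pi_M\bigl(\bigl(\int_M W(\mathrm u)\,\ud\mathcal L^n_g\bigr)^{-1}\int_M x(\xi)\,W(\mathrm u(\xi))\,\ud\mathcal L^n_g(\xi)\bigr)$. The heart of the matter is to prove $\beta$ is well defined on $\Omega_{\varepsilon,\mathrm v}$ and $\beta\circ\Phi_{\varepsilon,\mathrm v}\simeq\mathrm{id}_M$. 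By the $\Gamma$-$\liminf$ inequality and $L^1$-compactness, any family $(\mathrm u_\varepsilon)\subset\Omega_{\varepsilon,\mathrm v}$ subconverges as $\varepsilon\to0$ to $\mathbf p_1\mathbf 1_E$ with $\mathcal L^n_g(E)=\mathrm v/\mathbf p_1$ and $P(E;M)\leqslant(1+\theta)\,I_{\mathbb R^n}(\mathrm v/\mathbf p_1)$; since $P(E;M)\geqslant I_M(\mathrm v/\mathbf p_1)=I_{\mathbb R^n}(\mathrm v/\mathbf p_1)(1-o_{\mathrm v}(1))$, the set $E$ is almost isoperimetric, and a concentration-compactness argument --- using the strict subadditivity $I_{\mathbb R^n}(\mathrm w_1)+I_{\mathbb R^n}(\mathrm w_2)>I_{\mathbb R^n}(\mathrm w_1+\mathrm w_2)$ (strict concavity of $\mathrm w\mapsto\mathrm w^{(n-1)/n}$) to rule out splitting into far-apart components, together with the fact that small-volume almost-isoperimetric regions in a closed manifold have small diameter --- forces $E$, hence $\mathrm u_\varepsilon$ for $\varepsilon$ small, to be concentrated in a ball of radius $o_{\mathrm v}(1)$ about a single point. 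This gives the well-definedness and continuity of $\beta$ on $\Omega_{\varepsilon,\mathrm v}$, and, tracking the concentration along $\Phi_{\varepsilon,\mathrm v}$, that $\beta(\Phi_{\varepsilon,\mathrm v}(p))\to p$ uniformly in $p$, whence $\beta\circ\Phi_{\varepsilon,\mathrm v}\simeq\mathrm{id}_M$.

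To conclude, the pair $(\Phi_{\varepsilon,\mathrm v},\beta)$ exhibits $M$ as dominated by $\Omega_{\varepsilon,\mathrm v}$, so $\cat(\Omega_{\varepsilon,\mathrm v})\geqslant\cat(M)$ and $H_*(M)$ is a direct summand of $H_*(\Omega_{\varepsilon,\mathrm v})$, whence $\mathscr P_1(\Omega_{\varepsilon,\mathrm v})\geqslant\mathscr P_1(M)$. Lusternik--Schnirelmann theory for $\mathcal E_\varepsilon|_{\mathfrak M_{\mathrm v}}$, whose negative gradient flow preserves the sublevel set $\Omega_{\varepsilon,\mathrm v}$, yields at least $\cat(\Omega_{\varepsilon,\mathrm v})\geqslant\cat(M)$ critical points lying in $\Omega_{\varepsilon,\mathrm v}$; since a closed manifold of positive dimension is not contractible, $\Omega_{\varepsilon,\mathrm v}$ is not contractible, hence $\Omega_{\varepsilon,\mathrm v}\neq\mathfrak M_{\mathrm v}$ and (PS) forces a further critical value above $\Omega_{\varepsilon,\mathrm v}$, giving $\eta(\varepsilon,\mathrm v)\geqslant\cat(M)+1$, i.e.\ (i). If moreover all critical points are nondegenerate, the Morse inequalities on $\Omega_{\varepsilon,\mathrm v}$ give at least $\mathscr P_1(\Omega_{\varepsilon,\mathrm v})\geqslant\mathscr P_1(M)$ critical points there, while the Morse inequalities for the pair $(\mathfrak M_{\mathrm v},\Omega_{\varepsilon,\mathrm v})$ (valid since $\mathcal E_\varepsilon$ is coercive and satisfies (PS)), with $\mathfrak M_{\mathrm v}$ contractible so $\mathscr P_t(\mathfrak M_{\mathrm v},\Omega_{\varepsilon,\mathrm v})=t(\mathscr P_t(\Omega_{\varepsilon,\mathrm v})-1)$, give at least $\mathscr P_1(\Omega_{\varepsilon,\mathrm v})-1$ critical points above, for a total of at least $2\mathscr P_1(\Omega_{\varepsilon,\mathrm v})-1\geqslant2\mathscr P_1(M)-1$, i.e.\ (ii). Finally, for the genericity statement I would apply an abstract transversality theorem of Sard--Smale type to the map $F(\mathrm u,\Lambda,g)=\bigl(-\varepsilon\Delta_g\mathrm u+\varepsilon^{-1}W'(\mathrm u)-\Lambda,\ \int_M\mathrm u\,\ud\mathcal L^n_g-\mathrm v\bigr)$ between suitable Banach manifolds, with the metric $g$ as the abstract parameter: one checks that $0$ is a regular value of $F$ jointly in $(\mathrm u,\Lambda,g)$ --- variations of $g$ being rich enough, via unique continuation for the linearized operator $-\varepsilon\Delta_g+\varepsilon^{-1}W''(\mathrm u)$, to cover the finite-dimensional cokernel of the $\mathrm u$-linearization --- so that the projection onto the $g$-factor is Fredholm of index zero and, by Sard--Smale, has a residual set of regular values; these are precisely the metrics for which every solution of \eqref{ourequation} is nondegenerate, and the resulting set of pairs $(\varepsilon,g)$ is Baire generic. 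I expect the main obstacle to be the well-definedness and homotopy invariance of $\beta$: everything there rests on the quantitative concentration of near-minimal configurations, which through the $\Gamma$-convergence reduces to the statement that small-volume almost-isoperimetric regions in a closed Riemannian manifold are $L^1$-close to small geodesic balls --- the regularity/stability input that drives the entire photography method.
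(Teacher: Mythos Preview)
The paper does not prove Theorem~\ref{thm:benci-nardulli-piccione-osorio}; it is quoted from \cite{arXiv:2007.07024,arXiv:2012.13843} as the scalar antecedent of the main Theorem~1. Your sketch follows exactly the photography method those references use and that the present paper adapts for $N=3$: lower boundedness and Palais--Smale from the subcritical growth of $W$, Modica--Mortola $\Gamma$-convergence to the perimeter, a photography map built from recovery sequences over small geodesic balls, an extrinsic barycenter map via a Nash embedding, the homotopy $\beta\circ\Phi\simeq\mathrm{id}_M$ driven by small-volume concentration of almost-isoperimetric sets, then Theorem~\ref{thm:benci-cerami} and Theorem~\ref{thm:henry}. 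Your remark that no frame is needed because geodesic balls are rotationally symmetric is precisely why the scalar case avoids the parallelizability hypothesis.

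One technical slip worth flagging: you weight the barycenter by $W(\mathrm u)$, but $W$ vanishes on both wells, so on the $L^1$-limit $\mathbf p_1\chi_E$ the density is identically zero and the map degenerates. The paper (and the cited references) weight by $|\mathrm u|$, which passes to the limit as $|\mathbf p_1|\chi_E$ and makes the concentration estimate $\int_{\mathcal B_{r/2}(x)}|\mathrm u|\geqslant\tilde\eta\,\mathrm v$ (the analog of \eqref{concentration}) immediate. With $W(\mathrm u)$ as weight you would instead be computing the barycenter of the transition layer, which can still be shown to lie near $p$, but the argument then goes through the interface rather than the bulk and is less direct; replacing $W(\mathrm u)$ by $|\mathrm u|$ aligns your sketch with the standard proof.
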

	
	To explain the geometric idea behind the proof of our main theorem, we need to establish some standard terminology.
	
	\begin{definition}
		Let $u\in L_g^1(M)$. We define its distributional derivative $\nabla_gu\in \mathcal{M}_g(M,\mathbb R^n)$ as a vector Radon measure with total variation measure given by
		\begin{equation*}
			\|\nabla_gu\|(A)=\sup_{x\in\mathfrak{X}_c(A)}\left\{ \int_Uu{\rm div}_g(X)\ud\mathcal{L}^n_g : \|X\|\leqslant1\right\},
		\end{equation*}
		whenever $A\subset M$ is an open subset. We also consider the Borel measure extending $\|\nabla_gu\|$ and we denote it with the same symbol.
		Here $\mathfrak{X}_c(U)$ is the set of smooth vector fields with compact support and $\|X\|:=\sup_{x\in M}|X_x|$, where $|X_x|$ is the norm of $X_x\in T_xM$.
		Let us define $BV_g(M)=\{u\in L_g^1(M) : \|\nabla_g u\|<\infty\}$.
		In this case, we say that $u$ is a function of bounded variation. 
		We define the perimeter $\mathcal{P}_g:\mathcal{C}_g(M)\rightarrow[0,\infty)$, up to constant, by $\mathcal{P}_g(\Omega)=\|\nabla_g\chi_{\Omega}\|$, where
		$\mathcal{C}_g(M)=\{\Omega : \mathcal{P}_g(\Omega)<\infty\}$ are the Caccioppoli $($or the finite perimeter$)$ sets, that is, measurable and finite perimeter subsets of $M$.
		The reduced boundary of a Caccioppoli set $\Omega \subset M$ is denoted by $\partial^{*} \Omega$ and is the set whose a notion of measure-theoretic normal vector exists and has a length equal to one.
		A deep theorem of E. De Giorgi states that $\mathcal{P}_g(\Omega)=\mathcal{H}_g^{n-1}(\partial^*\Omega)$, where
		$\mathcal{H}_g^{n-1}$ is the $(n-1)$-dimensional Hausdorff measure induced by $g$ of the boundary. 
		Here we fix the convention that all the Caccioppoli sets $\Omega\in \mathcal C_g(M)$ satisfies $\partial \Omega={\rm clos}(\partial^*\Omega)$, where $\partial\Omega$ is the topological boundary of $\Omega$.
	\end{definition}

	A symmetric matrix $\alpha\in{\rm Sym}(\mathbb R^N_+)$ is said to be immiscible when \begin{equation}\tag{$\alpha_0$}\label{Eq:StrictTriangularInequality}
		{\rm tr}(\alpha)=0 \quad {\rm and} \quad \alpha_{ij}<\alpha_{i\ell}+\alpha_{\ell j} \quad {\rm for \ all} \quad i,j,\ell\in\{1,\dots,N\} \quad {\rm and} \quad \ell\notin\{i,j\}.
	\end{equation} 	
	Along this line, let us introduce the suitable notions of weighted multi-perimeter and weighted clusters.
	
	\begin{definition}\label{def:clusters}
		Let $(M^n,g)$ be a Riemannian manifold and $\alpha\in{\rm Sym}(\mathbb R^N_+)$ be a nonnegative symmetric matrix. 
		We define the set of $\alpha$-weighted $N$-clusters $($or simply weighted clusters$)$ by 
		\begin{equation*}
			\mathcal{C}^{\alpha}_g(M,\mathbb{R}^N)=\left\{\mathbf{\Omega}=(\Omega_1,\dots,\Omega_N) : \begin{aligned} M&=\mathring{\cup}_{i=1}^N\Omega_i, \ \mathrm{v}_g(\Omega_i)<\infty, \ \mathrm{v}_g(\Omega_i\cap\Omega_j)=0, \\ 
				&  \ {\rm for \ all}  \ i,j=1,\dots, N \  {\rm and} \ \boldsymbol{\mathcal{P}}^{\alpha}_g(\mathbf{\Omega})<\infty \end{aligned} \right\},
		\end{equation*}
		where the $\alpha$-multi-perimeter $($or vectorial $\alpha$-perimeter$)$ functional $\boldsymbol{\mathcal{P}}^{\alpha}_g:\mathcal{C}^{\alpha}_g(M,\mathbb{R}^N)\rightarrow[0,\infty)$ is 
		\begin{equation*}
			\boldsymbol{\mathcal{P}}^{\alpha}_g(\mathbf{\Omega})=\sum_{i,j=1}^{N}\alpha_{ij}\mathcal{H}^{n-1}_g(\partial^*\Omega_i\cap\partial^*\Omega_j).
		\end{equation*}
		Also, a weighted cluster $\mathbf{\Omega}\in \mathcal{C}^{\alpha}_g(M,\mathbb{R}^N)$ and the associated perimeter functional $\boldsymbol{\mathcal{P}}^{\alpha}_g$ are said to be immiscible if $\alpha\in{\rm Sym}(\mathbb R_+^N)$ is so.
		For any weighted cluster $\mathbf{\Omega}\in \mathcal{C}^{\alpha}_g(M,\mathbb{R}^N)$, we fix the terminology
		\begin{itemize}
			\item $\alpha_{ij}$ are the weights;
			\item  $\{\Omega_{i}\}_{i\in\{1,\dots,N\}}$ $($or $\{\mathbf{\Omega}(i)\}_{i\in\{1,\dots,N\}}$$)$ are the chambers with ${\Sigma}_i=\partial^*\Omega_i$;
			\item $\Sigma_{ij}=\Sigma_i\cap\Sigma_j$ are  the {interfaces};
			\item $\widetilde{{\Omega}}=\cup_{i=1}^{N-1}\Omega_i$ are the interior chambers and ${\Omega}_N=M\setminus\cup_{i=1}^{N-1}\Omega_i$ is the exterior chamber;
			\item $\mathbf{v}_g(\mathbf{\Omega})=(\mathrm{v}_g(\Omega_1),\dots, \mathrm{v}_g(\Omega_{N}))$ is the vectorial volume;
			\item  ${\rm diam}_g(\widetilde{{\Omega}})=\sup_{x,y\in \widetilde{{\Omega}}}\ud_g(x,y)$ is  the diameter.
		\end{itemize}
		In this fashion, we can rewrite $\boldsymbol{\mathcal{P}}^{\alpha}_g(\mathbf{\Omega})=\sum_{i,j=1}^{N}\alpha_{ij}\mathcal{H}^{n-1}_g(\Sigma_{ij})$.
	\end{definition}
	
	\begin{remark}\label{rmk:immisciblecondition}
		Notice that when $\boldsymbol{W}\in{\mathcal{W}}_{N,0}^{+}$ the assumption \eqref{W0} implies that $\omega\in{\rm Sym}(\mathbb R_+^N)$ given by $\omega_{ij}(\boldsymbol{W})=\ud_{\boldsymbol{W}}(\mathbf{p}_i,\mathbf{p}_j)$ for $i,j\in\{1,\dots,N\}$ satisfies \eqref{Eq:StrictTriangularInequality}.
		Hence, the multi-perimeter $\boldsymbol{\mathcal{P}}^{\omega}_g:\mathcal{C}^{\omega}_g(M,\mathbb{R}^N)\rightarrow[0,\infty)$ $($weighted by the potential$)$ given by
		\begin{equation}\label{vectorialperimeter}
			\boldsymbol{\mathcal{P}}^{\omega}_g(\mathbf{\Omega})=\sum_{i,j=1}^{N}\omega_{ij}(\boldsymbol{W})\mathcal{H}^{n-1}_g(\partial^*\Omega_i\cap\partial^*\Omega_j)
		\end{equation}
		is immiscible. 
		In this case, we simply denote $\mathcal{C}^{\omega}_g(M,\mathbb{R}^N)=\mathcal{C}_g(M,\mathbb{R}^N)$ and $\boldsymbol{\mathcal P}^{\omega}_g=\boldsymbol{\mathcal P}_g$.
		
		When the coefficient matrix is given by $\alpha=(\alpha_{ij})\in{\rm Sym}(\mathbb R^N_+)$, where $\alpha_{ij}=1-\delta_{ij}$ for $i,j\in\{1,\dots,N\}$ with $\delta_{ij}$ the Kronecker's delta, we recover the standard perimeter, which we denote by $\boldsymbol{\mathcal P}^{*}_g$.
		In this fashion, it is proved in \emph{\cite[Proposition~1]{resende2022clusters}} that we can write the multi-perimeter 
		\begin{equation*}
			\boldsymbol{\mathcal{P}}^{*}_g(\mathbf{\Omega})=\sum_{\substack{i,j=1}}^{N}\mathcal{H}^{n-1}_g(\Sigma_{ij}) = \frac12\sum_{i=1}^{N}\mathcal{P}_g(\Omega_i).
		\end{equation*}
		Such a formula allows us to prove existence and compactness results simply by relying on the theory for Caccioppoli sets, {\it i.e.}, $N=2$. 
		Nevertheless, when we are in the weighted case, the behavior of the multi-perimeter may change. 
		This was first noticed by F. Almgren \cite{MR420406}, which considered a more general perimeter functional with an extra assumption on the weights, the so-called partitioning regular condition to proceed with this theory. 
		After that, B. White \cite{MR1402391} showed that the strict triangular inequality \eqref{Eq:StrictTriangularInequality} is enough to prove existence and regularity results, which is an assumption weaker than Almgren's partitioning regular condition. 
		
		A generalization with a complete rigorous proof of White's results was given by G. P. Leonardi \cite{leonardi2001}. 
		Another weaker condition can be found in \cite{MR3581211,MR1070482}
		which considers existence and regularity issues and says that these coefficients shall satisfy the BV-ellipticity condition.
		All these hypotheses on the weights $\alpha_{ij}$ are used to guarantee that the weighted multi-perimeter is lower semi-continuous with respect to the flat convergence and enjoys nice regularity properties; this generally leads to the existence of minimizers by the direct method of the calculus of variations.
	\end{remark}
	
	The isoperimetric problem asks to minimize the perimeter among all Caccioppoli sets subject to a volume constraint.
	The regions attaining this minimal configuration are called isoperimetric regions.
	It is well-known that isoperimetric regions in the Euclidean space are balls.
	In the vectorial case, the analog problem of classifying isoperimetric weighted clusters is still not solved in its full generality, even if we consider the standard (non-weighted) multi-perimeter.
	Nevertheless, one has some partial answers, namely, for non-weighted double bubbles  \cite{MR1906593,arXiv:2112.08269} and triple-bubbles \cite{lawlor2019perimeter}, for weighted double-bubble \cite{lawlor2014double}, for multi-bubbles in the Gaussian measure case \cite{arXiv:1805.10961}, and lastly we mention the recent work \cite{milman2022structure}, where the multi-bubble conjecture in $\mathbb{R}^n$ and $\mathbb{S}^n$ is proved for $N=2,3,4$.
	Roughly speaking, isoperimetric weighted clusters are minimal multi-perimeter configurations for an enclosing region with a finite number of fixed volumes. 
	
	\begin{definition}\label{def:multiisoperimetr}
		Let $(M^n,g)$ be a Riemannian manifold.
		\begin{itemize}
			\item[{\rm (i)}] The isoperimetric profile is a function ${{\mathcal{I}}}_{(M,g)}:\left(0,\mathrm{v}_g(M)\right)\rightarrow (0,\infty)$ given by 
			\begin{equation*}
				{\mathcal{I}}_{(M,g)}(\mathrm{v}):= \inf\left\{\mathcal{P}_g({\Omega}): {\Omega}\in\mathcal{C}_g(M)\ \mbox{and} \ \mathrm{v}_g({\Omega})=\mathrm{v}\right\}.
			\end{equation*}
			The Caccioppoli set $\Omega\in\mathcal C_g(M)$ that attains this infimum is called an isoperimetric region.
			\item[{\rm (ii)}] The $\alpha$-weighted  multi-isoperimetric profile is a function ${\boldsymbol{\mathcal{I}}}^{\alpha}_{(M,g)}:\left(0,\mathrm{v}_g(M)\right)^{N}\rightarrow (0,\infty)$ given by 
			\begin{equation*}
				{\boldsymbol{\mathcal{I}}}^{\alpha}_{(M,g)}(\mathbf{v}):= \inf\left\{\boldsymbol{\mathcal{P}}^{\alpha}_g({\mathbf{\Omega}}): {\mathbf{\Omega}}\in\mathcal{C}^{\alpha}_g(M,\mathbb{R}^N) \ \mbox{and} \ \mathbf{v}_g(\mathbf{\Omega})=\mathbf{v}\right\}.
			\end{equation*}
			The vectorial Caccioppoli set $\mathbf{\Omega}\in\mathcal{C}^{\alpha}_g(M,\mathbb{R}^N)$ that attains this infimum is called an isoperimetric $\alpha$-weighted  $N$-cluster. 
			As before, we simply denote ${\boldsymbol{\mathcal{I}}}^{\omega}_{(M,g)}={\boldsymbol{\mathcal{I}}}_{(M,g)}$.
		\end{itemize}
	\end{definition}

	Next, we present the two abstract results that we will apply to prove our main theorem.
	
	To prove the multiplicity part, we use Lusternik--Schnirelmann, and Morse's theories.
	This relates to the topology of the finite-dimensional ambient manifold and the number of critical points of an energy functional defined on an infinite-dimensional Hilbert manifold.
	We are based on the abstract theorem below
	
	\begin{theoremletter}[\cite{MR1322324,MR1384393,MR1205376}]\label{thm:benci-cerami} 
		Let $X$ be a topological space, $\mathfrak{M}$ be a $C^2$-Hilbert manifold, $\mathcal{E}:\mathfrak{M}\rightarrow\mathbb{R}$ be a $C^1$-functional, and $\mathcal{E}^c:=\{u\in\mathfrak{M} : \mathcal{E}(u)\leqslant c\}$ be a sublevel set for some $c\in\mathbb R$. Assume that 
		\begin{enumerate}
			\item[\namedlabel{itm:E1}{({\rm $E_1$})}] $\inf_{u\in\mathfrak{M}}\mathcal{E}(u)>-\infty$;
			\item[\namedlabel{itm:E2}{(\rm $E_2$)}]
			$\mathcal{E}$ satisfies the Palais--Smale condition;
			\item[\namedlabel{itm:E3}{(\rm $E_3$)}]
			There exist $c\in\mathbb R$ and two continuous maps $\Psi_{R}:X\rightarrow \mathcal{E}^c$ and $\Psi_{L}:\mathcal{E}^c\rightarrow X$ such that $\Psi_{L}\circ\Psi_{R}$ is homotopic to the identity map of $X$.
		\end{enumerate} 
		Then, the number of critical points in $\mathcal{E}^c$ satisfies $\#\mathcal{E}^c\geqslant\cat(X)$, and if $\mathfrak{M}$ is contractible and $\cat(X)>1$, there is at least other critical point of $\mathcal{E}$ outside $\mathcal{E}^c$. Moreover, there exists $c_0\in(c,\infty)$ such that one of the two following conditions hold:
		\begin{itemize}
			\item[{\rm (i)}] $\mathcal{E}^{c_0}$ contains infinitely many critical points;
			\item[{\rm (ii)}]  $\mathcal{E}^{c}$ contains $\mathscr{P}_1(X)$ critical points and $\mathcal{E}^{c_0}\setminus \mathcal{E}^{c}$, contains $\mathscr{P}_1(X)-1$ critical points if counted with their multiplicity. More exactly, we have the following relation
			\begin{equation}\label{morserelation}
				\sum_{u\in {\rm Crit}(\mathcal{E})}i_t(u)=\mathscr{P}_t(X)+t[\mathscr{P}_t(X)-1]+(1+t)\mathscr{Q}(t),
			\end{equation}
			where $\mathscr{Q}(t)$ is a polynomial with nonnegative integer coefficients, and ${\rm Crit}(\mathcal{E}^{c_0})$ denotes the set of critical points of $\mathcal{E}$ on $\mathcal{E}^{c_0}$. In particular, if all the critical points are nondegenerate, there are at least $\mathscr{P}_1(X)$ critical points with energy less or equal than $c$ and $\mathscr{P}_1(X)-1$ with energy between $c$ and $c_0$.
		\end{itemize}
	\end{theoremletter}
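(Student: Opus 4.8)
The statement is the by-now-standard packaging of Lusternik--Schnirelmann theory and infinite-dimensional Morse theory via the photography method, and I would assemble it from three independent ingredients exactly as in \cite{MR1322324,MR1384393,MR1205376}: the purely topological content of \ref{itm:E3}, the deformation lemmas made available by \ref{itm:E1}--\ref{itm:E2}, and the usual minimax machinery on $\mathfrak{M}$. The first step is to extract the topology of the ``photograph''. From \ref{itm:E3}, $\Psi_{L}\circ\Psi_{R}\simeq\mathrm{id}_X$ means precisely that $X$ is dominated by $\mathcal{E}^c$; two consequences are recorded once and for all. First, $\cat(X)\leqslant\cat(\mathcal{E}^c)$, since pulling a categorical open cover of $\mathcal{E}^c$ back along $\Psi_L$, composing with $\Psi_R$, and using the homotopy produces a categorical cover of $X$ of the same cardinality. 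Second, for every coefficient field the map $(\Psi_R)_*\colon H_*(X)\to H_*(\mathcal{E}^c)$ is split injective with left inverse $(\Psi_L)_*$, hence $H_*(X)$ is a direct summand of $H_*(\mathcal{E}^c)$ and therefore $\mathscr{P}_t(\mathcal{E}^c)=\mathscr{P}_t(X)+\mathscr{R}(t)$ for a polynomial $\mathscr{R}$ with nonnegative coefficients.

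For the analytic input one uses that $\mathfrak{M}$ is a $C^2$-Hilbert manifold and $\mathcal{E}\in C^1$: there is a locally Lipschitz pseudo-gradient vector field for $\mathcal{E}$ on the regular set, \ref{itm:E1} prevents its negative flow from running off to $-\infty$, and \ref{itm:E2} gives compactness of the critical set at each level. This delivers the first and second deformation lemmas (if $[a,b]$ contains no critical value then $\mathcal{E}^b$ deformation retracts onto $\mathcal{E}^a$; across an isolated critical level the sublevel set changes by pieces whose homology is governed by the critical groups $C_q(\mathcal{E},u)$), and since every such deformation decreases $\mathcal{E}$ it leaves each sublevel $\mathcal{E}^{c}$ invariant. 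Now work inside the invariant set $\mathcal{E}^c$ and put $c_k=\inf\{\sup_A\mathcal{E}:A\subset\mathcal{E}^c,\ \cat_{\mathcal{E}^c}(A)\geqslant k\}$ for $1\leqslant k\leqslant\cat(\mathcal{E}^c)$. The standard argument --- a deformation would strictly lower a nearly optimal set, contradicting minimality, and a repeated value $c_k=\dots=c_{k+p}$ forces the critical set there to have category $\geqslant p+1$ --- shows each $c_k$ is a critical value, so $\mathcal{E}$ has at least $\cat(\mathcal{E}^c)\geqslant\cat(X)$ critical points with value $\leqslant c$. If moreover $\mathfrak{M}$ is contractible and $\cat(X)>1$, then the absence of critical points above $c$ would make $\mathfrak{M}$ deformation retract onto $\mathcal{E}^c$, giving $1=\cat(\mathfrak{M})\geqslant\cat(\mathcal{E}^c)\geqslant\cat(X)>1$, absurd; hence a further critical point lies outside $\mathcal{E}^c$.

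For the Morse part, if for every $c_0>c$ the set $\mathcal{E}^{c_0}$ carries infinitely many critical points we are in case (i); otherwise choose $c_0>c$ strictly above all of the finitely many critical values, so that $\mathcal{E}^{c_0}$ deformation retracts onto $\mathfrak{M}$ and, $\mathfrak{M}$ being contractible, $\mathscr{P}_t(\mathcal{E}^{c_0})=1$. Subadditivity of the Morse polynomial along the filtration $\emptyset\subset\mathcal{E}^c\subset\mathcal{E}^{c_0}$ (built from the deformation lemmas of the previous step) yields
\[
\sum_{\mathcal{E}(u)\leqslant c} i_t(u)=\mathscr{P}_t(\mathcal{E}^c)+(1+t)\mathscr{Q}_1(t),\qquad \sum_{c<\mathcal{E}(u)\leqslant c_0} i_t(u)=\mathscr{P}_t(\mathcal{E}^{c_0},\mathcal{E}^c)+(1+t)\mathscr{Q}_2(t),
\]
while the long exact sequence of the pair, $\mathscr{P}_t(\mathcal{E}^{c_0})=1$ and connectedness force $H_{k+1}(\mathcal{E}^{c_0},\mathcal{E}^c)\cong H_k(\mathcal{E}^c)$ for $k\geqslant1$, i.e.\ $\mathscr{P}_t(\mathcal{E}^{c_0},\mathcal{E}^c)=t(\mathscr{P}_t(\mathcal{E}^c)-1)$. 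Summing the two identities, inserting this and the splitting $\mathscr{P}_t(\mathcal{E}^c)=\mathscr{P}_t(X)+\mathscr{R}(t)$ from the first step, and collecting the $(1+t)$-divisible terms yields exactly \eqref{morserelation} with $\mathscr{Q}=\mathscr{Q}_1+\mathscr{Q}_2+\mathscr{R}$; evaluating at $t=1$ (where $i_1(u)=1$ for nondegenerate $u$) gives the stated counts, $\mathscr{P}_1(X)$ critical points with value $\leqslant c$ and $\mathscr{P}_1(X)-1$ with value in $(c,c_0)$, counted with multiplicity.

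I expect the essential difficulty to lie not in the bookkeeping above but in the infinite-dimensional deformation theory under the available regularity: because $\mathcal{E}$ is only $C^1$ one must work with pseudo-gradients rather than with the gradient flow; sublevel sets of a functional on a Hilbert manifold need not be manifolds, so one must ensure they are ANRs (or replace them by homotopy-equivalent polyhedra) for singular homology and the category estimates to be legitimate; and passing through a possibly degenerate isolated critical level requires the Gromoll--Meyer generalized Morse lemma to define the critical groups $C_q(\mathcal{E},u)$ and to realize the corresponding cell attachments. Granting this standard but technical package --- which is precisely where \cite{MR1322324,MR1384393,MR1205376} do the work --- the four steps assemble into the assertion.
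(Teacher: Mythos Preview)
Your proposal is correct and follows essentially the same architecture as the paper's sketch: extract the topological content of \ref{itm:E3} (category inequality and homology splitting), feed \ref{itm:E1}--\ref{itm:E2} into the deformation lemma to run Lusternik--Schnirelmann minimax, and then assemble the Morse relation from the long exact sequence of a pair of sublevel sets. The only organizational difference is that the paper computes with the pair $(\mathcal{E}^c,\mathcal{E}^{\tau})$ for small $\tau>0$ together with a Thom-isomorphism step (Lemma~\ref{subspacerelation}), whereas you work directly with $(\mathcal{E}^{c_0},\mathcal{E}^c)$ and the contractibility of $\mathfrak{M}$; your argument for the extra critical point (no critical value above $c$ would force $\cat(\mathfrak{M})\geqslant\cat(\mathcal{E}^c)>1$) is in fact cleaner than the paper's explicit cone construction, which tacitly uses a linear structure on $\mathfrak{M}$.
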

	For the proof of the generic nondegeneracy part, we apply an abstract transversality theorem 
	
	\begin{theoremletter}[\cite{MR2160744}]\label{thm:henry}
		Let $\mathfrak X$, $\mathfrak Y$, and $\mathfrak Z$ be real Banach spaces,  $\mathfrak U\subset \mathfrak X$, $\mathfrak V\subset \mathfrak Y$ be open subsets, $\mathfrak F: \mathfrak V \times \mathfrak U \rightarrow \mathfrak Z$ be a map of class $C^{1}$ and $z_{0} \in {\rm im}(\mathfrak F)$. 
		Suppose that
		\begin{enumerate}
			\item[\namedlabel{itm:F1}{(\rm $F_1$)}] Given $y \in \mathfrak V$, it follows that $\mathfrak F(y, \cdot): x \mapsto \mathfrak F(y,x)$ is a Fredholm map of index $\ell<1$, {\it i.e.}, $\mathrm{d} \mathfrak F(y, \cdot)_{x}: \mathfrak X \rightarrow \mathfrak Z$ is a Fredholm operator of index $\ell$ for any $x \in \mathfrak U$;
			\item[\namedlabel{itm:F2}{(\rm $F_2$)}] $z_{0}$ is a regular value of $\mathfrak F$, {\it i.e.}, $\mathrm{d} \mathfrak F_{(y_{0}, x_{0})}: \mathfrak Y \times \mathfrak X \rightarrow \mathfrak Z$ is surjective for any $(y_{0}, x_{0}) \in \mathfrak F^{-1}(z_{0})$;
			\item[\namedlabel{itm:F3}{(\rm $F_3$)}] Let $\iota: \mathfrak F^{-1}\left(z_{0}\right) \rightarrow \mathfrak Y \times \mathfrak X$ be the canonical embedding and $\pi_{\mathfrak Y}: \mathfrak Y \times \mathfrak X \rightarrow \mathfrak Y$ be the projection of the first coordinate. Then $\pi_{\mathfrak Y} \circ \iota: \mathfrak F^{-1}(z_{0} ) \rightarrow \mathfrak Y$ is $\sigma-$ proper, {\it i.e.}, $\mathfrak F^{-1}(z_{0})=\bigcup_{k=1}^{\infty} C_{k}$, where $C_{k}$ is a closed subset of $\mathfrak F^{-1}(z_{0})$ and $\pi_{\mathfrak Y} \circ \iota|_{C_{k}}$ is proper for all $k\in\mathbb{N}$.
		\end{enumerate}
		Then, the set $\{y \in \mathfrak V : z_{0} \ \mbox{is a regular value of} \ \mathfrak F(y, \cdot)\}$ is an open dense subset of $\mathfrak V$.
	\end{theoremletter}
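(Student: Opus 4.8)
The plan is to realize the solution set $\mathfrak F^{-1}(z_{0})$ as a $C^{1}$ Banach submanifold of $\mathfrak V\times\mathfrak U$, to show that the restriction to it of the projection onto the parameter space is a $C^{1}$ Fredholm map of index $\ell$ whose regular values are exactly those $y\in\mathfrak V$ for which $z_{0}$ is a regular value of $\mathfrak F(y,\cdot)$, and then to invoke the Sard--Smale theorem together with the $\sigma$-properness in \ref{itm:F3}.

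First I would construct the solution manifold. By \ref{itm:F2} the differential $\ud\mathfrak F_{(y,x)}\colon\mathfrak Y\times\mathfrak X\to\mathfrak Z$ is surjective at every $(y,x)\in\mathcal M:=\mathfrak F^{-1}(z_{0})$, and I claim it admits a bounded right inverse. Writing $L:=\ud\mathfrak F_{(y,x)}|_{\mathfrak Y\times\{0\}}$ and $B:=\ud\mathfrak F(y,\cdot)_{x}$, hypothesis \ref{itm:F1} makes $B$ Fredholm, so ${\rm im}\,B$ is closed of finite codimension in $\mathfrak Z$; picking $v_{1},\dots,v_{r}\in\mathfrak Y$ whose $L$-images span $\mathfrak Z/{\rm im}\,B$, and combining them with a bounded right inverse of $B$ onto ${\rm im}\,B$, one assembles the desired right inverse of $\ud\mathfrak F_{(y,x)}$. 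Hence $\ker\ud\mathfrak F_{(y,x)}$ is complemented, and the implicit function theorem in Banach spaces exhibits $\mathcal M$ as a $C^{1}$ Banach submanifold of $\mathfrak V\times\mathfrak U$ with $T_{(y,x)}\mathcal M=\ker\ud\mathfrak F_{(y,x)}$.

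Next I would analyse $\Pi:=\pi_{\mathfrak Y}\circ\iota\colon\mathcal M\to\mathfrak V$, which is of class $C^{1}$ with $\ud\Pi_{(y,x)}(v,\xi)=v$ for $(v,\xi)\in\ker\ud\mathfrak F_{(y,x)}$. A short diagram chase gives $\ker\ud\Pi_{(y,x)}=\{0\}\times\ker B$; moreover the map $v\mapsto[\,L(v)\,]$ from $\mathfrak Y$ to $\mathfrak Z/{\rm im}\,B$ is onto (here surjectivity of $\ud\mathfrak F_{(y,x)}$ enters) and has kernel ${\rm im}\,\ud\Pi_{(y,x)}$, so it induces a canonical isomorphism ${\rm coker}\,\ud\Pi_{(y,x)}\cong\mathfrak Z/{\rm im}\,B={\rm coker}\,B$. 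Therefore $\Pi$ is Fredholm of index $\dim\ker B-\dim{\rm coker}\,B=\ell$, and $y\in\mathfrak V$ is a regular value of $\Pi$ precisely when ${\rm coker}\,B=0$ for every $x$ with $(y,x)\in\mathcal M$ (vacuously if this fiber is empty), that is, precisely when $z_{0}$ is a regular value of $\mathfrak F(y,\cdot)$.

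Finally I would carry out the genericity step. Since $\ell<1$, i.e. $\ell\leqslant0$, the smoothness $q>\max\{0,\ell\}$ required by the Sard--Smale theorem is met by the $C^{1}$ map $\Pi$, so its set of regular values is residual, hence dense, in $\mathfrak V$; by the previous paragraph this is already the set $\{y\in\mathfrak V:z_{0}\text{ is a regular value of }\mathfrak F(y,\cdot)\}$. To promote residual to open, I would bring in \ref{itm:F3}: with $\mathcal M=\bigcup_{k}C_{k}$ and $\Pi|_{C_{k}}$ proper, the critical set $\Sigma$ of $\Pi$ is closed, each $\Sigma\cap C_{k}$ is closed, and its image under the proper (hence closed) map $\Pi|_{C_{k}}$ is closed in $\mathfrak V$, so the critical values form an $F_{\sigma}$ set; combined with the fact that at a regular point the index constraint forces $\ud\Pi$ to be a Banach isomorphism (so $\Pi$ is there a local diffeomorphism), this is the structural input from which the open-dense conclusion is extracted, as in \cite{MR2160744}. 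The step I expect to be the main obstacle is exactly this last one: upgrading the residual ($G_{\delta}$) set handed over by Sard--Smale to a genuinely \emph{open} one requires using the $\sigma$-properness in full, since a countable union of closed critical-value sets need not be closed, and one must simultaneously ensure that the Banach-submanifold structure and the cokernel identification for $\Pi$ persist at every point of each fiber and not only generically.
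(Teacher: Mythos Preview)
The paper does not prove this statement: Theorem~\ref{thm:henry} is quoted verbatim from \cite{MR2160744} as an abstract input and is never given a proof in the manuscript, so there is no ``paper's own proof'' to compare against. Your outline is the standard route to this transversality theorem (implicit function theorem to realize $\mathcal M=\mathfrak F^{-1}(z_0)$ as a $C^1$ Banach manifold, the canonical identification $\ker\ud\Pi\cong\ker B$ and ${\rm coker}\,\ud\Pi\cong{\rm coker}\,B$ to see that $\Pi$ is Fredholm of index $\ell$, then Sard--Smale for density), and the first two steps are carried out correctly.

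Your hesitation in the last paragraph is warranted but can be resolved more cleanly than you indicate. The argument for openness does not go through the $F_\sigma$ structure of the critical-value set; rather, one argues directly by contradiction. If $y_k\to y_0$ with $y_0$ a regular value of $\Pi$ but each $y_k$ singular, pick $(y_k,x_k)\in\mathcal M$ with $\ud\mathfrak F(y_k,\cdot)_{x_k}$ not surjective. Each $(y_k,x_k)$ lies in some $C_{j(k)}$; the point of the $\sigma$-proper hypothesis is that one may take the $C_j$ increasing (replace $C_j$ by $\bigcup_{i\leqslant j}C_i$), so either infinitely many $(y_k,x_k)$ lie in a single $C_j$, or one reduces to that case. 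Properness of $\Pi|_{C_j}$ on the compact set $\{y_k\}\cup\{y_0\}$ then yields a convergent subsequence $x_k\to x_0$, hence $(y_0,x_0)\in\mathcal M$; since the set of surjective Fredholm operators is open and the assignment $(y,x)\mapsto\ud\mathfrak F(y,\cdot)_x$ is continuous, $\ud\mathfrak F(y_0,\cdot)_{x_0}$ fails to be surjective, contradicting that $y_0$ is regular. This is the argument you should spell out instead of the $F_\sigma$/local-diffeomorphism remarks, which as written do not quite close the gap.
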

	
	We must verify the hypotheses in Theorem~\ref{thm:benci-cerami} for our vectorial energy. 
	Namely, \ref{itm:E1}
	follows directly from the definition, and 
	\ref{itm:E2} is obtained using the subcritical growth condition \eqref{W1}. 
	The most delicate part is to show \ref{itm:E3}.
	For this, we need to use the geometric nature of the problem for small values of the relaxation parameter and the volume.
	Also, condition \eqref{W2} is used to apply Theorem~\ref{thm:henry} for the proof of the genericity part of our main theorem.
	
	We now explain in detail how to construct the maps in \ref{itm:E3}. Before, we need the following definition 
	\begin{definition}\label{def:vectorialtrans}
		Associated to each potential $\boldsymbol{W}\in\mathcal{W}^+_{N}$, let us consider the functions $\phi_i:\mathbb{R}^m\rightarrow\mathbb{R}$ and $\Phi_i:{M}\rightarrow\mathbb{R}$ given by $\phi_i=\ud_{\boldsymbol{W}}(\mathbf{p}_i,\cdot)$ and $\Phi_i=\phi_i\circ \mathbf{u}$ for each $i=1,\dots, N$.
		We define the transformation
		\begin{equation}\label{vectorialtrans}
			\mathbf{\Phi}:L_g^1(M,\mathbb{R}^m)\rightarrow L_g^1(M,\mathbb{R}^{N}) \quad \mbox{given by} \quad \mathbf{\Phi}(\mathbf{u})=(\Phi_1,\dots,\Phi_{N}).
		\end{equation}
		Let us denote $\mathbf{u}^{\prime}=\mathbf{\Phi}(\mathbf{u})$ and $\mathbf{v}^{\prime}=\boldsymbol{\mathcal{V}}_g(\mathbf{u}^{\prime})$. 
		For the sake of simplicity, we fix the convention $\mathbf{v}=\mathbf{v}^{\prime}$.
	\end{definition}
	
	First, a result of S. Baldo \cite{MR1051228} proves the $\Gamma$-convergence of \eqref{vectorialenergy} to the weighted multi-perimeter \eqref{vectorialperimeter} in a bounded subset of the Euclidean space with flat metric $(\mathbb R^n,\delta)$.
	We need to extend this result to the case of non-flat background metrics. 
	Moreover, we use \eqref{W3} to further extend this for the case of a sequence of functions with a bounded energy.
	Then, we will use the approximating family given by the $\Gamma$-convergence as the map $\Psi_{L}$, the so-called photography map. 
	We also need to use the parallelizability to guarantee that this map is a one-to-one continuous bijection.
	
	\begin{proposition}\label{prop:inhomogeneousgammaconvergence}
		Let $(M^n,g)$ be a closed Riemannian manifold and $\boldsymbol{W}\in\mathcal{W}^+_{N,3}$.
		For any family $\{\mathbf{u}_{\varepsilon}\}_{\varepsilon>0}$ satisfying $\boldsymbol{\mathcal{E}}_{\varepsilon}(\mathbf{u}_{\varepsilon})\leqslant E$ for some $E>0$, there exists $\mathbf{u}_0\in {L}_g^{1}(M,\mathbb R^m)$
		such that, up to a subsequence, it follows 
		\begin{equation*}
			\lim_{\varepsilon\rightarrow0}\|\mathbf{u}_{\varepsilon}-\mathbf{u}_0\|_{L_g^{1}(M,\mathbb{R}^m)}=0 \quad \mbox{and} \quad 
			\lim_{\varepsilon\rightarrow0}\boldsymbol{\mathcal{E}}_{\varepsilon}(\mathbf{u}_{\varepsilon})=\boldsymbol{\mathcal{P}}_g(\mathbf{\Omega}_0), 
		\end{equation*} 
		where $\mathbf{\Omega}_0=\cup_{i=1}^{N-1}(\mathbf{\Phi}\circ\mathbf u_{0})^{-1}(\mathbf p_i)$.
		Moreover, there exists $\mathbf{\Omega}\in \mathcal{C}_g(M,\mathbb{R}^N)$ minimizing the multi-perimeter with constraint $\sum_{i=1}^N\mathrm{v}_g(\Omega_i)\mathbf{p}_i=\mathrm{v}_i$, and such that $\|\nabla_g\mathbf{u}_0\|(M)\leqslant E$ and 
		\begin{equation}\label{converseconvergence}
			\mathbf{u}_0=\sum_{i=1}^{N}\mathbf{p}_i\chi_{\Omega_i}\in BV_g(M,\mathbb{R}^m),
		\end{equation} 
		where $BV_g(M,\mathbb R^m)=\left\{\mathbf u\in L_g^1(M,\mathbb R^m) : |\mathbf u|\in BV_g(M)\right\}$.
		Conversely, for any $\mathbf{\Omega}\in \mathcal{C}_g(M,\mathbb{R}^N)$ and $\mathbf{u}_{0,\mathbf{\Omega}}\in {L}_g^{1}(M,\mathbb R^m)$ of the form \eqref{converseconvergence}, there exists a sequence $\{\mathbf{u}_{\varepsilon,\mathbf{\Omega}}\}_{\varepsilon>0}$ satisfying $\lim_{\varepsilon\rightarrow0}\|\mathbf{u}_{\varepsilon,\mathbf{\Omega}}-\mathbf{u}_{0,\mathbf{\Omega}}\|_{{L}_g^{1}(M,\mathbb R^m)}=0$.
		In particular, it follows that $\Gamma$-$\lim_{\varepsilon\rightarrow0}\boldsymbol{\mathcal{E}}_{\varepsilon}=\boldsymbol{\mathcal{P}}_g$ in $L_g^{1}(M,\mathbb{R}^m)$. 
	\end{proposition}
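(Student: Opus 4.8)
The strategy is to run the Modica--Mortola--Baldo slicing scheme with respect to the degenerate metric $\ud_{\boldsymbol{W}}$, upgrading the flat $\Gamma$-convergence of \cite{MR1051228} to a general closed metric $g$ by a normal-coordinate localization, and using the coercivity in \eqref{W3} to promote $L^1_g$-precompactness from sequences that already converge in $L^1_g$ to sequences of merely bounded energy. The first step is a chain-rule estimate for the auxiliary functions $\Phi_i=\phi_i\circ\mathbf u$ of Definition~\ref{def:vectorialtrans}. Since $\phi_i=\ud_{\boldsymbol{W}}(\mathbf p_i,\cdot)$ is $1$-Lipschitz for $\ud_{\boldsymbol{W}}$ and, after a standard Lipschitz extension off $\mathbb R^m_+$, locally Lipschitz on $\mathbb R^m$ with $|\nabla\phi_i(z)|\leqslant\boldsymbol W^{1/2}(z)$ for a.e.\ $z$, one gets for $\mathbf u\in H^1_g(M,\mathbb R^m)$ that $\Phi_i\in W^{1,1}_g(M)$ with $|\nabla_g\Phi_i|\leqslant\boldsymbol W^{1/2}(\mathbf u)\,|\nabla_g\mathbf u|$ a.e.\ on $M$. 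Young's inequality then gives the pointwise bound
\begin{equation*}
	\varepsilon|\nabla_g\mathbf u|^2+\varepsilon^{-1}\boldsymbol W(\mathbf u)\ \geqslant\ 2\,\boldsymbol W^{1/2}(\mathbf u)\,|\nabla_g\mathbf u|\ \geqslant\ 2\,|\nabla_g\Phi_i| ,
\end{equation*}
so that $\|\nabla_g\Phi_i^{\varepsilon}\|(M)\leqslant\tfrac12\boldsymbol{\mathcal E}_{\varepsilon}(\mathbf u_{\varepsilon})\leqslant E/2$ for every $i=1,\dots,N$, where $\Phi_i^{\varepsilon}=\phi_i\circ\mathbf u_{\varepsilon}$.

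Next I would prove the compactness. From $\varepsilon^{-1}\int_M\boldsymbol W(\mathbf u_{\varepsilon})\,\ud\mathcal L^n_g\leqslant E$ and the lower bound in \eqref{W3} one obtains $\int_{\{|\mathbf u_{\varepsilon}|\geqslant R\}}|\mathbf u_{\varepsilon}|^{p_1}\,\ud\mathcal L^n_g\leqslant k_3^{-1}\varepsilon E$, which furnishes a uniform $L^1_g$ bound on $\mathbf u_{\varepsilon}$ together with equi-integrability; the same lower bound forces superlinear growth $\phi_i(z)\geqslant c_0|z|-c_1$, so $\{\Phi_i^{\varepsilon}\}$ is bounded in $BV_g(M)$ as well. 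By the compact embedding $BV_g(M)\hookrightarrow L^1_g(M)$ on the closed manifold $M$, after passing to a subsequence $\Phi_i^{\varepsilon}\to\Psi_i$ in $L^1_g(M)$ and a.e.\ for each $i$; moreover $\boldsymbol W(\mathbf u_{\varepsilon})\to0$ in $L^1_g(M)$ and, along a further subsequence, a.e. At a.e.\ $x$ where all these convergences hold, $\mathbf u_{\varepsilon}(x)$ eventually enters an arbitrarily small neighborhood of the finite set $\mathcal Z$ (by continuity, coercivity and nondegeneracy of the minima of $\boldsymbol W$), and the values $\Psi_i(x)$ single out one well because $\mathbf p\mapsto(\phi_1(\mathbf p),\dots,\phi_N(\mathbf p))$ separates the points of $\mathcal Z$ (indeed $\phi_j(\mathbf p_j)=0$ while $\phi_i(\mathbf p_j)=\omega_{ij}(\boldsymbol W)>0$ for $i\neq j$). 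Hence $\mathbf u_{\varepsilon}\to\mathbf u_0:=\sum_{i=1}^N\mathbf p_i\chi_{\Omega_i}$ a.e., with $\Omega_i:=\{\mathbf u_0=\mathbf p_i\}$, and equi-integrability upgrades this to convergence in $L^1_g(M,\mathbb R^m)$; lower semicontinuity of the total variation applied to the bounds above yields $\mathbf\Omega_0=(\Omega_1,\dots,\Omega_N)\in\mathcal C_g(M,\mathbb R^N)$ and $\|\nabla_g\mathbf u_0\|(M)\leqslant E$.

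For the energy it remains to establish two matching inequalities. The lower bound $\liminf_{\varepsilon\to0}\boldsymbol{\mathcal E}_{\varepsilon}(\mathbf u_{\varepsilon})\geqslant\boldsymbol{\mathcal P}_g(\mathbf\Omega_0)$, with $\boldsymbol{\mathcal P}_g$ the $\omega(\boldsymbol W)$-weighted multi-perimeter \eqref{vectorialperimeter}, is Baldo's estimate \cite{MR1051228} in the Euclidean case; to accommodate the metric I would work in geodesic normal coordinates, where on a ball of radius $r$ one has $(1-Cr^2)\delta\leqslant g\leqslant(1+Cr^2)\delta$, so that both $\boldsymbol{\mathcal E}_{\varepsilon}$ and $\boldsymbol{\mathcal P}_g$ differ from their flat counterparts by a factor $1+O(r^2)$, then cover $M$ by finitely many such balls subordinate to a partition of unity, apply the flat inequality chartwise, sum, and let $r\to0$ after $\varepsilon\to0$ along a diagonal subsequence. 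The converse direction is the recovery sequence: given $\mathbf\Omega\in\mathcal C_g(M,\mathbb R^N)$ and $\mathbf u_{0,\mathbf\Omega}$ as in \eqref{converseconvergence}, one interpolates across an $\varepsilon$-tubular neighborhood of each interface $\Sigma_{ij}$ between the wells $\mathbf p_i,\mathbf p_j$ along a near-geodesic of $\ud_{\boldsymbol W}$ composed with the $\varepsilon$-rescaled signed distance to $\Sigma_{ij}$, truncating near the lower-dimensional strata of $\mathbf\Omega$ on a set of vanishing measure whose energy contribution is negligible by the strict triangle inequality \eqref{W0} and the growth bounds \eqref{W1}--\eqref{W3}; this is again Baldo's construction \cite{MR1051228}, localized in normal coordinates, and it yields $\|\mathbf u_{\varepsilon,\mathbf\Omega}-\mathbf u_{0,\mathbf\Omega}\|_{L^1_g(M,\mathbb R^m)}\to0$ together with $\limsup_{\varepsilon\to0}\boldsymbol{\mathcal E}_{\varepsilon}(\mathbf u_{\varepsilon,\mathbf\Omega})\leqslant\boldsymbol{\mathcal P}_g(\mathbf\Omega)$, so that $\Gamma\text{-}\lim_{\varepsilon\to0}\boldsymbol{\mathcal E}_{\varepsilon}=\boldsymbol{\mathcal P}_g$ in $L^1_g(M,\mathbb R^m)$. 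Finally, invoking the existence and regularity theory for immiscible weighted clusters (\cite{MR1402391,leonardi2001}; see also \cite{MR420406}) one selects a minimizer $\mathbf\Omega$ of $\boldsymbol{\mathcal P}_g$ under the constraint $\mathrm v_g(\Omega_i)\mathbf p_i=\mathrm v_i$; when $\{\mathbf u_{\varepsilon}\}$ is a minimizing family for $\boldsymbol{\mathcal E}_{\varepsilon}$ on $\boldsymbol{\mathfrak M}_{\mathbf v}$, combining the $\liminf$ bound with the recovery upper bound forces $\lim_{\varepsilon\to0}\boldsymbol{\mathcal E}_{\varepsilon}(\mathbf u_{\varepsilon})=\boldsymbol{\mathcal P}_g(\mathbf\Omega_0)=\boldsymbol{\mathcal P}_g(\mathbf\Omega)$ (the equality of the two limits, as opposed to just the $\liminf$ inequality, genuinely uses that the family is minimizing).

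I expect the compactness step to be the principal obstacle: a uniform energy bound does not confine $\mathbf u_{\varepsilon}$ to a fixed compact region of $\mathbb R^m$, so recovering the limit as a cluster-valued $BV$ function requires combining the $BV$-compactness of the auxiliary maps $\Phi_i^{\varepsilon}$ with the coercive tail control of \eqref{W3} and with the fact that $\mathbf\Phi$ separates the wells even though it need not be globally injective. Secondarily, although the passage from the flat $\Gamma$-convergence of \cite{MR1051228} to a curved background is conceptually routine, it must be carried out with error terms that are uniform in $\varepsilon$ and summable over a finite atlas, which is where the precise growth hypotheses \eqref{W1}--\eqref{W3} on $\boldsymbol{W}$ enter.
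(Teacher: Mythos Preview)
Your proposal is correct and follows the same Modica--Mortola--Baldo scheme as the paper, but with two organizational differences worth flagging. For the compactness step, the paper transforms $\mathbf u_\varepsilon$ through a single scalar primitive $\varphi(t)=\int_{B_t}\boldsymbol W^{1/2}(z)\,\ud z$ applied componentwise, obtains $BV$-compactness for $\mathbf w_\varepsilon=\boldsymbol\varphi(\mathbf u_\varepsilon)$, and then recovers $L^1$-convergence of $\mathbf u_\varepsilon$ by inverting $\varphi$ (whose inverse is Lipschitz at infinity thanks to \eqref{W3}); your route via the $N$ auxiliary maps $\Phi_i=\phi_i\circ\mathbf u$ together with the well-separation argument is the more standard vectorial approach (closer to \cite{MR1051228}) and is arguably cleaner, since it avoids the componentwise inversion. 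For the $\liminf$ inequality, the paper works intrinsically on $M$: it truncates via \eqref{W3}, applies Lemma~\ref{lm:lipschtzdistance}, and invokes lower semicontinuity of the total variation directly, with no charts; your normal-coordinate localization with $1+O(r^2)$ metric comparison would certainly work, but it is heavier than necessary here. Finally, your closing observation is on point: the paper's proof of the Proposition proper only establishes compactness and the structure \eqref{converseconvergence} of $\mathbf u_0$, while the full $\Gamma$-convergence is handled in the preceding lemma, and the asserted energy \emph{equality} (rather than the $\liminf$ inequality) is indeed only justified for minimizing families.
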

	
	Second, we build the right-inverse homotopy $\Psi_{R}$ to the photography map by composing the barycenter map together with the projection provided by the Nash embedding theorem.
	To control the range of the barycenter map, we need to use some deep results of the isoperimetric theory for clusters.
	
	More precisely, we prove that from an isoperimetric weighted cluster enclosing a small volume we can build another cluster with small diameter which almost the same volume and perimeter of the original cluster. Such result is called {selecting a large subdomain}. 
	Although we will apply this result to the perimeter in \eqref{vectorialperimeter}, which is related to the potential $\boldsymbol{W}$, this result holds for a broader class of weighted perimeters, which we call immiscible (see Remark~\ref{rmk:immisciblecondition}) and are of independent interest. 
	Indeed, we use \eqref{W0} to guarantee that $\boldsymbol{\mathcal{P}}_g$ is immiscible.
	For results concerning properties of isoperimetric clusters, see  \cite{MR2976521,leonardi2001,MR1402391,MR420406,arXiv:2112.08170,resende2022clusters}.
	
	\begin{proposition}\label{lm:SelectingaLargeCompact}
		Let $(M^n,g)$ be a closed Riemannian manifold and
		$\{\mathbf{\Omega}_k\}_{k\in \mathbb{N}}\subset \mathcal C^{\alpha}_g(M,\mathbb R^3)$ be a sequence of isoperimetric immiscible weighted $3$-clusters. 
		There exists another sequence $\{\mathbf{\Omega}_k^{\prime}\}_{k\in \mathbb{N}}\subset \mathcal C^{\alpha}_g(M,\mathbb R^3)$ such that 
		\begin{itemize}
			\item[{\rm (i)}]$\lim_{k\rightarrow\infty} {\mathrm{v}_g({\mathbf{\Omega}}_k\triangle {\mathbf{\Omega}}_k^{\prime})}{\mathrm{v}_g({\mathbf{\Omega}}_k)}^{-1}=0$;
			\item[{\rm (ii)}]$\lim_{k\rightarrow\infty} {\mathrm{v}_g(\mathbf{\Omega}_{k}^{\prime})}{\mathrm{v}_g(\mathbf{\Omega}_{k})}^{-1}=1$;
			\item[{\rm (iii)}]$\lim_{k\rightarrow\infty}{\boldsymbol{\mathcal{P}}^{\alpha}_g(\mathbf{\Omega}_k^{\prime})}{\boldsymbol{\mathcal{P}}^{\alpha}_g(\mathbf{\Omega}_k)}^{-1}=1$;
			\item[{\rm (iv)}]$\lim_{k\rightarrow\infty}\diam_g(\widetilde{{\Omega}}_k^{\prime})=0$.
		\end{itemize}
	\end{proposition}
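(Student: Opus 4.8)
\textit{Strategy and set-up.} The plan is to combine a blow-up/compactness argument in the tangent space with an explicit cut-and-fill surgery. Throughout set $v_k := \mathrm{v}_g(\widetilde{\Omega}_k)$ for the total interior volume and $r_k := v_k^{1/n}$; since conclusions {\rm (ii)}--{\rm (iv)} force $v_k\to 0$ for a sequence of isoperimetric clusters (any finite-perimeter set of small diameter has small volume), we argue under this hypothesis, which is the regime in which the statement is applied. If $\diam_g(\widetilde{\Omega}_k) = o(1)$ along a subsequence we simply take $\mathbf{\Omega}_k' = \mathbf{\Omega}_k$, so we may assume $\liminf_k\diam_g(\widetilde{\Omega}_k) > 0$. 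Comparing $\boldsymbol{\mathcal{I}}_{(M,g)}^\alpha$ with a tiny standard weighted cluster placed inside a normal coordinate chart of $(M,g)$ gives, for the isoperimetric $\mathbf{\Omega}_k$, the two-sided bound $c\,r_k^{n-1}\leqslant\boldsymbol{\mathcal{P}}_g^\alpha(\mathbf{\Omega}_k) = \boldsymbol{\mathcal{I}}_{(M,g)}^\alpha(\mathbf{v}_g(\mathbf{\Omega}_k))\leqslant C\,r_k^{n-1}$, with $c,C$ depending only on $(M,g,\alpha)$; here the lower bound is the isoperimetric inequality for clusters on the closed (hence bounded-geometry) manifold $M$.

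\textit{Step 1: concentration of volume.} Pull $\mathbf{\Omega}_k$ back through normal coordinates centered at a point of near-maximal interior density and dilate by $r_k^{-1}$. The rescaled metrics converge in $C^\infty_{\mathrm{loc}}$ to $\delta$, and by lower semicontinuity of the immiscible weighted multi-perimeter under flat convergence (\cite{MR1402391,leonardi2001}, using \eqref{Eq:StrictTriangularInequality}; see also Remark~\ref{rmk:immisciblecondition}), any subsequential limit is an isoperimetric $\alpha$-weighted cluster in $(\mathbb{R}^n,\delta)$ with at most two interior chambers and unit total interior volume. Vanishing of mass is ruled out by the normalization, while dichotomy --- the interior mass splitting into two subclusters at mutually diverging distance --- is ruled out by the \emph{strict} subadditivity of $\mathbf{v}\mapsto\boldsymbol{\mathcal{I}}_{\mathbb{R}^n}^\alpha(\mathbf{v})$, a consequence of its homogeneity of degree $(n-1)/n$, combined with the asymptotics $\boldsymbol{\mathcal{I}}_{(M,g)}^\alpha(\mathbf{v}) = \boldsymbol{\mathcal{I}}_{\mathbb{R}^n}^\alpha(\mathbf{v})(1+o(1))$ as $|\mathbf{v}|\to0$. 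Since $N=3$, the classification of weighted double bubbles in $\mathbb{R}^n$ of \cite{lawlor2014double} (together with the classical case of a single ball) shows the limit is bounded, say of diameter at most $D_0 = D_0(n,\alpha)$. Tightness then yields: for each $\epsilon>0$ there are $R(\epsilon)\geqslant D_0$ and $k_0$ such that, for all $k\geqslant k_0$, there is $x_k\in M$ with $\mathrm{v}_g\big(\widetilde{\Omega}_k\setminus B^g_{R(\epsilon)r_k}(x_k)\big)\leqslant\epsilon\,r_k^n$.

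\textit{Step 2: surgery and verification of {\rm (i)}--{\rm (iv)}.} Choose $\epsilon_k\downarrow 0$ and $R_k\uparrow\infty$ with $R_kr_k\to 0$, so that $\mathrm{v}_g(\widetilde{\Omega}_k\setminus B^g_{R_kr_k}(x_k)) = o(r_k^n)$. By the coarea formula for $\ud_g(x_k,\cdot)$ on the annulus $B^g_{2R_kr_k}(x_k)\setminus B^g_{R_kr_k}(x_k)$ there is $t_k\in(R_kr_k,2R_kr_k)$ with $\mathcal{H}^{n-1}_g(\widetilde{\Omega}_k\cap\partial B^g_{t_k}(x_k)) = o(r_k^{n-1})$. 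Define $\mathbf{\Omega}_k'$ by $\Omega_i':=\Omega_i\cap B^g_{t_k}(x_k)$ for the two interior chambers $i$ and $\Omega_3':=M\setminus(\Omega_1'\cup\Omega_2')$, an admissible element of $\mathcal{C}_g^\alpha(M,\mathbb{R}^3)$. Then $\widetilde{\Omega}_k'\subset B^g_{t_k}(x_k)$ gives $\diam_g(\widetilde{\Omega}_k')\leqslant 4R_kr_k\to 0$, which is {\rm (iv)}; since $\widetilde{\Omega}_k\triangle\widetilde{\Omega}_k' = \widetilde{\Omega}_k\setminus B^g_{t_k}(x_k)$ we get $\mathrm{v}_g(\mathbf{\Omega}_k\triangle\mathbf{\Omega}_k') = o(r_k^n) = o(v_k)$, which is {\rm (i)}, and hence $\mathrm{v}_g(\widetilde{\Omega}_k') = v_k(1+o(1))$, which is {\rm (ii)}. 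For {\rm (iii)}: cutting along $\partial B^g_{t_k}(x_k)$ adds at most $(\max_{i\neq j}\alpha_{ij})\,\mathcal{H}^{n-1}_g(\widetilde{\Omega}_k\cap\partial B^g_{t_k}(x_k)) = o(r_k^{n-1})$ to $\boldsymbol{\mathcal{P}}_g^\alpha$ and only deletes interfaces lying outside the ball, so $\boldsymbol{\mathcal{P}}_g^\alpha(\mathbf{\Omega}_k')\leqslant\boldsymbol{\mathcal{P}}_g^\alpha(\mathbf{\Omega}_k)(1+o(1))$; conversely $\mathbf{\Omega}_k'$ competes in the isoperimetric problem for its own volume, whence by continuity of $\boldsymbol{\mathcal{I}}_{(M,g)}^\alpha$ near $\mathbf{0}$ and {\rm (ii)},
\[
\boldsymbol{\mathcal{P}}_g^\alpha(\mathbf{\Omega}_k')\;\geqslant\;\boldsymbol{\mathcal{I}}_{(M,g)}^\alpha(\mathbf{v}_g(\mathbf{\Omega}_k'))\;=\;\boldsymbol{\mathcal{I}}_{(M,g)}^\alpha(\mathbf{v}_g(\mathbf{\Omega}_k))(1+o(1))\;=\;\boldsymbol{\mathcal{P}}_g^\alpha(\mathbf{\Omega}_k)(1+o(1)),
\]
the last equality because $\mathbf{\Omega}_k$ is isoperimetric. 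Combining the two estimates gives {\rm (iii)}.

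\textit{Main obstacle.} I expect the crux to be Step~1: making rigorous the concentration-compactness trichotomy for rescaled isoperimetric clusters over a non-flat background and, above all, excluding dichotomy. This relies on a quantitative use of the strict subadditivity of the Euclidean weighted multi-isoperimetric profile (Definition~\ref{def:multiisoperimetr}) together with the blow-up comparison of profiles, and it is exactly here that the restriction $N=3$ is invoked, through the classification of \cite{lawlor2014double}: it furnishes the uniform diameter bound $D_0$ on the blow-up limit --- equivalently, the statement that isoperimetric weighted clusters of small volume have small diameter in the tangent space --- and hence the uniform-in-$k$ radius $R(\epsilon)$ that the surgery requires. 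A secondary point requiring care is the continuity and Euclidean asymptotics of $\boldsymbol{\mathcal{I}}_{(M,g)}^\alpha$ near $\mathbf{0}$, which underpins the matching lower bound in {\rm (iii)}.
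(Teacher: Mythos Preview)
Your architecture (rescale by $v_k^{-1/n}$, extract a Euclidean limit, exclude splitting, then cut by a coarea-selected geodesic ball) matches the paper's; the paper simply packages Step~1 via an abstract compactness decomposition (Proposition~\ref{Thm:GeneralizedCompactnessForClusters}) plus a connectedness lemma (Lemma~\ref{Lemma:ConnectednessOfIsoperimetricClustersForSmallVolumes}) rather than a Lions-style trichotomy, and defers your explicit Step~2 surgery to \cite[Theorem~4.2]{MR4130849}.

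There is, however, one genuine gap in your Step~1. You rule out dichotomy by asserting that strict subadditivity of $\mathbf{v}\mapsto\boldsymbol{\mathcal{I}}^{\alpha}_{(\mathbb{R}^n,\delta)}(\mathbf{v})$ is ``a consequence of its homogeneity of degree $(n-1)/n$''. This is false in the vectorial setting: homogeneity gives strict subadditivity only for splits along a single ray, whereas in a dichotomy the two pieces may carry different chamber proportions (one piece mostly $\Omega_1$, the other mostly $\Omega_2$). A model counterexample is $f(v_1,v_2)=v_1^{(n-1)/n}+v_2^{(n-1)/n}$, which is positively homogeneous of degree $(n-1)/n$ yet satisfies $f(1,1)=f(1,0)+f(0,1)$. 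What you actually need is that every Euclidean isoperimetric immiscible $\alpha$-cluster has connected interior $\widetilde{\Omega}$; strict subadditivity then follows because a disjoint union of two isoperimetric pieces would be a disconnected minimizer. For $N=3$ this connectedness is immediate from the very classification of \cite{lawlor2014double} you already invoke for the diameter bound $D_0$, and for general $N$ it is precisely Lemma~\ref{Lemma:ConnectednessOfIsoperimetricClustersForSmallVolumes} (adapted from \cite{milman2022structure}); either reference repairs your argument. A secondary point: invoking ``continuity of $\boldsymbol{\mathcal{I}}^{\alpha}_{(M,g)}$ near $\mathbf{0}$'' in the lower bound for {\rm (iii)} risks circularity, since in the paper that continuity (Corollary~\ref{cor:asymp}) is a \emph{consequence} of the present proposition. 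You can sidestep this by noting that the rescaled $\mathbf{\Omega}_k'$ converge to the same Euclidean limit $\mathbf{\Omega}_\infty$ and applying lower semicontinuity of $\boldsymbol{\mathcal{P}}^{\alpha}$ directly to get $\liminf_k\boldsymbol{\mathcal{P}}^{\alpha}_g(\mathbf{\Omega}_k')/v_k^{(n-1)/n}\geqslant\boldsymbol{\mathcal{P}}^{\alpha}_\delta(\mathbf{\Omega}_\infty)=\lim_k\boldsymbol{\mathcal{P}}^{\alpha}_g(\mathbf{\Omega}_k)/v_k^{(n-1)/n}$.
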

	
	The proofs of Proposition \ref{prop:inhomogeneousgammaconvergence} and Proposition~\ref{lm:SelectingaLargeCompact} are independent of each other. However, in the proof of Proposition~\ref{prop:inhomogeneousgammaconvergence}, it is convenient to assume that the limiting weighted cluster has small diameter, for $N=3$, we use directly \cite{lawlor2014double} which implies that if we are under the small volumes regime then the small diameter property holds.
	There is an alternative strategy to prove our main results without relying on the full strength of \cite{lawlor2014double}. 
	The price to pay is to extend Proposition~\ref{prop:inhomogeneousgammaconvergence} to the Riemannian setting without the small diameter property and to make a more complicated definition of the photography map with a consequent more involved proof of its well posedness and continuity.
	
	Beyond its geometrical relevance, the study of the ACH equation dates back to the theory of phase separation for binary fluids in an alloy \cite{MR855305,allen-cahn,cahn-hilliard}, or more generally in the van der Walls theory of phase transition \cite{vanderwaals}.
	Using this analogy, System \eqref{oursystem} is used to describe the free energy of a multiphasic mixture of interacting fluids, where the coefficients $\omega_{ij}$ measure the surface tension between two pure states of the system.
	To be more physically realistic, the energy should have a non-local ferromagnetic Kac potential instead of the norm of the gradient \cite{MR1453735,MR1638739}; these models have applications in several areas like the Ising process, and dislocations in elastic materials exhibiting microstructure \cite{MR3748585}.
	
	Let us explain the connection of our result to differential geometry.
	An ancient problem in this field is to determine the number of critical points of the perimeter functional $\mathcal{P}_g$ on a general Riemannian manifold. 
	Indeed, S.-T. Yau \cite{MR645762} conjectured that any closed Riemannian manifold contains infinitely many critical points of the perimeter.
	This conjecture was recently solved affirmatively for generic metrics \cite{MR3953507,arXiv:1901.08440v1} and for the remaining cases when $3\leqslant n\leqslant 7$ \cite{arXiv:1806.08816v1}.
	Recall that critical points of the perimeter are called minimal hypersurfaces and they must have zero mean curvature.
	When there is a volume constraint, one has the equivalent problem of minimizing $\mathcal{P}_g(\Omega)$ under the condition ${\mathrm{v}}_g(\Omega)=\mathrm{v}$.
	In this case, the reduced boundary of $\Omega$ is called a CMC (Constant Mean Curvature) hypersurface, or a CMC boundary.
	The convergence of the ACH energy to the perimeter functional was recently used to construct minimal hypersurfaces in any closed Riemannian manifold \cite{MR3743704,MR3814054} as an alternative approach to min-max methods  \cite{almgren,MR626027}. For CMC boundaries, it is only known the min-max construction in \cite{MR4091027,arXiv:1910.00989,arXiv:2004.05120,MR4011704}.
	It is not known whether Yau's conjecture also holds true.
	In this context, let us consider a slightly more general problem.
	We aim to study the asymptotic behavior of $\boldsymbol{\eta}(0,\mathbf{v}):=\lim_{\varepsilon\rightarrow0}\boldsymbol{\eta}(\varepsilon,\mathbf{v})$, which coincides with the number of critical points of the multi-perimeter with volume constraint $\mathbf{v}$, denoted by $\mathfrak{N}_{0,\mathbf v}$. We aim to study the abundance of elements in this set. 
	It is already known that $\mathfrak{N}_{0,\mathbf v}\neq\emptyset$, then it is natural to ask if in any closed manifold one has $\boldsymbol{\eta}(0,\mathbf{v})=\infty$.
	
	If we can pass to the limit as $\varepsilon\rightarrow0$ in Theorem~\ref{theorem1i}, we would obtain the first result in this direction for the case of weighted clusters with two interior chambers under the small volume condition.
	In the scalar case, by combining the results on \cite{arXiv:2007.07024} and \cite{arXiv:2010.05847}, it follows that ${\eta}(0,\mathbf{v})\geqslant1$, that is, there exists at least one almost embedded CMC boundary in the small volume regime, which is the pioneering existence result using phase transition methods.
	
	At last, we compare our proof to the one in the scalar case.
	The strategy to prove Theorem~\ref{thm:benci-nardulli-piccione-osorio} relies also on the combination of several results from the realm of PDEs, algebraic topology, isoperimetric problem, minimal surface theory, and phase transition approximations.
	First, classical results \cite{MR0445362,MR866718,MR870014,MR930124} (see also \cite{MR3495430,MR2769110} for the geometric case) state that the energy $\mathcal{E}_{\varepsilon}$ is a singular perturbation (relaxation) of the perimeter functional $\mathcal{P}_g$. 
	This analogy creates an unexpected and surprising bridge between the theory of nonlinear PDEs from phase transition and the study of qualitative properties for CMC boundaries (or minimal hypersurfaces).
	The heuristics is that the action of the double-well potential induces the decomposition $M=\Omega_1\cup \Omega_2\cup \Sigma_{12}$, where $\Sigma_{12}=\partial^* \Omega_1=\partial^* \Omega_2$ is the limit interface, known to be a critical point of the perimeter, that is, the mean curvature of the interface $H_{\Sigma_{12}}$ is constant, and $\Omega_1:=\{x\in M : \lim_{\varepsilon\rightarrow0}{\mathrm u}_\varepsilon(x)=1\}$ and $\Omega_2:=\{x\in M : \lim_{\varepsilon\rightarrow0}u_\varepsilon(x)=0\},$ 	where $\mathrm{u}_{\varepsilon}$ is a solution to \eqref{ourequation}.
	In other terms, in the singular limit, there is a relation between the nodal sets of solutions to \eqref{ourequation} and CMC boundaries. 
	In this proof, it is also used that isoperimetric regions of small volume have a small diameter, and the asymptotic expansion of the isoperimetric profile function \cite{MR2529468,MR1803220,MR690651,MR4130849,arXiv:2201.04916}.
	
	The rest of the paper is divided as follows. 
	In Section~\ref{sec:abstractphotography}, we introduce some definitions and results from the abstract Lusternik--Schnirelmann and infinite-dimensional Morse theories, which we use to sketch the proof of Theorem~\ref{thm:benci-cerami}.
	In Section~\ref{sec:isoperimetricclusters}, we show that we can select a large subdomain from isoperimetric weighted clusters in Proposition~\ref{lm:SelectingaLargeCompact}.
	In Section~\ref{sec:gammaconvergence}, we prove the convergence and approximation results in Proposition~\ref{prop:inhomogeneousgammaconvergence}.
	In Section~\ref{sec:concretephotographymethod}, we apply Theorem~\ref{thm:benci-cerami} to prove the multiplicity part of Theorem~\ref{maintheorem1}.
	In Section~\ref{sec:nondegeneracy}, based on Theorem~\ref{thm:henry}, we prove the generic nondegeneracy part of Theorem~\ref{maintheorem1}.
	
	\begin{acknowledgement}
		This project started when the first-named author was a visiting Ph.D. student and the third-named author was a visiting fellow in the Department of Mathematics at Princeton University, whose hospitality they greatly acknowledge.
		This work was partially supported by Funda\c c\~ao de Amparo \`a Pesquisa do Estado de S\~ao Paulo (FAPESP), Conselho Nacional de Desenvolvimento Cient\'ifico e Tecnol\'ogico (CNPq), Fulbright Commission in Brazil, and Coordena\c c\~ao de Aperfei\c coamento de Pessoal de N\'ivel Superior--Brasil (CAPES). 
		J.H.A. was supported by FAPESP \#2020/07566-3 and \#2021/15139-0, Fulbright \#G-1-00001, and CAPES \#88882.440505/2019-01. 
		J.C. was supported by CAPES \#88882.377936/2019-01. 
		S.N. was supported by FAPESP \#2021/05256-0 and \#2018/22938-4, CNPq \#305726/2017-0, and CNPq \#12327/2021-8. 
		P.P. was supported by FAPESP \#2016/23746-6 and CNPq \#313773/2021-1.
		R.R. was supported by CAPES \#88882.377954/2019-01.
	\end{acknowledgement}
	
	\numberwithin{equation}{section} 
	\numberwithin{theorem}{section}
	
	\section{Abstract photography method}\label{sec:abstractphotography}
	In this section, we present some standard results from Lusternik--Schnirelmann and infinite-dimensional Morse theories which will be used in the abstract photography method \cite{MR1384393,MR1205376}. 
	We use the approach developed in \cite{MR1322324,MR1088278,MR1322322}, which is suitable for problems arising from PDEs.
	
	\begin{definition}
		Let $X$ a topological space and $Y\subseteq X$ be a closed subset. We define the  Lusternik-Schnirelmann category of $Y$ in $X$ as the extended natural number $\cat_{X}(Y)$ obtained as the minimum number $k\in\mathbb{N}\cup\{\infty\}$ such that there exist $\mathcal{U}_1,\dots,\mathcal{U}_k\subseteq X$, open contractible subsets satisfying $Y\subseteq\bigcup_{i=1}^k\mathcal{U}_i$. We also denote $\cat(X):=\cat_X(X)$.
	\end{definition}
	
	\begin{definition}
		Let $X,Z$ be topological spaces. We say that $X$ and $Z$ are homotopically superjacent if there exist continuous maps $\Psi_{L}:X\rightarrow Z$ and $\Psi_{R}:Z\rightarrow X$ such that $\Psi_{L}\circ \Psi_{R}$ is homotopic to the identity of $X$.
	\end{definition}
	
	Now, we present a standard result in Lusternik--Schnirelman's theory.
	
	\begin{lemma}
		If $X$ and $Z$ are homotopically superjacent topological spaces, then $\cat(X)\leqslant \cat(Z)$. 
	\end{lemma}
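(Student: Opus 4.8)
The plan is to transfer a categorical open cover from $Z$ back to $X$ using the two maps $\Psi_L \colon X \to Z$ and $\Psi_R \colon Z \to X$ witnessing homotopical superjacency, together with the homotopy invariance of the Lusternik--Schnirelmann category. First I would recall the two basic structural facts about category that make this work: (a) if $A \subseteq Z$ is contractible in $Z$ and $f \colon X \to Z$ is continuous, then $f^{-1}(A)$ is contractible in $X$ — indeed, the inclusion $f^{-1}(A) \hookrightarrow X$ equals $\Psi_R \circ (\text{inclusion } f^{-1}(A)\hookrightarrow Z$ composed appropriately$)$, and more precisely one uses that a set deformable to a point inside the target pulls back to a set deformable to a point inside the source whenever the map is defined on all of $X$; (b) if $h_0, h_1 \colon X \to X$ are homotopic, then $\cat(h_1(X))$ and $\cat(h_0(X))$ are comparable — in particular $\cat(X) = \cat(\mathrm{id}_X(X)) \leqslant \cat(\text{any map homotopic to } \mathrm{id}_X \text{ applied to } X)$ in the sense of covering the image by sets contractible in $X$.

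The main argument then runs as follows. Let $k = \cat(Z)$; if $k = \infty$ there is nothing to prove, so assume $k < \infty$ and pick open subsets $\mathcal{U}_1, \dots, \mathcal{U}_k \subseteq Z$, each contractible in $Z$, with $Z = \bigcup_{i=1}^k \mathcal{U}_i$. Set $\mathcal{V}_i := \Psi_L^{-1}(\mathcal{U}_i) \subseteq X$. Each $\mathcal{V}_i$ is open since $\Psi_L$ is continuous, and $\bigcup_{i=1}^k \mathcal{V}_i = \Psi_L^{-1}(Z) = X$. The key point is that each $\mathcal{V}_i$ is contractible \emph{in} $X$: if $R \colon \mathcal{U}_i \times [0,1] \to Z$ is a contraction of $\mathcal{U}_i$ inside $Z$ (so $R(\cdot, 0)$ is the inclusion and $R(\cdot,1)$ is constant), then $\Psi_R \circ R \circ (\Psi_L|_{\mathcal{V}_i} \times \mathrm{id}) \colon \mathcal{V}_i \times [0,1] \to X$ deforms $\mathcal{V}_i$, starting from $\Psi_R \circ \Psi_L|_{\mathcal{V}_i}$ and ending at a constant map. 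Since $\Psi_L \circ \Psi_R \simeq \mathrm{id}_X$ — wait, the hypothesis gives $\Psi_L \circ \Psi_R \simeq \mathrm{id}_X$, so I should instead note $\Psi_R \circ \Psi_L$ need not be homotopic to the identity; hence the deformation above ends at a point but \emph{starts} at $\Psi_R \circ \Psi_L|_{\mathcal{V}_i}$, which is a map $\mathcal{V}_i \to X$ that need not be the inclusion. This is the one subtle step.

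To fix this, I would argue at the level of the whole space rather than set by set. Consider $\Phi := \Psi_R \circ \Psi_L \colon X \to X$; it is homotopic to $\mathrm{id}_X$ only after composing the other way, so instead I use the cover $\{\mathcal{V}_i\}$ of $X$ together with the fact that $\Psi_R(\mathcal{U}_i)$ is contractible in $X$ (immediate from the contraction of $\mathcal{U}_i$ composed with $\Psi_R$), and that $X = \Phi^{-1}(X) = \bigcup_i \Phi^{-1}(\Psi_R(\mathcal{U}_i))$ is covered by the open-enlarged preimages. The clean route, which I expect to be the actual proof, is: by the standard homotopy-invariance lemma for category (if $f \simeq g \colon X \to Y$ then $\cat(f) = \cat(g)$, where $\cat(f)$ is the least number of open sets covering $X$ each of which $f$ maps into a set contractible in $Y$), apply this with $Y = X$, $f = \mathrm{id}_X$, $g = \Psi_L \circ \Psi_R$; but $\Psi_L \circ \Psi_R$ has domain... no — domain must match. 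The correct instance is $f, g \colon X \to X$ with $f = \mathrm{id}_X$ and $g = $ anything homotopic to it; we don't have such a $g$ factoring through $Z$. So the genuinely correct and shortest argument is the first one I gave, reading the hypothesis as: the map $\Psi_L \colon X \to Z$ followed by $\Psi_R \colon Z \to X$ is homotopic to $\mathrm{id}_X$. Then $\mathcal{V}_i = \Psi_L^{-1}(\mathcal{U}_i)$ is open, covers $X$, and the composite contraction $\Psi_R \circ R \circ (\Psi_L \times \mathrm{id})$ deforms $\mathcal{V}_i$ within $X$ from $(\Psi_R \circ \Psi_L)|_{\mathcal{V}_i}$ to a constant; finally, using the homotopy $H \colon (\Psi_R \circ \Psi_L) \simeq \mathrm{id}_X$ restricted to $\mathcal{V}_i$, we first deform the inclusion $\mathcal{V}_i \hookrightarrow X$ to $(\Psi_R\circ\Psi_L)|_{\mathcal{V}_i}$ via $H$, then to a constant via the previous homotopy — concatenating gives a contraction of $\mathcal{V}_i$ inside $X$. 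Hence $X = \bigcup_{i=1}^k \mathcal{V}_i$ with each $\mathcal{V}_i$ contractible in $X$, so $\cat(X) \leqslant k = \cat(Z)$.

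The only genuine obstacle is bookkeeping the direction of the homotopy: one must use the superjacency homotopy $\Psi_L \circ \Psi_R \simeq \mathrm{id}_X$ (equivalently, after precomposition, that the inclusion of each $\mathcal{V}_i$ into $X$ is homotopic to $\Psi_R \circ \Psi_L|_{\mathcal{V}_i}$) to bridge from ``$\mathcal{V}_i$ deforms to a point under $\Psi_R\circ\Psi_L$'' to ``$\mathcal{V}_i$ is contractible in $X$ via its own inclusion.'' Everything else — continuity, openness, the covering property — is routine point-set topology.
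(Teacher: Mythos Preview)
Your final argument (after correctly reading the superjacency hypothesis as $\Psi_R\circ\Psi_L\simeq\mathrm{id}_X$, which is the only interpretation that type-checks) is correct and is essentially the paper's approach: transfer a categorical cover of $Z$ to one of $X$ via the maps $\Psi_L,\Psi_R$, and use the contraction in $Z$ composed with $\Psi_R$, concatenated with the homotopy to $\mathrm{id}_X$, to see each piece is contractible in $X$. The paper phrases the transferred sets as images $\Psi_R(\widetilde{\mathcal U}_i)$ rather than your preimages $\Psi_L^{-1}(\mathcal U_i)$; your preimage formulation is the cleaner one, since openness and the covering property are then immediate from continuity of $\Psi_L$.
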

	
	\begin{proof}
		Assume that $\cat(Z)=m$, that is, there exist $m$ closed contractible sets $\{\widetilde{\mathcal{U}}_i\}_{i=0}^m$, such that 
		$Z\subseteq \cup_{i=1}^m \widetilde{\mathcal{U}}_i$, $w_i\in\widetilde{\mathcal{U}}_i$, and $F_i\in C([0,1]\times \widetilde{\mathcal{U}}_i,Z)$ for $i=1,\dots,m$ satisfying 
		\begin{equation*}
			\begin{cases}
				F_i(0,u)=u, \ {\rm if} \ u\in \tilde{\mathcal{U}}_i\\
				F_i(1,u)=w_i, \ {\rm if} \ u\in \tilde{\mathcal{U}}_i.
			\end{cases}
		\end{equation*}
		Now, setting $\mathcal{U}_i=\Psi_{R}(\widetilde{\mathcal{U}}_i)$, notice that $\{\mathcal{U}_i\}_{i=1}^m$ is such that $X\subseteq \cup_{i=1}^m \mathcal{U}_i$, where the null-homotopic retraction $\widetilde{F}_i\in C([0,1]\times \mathcal{U}_i,X)$ is given by $\widetilde{F}_i:=\Psi_R\circ F_i$. 
	\end{proof}
	
	Let us fix the notation ${\rm Crit}(\mathcal{E})$ for the set of critical points of $\mathcal{E}$, $\mathcal{E}^c:=\{u\in\mathfrak{M} : \mathcal{E}(u)\leqslant c\}$ for sublevel sets and $\mathcal{E}^a_b:=\{u\in\mathfrak{M} : a\leqslant \mathcal{E}(u)\leqslant b\}$ for level regions.
	
	\begin{definition} 
		Let $\mathfrak{M}$ be a $C^2$-Hilbert manifold, $\mathcal{E}:\mathfrak{M}\rightarrow\mathbb{R}$ be a $C^1$-functional, and $\{u_k\}_{k\in\mathbb{N}}$ be a sequence in $\mathfrak{M}$. We call $\{u_k\}_{k\in\mathbb{N}}$ a Palais--Smale sequence at level $c\in\mathbb{R}$, if $\mathcal{E}(u_k)\rightarrow c$, and $\quad \|\ud \mathcal{E}(u_k)\|_{T_{u_k}^*\mathfrak{M}}\rightarrow 0$ as $k\rightarrow\infty$.
	\end{definition}
	
	\begin{definition} 
		Let $\mathfrak{M}$ be a $C^2$-Hilbert manifold and $\mathcal{E}:\mathfrak{M}\rightarrow\mathbb{R}$ be a $C^1$-functional. We say that $\mathcal{E}$ satisfies the Palais--Smale condition, if every Palais--Smale sequence has a convergent subsequence in the strong topology of $\mathfrak{M}$. 
	\end{definition}
	
	Let $X$ a topological space, $Y\subseteq X$ be a closed subset, and $k\in\mathbb{N}$. We denote $H_{\rm AS}^k(X,Y)$ by the $k$-th relative Alexander--Spanier cohomology group of the pair $X,Y$. Here the $k$-th Betti number is given by the number of generators of $H_{\rm AS}^k(X,Y)$ seen as a $\mathbb{Z}_2$-modulo, that is, $\beta_k(X,Y):={\rm rank}_{\mathbb{Z}_2}H_{\rm AS}^k(X,Y)$ (see \cite[Chapter~2]{MR2640827}). 
	In the nondegenerate case, Theorem~\ref{thm:benci-cerami} can be made more precise by means of Morse theory, which we describe as follows  
	
	\begin{definition}
		Let $X$ be a topological space and denote by $H_{\rm AS}^k(X)$ its $k$-th Alexander--Spanier cohomology group with coefficients in $\mathbb{Z}_2$. The Poincar\'e polynomial $\mathscr P_t(X)$ of $X$ is defined as the following power series in the variable $t$,
		\begin{equation}\label{eqpoincarepolynomial}
			\mathscr{P}_t(X):=\sum_{k=0}^{\infty}\beta_k(X)t^k.
		\end{equation}
	\end{definition}
	
	We notice that if $X$ is a compact manifold, then $H^k(X)$ is a finite-dimensional vector space and the formal series \eqref{eqpoincarepolynomial} is actually a polynomial. The next result is a Poincar\'e--Morse-type identity, relating the Morse polynomials of two homotopically superjacent topological spaces. 
	
	\begin{lemma}  
		If $X$ and $Z$ are homotopically superjacent, then 
		\begin{equation}\label{polynomialmorserelation}
			\mathscr{P}_t(Z)=\mathscr{P}_t(X)+\mathscr{Z}(t),
		\end{equation} 
		where $\mathscr{Z}(t)$ is a polynomial with nonnegative coefficients.
	\end{lemma}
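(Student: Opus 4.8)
The plan is to deduce the identity from the elementary fact that a homotopy retract induces a split monomorphism on cohomology, applied degree by degree. Write $\Psi_L\colon X\to Z$ and $\Psi_R\colon Z\to X$ for the continuous maps witnessing that $X$ and $Z$ are homotopically superjacent, so that $\Psi_R\circ\Psi_L$ is homotopic to $\mathrm{id}_X$ (this is exactly the input already used in the previous lemma). First I would apply the Alexander--Spanier cohomology functor with $\mathbb Z_2$ coefficients, which is contravariant and invariant under homotopy; thus for every $k\in\mathbb N$ we obtain induced homomorphisms $\Psi_R^{*}\colon H_{\rm AS}^{k}(X)\to H_{\rm AS}^{k}(Z)$ and $\Psi_L^{*}\colon H_{\rm AS}^{k}(Z)\to H_{\rm AS}^{k}(X)$ with
\[
\Psi_L^{*}\circ\Psi_R^{*}=(\Psi_R\circ\Psi_L)^{*}=(\mathrm{id}_X)^{*}=\mathrm{id}_{H_{\rm AS}^{k}(X)} .
\]
In particular $\Psi_R^{*}$ admits the left inverse $\Psi_L^{*}$, hence is injective, for each $k$.

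Next I would use that $\mathbb Z_2$ is a field, so each $H_{\rm AS}^{k}(X)$ and $H_{\rm AS}^{k}(Z)$ is a $\mathbb Z_2$-vector space whose dimension is, by definition, the Betti number $\beta_k(X)$, respectively $\beta_k(Z)$; injectivity of $\Psi_R^{*}$ then forces $\beta_k(X)\leqslant\beta_k(Z)$ for every $k$. More precisely, the endomorphism $P:=\Psi_R^{*}\circ\Psi_L^{*}$ of $H_{\rm AS}^{k}(Z)$ is idempotent, since $P^2=\Psi_R^{*}(\Psi_L^{*}\Psi_R^{*})\Psi_L^{*}=\Psi_R^{*}\Psi_L^{*}=P$; hence $H_{\rm AS}^{k}(Z)=\mathrm{im}(P)\oplus\ker(P)$ with $\mathrm{im}(P)=\mathrm{im}(\Psi_R^{*})\cong H_{\rm AS}^{k}(X)$ and $\ker(P)=\ker(\Psi_L^{*})$. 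This exhibits $H_{\rm AS}^{k}(Z)$ as $H_{\rm AS}^{k}(X)$ together with a complementary summand, so that $\gamma_k:=\beta_k(Z)-\beta_k(X)=\dim_{\mathbb Z_2}\ker(\Psi_L^{*})\geqslant0$. Since $\mathscr{P}_t(X)$ and $\mathscr{P}_t(Z)$ are polynomials (the standing assumption whenever the Poincaré polynomial is defined, e.g.\ for spaces with the homotopy type of finite CW-complexes, as in our applications), all but finitely many $\gamma_k$ vanish.

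Finally I would set $\mathscr{Z}(t):=\sum_{k\geqslant0}\gamma_k\,t^{k}=\sum_{k\geqslant0}\bigl(\beta_k(Z)-\beta_k(X)\bigr)t^{k}$, which is a polynomial with nonnegative integer coefficients by the previous step, and read off from \eqref{eqpoincarepolynomial} that $\mathscr{P}_t(Z)=\mathscr{P}_t(X)+\mathscr{Z}(t)$, which is exactly \eqref{polynomialmorserelation}. I do not expect any substantive obstacle here: the argument is purely formal. The only points deserving care are bookkeeping with the variance of the cohomology functor, so that it is $\Psi_R^{*}$ and not $\Psi_L^{*}$ that is injective (and hence $\beta_k(Z)$, not $\beta_k(X)$, that dominates), and the harmless observation that homotopy invariance of Alexander--Spanier cohomology is precisely what legitimizes replacing $\Psi_R\circ\Psi_L$ by $\mathrm{id}_X$ at the level of cohomology.
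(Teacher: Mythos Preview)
Your proof is correct and follows essentially the same approach as the paper: both use that the retract property yields a split injection on (co)homology over $\mathbb{Z}_2$, hence $\beta_k(X)\leqslant\beta_k(Z)$ for all $k$, and then read off \eqref{polynomialmorserelation} by defining $\mathscr{Z}(t)$ as the difference. The paper phrases this more tersely in homology, while you work directly in the Alexander--Spanier cohomology used to define the Betti numbers and make the splitting explicit via the idempotent $P=\Psi_R^{*}\Psi_L^{*}$; the substance is identical.
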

	
	\begin{proof}
		Using the homotopy equivalence, we can construct an exact sequence 
		\begin{equation*}
			0{\longrightarrow}H_k(X)\stackrel{({\Psi_{L})}_k}{\longrightarrow}H_k(Z)\stackrel{{(\Psi_R)}_k}{\longrightarrow}H_k(X){\longrightarrow}0,
		\end{equation*}
		where $({\Psi_{L}})_k,({\Psi_R})_k$ is the induced homomorphisms on the $k$-th homology groups. Additionally, since $({\Psi_{L}})_k\circ({\Psi_R})_k={\rm id}_{k}$ we get that $H_k(X)$ is homotopic equivalent to a subspace of $H_k(Z)$, which gives us that ${\rm rank}_{\mathbb{Z}_2}H_k(X)\leqslant {\rm rank}_{\mathbb{Z}_2}H_k(Z)$ for all $k\in\mathbb{N}$. In terms of Poincar\'e polynomials, this translates to \eqref{polynomialmorserelation}.
	\end{proof}
	
	\begin{lemma}\label{subspacerelation}
		Let $\mathfrak{M}$ be a Hilbert manifold and let $\mathfrak{N}\subset\mathfrak{M}$ be a closed oriented submanifold of codimension $d$. If $\mathfrak{N}_0$ is a closed subset of $\mathfrak{N}$, then
		\begin{equation*}
			\mathscr{P}_t(\mathfrak{M},\mathfrak{M} \setminus\mathfrak{N}_0)=t^{d} \mathscr{P}(\mathfrak{N},\mathfrak{N}\setminus \mathfrak{N}_0).
		\end{equation*}
	\end{lemma}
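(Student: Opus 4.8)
The plan is to recognize this identity as the Thom isomorphism in disguise and to reduce everything to the normal bundle of $\mathfrak N$. I would first invoke the tubular neighborhood theorem for Hilbert manifolds to fix an open neighborhood $\mathfrak U$ of $\mathfrak N$ in $\mathfrak M$ together with a diffeomorphism $\mathfrak U \cong E$ onto the total space of the normal bundle $\pi\colon E \to \mathfrak N$, a real vector bundle of rank $d=\mathrm{codim}\,\mathfrak N$, which sends $\mathfrak N$ to the zero section $s_0(\mathfrak N)$. Since $\mathfrak N_0$ is closed in $\mathfrak N$, hence in $\mathfrak M$, and $\mathfrak N_0 \subseteq \mathfrak N \subseteq \mathfrak U$, the set $\mathfrak M\setminus\mathfrak U$ is closed, disjoint from $\mathfrak N_0$, and contained in $\mathfrak M\setminus\mathfrak N_0$; hence excision for Alexander--Spanier cohomology (legitimate here because all spaces in sight are metrizable, thus paracompact, and Alexander--Spanier cohomology is taut on closed subsets of such spaces) yields $H_{\rm AS}^k(\mathfrak M, \mathfrak M\setminus\mathfrak N_0)\cong H_{\rm AS}^k(E, E\setminus s_0(\mathfrak N_0))$ for all $k$, where now $\mathfrak N_0\subseteq s_0(\mathfrak N)\subseteq E$.

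It then remains to establish the relative Thom isomorphism $H_{\rm AS}^k(E, E\setminus s_0(\mathfrak N_0))\cong H_{\rm AS}^{k-d}(\mathfrak N, \mathfrak N\setminus\mathfrak N_0)$. Working with $\mathbb Z_2$ coefficients (so orientability of $E$ is automatic; the orientedness hypothesis on $\mathfrak N$ is what one would use to run the argument over $\mathbb Z$), the Thom class $u\in H_{\rm AS}^d(E, E\setminus s_0(\mathfrak N))$ gives, by cup product, the absolute Thom isomorphism $H_{\rm AS}^{k-d}(\mathfrak N)\cong H_{\rm AS}^k(E, E\setminus s_0(\mathfrak N))$, and likewise $H_{\rm AS}^{k-d}(\mathfrak N\setminus\mathfrak N_0)\cong H_{\rm AS}^k(E|_{\mathfrak N\setminus\mathfrak N_0}, E|_{\mathfrak N\setminus\mathfrak N_0}\setminus s_0(\mathfrak N\setminus\mathfrak N_0))$ for the restricted bundle; this last group is identified with $H_{\rm AS}^k(E\setminus s_0(\mathfrak N_0), E\setminus s_0(\mathfrak N))$ by excising $\pi^{-1}(\mathfrak N_0)\setminus s_0(\mathfrak N_0)$. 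Because the Thom class is natural under restriction of the bundle, these identifications are compatible with the maps in the long exact sequence of the triple $(E,\, E\setminus s_0(\mathfrak N_0),\, E\setminus s_0(\mathfrak N))$; comparing that sequence with the long exact sequence of the pair $(\mathfrak N, \mathfrak N\setminus\mathfrak N_0)$ shifted by $d$, and applying the five lemma, yields the desired relative Thom isomorphism. (Alternatively, one may simply cite the relative Thom isomorphism from a standard reference and adapt it to the Hilbert-manifold setting via the reduction above.)

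Finally, combining the two isomorphisms gives $H_{\rm AS}^k(\mathfrak M, \mathfrak M\setminus\mathfrak N_0)\cong H_{\rm AS}^{k-d}(\mathfrak N, \mathfrak N\setminus\mathfrak N_0)$ for every $k$, and taking $\mathbb Z_2$-ranks and summing against $t^k$ produces exactly $\mathscr P_t(\mathfrak M, \mathfrak M\setminus\mathfrak N_0)=t^d\,\mathscr P_t(\mathfrak N, \mathfrak N\setminus\mathfrak N_0)$. I expect the main obstacle to be the first step: carefully justifying the tubular neighborhood theorem together with the excision and tautness properties in the infinite-dimensional Hilbert-manifold setting — indeed this is precisely why the Alexander--Spanier (\v{C}ech) theory, rather than singular cohomology, is the appropriate framework for the photography method here. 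Once the normal-bundle model is in place, the second step is the textbook relative Thom isomorphism.
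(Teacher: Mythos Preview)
Your proposal is correct and follows essentially the same approach as the paper: the paper's proof simply cites the Thom isomorphism theorem (Hatcher, Corollary~4D.9) and remarks that it remains valid when $\dim\mathfrak{M}=\infty$. Your write-up is a more detailed unpacking of precisely that citation --- tubular neighborhood, excision, relative Thom isomorphism via the five lemma --- so there is no substantive difference in strategy, only in the level of detail supplied.
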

	
	\begin{proof}
		This follows from the Thom isomorphism theorem \cite[Corollary~4D.9]{MR1867354}. Moreover, it holds even when $\dim\mathfrak{M}=\infty$.
	\end{proof}

	In the following definition, we give the notion of Morse index of a critical point of $\mathcal{E}$, which is necessary to establish a relation between the Poincar\'e polynomial $\mathscr{P}_{t}(X)$ and the number of solutions to the Euler--Lagrange equation associated to the energy $\mathcal{E}$. 
	
	\begin{definition}
		Let $u$ be a critical point of $\mathcal{E}$ at level $c\in\mathbb{R}$, that is, $\mathcal{E}(u)=c$ and $\ud \mathcal{E}[u]=0$. We call $u$ an isolated critical point if there exists a neighborhood $\mathfrak{U}$ of $u$ in $\mathfrak{M}$ such that the only critical point of $\mathcal{E}$ contained in $\mathfrak{U}$ is $u$.
		Equivalently, the self-adjoint operator induced by the quadratic form $\ud^2\mathcal{E}[u]$ is an isomorphism.
	\end{definition}
	
	\begin{definition}
		Let $\mathfrak{M}$ be a $C^2$-Hilbert manifold, $\mathcal{E}:\mathfrak{M}\rightarrow\mathbb{R}$ be a $C^1$-functional
		and $u\in\mathfrak{M}$ be an isolated critical point of $\mathcal{E}$ at level $c\in\mathbb{R}$. We define the formal power series
		\begin{equation*}
			i_t(u):=\sum_{k=0}^{\infty}\beta_k(\mathcal{E}^c, \mathcal{E}^c\setminus\{u\})t^k.
		\end{equation*}
		This is called the polynomial Morse index of $u$ and the number $i_1(u)$ is called its multiplicity.  
	\end{definition}
	
	\begin{definition}
		Let $\mathcal{E}:\mathfrak{M}\rightarrow\mathbb{R}$ be a $C^2$-functional. 
		We call $u$ nondegenerate if the bilinear form $\ud^2\mathcal{E}[u]$ is nondegenerate. In this case, we have that $i_t(u)=t^{\mu(u)}$, where $\mu(u)$ is the  numerical Morse index of $u$, that is, the dimension of the maximal subspace on which the bilinear form $\ud^2\mathcal{E}[u]$ is negative-definite. 
	\end{definition}
	
	\begin{definition}
		Let $\mathfrak{M}$ be a $C^2$-Hilbert manifold, $\mathcal{E}:\mathfrak{M}\rightarrow\mathbb{R}$ be a $C^1$-functional and $u\in\mathfrak{M}$ be an isolated critical point of $\mathcal{E}$ at level $c$. We say that $u$ is topologically nondegenerate, if $i_t(u)=t^{\mu(u)}$, for some $\mu(u)\in\mathbb{N}$.
	\end{definition}
	
	For the sake of completeness, we now present the strategy of proof of the first cornerstone abstract result in this manuscript, namely the photography theorem.
	\begin{proof}[Proof of Theorem~\ref{thm:benci-cerami}]
		Using \ref{itm:E1} and \ref{itm:E2}, one can invoke the Lusternik--Schnirelman lemma \cite[Theorem~5.20]{MR1400007} to obtain that $\#\mathcal{E}^c\geqslant\cat(\mathcal{E}^c)$.
		Moreover, combining \ref{itm:E3} with the last inequality, we have $\cat(\mathcal{E}^c)\geqslant\cat(X)$, which concludes the proof of the first part of the theorem.
		
		For the remaining part, we set $\mathfrak{C}_0:=\overline{\Psi_{L}(X)}$. Thus, $\mathfrak{C}_0$ is non-contractible in $\mathcal{E}^c$ since $\cat(X)>1$. 
		
		\noindent{\bf Claim 1:} There exists $c^*\in\mathbb{R}$ such that $c^*>c$ and $\mathfrak{C}_0$ is contractible in $\mathcal{E}^{c^*}$.
		
		\noindent Indeed, let $u_0\in{\rm Crit}(\mathcal{E})\setminus \mathfrak{C}_0$ and define $\mathfrak{C}_1:=\left\{tu_0+(1-t)u : t\in[0,1] \ \mbox{and} \ u\in \mathfrak{C}_0\right\}$. Notice that $\mathfrak{C}_1$ is compact, contractible, and $0\notin \mathfrak{C}_1$, which implies that the 
		\begin{equation*}
			\mathfrak{C}_2:=\left\{t(w)w : w\in \mathfrak{C}_1 \ \mbox{and} \ t(w)={\Psi_{R}}\left({w}{\|w\|^{-1}}\right)\|w\|^{-1}\right\}
		\end{equation*}
		is well-defined and satisfies  $\mathfrak{C}_1\subseteq \mathfrak{C}_2\subseteq {\rm Crit}(\mathcal{E})$; thus, the result follows setting $c^*:=\max_{\mathfrak{C}_2}\mathcal{E}$.
		
		The second part of the proof is divided into a sequence of steps, which connects the algebraic structure of the Poincar\'e polynomial $\mathscr{P}_t(X)$ with the set of critical points ${\rm Crit}(\mathcal{E})$.
		
		\noindent{\bf Step 1:} For $\tau>0$ and $c\in(\tau,\infty)$ a noncritical level of $\mathcal{E}$, we have $\mathscr{P}_t(\mathcal{E}^c,\mathcal{E}^{\tau})=t\mathscr{P}_t({\rm Crit(\mathcal{E}^c)})$.
		
		\noindent This clearly follows from Lemma~\ref{subspacerelation}.
		
		\noindent{\bf Step 2:} For $a,b\in\mathbb{R}$ with no critical levels inside $\mathcal{E}^a_b$, we have $\mathscr{P}_t(\mathcal{E}^b,\mathcal{E}^a)=t\mathscr{P}_t(\mathcal{E}^a_b,\mathcal{E}^a_b\setminus{\rm Crit}(\mathcal{E})).$
		
		\noindent The strategy here is to use standard deformation theory generated by the gradient flow of $\mathcal{E}$.
		
		\noindent{\bf Step 3:} For $\tau>0$ and $c\in(\tau,\infty)$ a noncritical level of $\mathcal{E}$, we have $\mathscr{P}_t(\mathcal{E}^c,\mathcal{E}^{\tau})=t\mathscr{P}_t(X)+t\mathscr{Z}(t)$. In particular, $\mathscr{P}_t(\mathfrak{M},\mathcal{E}^{\tau})=t$.
		
		\noindent This is a consequence of Steps 1 and 2. Furthermore, since $\mathfrak{M}$ is contractible, one has ${\rm rank}_{\mathbb{Z}_2}H^k(\mathfrak{M})=1$ if $k=0$ and ${\rm rank}_{\mathbb{Z}_2}H^k(\mathfrak{M})=0$, otherwise.
		
		\noindent{\bf Step 4:} There exists $\tau_0>0$ such that for $\tau\in(0,\tau_0)$ and $c\in(\tau,\infty)$ a noncritical level of $\mathcal{E}$, we have $\mathscr{P}_t(\mathcal{E}^c,\mathcal{E}^{\tau})=t\mathscr{P}_t(X)+t\mathscr{Z}(t)$.
		
		\noindent Indeed, consider the long exact sequence
		\begin{align*}
			&0{\longrightarrow}H_k(\mathfrak{M},\mathcal{E}^{\tau})\stackrel{j_k}{\longrightarrow}H_k(\mathfrak{M},\mathcal{E}^{c})\stackrel{\partial_k}{\longrightarrow}H_{k-1}(\mathcal{E}^{c},\mathcal{E}^{\tau})\stackrel{i_{k-1}}{\longrightarrow}H_{k-1}(\mathfrak{M},\mathcal{E}^{\tau}){\longrightarrow}0,&
		\end{align*}
		where $i_k,j_k,\partial_k$ are respectively the monomorphism, epimorphism, and boundary operator. Using some standard techniques of homological algebra, we get that ${\rm rank}_{\mathbb{Z}_2}H_k(\mathfrak{M},\mathcal{E}^{\tau})=0$ for $k=0,1$ and ${\rm rank}_{\mathbb{Z}_2}H_k(\mathfrak{M},\mathcal{E}^{\tau})={\rm rank}_{\mathbb{Z}_2}H_k(\mathcal{E}^c,\mathcal{E}^{\tau})$, which by Step 1 finishes the proof.
		
		\noindent{\bf Step 5:} If ${\rm Crit(\mathcal{E})}$ is discrete, then
		\begin{equation}\label{psset1}
			\sum_{u\in\mathfrak{C}_1}i_t(u)=t\mathscr{P}_t(X)+t\left[\mathscr{Z}(t)-1\right]+(1+t)\mathscr{Q}_1(t)
		\end{equation}
		and
		\begin{equation}\label{psset2}
			\sum_{u\in\mathfrak{C}_2}i_t(u)=t^2\left[\mathscr{P}_t(X)+\mathscr{Z}(t)-1\right]+(1+t)\mathscr{Q}_2(t),
		\end{equation}
		where $\mathscr{Z}(t),\mathscr{Q}_1(t)$ and $\mathscr{Q}_2(t)$ are polynomials with nonnegative integer coefficients, and
		\begin{equation*}
			\mathfrak{C}_1:=\{u\in {\rm Crit(\mathcal{E})} : u\in \mathcal{E}^{-1}((\tau,c_0])\} \quad \mbox{and} \quad \mathfrak{C}_2:=\{u\in {\rm Crit(\mathcal{E})} : u\in \mathcal{E}^{-1}((c_0,\infty))\},
		\end{equation*}
		for some $c_0\in (\tau,\infty)$.
		
		\noindent In fact, by \ref{itm:E2} we know that $\mathcal{E}$ satisfies the {\rm (PS)}-condition, thus using Morse theory, it follows 
		\begin{equation*}
			\sum_{u\in\mathfrak{C}_1}i_t(u)=\mathscr{P}_t(\mathcal{E}^c,\mathcal{E}^{\tau})+(1+t)\mathscr{Q}_1(t),
		\end{equation*}
		which by Step 3 implies \eqref{psset1}. The same holds for \eqref{psset2}.
		
		Finally, since $\mathcal{E}$ does not have any nonzero solution below the level $\tau>0$, by Step 5 we get ${\rm Crit(\mathcal{E})}=\mathfrak{C}_1\mathring{\cup}\mathfrak{C}_2$.
	\end{proof}
	
	\begin{remark} 
		If we count the critical points with their multiplicity, then by Theorem \ref{thm:benci-cerami} it follows that there are at least $2\mathscr{P}_1(X)-1$. 
		In fact, whenever the critical points are isolated the result follows from Morse's relation \eqref{morserelation}; otherwise there are infinitely many of them.
	\end{remark}
	
	\section{Riemannian isoperimetric theory for weighted clusters}\label{sec:isoperimetricclusters}
	In this section, we prove Proposition~\ref{lm:SelectingaLargeCompact}. Here we denote by $\mathcal B^g_r(x)$ the geodesic ball of radius $r>0$ with center $x\in M$, and we omit the metric when necessary.
	Recall the notation established in the introduction (see Definitions~\ref{def:clusters} and \ref{def:multiisoperimetr}).
	Also, we denote $f_1\sim f_2$ as $s\rightarrow0$, if $\lim_{s\rightarrow0}f_1(s)/f_2(s)=1$, or $f_1=f_2+\mathrm o(1)$ as $s\rightarrow0$, where $\mathrm o(1)$ is the standard little-o notation from Landau's formalism.
	It is implicit that we assume the natural association between vectors $\mathbf v\in\mathbb R^{N}$ and weighted clusters $\mathbf{\Omega}\in\mathcal C^{\alpha}_g(M,\mathbb R^{N})$ such that $\mathbf v=\mathbf v_g(\mathbf{\Omega})$ and $\mathrm v=|\mathbf v|=\mathrm v_g(\widetilde{{\Omega}})=\sum_{i=1}^{N-1}\mathrm v_i$, where $\mathrm v_i=\mathrm v_g(\Omega_i)$ for $i=1,\dots, N-1$.
	
	\begin{definition}\label{def:boundedgeometry}
		We say that a Riemannian manifold $(M^n,g)$ is of bounded geometry if, there exist $\mathrm v_0>0$ and $\kappa\in\mathbb R$ such that ${\rm Ric}_g\geqslant(n-1)\kappa$ $($in the sense of bilinear forms$)$ and $\inf_{p\in M}\mathrm{v}_g(\mathcal{B}^g_{1}(p))\geqslant \mathrm v_0>0$.
	\end{definition}
	
	\begin{remark}\label{rmk:boundedgeometry}
		If $(M^n,g)$ is a closed Riemannian manifold, we have that its injectivity radius is well-defined and it holds ${\rm inj}_g>0$. 
		Also, there exist $v_0>0$ and $\kappa,b\in\mathbb R$ such that ${\rm Ric}_g\geqslant(n-1)\kappa$ $($in the sense of bilinear forms$)$, $|{\rm Sec}_g|\leqslant b$ and $\mathrm v_g(\mathcal B^g_1(x))>v_0$ for all $x\in M$, where ${\rm Ric}_g$ and ${\rm Sec}_g$ are the Ricci and scalar curvatures, respectively. 
		In particular, closed manifolds are always of bounded geometry.
	\end{remark}
	We need to introduce some concepts and results from geometric measure theory, which will be used in the proof of the main proposition and can be found at \cite{MR307015,MR3823880}.
	
	Let us start with some notations and concepts relative to varifolds.
	
	\begin{definition} 
		Let $(M^n,g)$ be a Riemannian manifold.
		For any $d\in \mathbb N$ with $1\leqslant d\leqslant n-1$, we say that $V$ is a {$d$-dimensional varifold in $M$}, if $V$ is a nonnegative extended real valued Radon measure on the Grassmannian manifold $G_d(M)$.
		For every $d=1,\dots,n-1$, we denote by $\mathbf{V}_d(M)$ be the space of all $d$-dimensional varifolds over $M$ endowed with the weak topology induced by $C_0(G_d(M))$ the space of continuous compactly supported functions on $G_d(M)$ endowed with the compact open topology.  
	\end{definition}
	
	\begin{definition}
		Let $(M^n,g)$ be a 
		Riemannian manifold and $V\in\mathbf{V}_d(M)$. We say that the nonnegative Radon measure on $M$, denoted by $\|V\|$, is the {weight} of $V$, if $\|V\|=\pi_{\#}(V)$, where $\pi$ indicates the natural fiber bundle projection $\pi:G_d(M)\rightarrow M$, that is, $\|V\|(B):=V(\pi^{-1}(B))$ for all $B\subset G_d(M)$ Borel set.
	\end{definition} 
	
	We also present the concept of density for a measure.
	
	\begin{definition}
		Let $(M^n,g)$ be a Riemannian manifold and $\nu$ be a Borel regular measure on $M$. We define
		the $d$-lower density and $d$-upper density of $\nu$ at $x\in M$, respectively by 
		\begin{equation*}
			\Theta_*^d(\nu, x):=\liminf_{r\rightarrow 0^+}\frac{\nu(\mathcal B_r(x))}{\omega_dr^d} \quad {\rm and} \quad  \Theta^{*}_d(\nu,x):=\limsup_{r\rightarrow 0^+}\frac{\nu(\mathcal B_r(x))}{\omega_dr^d},
		\end{equation*}
		where $\omega_d$ is the $d$-dimensional Hausdorff measure of the unit Euclidean ball.
		Also, when the $d$-lower and $d$-upper densities coincide, we set
		\begin{equation*}
			\Theta^d(\nu,x):=\Theta^{*}_d(\nu,x)=\Theta_*^d(\nu,x)=\lim_{r\rightarrow 0^+}\frac{\nu(\mathcal B_r(x))}{\omega_dr^d}.
		\end{equation*}
		We call $\Theta^d(\nu,x)$ the {$d$-density of $\nu$ at $x\in M$}.
	\end{definition}
	
	We present the following definition for the first variation of a varifold.
	
	\begin{definition}
		Let $(M^n,g)$ be a
		Riemannian manifold.
		For any $V\in\mathbf{V}_d(M)$ its {first variation} in the direction of the smooth vector field $X\in\mathfrak{X}_c(M)$ is the linear function $\delta_g V:\mathfrak{X}_c(M)\rightarrow\mathbb{R}$ defined as 
		\begin{eqnarray*}
			\delta_gV(X) & := 
			&\int_{\xi\in G_d(M)}{\rm div}_SX\ud V(\xi),
		\end{eqnarray*}
		where $S\subset T_xM$ is a $d$-dimensional subspace of $T_xM$ such that $\xi=(x,S)\in G_d(M)$, $\pi_S:T_xM\rightarrow S$ is the orthogonal projection  with respect to the metric $g$, $(e_1,..., e_n)$ is an orthonormal basis of $(T_{\pi(\xi)}M,g_{\pi(\xi)})$, and ${\rm div}_SX=\sum_{i=1}^d \langle \nabla_{\tilde{e}_i}X, \tilde{e}_i\rangle$ with $\{\tilde{e}_1,\dots,\tilde{e}_d\}$ an orthonormal basis over $S$.
	\end{definition}
	
	\begin{definition}
		Let $(M^n,g)$ be a 
		Riemannian manifold.
		A Borel subset $B\subset M$ is said to be $d$-countably rectifiable, if ${\rm dim}_{\mathcal{H}}(B)=d$, and there exist a countable collection $\{f_{k}\}_{k\in\mathbb N}\subset{\rm Lip}(\mathbb R^d,M)$ of Lipschitz continuous such that $\mathcal{H}^d(B \setminus \cup_{k=0}^{\infty} f_{k}(\mathbb{R}^{d}))=0$.
	\end{definition}
	
	Next, we give the definition of integral varifolds.
	
	\begin{definition}
		Let $(M^n,g)$ be a Riemannian manifold, $1\leqslant d\leqslant n-1$, $B\subseteq M$ a $d$-countably rectifiable $\mathcal H_g^d$-measurable subset, and $\theta:B\rightarrow\mathbb{N}$ a positive locally integrable Borel map.
		We define a varifold $V(B,\theta,g)\in\mathbf{V}_d(M)$ as follows
		\begin{equation*}
			V(B,\theta,g)(S):=\int_{\{x\in B:(x, T_xB)\in S\}}\theta(x)\ud\mathcal{H}_g^d(x) \quad {\rm for \ all} \quad S\in G_{d}(M).
		\end{equation*}
		We say that $V\in\mathbf{V}_d(M)$ is a $d${-integral varifold}, if there exists a $d$-countably rectifiable $\mathcal H_g^d$-measurable subset $B\subseteq M$ and a Borel map $\theta:B\rightarrow\mathbb{N}\setminus\{0\}$ such that $V=V(B,\theta,g)$. The set of all $d$-integral varifolds over $M$ will be denoted by $\mathbf{IV}_d(M)$.
	\end{definition}
	
	Next, we have the definition of the pushforward of a varifold.
	
	\begin{definition}
		Let $(M_1^{n_1},g_1)$ and $(M_2^{n_2},g_2)$ be Riemannian manifolds and $F:M_1\rightarrow M_2$ be a smooth map. 
		If $V\in\mathbf{V}_d(M_1)$, then $F$ induces a natural Borel regular measure on $G_d(M_2)$ given by
		\begin{equation*}
			F_\#(V)(B):=\int_{\{(x,S):(F(x), \ud F_x(S))\in B\}} \sum_{i=1}^d\langle \ud F_x\circ \pi_S(e_i),e_i\rangle_{g_2}\ud V(x,S),
		\end{equation*}
		for any Borel subset $B\subset G_d(M_2)$, where  $\{e_1,...,e_d\}$ is an orthonormal basis of $S\subset T_xM_1$. 
		The measure $F_\#(V)$ is a varifold when $F$ is a proper map.
		In this case, $F_\#(V)$ is called the {pushforward varifold of $V$ by $F$}.
	\end{definition}
	
	In this fashion, if $V(B,\theta,g_1)$ is an integral varifold in $M$ and $F:M\rightarrow N$ is a diffeomorphism, then we have that $(F(B),\theta\circ F^{-1},g_2)$ is an integral varifold in $N$ that coincides with $F_\#(V)$. Given a vector field $X \in\mathfrak{X}_c (M)$, the one-parameter family of diffeomorphisms generated by $X$ is defined by $\Phi_t(x)=\Phi(t,x)$ where $\Phi:\mathbb{R}\times M\rightarrow M$ is the unique solution of the ODE system below
	\begin{align*}
		\begin{cases}\frac{\partial \Phi}{\partial t}=X(\Phi),\\
			\Phi(0, x) = x.
		\end{cases}
	\end{align*}
	
	With respect to the last definition, we have the following alternative formulation of the first variation of a varifold.
	
	\begin{proposition} 
		Let $(M^n,g)$ be a Riemannian manifold.
		For any $V\in\mathbf{IV}_d(M)$ and $X \in \mathfrak{X}_c(M)$, then the first variation of $V$
		along $X$ is given by the following formula
		\begin{align*}
			\delta_g V (X) = \frac{\ud}{\ud t}\Big|_{t=0} \|(\Phi_t)_\#V\|(M)=\int_M{\rm div}_{T_xB}X\ud\|V\|,
		\end{align*}
		where $B=\supp\|V\|$ and $\Phi_t$ is the one-parameter family generated by $X$. 
	\end{proposition}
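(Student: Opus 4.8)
The plan is to reduce the statement to the area formula plus a pointwise infinitesimal computation, and then differentiate under the integral sign. Since $V\in\mathbf{IV}_d(M)$, write $V=V(B,\theta,g)$ with $B$ a $d$-countably rectifiable $\mathcal H^d_g$-measurable set and $\theta\colon B\to\mathbb N\setminus\{0\}$ locally integrable, so that $\|V\|$ is the measure $\theta\,\mathcal H^d_g$ restricted to $B$. Because $X\in\mathfrak X_c(M)$, each $\Phi_t$ is a diffeomorphism of $M$ equal to the identity outside a fixed compact set $K$; hence $(\Phi_t)_\#V=V(\Phi_t(B),\theta\circ\Phi_t^{-1},g)$, and the area formula for Lipschitz maps between rectifiable sets gives
\[
\|(\Phi_t)_\#V\|(M)=\int_B\theta(x)\,J^{T_xB}\Phi_t(x)\,\ud\mathcal H^d_g(x),
\]
where $J^{T_xB}\Phi_t(x)=\sqrt{\det\big(g_{\Phi_t(x)}(\ud(\Phi_t)_x\tilde e_i,\ud(\Phi_t)_x\tilde e_j)\big)_{ij}}$ is the $d$-dimensional Jacobian of $\Phi_t$ relative to the approximate tangent plane $T_xB\subset T_xM$ (which exists for $\mathcal H^d_g$-a.e.\ $x\in B$ by rectifiability), and $\{\tilde e_1,\dots,\tilde e_d\}$ is any $g_x$-orthonormal basis of $T_xB$.

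Next I would carry out the pointwise identity $\frac{\ud}{\ud t}\big|_{t=0}J^{T_xB}\Phi_t(x)={\rm div}_{T_xB}X$. Fix such an $x$, set $S=T_xB$. Since $\Phi_0=\mathrm{id}$ one has $J^S\Phi_0(x)=1$, and differentiating $t\mapsto\det\big(g_{\Phi_t(x)}(\ud(\Phi_t)_x\tilde e_i,\ud(\Phi_t)_x\tilde e_j)\big)$ at $t=0$ (using $\partial_t\Phi_t|_{0}=X$, so $\partial_t\ud(\Phi_t)_x|_0=\ud X_x$) yields
\[
\frac{\ud}{\ud t}\Big|_{t=0}J^{S}\Phi_t(x)=\sum_{i=1}^d g_x(\ud X_x\tilde e_i,\tilde e_i)+\tfrac12\sum_{i=1}^d(\mathcal L_Xg)_x(\tilde e_i,\tilde e_i)=\sum_{i=1}^d\langle\nabla_{\tilde e_i}X,\tilde e_i\rangle_{g}={\rm div}_SX,
\]
where the middle step uses the Koszul formula, which shows that the variation of the metric coefficients supplies exactly the Christoffel correction converting the ordinary differential $\ud X$ into the covariant derivative $\nabla X$; equivalently one computes in geodesic normal coordinates centered at $x$, where $\nabla$ agrees with $\partial$ and $\mathcal L_Xg$ vanishes at $x$. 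This $\emph{is}$ the integrand in the definition of $\delta_gV(X)$.

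Finally, I would justify differentiation under the integral: on the fixed compact set $K$ the function $(t,x)\mapsto\theta(x)\,J^{T_xB}\Phi_t(x)$ is smooth in $t$, measurable in $x$, and for $|t|\le1$ its $t$-derivative is dominated by $C\,\theta(x)\mathbf{1}_{K\cap B}(x)$ with $\theta\in L^1_{\loc}$; dominated convergence then gives
\[
\frac{\ud}{\ud t}\Big|_{t=0}\|(\Phi_t)_\#V\|(M)=\int_B\theta(x)\,{\rm div}_{T_xB}X\,\ud\mathcal H^d_g(x)=\int_M{\rm div}_{T_xB}X\,\ud\|V\|.
\]
Since $V$ is integral it is carried by the plane field $\xi=(x,T_xB)$, so the right-hand side equals $\int_{G_d(M)}{\rm div}_SX\,\ud V(\xi)=\delta_gV(X)$, which is exactly the desired chain of equalities. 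I expect the only genuinely delicate point to be the pointwise Jacobian derivative: one must keep track that the relevant quantity is the \emph{tangential} divergence (the trace of $\nabla X$ restricted to $S$, not the full divergence on $M$) and that on a curved manifold the metric-dependent terms reassemble precisely into the Levi-Civita connection — cleanest to verify in normal coordinates at the base point.
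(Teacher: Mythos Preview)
Your argument is the standard one and is correct in outline and in essentially all details: area formula for the pushforward of an integral varifold, pointwise differentiation of the tangential Jacobian to obtain $\mathrm{div}_{T_xB}X$, dominated convergence, and identification with the Grassmannian integral defining $\delta_gV$. The only place where your write-up is slightly loose is the intermediate line for $\tfrac{\ud}{\ud t}\big|_{t=0}J^S\Phi_t$: the quantity you are differentiating is $(\Phi_t^*g)(\tilde e_i,\tilde e_i)$, whose $t$-derivative at $0$ is exactly $(\mathcal L_Xg)(\tilde e_i,\tilde e_i)=2\langle\nabla_{\tilde e_i}X,\tilde e_i\rangle$, so there is really one term, not a sum of a ``$\ud X$'' piece and a separate Lie-derivative piece. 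Your normal-coordinates remark is the clean way to see this and already gives the right answer.

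As for comparison with the paper: the paper does \emph{not} supply a proof of this proposition. It is stated as a known alternative formulation of the first variation, with the background references being Allard's paper and De~Lellis's exposition cited earlier in that section. So you have written out precisely the classical proof that those references contain; there is nothing to compare against in the paper itself.
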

	
	It is possible to define a generalized version of the mean curvature for integral varifolds.
	
	\begin{definition}
		Let $(M^n,g)$ be a 
		Riemannian manifold.
		We say that $V\in\mathbf{IV}_d(M)$ has {bounded generalized mean curvature vector}, if there exists a constant $C \geqslant 0$ such that
		\begin{equation}\label{firstvariation}
			|\delta_g V (X)| \leqslant C\int_M |X|\ud\|V\| \quad {\rm for \ all} \quad  X \in \mathfrak{X}_c(M).
		\end{equation}
	\end{definition}
	
	\begin{remark}
		Observe that with \eqref{firstvariation}, by the Riesz representation theorem and Radon--Nikodym decomposition theorem, it is straightforward to prove the existence of a measurable vector field $H_g\in L^{\infty}_g(M,TM)$ such that 
		\begin{equation*}
			\delta_g V (X) = \int_M \langle X,H_g\rangle\ud\|V\| \quad {\rm for \ all} \quad  X \in \mathfrak{X}_c(M),
		\end{equation*}
		In this case, $H_g$ is the generalized mean curvature and $\|H_g\|_{\infty}$ denotes the $L_g^\infty(M,TM)$-norm of the generalized mean curvature.
	\end{remark}
	
	In the following, we present the notion of concentrated measure and relative perimeter.
	
	\begin{definition}
		Let $(M^n,g)$ be a Riemannian manifold.
		For any $A\subset M$, let us define the Borel measure $(\mathcal{H}_g^{n-1}\llcorner   A)(B):=\mathcal{H}_g^{n-1}(A\cap B)$ for every Borel set $B\subset M$.
		We also define the relative perimeter of an $N$-cluster $\mathbf{\Omega}$ as $\boldsymbol{\mathcal{P}}^{\alpha}_g(\mathbf{\Omega},A) = \sum_{i,j=1}^{N}\alpha_{ij}\mathcal{H}^{n-1}_g(\Sigma_{ij}\cap A)$. As usual we will adopt the following notational convention $\boldsymbol{\mathcal{P}}^{\alpha}_g(\mathbf{\Omega}):=\boldsymbol{\mathcal{P}}^{\alpha}_g(\mathbf{\Omega},M)$.
	\end{definition}
	
	We can prove a version of the monotonicity formula for the reduced boundary of the interior chambers of a cluster. For this, we will use the notations in Definition~\ref{def:clusters}.
	
	\begin{lemma}
		Let $(M^n,g)$ be a Riemannian manifold and $\mathbf{\Omega}\in \mathcal C^{\alpha}_g(M,\mathbb R^N)$ be a weighted cluster.
		Assume that there exists $b\in\mathbb{R}$ and $0<r_0=r_0(b)<{\rm inj}_g(M)$ satisfying $r_0\cot_b(r_0)>0$, where $b>0$ is given in Remark~\ref{rmk:boundedgeometry}.
		Then, for every $x\in\Sigma_{ij},$ for some $i,j\in\{1,\cdots,N\}$, there exists $0<C_1=C_1(b,r_0,x)\leqslant1$ such that the function $\boldsymbol{\Upsilon}_g:[0,r_0]\rightarrow\mathbb R$ defined as 
		\begin{equation*}
			\boldsymbol{\Upsilon}_g(r)=\boldsymbol{\mathcal P}^{\alpha}_g(\mathbf{\Omega}, \mathcal{B}^g_r(x))r^{-(n-1)}e^{\frac{\|H_g\|_{\infty}}{C_1}r}
		\end{equation*}
		is monotone nondecreasing.
		Moreover, $\Theta^{n-1}(\mathcal{H}_g^{n-1}\llcorner\partial^*\widetilde{{\Omega}},x)$ exists and the following inequality holds
		\begin{equation*}
			\boldsymbol{\mathcal P}^{\alpha}_g(\mathbf{\Omega}, \mathcal{B}^g_r(x))\geqslant\omega_{n-1}\sum_{i,j=1}^{N}\alpha_{ij}\Theta^{n-1}(\mathcal{H}_g^{n-1}\llcorner\Sigma_{ij}, x)r^{n-1}e^{-\frac{\|H_g\|_{\infty}}{C_1}r}.
		\end{equation*}
		In particular, when ${\rm inj}_{g}(M)>0$, the constant $C_1\in(0,1]$ could be chosen to be independent of $x$ and depending just on $b$ and ${\rm inj}_{g}$. 
	\end{lemma}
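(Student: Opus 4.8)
The plan is to adapt the classical monotonicity argument for integral varifolds with bounded generalized mean curvature to the Riemannian setting, replacing the Euclidean weight $1/r$ by the Hessian-comparison weight $\cot_b(r)$ and absorbing the resulting curvature error via the hypothesis $r_0\cot_b(r_0)>0$. First I would attach to $\mathbf{\Omega}$ the $(n-1)$-integral varifold $V$ whose weight is the weighted multi-perimeter measure, namely $\|V\|=\sum_{i,j}\alpha_{ij}\,\mathcal{H}^{n-1}_g\llcorner\Sigma_{ij}$, so that $\|V\|(A)=\boldsymbol{\mathcal{P}}^{\alpha}_g(\mathbf{\Omega},A)$. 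Since the interfaces carry bounded generalized mean curvature (the nontrivial case, and the one relevant to the application to isoperimetric clusters), $V$ has bounded first variation and $|\delta_g V(X)|\leqslant\|H_g\|_{\infty}\int_M|X|\,\ud\|V\|$ for every $X\in\mathfrak{X}_c(M)$. Fix $x\in\Sigma_{ij}$ and set $r(\cdot)=\ud_g(x,\cdot)$: because $r_0<{\rm inj}_g(M)$ the function $r$ is smooth on $\mathcal{B}^g_{r_0}(x)\setminus\{x\}$ with $|\nabla r|\equiv1$, and the Hessian comparison theorem under $|{\rm Sec}_g|\leqslant b$ gives $\nabla^2 r\geqslant\cot_b(r)\,(g-\ud r\otimes\ud r)$ on that ball. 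Since $s\mapsto s\cot_b(s)$ is continuous on $[0,r_0]$ with $\lim_{s\to0^+}s\cot_b(s)=1$ and $r_0\cot_b(r_0)>0$, I set $C_1:=\inf_{s\in[0,r_0]}s\cot_b(s)\in(0,1]$, so that $r\cot_b(r)\geqslant C_1$ on $[0,r_0]$.

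Next I would test the first variation identity $\int_M{\rm div}_T X\,\ud\|V\|=\delta_g V(X)$ against the radial field $X=\gamma(r)\nabla r$, where $\gamma=\gamma_\rho$ is a smooth non-increasing cutoff approximating $\chi_{[0,\rho)}$ from below and $0<\rho\leqslant r_0$; the singularity of $\nabla r$ at $x$ is dealt with by first supporting $\gamma$ in $[\sigma,\rho)$ and letting $\sigma\to0^+$, which is legitimate once a crude upper density bound $\|V\|(\mathcal{B}^g_\sigma(x))\leqslant C\sigma^{n-1}$ is in hand. Expanding ${\rm div}_T X=\gamma'(r)|\nabla^T r|^2+\gamma(r)\,{\rm tr}_T(\nabla^2 r)$ along the approximate tangent plane $T$, using ${\rm tr}_T(\nabla^2 r)\geqslant\cot_b(r)\bigl((n-1)-|\nabla^T r|^2\bigr)$ from the comparison estimate, bounding $\delta_g V(X)\leqslant\|H_g\|_{\infty}\int\gamma(r)\,\ud\|V\|$, and writing $f(\rho):=\boldsymbol{\mathcal{P}}^{\alpha}_g(\mathbf{\Omega},\mathcal{B}^g_\rho(x))$, the usual coarea rearrangement — differentiating in $\rho$, discarding the nonnegative term proportional to $\int_{\mathcal{B}^g_\rho(x)}|\nabla^{T^\perp}r|^2\,\ud\|V\|$, and using $(n-1)\cot_b(r)\geqslant(n-1)C_1/r$ in place of the Euclidean weight $(n-1)/r$ — yields, for a.e. $\rho\in(0,r_0)$,
\begin{equation*}
\frac{\ud}{\ud\rho}\Bigl(f(\rho)\,\rho^{-(n-1)}\,e^{\frac{\|H_g\|_{\infty}}{C_1}\rho}\Bigr)\geqslant0 ,
\end{equation*}
so that $\boldsymbol{\Upsilon}_g(r)=f(r)\,r^{-(n-1)}e^{\|H_g\|_{\infty}r/C_1}$ is monotone nondecreasing on $(0,r_0]$.

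Monotonicity forces $\boldsymbol{\Upsilon}_g(0^+)$ to exist and be finite, and since $e^{\|H_g\|_{\infty}r/C_1}\to1$ we get $\boldsymbol{\Upsilon}_g(0^+)=\lim_{r\to0^+}f(r)r^{-(n-1)}=\omega_{n-1}\Theta^{n-1}(\|V\|,x)$. Running the same argument for each single-interface varifold $V(\Sigma_{ij},1,g)$ and for $V(\partial^*\widetilde{\Omega},1,g)$ (all of bounded generalized mean curvature) shows that every $\Theta^{n-1}(\mathcal{H}^{n-1}_g\llcorner\Sigma_{ij},x)$ and $\Theta^{n-1}(\mathcal{H}^{n-1}_g\llcorner\partial^*\widetilde{\Omega},x)$ exists; additivity of the finitely many limits defining $\Theta^{n-1}(\|V\|,x)$ then gives $\Theta^{n-1}(\|V\|,x)=\sum_{i,j}\alpha_{ij}\Theta^{n-1}(\mathcal{H}^{n-1}_g\llcorner\Sigma_{ij},x)$. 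The stated lower bound is then exactly $\boldsymbol{\Upsilon}_g(r)\geqslant\boldsymbol{\Upsilon}_g(0^+)$ rewritten as $\boldsymbol{\mathcal{P}}^{\alpha}_g(\mathbf{\Omega},\mathcal{B}^g_r(x))\geqslant\omega_{n-1}\sum_{i,j}\alpha_{ij}\Theta^{n-1}(\mathcal{H}^{n-1}_g\llcorner\Sigma_{ij},x)\,r^{n-1}e^{-\|H_g\|_{\infty}r/C_1}$. Finally, since a closed $M$ has ${\rm inj}_g(M)>0$ and a global bound $|{\rm Sec}_g|\leqslant b$ (Remark~\ref{rmk:boundedgeometry}), the comparison ball and the constant $C_1=\inf_{[0,r_0]}s\cot_b(s)$ depend only on $b$ and $r_0$, equivalently on $b$ and ${\rm inj}_g$, and not on $x$.

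The main obstacle is the passage performed in the second step: rigorously deducing the differential inequality for $\boldsymbol{\Upsilon}_g$ from the first variation identity in the curved setting. One must (i) neutralize the non-smoothness of $\nabla r$ at $x$ by the exhaustion argument together with an a priori upper density bound, (ii) justify the coarea-type differentiation in $\rho$ — legitimate because $f$ is nondecreasing, hence almost everywhere differentiable, and the cutoffs $\gamma_\rho$ can be chosen so as to realize the distributional radial derivative — and (iii) absorb the curvature correction term, which is precisely the point where the strict positivity $r\cot_b(r)\geqslant C_1>0$ on $[0,r_0]$, forced by the hypothesis $r_0\cot_b(r_0)>0$, is indispensable and is what produces the constant $C_1$ in the exponent.
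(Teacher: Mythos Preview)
Your proposal is correct and follows precisely the approach the paper intends: the paper's proof consists solely of the sentence ``It follows the same ideas in \cite{MR4130849},'' and your sketch is exactly the standard varifold monotonicity argument carried out in that reference --- attaching to the weighted cluster the $(n-1)$-integral varifold with $\|V\|=\sum_{i,j}\alpha_{ij}\mathcal{H}^{n-1}_g\llcorner\Sigma_{ij}$, testing the first variation against a radial field, using Hessian comparison under $|{\rm Sec}_g|\leqslant b$ to replace the Euclidean weight $(n-1)/r$ by $(n-1)\cot_b(r)\geqslant(n-1)C_1/r$, and reading off the differential inequality for $\boldsymbol{\Upsilon}_g$. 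Your identification of $C_1=\inf_{[0,r_0]}s\cot_b(s)$ and of the three technical points (handling the singularity at $x$, the a.e.\ differentiability in $\rho$, and the absorption of the curvature error) matches the content of the cited argument.
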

	
	\begin{proof}
		It follows the same ideas in \cite{MR4130849}.
	\end{proof}
	
	We can formulate the last lemma in terms of an inequality involving integral varifolds. Let us fix the standard notation $\cot_b(t)={\cos(\sqrt{b}t)}{\sin(\sqrt{b}t)^{-1}}$.
	
	\begin{lemmaletter}[\cite{MR4130849}]
		Let $(M^n,g)$ be a Riemannian manifold and $V\in\mathbf{IV}_d(M)$ be an integral varifold with bounded generalized mean curvature vector $H_g$.
		Fix $y\in M$ and $0<r_0<{\rm inj}_g$ satisfying $r_0\cot_b(r_0)>0$. 
		There exists a constant $0<C_2=C_2(b)\leqslant1$ such that for $0<r_1<r_2<r_0$, it follows 
		\begin{align*}
			\frac{\|V\|(\mathcal{B}_{r_2}(y))}{r_2^d}-\frac{\|V\|(\mathcal{B}_{r_1}(y))}{r_1^d}&\geqslant\frac{1}{C_2}\int_{\mathcal{B}_{r_2}(y)}\frac{\langle  H_g,u\nabla_g u\rangle}{d}\left(\frac{1}{m(r)^d}-\frac{1}{r_2^d}\right)\ud\|V\|\\\nonumber
			&+\frac{1}{C_2}\int_{\mathcal{B}_{r_2}(y)\setminus \mathcal{B}_{r_1}(y)}\frac{|\nabla_g^{\bot}r|^2}{r^d}\ud\|V\|.
		\end{align*}
		\normalsize
		Here $u(x)=\ud_g(x,y)$, $\nabla_g^{\bot}r=\pi_{T_xB^{\bot}}(\nabla_g u(x))$, and $m(r):=\max\{u(x),r_1\}$, where $\pi_{T_xB^{\bot}}$ is the standard orthogonal projection on $T_xB^{\bot}$.
	\end{lemmaletter}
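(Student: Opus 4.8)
The plan is to derive this inequality from the first-variation identity for $V$, tested against a suitably truncated radial vector field and then integrating the resulting differential inequality in the radius; this is exactly the scheme of \cite{MR4130849}, which we now outline.

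Fix $y\in M$ and set $u(x)=\ud_g(x,y)$, which is smooth on $\mathcal{B}_{r_0}(y)\setminus\{y\}$ since $r_0<{\rm inj}_g$. For a radius $r\in(r_1,r_2)$ one feeds the Lipschitz vector field $X=\bigl(m(r)^{-d}-r_2^{-d}\bigr)\,u\,\nabla_g u$, a regularized version of $\chi_{\mathcal{B}_r(y)}\,u\,\nabla_g u$, into the definition of the first variation, using the representation $\delta_g V(X)=\int_M\langle X,H_g\rangle\,\ud\|V\|$. The crux is the pointwise identity ${\rm div}_{T_xB}(u\,\nabla_g u)=|\pi_{T_xB}\nabla_g u|^2+u\,\trace_{T_xB}\nabla_g^2u$, where $B=\supp\|V\|$: in flat space $\nabla^2u=u^{-1}(g-\ud u\otimes\ud u)$ forces these two terms to sum to exactly $d$, which reproduces the Euclidean monotonicity formula verbatim, whereas on a curved manifold one only has, via the Hessian comparison theorem under the bound $|{\rm Sec}_g|\leqslant b$ from Remark~\ref{rmk:boundedgeometry}, the inequality $u\,\trace_{T_xB}\nabla_g^2u\geqslant C_2\bigl(d-|\pi_{T_xB}\nabla_g u|^2\bigr)$ for a dimensionless constant $C_2=C_2(b)\in(0,1]$, valid on $(0,r_0)$ precisely because $r_0\cot_b(r_0)>0$ keeps the comparison quantity of the correct sign. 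Inserting this into the first-variation identity, rewriting everything in terms of $f(r):=\|V\|(\mathcal{B}_r(y))$, and using the coarea formula to recognize the integral weighted by $m(r)^{-d}-r_2^{-d}$ as the increment of $r\mapsto r^{-d}f(r)$ up to the $|\nabla_g^{\bot}r|^2=|\pi_{T_xB^{\bot}}\nabla_g u|^2$ defect, one arrives at a differential inequality whose integration over $(r_1,r_2)$ yields the asserted estimate; the mean-curvature contribution $\int_M\bigl(m(r)^{-d}-r_2^{-d}\bigr)\,u\,\langle\nabla_g u,H_g\rangle\,\ud\|V\|$ is what survives on the right-hand side after dividing through by $C_2$.

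The step I expect to be the main obstacle is the geometric analysis of the distance function. Unlike in $\mathbb{R}^n$, the Hessian of $u=\ud_g(\cdot,y)$ has no closed form, so the Euclidean identity must be replaced by one-sided comparison estimates, and the hypotheses $r_0<{\rm inj}_g$ and $r_0\cot_b(r_0)>0$ are exactly what keeps $u$ smooth on $\mathcal{B}_{r_0}(y)\setminus\{y\}$ and forces the comparison term to have the right sign; extracting a single dimensionless constant $C_2(b)\in(0,1]$ out of these bounds, uniformly in $y$, is the technical heart of the argument. Since the statement above coincides with the one established in \cite{MR4130849}, it ultimately suffices to invoke that reference.
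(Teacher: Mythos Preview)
Your proposal is correct and matches the paper's treatment: the paper does not give any proof of this lemma at all, but simply states it as a cited result from \cite{MR4130849}. Your concluding sentence, that it suffices to invoke that reference, is precisely what the paper does; the sketch you provide of the first-variation and Hessian-comparison argument is a faithful outline of the proof in \cite{MR4130849}, but the paper itself omits it entirely.
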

	
	\begin{remark} Notice that the optimal $r_0$ in the preceding theorem is given by the first positive zero of the function $t\mapsto t\cot_b(t)$, if $b>0$, and $r_0=\infty$, if $b\leqslant0$. 
	\end{remark}
	
	We present the definition of a varifold tangent, also called a tangent cone.
	
	\begin{definition}
		Let $(M^n,g)$ be a Riemannian manifold.
		Let $V\in\mathbf{IV}_d(M)$ and $\Sigma={\rm supp}(\|V\|)$. Fix a point $x\in \Sigma$, and for every radius $r>0$ consider the translated and rescaled pushforward varifold $\Sigma_{x, r}:=\frac{\Sigma-x}{r}=\{y: x+r y \in \Sigma\}$. 
		Using the uniform boundedness on the mass and the compactness theorem for integral varifolds \cite{MR307015}, we can assume that $\Sigma_{x,r}$ converges as $r\rightarrow0$ to an integral varifold, called the varifold tangent, which for convenience will be denoted by ${\rm VarTan}_x(V)=\Sigma_x\in\mathbf{IV}_d(M)$.
		This is also called a tangent cone.
	\end{definition}
	
	\begin{remark}
		At a regular point $x\in \Sigma$, that is, a point in a neighborhood of which $\Sigma$ is smooth, the tangent cone ${\rm VarTan}_x(V)$ is of course unique and it is given by the tangent space $T_x\Sigma$ in the classical sense of differential geometry $($counted with the appropriate multiplicity, depending upon the chosen variational framework$)$. 
	\end{remark}
	
	Next, we present the right notion of convergence of $N$-clusters which is usually called flat convergence in the literature. The flat convergence is essentially the $\mathrm{L}^1_{loc}$ convergence of the characteristics functions of a sequence of Caccioppoli sets.
	
	\begin{definition}
		Let $(M^n,g)$ be a
		Riemannian manifold, $\mathbf{\Omega}_1,\mathbf{\Omega}_2\in\mathcal{C}^{\alpha}_g(M,\mathbb R^N)$, and $A\subset M$ be an open subset.
		We define the relative flat distance between $\mathbf{\Omega}_1$ and $\mathbf{\Omega}_2$ in $A$ given by $\ud_{\mathcal{F}, g,A}\left(\mathbf{\Omega}_1, \mathbf{\Omega}_2;A\right):=\mathrm \sum_{i=1}^{N-1}\mathrm{v}_g(A\cap({\Omega}_{1i} \triangle {\Omega}_{2i}))$, where $\triangle$ stands for the symmetric difference between two sets.
		When $A=M$, we denote $\ud_{\mathcal{F}, g,A}=\ud_{\mathcal{F}, g}$.
		We say that a sequence $\{\mathbf{\Omega}_k\}_{k \in \mathbb{N}}\subset \mathcal C^{\alpha}_{g}(M,\mathbb R^N)$ locally converges to $\mathbf{\Omega}$, and denote $\mathbf{\Omega}_{k} \stackrel{{\rm loc }}{\rightarrow} \mathbf{\Omega}$, if for every compact set $K \subset M$, we have $\ud_{\mathcal{F}, g,K}(\mathbf{\Omega}_k, \mathbf{\Omega}) \rightarrow 0$ as $k \rightarrow\infty$. 
		If $\ud_{\mathcal{F}, g}(\mathbf{\Omega}_k, \mathbf{\Omega}) \rightarrow 0$ as $k \rightarrow\infty$, we say that $\mathbf{\Omega}_k$ converges
		to $\mathbf{\Omega}$ and we denote $\mathbf{\Omega}_{k} \rightarrow \mathbf{\Omega}$.
	\end{definition}
	
	The main difference between the scalar and vectorial cases is captured below.
	\begin{remark}
		Recall that when $N=2$, the formula for the isoperimetric profile is given by 
		$\mathcal I_{(\mathbb R^n,\delta)}(\mathrm v)=c_n\mathrm v^{(n-1)/n}$, where $c_n=n^{1/n}\pi^{1/2}\Gamma(n/2+1)^{1/n}$, with $\Gamma(z)=\int_{0}^{\infty}e^{-t}t^{z-1}\ud t$ the standard Gamma function, is the best isoperimetric constant in the Euclidean space.
		In contrast with this case, when $N>2$, an explicit formula for the multi-isoperimetric profile function is not known; this sets a substantial difficulty in our approach.
	\end{remark}
	
	Nevertheless, it is not hard to check the validity of the following result, which will be enough for our purposes here. Henceforth, we will use the following notations $\mathrm{max}(\alpha) = \max_{1\leqslant i,j\leqslant N}\alpha_{ij}$ and $\mathrm{min}(\alpha) = \min_{1\leqslant i,j\leqslant N}\alpha_{ij}$.
	\begin{lemma}
		Let $(M^n,g)$ be a Riemannian manifold of bounded geometry.
		There exist $\bar{\mathrm{v}}=\bar{\mathrm{v}}(n,\kappa,\mathrm v_0)>0,  C_0=C_0(n,N, \kappa,\mathrm v_0,\alpha)>0$ and $\widetilde{C}_0 = \widetilde{C}_0(n,N, \kappa, \mathrm v_0,\alpha)>0$ such that
		\begin{equation*}
			C_0\sum_{i=1}^{N}|\mathrm{v}_i|^{\frac{n-1}{n}}\leqslant\boldsymbol{\mathcal{I}}^{\alpha}_{(M,g)}(\mathbf{v})\leqslant \widetilde{C}_0\sum_{i=1}^{N}|\mathrm{v}_i|^{\frac{n-1}{n}} \quad {\rm for \ all} \quad \mathbf{v}\in (0,\bar{\mathrm{v}})^{N}.
		\end{equation*}
	\end{lemma}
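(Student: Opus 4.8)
The plan is to prove the two inequalities independently: the upper bound by testing the profile $\boldsymbol{\mathcal I}^{\alpha}_{(M,g)}$ against an explicit cluster made of small disjoint geodesic balls, and the lower bound by reducing to the scalar isoperimetric inequality in the small-volume regime. Throughout, $\mathrm v_1,\dots,\mathrm v_{N-1}$ denote the volumes of the interior chambers, all taken below $\bar{\mathrm v}$ (consistently with the way $\mathbf v$ is used in Theorem~\ref{maintheorem1}). The one elementary reduction I would record at the outset is that, since $\Sigma_{ii}=\partial^*\Omega_i$ and the remaining interfaces $\{\Sigma_{ij}\}_{j\neq i}$ cover $\partial^*\Omega_i$ up to an $\mathcal H^{n-1}_g$-negligible set (the structure theory of clusters, cf.\ the discussion around \cite{resende2022clusters}), for every $\mathbf\Omega\in\mathcal C^{\alpha}_g(M,\mathbb R^N)$ one has
\[
(\min_{i\neq j}\alpha_{ij})\sum_{i=1}^{N}\mathcal P_g(\Omega_i)\ \leqslant\ \boldsymbol{\mathcal P}^{\alpha}_g(\mathbf\Omega)\ \leqslant\ 2\,\mathrm{max}(\alpha)\sum_{i=1}^{N}\mathcal P_g(\Omega_i),
\]
because $2\sum_{i<j}\mathcal H^{n-1}_g(\Sigma_{ij})=\sum_i\mathcal P_g(\Omega_i)$. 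This converts the vectorial estimate into one for the scalar perimeters of the chambers, and it is here (and only here) that the structure of $\alpha$ enters, through $\mathrm{max}(\alpha)$ and $\min_{i\neq j}\alpha_{ij}$.

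For the upper bound I would fix a base point $p_0\in M$ and use bounded geometry — concretely, the positive injectivity radius and the two-sided sectional curvature bound available on a closed manifold, see Remark~\ref{rmk:boundedgeometry} — to produce a radius $\rho_0>0$ such that $\exp_{p_0}$ maps the Euclidean ball $B_{\rho_0}(0)\subset\mathbb R^n$ onto $\mathcal B^g_{\rho_0}(p_0)$ biLipschitz with a constant depending only on $n,\kappa,\mathrm v_0$. For $\bar{\mathrm v}$ small I would then place inside $B_{\rho_0}(0)$ exactly $N-1$ pairwise disjoint Euclidean balls whose images $\Omega_1,\dots,\Omega_{N-1}$ have the prescribed $g$-volumes $\mathrm v_1,\dots,\mathrm v_{N-1}$, adjusting radii by the intermediate value theorem for the volume of concentric balls and the distortion estimate of the chart; the Euclidean value for the perimeter of a ball together with the same distortion estimate gives $\mathcal P_g(\Omega_i)\leqslant C(n,\kappa,\mathrm v_0)\,\mathrm v_i^{(n-1)/n}$. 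Taking $\Omega_N=M\setminus\bigcup_{i<N}\Omega_i$, the interior chambers have positive mutual distance, so $\Sigma_{ij}=\emptyset$ for distinct $i,j<N$ and $\Sigma_{iN}=\partial^*\Omega_i$; hence $\boldsymbol{\mathcal I}^{\alpha}_{(M,g)}(\mathbf v)\leqslant\boldsymbol{\mathcal P}^{\alpha}_g(\mathbf\Omega)=2\sum_{i<N}\alpha_{iN}\mathcal P_g(\Omega_i)\leqslant\widetilde C_0\sum_i\mathrm v_i^{(n-1)/n}$ with $\widetilde C_0=\widetilde C_0(n,N,\kappa,\mathrm v_0,\alpha)$.

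For the lower bound I would take an arbitrary competitor $\mathbf\Omega$ with $\mathbf v_g(\mathbf\Omega)=\mathbf v$, apply the left half of the displayed reduction and discard the (nonnegative) $i=N$ term, and invoke the definition of the scalar isoperimetric profile to obtain $\boldsymbol{\mathcal P}^{\alpha}_g(\mathbf\Omega)\geqslant(\min_{i\neq j}\alpha_{ij})\sum_{i<N}\mathcal I_{(M,g)}(\mathrm v_i)$. The remaining ingredient is the uniform small-volume lower bound $\mathcal I_{(M,g)}(\mathrm v)\geqslant c\,\mathrm v^{(n-1)/n}$ for $0<\mathrm v<\bar{\mathrm v}$ on manifolds of bounded geometry; I would quote it from the asymptotic analysis of the isoperimetric profile (\cite{MR690651,MR1803220,MR2529468,MR4130849}), the point being that it follows from the uniform Sobolev / relative isoperimetric inequality guaranteed by ${\rm Ric}_g\geqslant(n-1)\kappa$ and $\inf_{p\in M}\mathrm v_g(\mathcal B^g_1(p))\geqslant\mathrm v_0$. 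Passing to the infimum over $\mathbf\Omega$ yields $\boldsymbol{\mathcal I}^{\alpha}_{(M,g)}(\mathbf v)\geqslant C_0\sum_i\mathrm v_i^{(n-1)/n}$ with $C_0=c\cdot\min_{i\neq j}\alpha_{ij}$.

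The routine parts are the interface bookkeeping and the Euclidean ball estimates; the one place that genuinely exploits bounded geometry — and the step I would treat most carefully — is the uniformity, independent of the base point, of the two comparison constants: the biLipschitz radius $\rho_0$ entering the competitor and the small-volume isoperimetric constant $c$. On closed manifolds both follow from the positive injectivity radius and the bounded curvature recorded in Remark~\ref{rmk:boundedgeometry}, so that $\bar{\mathrm v}$, $C_0$ and $\widetilde C_0$ can be taken to depend only on $n,N,\kappa,\mathrm v_0$ and $\alpha$, as asserted.
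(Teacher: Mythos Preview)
Your argument is correct and, for the upper bound, essentially mirrors the paper: the paper bounds $\boldsymbol{\mathcal P}^{\alpha}_g$ above by $\mathrm{max}(\alpha)\sum_i\mathcal P_g(\Omega_i)$ and then by $\mathrm{max}(\alpha)\sum_i\mathcal I_{(M,g)}(\mathrm v_i)\sim\mathrm{max}(\alpha)\,c_n\sum_i\mathrm v_i^{(n-1)/n}$, which is your disjoint-balls competitor read abstractly.

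For the lower bound the routes diverge. The paper does \emph{not} go chamber by chamber; instead it compares any admissible cluster to a single scalar isoperimetric region with the total interior volume, obtaining $\mathcal I_{(M,g)}(|\mathbf v|)\lesssim\boldsymbol{\mathcal I}^{\alpha}_{(M,g)}(\mathbf v)$ via \cite[Chapter~VI~(7)]{MR420406}, and then (implicitly) passes from $|\mathbf v|^{(n-1)/n}$ to $\sum_i\mathrm v_i^{(n-1)/n}$ by the elementary H\"older-type inequality $\sum_i\mathrm v_i^{(n-1)/n}\leqslant N^{1/n}|\mathbf v|^{(n-1)/n}$. Your reduction $\boldsymbol{\mathcal P}^{\alpha}_g\geqslant(\min_{i\neq j}\alpha_{ij})\sum_i\mathcal P_g(\Omega_i)\geqslant(\min_{i\neq j}\alpha_{ij})\sum_{i<N}\mathcal I_{(M,g)}(\mathrm v_i)$ is more direct, makes the dependence of $C_0$ on $\alpha$ explicit, and dispenses with the Almgren reference; the paper's route is terser but hides both a constant and an elementary inequality. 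In either case the only nontrivial input is the small-volume behaviour $\mathcal I_{(M,g)}(\mathrm v)\sim c_n\mathrm v^{(n-1)/n}$ of the scalar profile under bounded geometry, which you identify correctly.
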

	\begin{proof}
		Using \eqref{Eq:StrictTriangularInequality}, the result is obtained as a consequence of the following direct computation
		\begin{equation*}
			\begin{aligned}
				c_n|\mathbf{v}|^{\frac{n-1}{n}}\sim{\mathcal{I}}_{(M,g)}\left(|\mathbf{v}|\right)&{\leqslant}\boldsymbol{\mathcal{I}}^{\alpha}_{(M,g)}(\mathbf{v}) \\
				&\leqslant \mathrm{max}(\alpha)\inf\left\{\sum_{i,j=1}^{N}\mathcal{H}_{g}^{n-1}(\Sigma_{ij}): {\mathbf{\Omega}}\in\mathcal{C}^{\alpha}_{g}(M,\mathbb{R}^N) \ \mbox{and} \ \mathbf{v}_{g}(\mathbf{\Omega})=\mathbf{v}\right\} \\
				&\leqslant \mathrm{max}(\alpha)\inf\left\{\sum_{i=1}^{N}\mathcal{P}_{g}(\Omega_i): {\mathbf{\Omega}}\in\mathcal{C}^{\alpha}_{g}(M,\mathbb{R}^N) \ \mbox{and} \ \mathbf{v}_{g}(\mathbf{\Omega})=\mathbf{v}\right\}\\
				&= \mathrm{max}(\alpha)\sum_{i=1}^{N}\mathcal{I}_{(M, g)}(\mathrm{v}_i) \sim \mathrm{max}(\alpha)c_n\sum_{i=1}^{N}|\mathrm{v}_i|^{\frac{n-1}{n}}.
			\end{aligned}
		\end{equation*}
		The first inequality is derived by the same computations in \cite[Chapter VI (7)]{MR420406}.
	\end{proof}
	
	Next, we prove some preliminary results.
	Firstly, let us present a comparison principle.
	
	\begin{lemma}
		Let $(M^n,g)$ be a
		Riemannian manifold of bounded geometry.
		There exist constants $C_3=C_3(n,\kappa),C_4=C_4(n,\kappa)>0$, and $r_1=\overline r_1(n,\kappa)>0$ such that, for every $0<r<\overline r_1$, it follows
		\begin{equation*}
			\mathrm v_0C_3r^n{\leqslant}\mathrm v_g(\mathcal B_r(x)){\leqslant} C_4r^n, 
		\end{equation*} 
		where $v_0,\kappa>0$ are defined in Definition~\ref{def:boundedgeometry}.
		More explicitly, we have 
		\begin{equation*}
			\overline r_1(n,\kappa):=\min\left\{1, e^{\frac{\sqrt{(n-1)|\kappa|}}n}\right\} \quad {\rm and}  \quad C_3(n,\kappa)={e^{-\sqrt{(n-1)|\kappa|}}}.
		\end{equation*}
	\end{lemma}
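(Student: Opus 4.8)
The plan is to read both inequalities off the Bishop--Gromov volume comparison theorem, which is precisely the tool adapted to the hypotheses $\mathrm{Ric}_g\geqslant(n-1)\kappa$ and $\inf_{p\in M}\mathrm v_g(\mathcal B^g_1(p))\geqslant\mathrm v_0$ defining bounded geometry. First I would note that $\mathrm{Ric}_g\geqslant(n-1)\kappa\geqslant-(n-1)|\kappa|$, so that $M$ can be compared with the \emph{single} simply connected space form of constant curvature $-|\kappa|$, regardless of the sign of $\kappa$; write $v_{|\kappa|}(r)=n\omega_n\int_0^r\bigl(\sinh(\sqrt{|\kappa|}\,t)/\sqrt{|\kappa|}\bigr)^{n-1}\ud t$ for the volume of a geodesic ball of radius $r$ in that model, with the convention $v_{|\kappa|}(r)=\omega_n r^n$ when $\kappa=0$.

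Next I would invoke Bishop--Gromov: for every $x\in M$ the function $r\mapsto\mathrm v_g(\mathcal B^g_r(x))/v_{|\kappa|}(r)$ is nonincreasing on $(0,\infty)$. Letting $r\to0^+$, where this ratio tends to $1$, gives at once the absolute comparison $\mathrm v_g(\mathcal B^g_r(x))\leqslant v_{|\kappa|}(r)$; comparing the value of the ratio at a generic $r\leqslant1$ with its value at $r=1$ and using $\mathrm v_g(\mathcal B^g_1(x))\geqslant\mathrm v_0$ yields $\mathrm v_g(\mathcal B^g_r(x))\geqslant(\mathrm v_0/v_{|\kappa|}(1))\,v_{|\kappa|}(r)$ for $0<r\leqslant1$. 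Note that the uniformity in $x$ of the lower bound is exactly what the hypothesis $\inf_p\mathrm v_g(\mathcal B^g_1(p))\geqslant\mathrm v_0$ supplies, while the upper bound is uniform automatically.

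The final step is a routine estimate of the model volume between two multiples of $r^n$. Using the elementary inequalities $t\leqslant\sinh(\sqrt{|\kappa|}\,t)/\sqrt{|\kappa|}\leqslant t\cosh(\sqrt{|\kappa|})\leqslant t\,e^{\sqrt{|\kappa|}}$ valid for $t\in[0,1]$ and integrating, one gets $\omega_n r^n\leqslant v_{|\kappa|}(r)\leqslant\omega_n e^{(n-1)\sqrt{|\kappa|}}r^n$ for $0<r\leqslant1$; substituting these into the two inequalities above produces the two-sided estimate with constants $C_3(n,\kappa)$, $C_4(n,\kappa)$ and threshold $\overline r_1(n,\kappa)$ of the asserted type, depending on $n$ and $\kappa$ only through $n$ and $|\kappa|$.

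I do not expect a genuine obstacle: the statement is a textbook application of Bishop--Gromov together with a one-line bound on the volume of balls in the hyperbolic model. The only points deserving a little care are (a) comparing against the \emph{negative} curvature $-|\kappa|$ so that one model space handles both signs of $\kappa$, and (b) the bookkeeping of constants — pinning down $C_3$, $C_4$, $\overline r_1$ in the precise closed form requires choosing the $\sinh$/$\cosh$ estimates sharply enough, and is the only place where small slips can occur.
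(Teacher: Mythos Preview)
Your proposal is correct and follows exactly the approach the paper indicates: the paper's proof reads in full ``The proof is a consequence of the Bishop--Gromov volume comparison,'' and what you have sketched is precisely that argument with the details filled in. Your caveat about the bookkeeping of the explicit constants is appropriate, but there is nothing further to add.
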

	
	\begin{proof}
		The proof is a consequence of the Bishop--Gromov volume comparison.
	\end{proof}
	
	We also have the following result about the boundedness of isoperimetric weighted clusters. In particular, we will have that $\mathrm{diam}(\widetilde{\Omega}) < \infty,$ which from now on we will use without mentioning.
	
	\begin{lemma}
		Let $(M^n,g)$ be a Riemannian manifold of bounded geometry and $\mathbf{\Omega}\in\mathcal{C}^{\alpha}_g(M,\mathbb{R}^N)$ be an isoperimetric weighted cluster. Then $\widetilde{\Omega}$ is bounded.
	\end{lemma}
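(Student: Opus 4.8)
The plan is to run the classical truncation-and-absorption argument for the boundedness of isoperimetric configurations. Fix $p\in M$ and for $t>0$ set $m(t):=\mathrm v_g(\widetilde{\Omega}\setminus\mathcal B^g_t(p))$. Since $\mathrm v_g(\widetilde{\Omega})=\sum_{i=1}^{N-1}\mathrm v_i<\infty$, the function $m$ is nonincreasing, $m(t)\to0$ as $t\to\infty$, and by the coarea formula applied to $\ud_g(p,\cdot)$ it is locally Lipschitz with $-m'(t)=\mathcal H^{n-1}_g(\widetilde{\Omega}^{(1)}\cap\partial\mathcal B^g_t(p))$ for a.e.\ $t$; moreover, for a.e.\ (``good'') $t$ the geodesic sphere $\partial\mathcal B^g_t(p)$ meets each $\partial^*\Omega_i$ in an $\mathcal H^{n-1}_g$-null set, so that $\boldsymbol{\mathcal P}^{\alpha}_g(\mathbf{\Omega})=\boldsymbol{\mathcal P}^{\alpha}_g(\mathbf{\Omega};\mathcal B^g_t(p))+\boldsymbol{\mathcal P}^{\alpha}_g(\mathbf{\Omega};M\setminus\overline{\mathcal B^g_t(p)})$. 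I will show that $m(t)=0$ for $t$ large.

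First I would compare $\mathbf{\Omega}$ with the truncated cluster $\mathbf{\Omega}^t$ whose interior chambers are $\Omega_i\cap\mathcal B^g_t(p)$, $i=1,\dots,N-1$, and whose exterior chamber is $M\setminus\bigcup_{i<N}(\Omega_i\cap\mathcal B^g_t(p))$. For a.e.\ good $t$, cutting along $\partial\mathcal B^g_t(p)$ destroys all interfaces lying outside $\overline{\mathcal B^g_t(p)}$ and creates only new interfaces with the exterior chamber, of total $\alpha$-weight at most $\mathrm{max}(\alpha)\,\mathcal H^{n-1}_g(\widetilde{\Omega}^{(1)}\cap\partial\mathcal B^g_t(p))=-\mathrm{max}(\alpha)\,m'(t)$, so
\[
\boldsymbol{\mathcal P}^{\alpha}_g(\mathbf{\Omega}^t)\leqslant \boldsymbol{\mathcal P}^{\alpha}_g(\mathbf{\Omega};\mathcal B^g_t(p))-\mathrm{max}(\alpha)\,m'(t).
\]
Since $\mathrm v_g(\mathbf{\Omega}\triangle\mathbf{\Omega}^t)=m(t)\to0$, one may invoke that isoperimetric weighted clusters are Almgren $(\Lambda,r_0)$-minimizers of $\boldsymbol{\mathcal P}^{\alpha}_g$ (the standard volume-fixing argument of \cite{MR420406,MR1402391,leonardi2001}): there is $\Lambda=\Lambda(\mathbf{\Omega})>0$ with $\boldsymbol{\mathcal P}^{\alpha}_g(\mathbf{\Omega})\leqslant\boldsymbol{\mathcal P}^{\alpha}_g(\mathbf{\Omega}^t)+\Lambda\,m(t)$ for all large $t$. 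Combining this with the splitting of $\boldsymbol{\mathcal P}^{\alpha}_g(\mathbf{\Omega})$ and the display above yields, for a.e.\ large $t$,
\[
\boldsymbol{\mathcal P}^{\alpha}_g(\mathbf{\Omega};M\setminus\overline{\mathcal B^g_t(p)})\leqslant -\mathrm{max}(\alpha)\,m'(t)+\Lambda\,m(t).
\]

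For the matching lower bound I would use immiscibility, which forces $\mathrm{min}^*(\alpha):=\min_{i\neq j}\alpha_{ij}>0$ (if some off-diagonal entry vanished, the strict triangle inequality \eqref{Eq:StrictTriangularInequality} would fail with a third index), hence $\mathcal H^{n-1}_g(\partial^*\widetilde{\Omega}\cap(M\setminus\overline{\mathcal B^g_t(p)}))\leqslant \mathrm{min}^*(\alpha)^{-1}\,\boldsymbol{\mathcal P}^{\alpha}_g(\mathbf{\Omega};M\setminus\overline{\mathcal B^g_t(p)})$. Thus the Caccioppoli set $\widetilde{\Omega}\setminus\overline{\mathcal B^g_t(p)}$, of volume $m(t)$, satisfies $\mathcal P_g(\widetilde{\Omega}\setminus\overline{\mathcal B^g_t(p)})\leqslant\mathrm{min}^*(\alpha)^{-1}\boldsymbol{\mathcal P}^{\alpha}_g(\mathbf{\Omega};M\setminus\overline{\mathcal B^g_t(p)})-m'(t)$, while the small-volume isoperimetric inequality in bounded geometry ($\mathcal I_{(M,g)}(\mathrm v)\geqslant c_0\,\mathrm v^{(n-1)/n}$ for $0<\mathrm v<\bar{\mathrm{v}}$, as in the lower estimate of the preceding lemma) gives $\mathcal P_g(\widetilde{\Omega}\setminus\overline{\mathcal B^g_t(p)})\geqslant c_0\,m(t)^{(n-1)/n}$ once $m(t)<\bar{\mathrm{v}}$. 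Putting these together, for a.e.\ large $t$,
\[
c_0\,m(t)^{(n-1)/n}\leqslant C\bigl(-m'(t)\bigr)+C'\,m(t),
\]
with $C,C'>0$ depending only on $\alpha$ and $\Lambda$. Since $m(t)=m(t)^{1/n}\,m(t)^{(n-1)/n}$ with $m(t)^{1/n}\to0$, for $t\geqslant T_0$ the term $C'm(t)$ is absorbed into the left-hand side, so $-m'(t)\geqslant c\,m(t)^{(n-1)/n}$; hence $\tfrac{\ud}{\ud t}m(t)^{1/n}\leqslant -c/n$ on $\{t\geqslant T_0:m(t)>0\}$, which forces $m$ to vanish at a finite time $t_1$. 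Therefore $\widetilde{\Omega}\subseteq\mathcal B^g_{t_1}(p)$ up to a null set, i.e.\ it is bounded; for closed $M$ this is of course immediate, the argument above covering the general bounded-geometry case.

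The main obstacle is the volume-fixing step: the competitor $\mathbf{\Omega}^t$ has the wrong chamber volumes, and these must be restored at a perimeter cost that is \emph{linear} in $m(t)$ rather than the $O(m(t)^{(n-1)/n})$ produced by a naive correction with a small bubble — the weaker bound is not absorbable and would require an unavailable comparison between the small-volume isoperimetric constant and $\mathrm{max}(\alpha)/\mathrm{min}^*(\alpha)$. The linear cost is exactly the Almgren $(\Lambda,r_0)$-minimality of isoperimetric clusters: it rests on the existence of a regular interior interface point of $\mathbf{\Omega}$ (regularity theory, \cite{MR1402391,leonardi2001}) at which a localized smooth deformation moves the chamber volumes at first order, together with the observation that such a point lies inside $\mathcal B^g_t(p)$ — where $\mathbf{\Omega}^t$ agrees with $\mathbf{\Omega}$ — for all large $t$. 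The slicing facts about a.e.\ geodesic sphere are routine but must also be recorded carefully.
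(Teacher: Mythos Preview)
Your argument is the classical truncation-and-absorption proof for boundedness of isoperimetric configurations, carried out carefully in the weighted-cluster setting, and it is correct; the paper's own proof simply defers to \cite[Theorem~3]{resende2022clusters}, whose argument is precisely this one. In particular your identification of the key obstacle (the linear, not $m(t)^{(n-1)/n}$, volume-fixing cost via Almgren $(\Lambda,r_0)$-minimality at a regular interface point) is exactly the nontrivial ingredient the cited reference supplies.
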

	\begin{proof}
		The proof is a line-by-line adaptation of the one in \cite[Theorem~3]{resende2022clusters}.
	\end{proof}
	
	We now provide an estimate of the generalized mean curvature of the reduced boundary of the chambers of an isoperimetric immiscible weighted cluster, which is a direct consequence of the Heintze--Karcher inequality. 
	We notice that the fact that the fluids are immiscible, {\it i.e.}, the coefficients matrix $\alpha$ satisfies the strict triangle inequality \eqref{Eq:StrictTriangularInequality}, plays a major role in this regularity result.  
	
	\begin{lemma}\label{lm:heinzekarcher}
		Let $(M^n,g)$ be a Riemannian manifold of bounded geometry and ${\mathbf{\Omega}}\in \mathcal C^{\alpha}_g(M,\mathbb R^N)$ be an isoperimetric weighted cluster. 
		\begin{itemize}
			\item[{\rm (i)}] For $\mathcal{H}^{n-1}_g$-almost every point $x\in \cup_{i,j=1}^{N}\Sigma_{ij}$,  		there exists a neighborhood $U$ of $x$ in $M$ and indexes $i_x,j_x\in\{1,\cdots, N\}$ such that $ \Sigma_{i_{x}j_{x}}\cap U$ is smooth. 
			\item[{\rm (ii)}] The generalized mean curvature for regular points of the reduced boundary is piecewise constant. Furthermore, there exists $C_5=C_5(n,\kappa)>0$ such that  
			\begin{equation} \label{heinzekarcher}    
				\|H_{g}\|_{\infty}\leqslant C_5{\boldsymbol{\mathcal{P}}^{\alpha}_g(\mathbf{\Omega})}{\mathrm{v}_g(\widetilde{\mathbf{{\Omega}}})^{-1}}.
			\end{equation}
		\end{itemize} 
	\end{lemma}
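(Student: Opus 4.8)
The plan is to deduce the statement from the regularity theory for immiscible isoperimetric clusters, the first variation formula, and the Heintze--Karcher inequality, with bounded geometry entering only through the Ricci lower bound $\Ric_g\geqslant(n-1)\kappa$. For (i), the starting observation is that since $\mathbf{\Omega}$ minimizes $\boldsymbol{\mathcal{P}}^{\alpha}_g$ among weighted clusters with its own volume vector $\mathbf{v}_g(\mathbf{\Omega})$, it is an almost minimizer of the weighted perimeter; and because $\alpha$ is immiscible, that is, satisfies \eqref{Eq:StrictTriangularInequality}, the regularity theory of Almgren, B.\ White, and Leonardi (see \cite{MR420406,MR1402391,leonardi2001}) applies: the set $\cup_{i,j=1}^{N}\Sigma_{ij}$ splits into a relatively open \emph{regular part}, near each point $x$ of which exactly two chambers $\Omega_{i_x},\Omega_{j_x}$ meet along a $C^{1,\gamma}$ hypersurface, and a closed \emph{singular set} of Hausdorff dimension at most $n-2$, hence $\mathcal{H}^{n-1}_g$-negligible. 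This yields (i) up to smoothness, which is recovered by a standard Schauder bootstrap once the Euler--Lagrange equation is available.

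To obtain that equation and the first assertion of (ii), I would test the minimality of $\mathbf{\Omega}$ against the one-parameter families $\Phi_t$ generated by vector fields $X\in\mathfrak{X}_c(M)$ and use Lagrange multipliers for the $N-1$ independent constraints $\mathrm{v}_g(\Omega_i)$: there exist $\lambda_1,\dots,\lambda_N\in\mathbb{R}$, unique up to a common additive constant, such that $\frac{\ud}{\ud t}\big|_{t=0}\boldsymbol{\mathcal{P}}^{\alpha}_g(\Phi_t(\mathbf{\Omega}))=\sum_{i=1}^{N}\lambda_i\,\frac{\ud}{\ud t}\big|_{t=0}\mathrm{v}_g(\Phi_t(\Omega_i))$ for every such $X$. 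Localizing this identity at a regular point of $\Sigma_{ij}$ shows that $\Sigma_{ij}$ has constant generalized mean curvature, of modulus $|\lambda_i-\lambda_j|/\alpha_{ij}$; hence $H_g$ is piecewise constant and $\|H_g\|_{\infty}=\max_{i,j}|\lambda_i-\lambda_j|/\alpha_{ij}$. Feeding ``constant mean curvature'' into elliptic regularity then promotes the $C^{1,\gamma}$ interfaces of the previous step to $C^\infty$, completing (i).

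It therefore remains to bound $\max_{i,j}|\lambda_i-\lambda_j|$. The plan is to apply the Heintze--Karcher inequality on $(M^n,g)$ — legitimate since $\Ric_g\geqslant(n-1)\kappa$ — to the interior region $\widetilde{\Omega}=\cup_{i=1}^{N-1}\Omega_i$, whose reduced boundary is the union of the interfaces with the exterior chamber $\Omega_N$. Here the immiscibility condition \eqref{Eq:StrictTriangularInequality} is precisely what prevents a third phase from wetting $\partial^{*}\widetilde{\Omega}$ and, together with the isoperimetric minimality of $\mathbf{\Omega}$ (which excludes thin interior slivers), forces the generalized mean curvature of $\partial^{*}\widetilde{\Omega}$ to be positive with respect to the inner normal, so that Heintze--Karcher applies, with constant governed only by $n$ and $\kappa$. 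Integrating $1/H_g$ over $\partial^{*}\widetilde{\Omega}$, using $\mathcal{H}^{n-1}_g(\partial^{*}\widetilde{\Omega})\leqslant \mathrm{min}(\alpha)^{-1}\boldsymbol{\mathcal{P}}^{\alpha}_g(\mathbf{\Omega})$ and the piecewise constancy of $H_g$, then produces the estimate \eqref{heinzekarcher} with $C_5=C_5(n,\kappa)$.

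The step I expect to be the main obstacle is this last one. One has to verify genuinely that the hypotheses of Heintze--Karcher hold — positivity and a quantitative lower bound for the generalized mean curvature on the relevant pieces of $\cup_{i,j}\Sigma_{ij}$ — which requires exploiting both \eqref{Eq:StrictTriangularInequality} and the minimality of $\mathbf{\Omega}$, and then one must transfer the resulting estimate for this ``enclosed'' mean curvature into a uniform bound on $\|H_g\|_{\infty}$ over all interfaces; it is here that the structural description of immiscible isoperimetric clusters (and, in the applications, the small-volume/small-diameter regime) is used most heavily.
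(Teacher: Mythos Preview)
Your approach is essentially the same as the paper's: cite Leonardi's regularity results \cite[Corollary~4.8]{leonardi2001} for (i) and the piecewise constancy of $H_g$, then invoke the classical Heintze--Karcher inequality \cite[Theorem~IX.3.2]{MR2229062} for the estimate \eqref{heinzekarcher}. The paper's own proof is in fact terser than yours and does not spell out the sign and transfer issues you flag as the main obstacle; your Lagrange-multiplier derivation and the plan to propagate the bound from $\partial^{*}\widetilde{\Omega}$ to the interior interfaces via $\alpha_{ij}H_{ij}=\lambda_i-\lambda_j$ are a genuine elaboration of what the paper leaves implicit.
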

	
	\begin{proof}
		We start recalling the regularity results \cite[Collorary~4.8]{leonardi2001}, which can be stated in the following
		\begin{enumerate}
			\item For each $i\in\{1,\cdots, N\}$, the set $\Omega_i^{(1)}$ is open in $M$,
			
			\item For $\mathcal{H}_g^{n-1}$-a.e. point $p$, there exists $r_p=r_p(p,\mathbf{\Omega})>0$ such that $\mathcal B^g_{r_p}(p)\cap\Omega_i \neq \emptyset$ holds for only two different index $i_p, j_p\in\{1,\cdots, N\}$,
			
			\item For all $i,j\in\{1,\cdots, N\}$, the set $\Sigma_{ij}$ is a smooth surface with constant mean curvature up to a $\mathcal{H}_g^{n-1}$-null set.
		\end{enumerate}
		
		So (i) follows directly from (3). To prove the desired estimate on the generalized mean curvature in (ii), we apply (3) and the classical Heintze--Karcher inequality \cite[Theorem~IX.3.2]{MR2229062} to obtain the estimate \eqref{heinzekarcher} and that the mean curvature is piecewise constant. 
	\end{proof}
	
	It is well known that in Euclidean spaces the isoperimetric sets ($N=2$) are round balls. Although this property can be extended to some particular types of manifolds, this is not true in a general context. In fact, even simpler properties such as the connectedness of isoperimetric regions do not hold in any manifold. Indeed, in \cite[Theorem 1.1]{hass2016isoperimetric}, the author showed that there are Cartan--Hadamard manifolds with $n=2,3$ that contain isoperimetric regions that are not connected. Nonetheless,  this property is true in the Euclidean space for immiscible clusters. This fact is fundamental to proving the desired diameter estimate.
	
	\begin{lemma}\label{Lemma:ConnectednessOfIsoperimetricClustersForSmallVolumes}
		If $\mathbf{\Omega}\in\mathcal{C}_{\delta}^{\alpha}(\mathbb{R}^n, \mathbb{R}^N)$ is an isoperimetric immiscible weighted cluster, then $\tilde\Omega\subset M$ is connected.
	\end{lemma}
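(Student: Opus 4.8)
The plan is to argue by contradiction with a sliding argument, using that translations are isometries of $(\mathbb{R}^n,\delta)$ together with the regularity theory for $\boldsymbol{\mathcal{P}}^{\alpha}$-minimizing clusters recalled in Lemma~\ref{lm:heinzekarcher} and \cite{leonardi2001,MR1402391}. I first note that immiscibility forces $\alpha_{ij}>0$ for every $i\neq j$: otherwise, picking $\ell\notin\{i,j\}$ (possible when $N\geqslant 3$), \eqref{Eq:StrictTriangularInequality} would give both $\alpha_{i\ell}<\alpha_{j\ell}$ and $\alpha_{j\ell}<\alpha_{i\ell}$; the case $N=2$ is the classical fact that Euclidean isoperimetric regions are balls, hence connected. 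So every interface carries a strictly positive weight.

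Suppose $\tilde\Omega$ is disconnected. Since $\mathbf{\Omega}$ is isoperimetric, $\boldsymbol{\mathcal{P}}^{\alpha}(\mathbf{\Omega})<\infty$, $\tilde\Omega$ is bounded (by the boundedness lemma above), and each chamber has an open representative with a.e.-regular reduced boundary \cite{leonardi2001}; then $\tilde\Omega$ has finitely many connected components, pairwise separated by a positive distance (a contact along an $\mathcal{H}^{n-1}$-null set would create a singular point of the reduced boundary incompatible with the structure theorem). Hence I may write $\tilde\Omega=A\,\mathring{\cup}\,B$ with $A,B\neq\emptyset$ unions of components and $\dist(A,B)>0$. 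Put $A_i=\Omega_i\cap A$, $B_i=\Omega_i\cap B$ for $i=1,\dots,N-1$, pick density-one points $a\in A_{i_0}$, $b\in B_{j_0}$, and set $v=(a-b)\,|a-b|^{-1}$.

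For $t\geqslant 0$ let $\mathbf{\Omega}_t$ be the $N$-cluster with interior chambers $A_i\cup(B_i+tv)$ and exterior chamber the complement of their union; each chamber volume is independent of $t$. As long as $A$ and $B+tv$ are essentially disjoint, no interface separates the $A$-part from the $B$-part, so each $\Sigma^t_{ij}$ is the disjoint union of the corresponding interface of $A$ and the $tv$-translate of that of $B$; translation invariance of $\mathcal{H}^{n-1}$ then gives $\boldsymbol{\mathcal{P}}^{\alpha}(\mathbf{\Omega}_t)=\boldsymbol{\mathcal{P}}^{\alpha}(\mathbf{\Omega})$. Since $b+|a-b|\,v=a$ with $a,b$ density points, $A\cap(B+tv)$ has positive measure for $t$ near $|a-b|$, so $t_0:=\sup\{t>0:\mathcal{L}^n(A\cap(B+tv))=0\}$ satisfies $0<t_0\leqslant|a-b|<\infty$. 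Letting $t\uparrow t_0$ gives $\mathbf{\Omega}_t\to\mathbf{\Omega}_{t_0}$ in $L^1$, so $\mathbf{\Omega}_{t_0}$ is a cluster with the prescribed volumes and, by lower semicontinuity, $\boldsymbol{\mathcal{P}}^{\alpha}(\mathbf{\Omega}_{t_0})\leqslant\boldsymbol{\mathcal{P}}^{\alpha}(\mathbf{\Omega})$; hence $\mathbf{\Omega}_{t_0}$ is itself an isoperimetric immiscible cluster, with equality.

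\textbf{The main obstacle} is to contradict the minimality of $\mathbf{\Omega}_{t_0}$ at its contact configuration. By construction $\overline{A}$ and $\overline{B+t_0v}$ meet at a point $p$ while $A$ and $B+t_0v$ stay essentially disjoint, so near $p$ the minimizing cluster $\mathbf{\Omega}_{t_0}$ exhibits an interior chamber $\Omega_{i_0}$ from the $A$-side and an interior chamber $\Omega_{j_0}$ from the $B$-side whose reduced boundaries both reach $p$, with the exterior chamber $\Omega_N$ squeezed between them. Blowing up at $p$ and using that varifold tangents of minimizing clusters are area-minimizing cones (Lemma~\ref{lm:heinzekarcher}, \cite{leonardi2001,MR1402391}), the tangent cone carries across the two touching sheets a total interfacial weight at least $\alpha_{i_0N}+\alpha_{j_0N}$ (or $2\alpha_{i_0N}$ if $i_0=j_0$), whereas a cone separating the same local volumes by a single $\Sigma_{i_0j_0}$-wall costs only $\alpha_{i_0j_0}<\alpha_{i_0N}+\alpha_{j_0N}$ (or $0$, since $\alpha_{i_0N}>0$), by \eqref{Eq:StrictTriangularInequality} with $\ell=N$; hence the tangent cone is not minimizing, a contradiction. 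Equivalently, for $t$ slightly larger than $t_0$ one produces a competitor keeping $A\setminus(B+tv)$ and $(B+tv)\setminus A$ unchanged, resolving the thin overlap $A\cap(B+tv)$ by replacing the two $\Omega_{\bullet N}$-sheets between $\Omega_{i_0}$ and $\Omega_{j_0}$ with a single $\Sigma_{i_0j_0}$-interface (or erasing them when $i_0=j_0$) and restoring the $O(\mathcal{L}^n(A\cap(B+tv)))$ volume defect by an outward normal deformation; immiscibility makes the interfacial gain, of order the contact area, strictly dominate the cost of the volume correction, so $\boldsymbol{\mathcal{P}}^{\alpha}$ strictly decreases. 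Either way $\tilde\Omega$ cannot be disconnected. The delicate points are to justify that first contact occurs along an $\mathcal{H}^{n-1}$-null set (so $\boldsymbol{\mathcal{P}}^{\alpha}$ is truly preserved up to $t_0$) and to make the cut-and-paste near $p$ rigorous, localizing in a small ball and using the regularity of $\partial^*\Omega_{i_0}$, $\partial^*\Omega_{j_0}$ off the lower-dimensional singular set.
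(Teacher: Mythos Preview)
Your sliding argument is essentially the approach the paper invokes: the paper's proof is a one-line reference to \cite[Lemma~9.3]{milman2022structure}, which is precisely this translate-until-contact argument, with the regularity input replaced by Lemma~\ref{lm:heinzekarcher}. You have reconstructed that argument in detail, correctly identifying that immiscibility \eqref{Eq:StrictTriangularInequality} with $\ell=N$ is what makes the double-sheet contact non-minimizing and that the Leonardi/White regularity gives the smooth CMC structure needed at generic boundary points.
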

	\begin{proof}
		The proof is a straightforward adaptation of \cite[Lemma 9.3]{milman2022structure} which relies in the regularity result \cite[Corollary 30.3]{MR2976521}, we instead rely in Lemma \ref{lm:heinzekarcher}.
	\end{proof}
	
	We now state a generalized abstract compactness result for a sequence of weighted clusters. 
	These results can be found in several forms in the literature, we have chosen to state it in the more general fashion that we know, that is, in the context of $\mathrm{RCD}$ spaces. 
	One interested reader can consult \cite{antonelli2022isoperimetric} and the references therein to see the definition and properties of these spaces. 
	For what concern our work, we will always apply the theorem for manifolds of bounded geometry, which are well-known examples of $\mathrm{RCD}$ spaces. 
	In fact, the heuristics behind the $\mathrm{RCD}$ spaces is to define a {metric space of bounded geometry}, which means that given the right notion of curvature and measurability in the metric space we require that it satisfies the properties of Definition \ref{def:boundedgeometry}.
	
	Before the statement of the theorem, we need to define a strong notion of convergence of weighted clusters.
	
	\begin{definition}
		Let $\{(M^n_k, g_k)\}_{k \in \mathbb{N}}$ be a sequence of closed manifolds converging in the pmGH sense to a closed manifold $(M^n_{\infty}, g_{\infty})$. 
		Assume that there exists a locally compact separable metric space $(Z,\ud_Z)$ on which $M_{\infty}$ and $M_k$, for all $k\in\mathbb N$, are isometrically embedded. 
		We say that a sequence of clusters $\{\mathbf{\Omega}_k\}_{k\in\mathbb N} \subset \mathcal C_{g_k}^{\alpha}(M_k,\mathbb R^N)$ converges in the $L^{1}$-strong sense to a cluster $\mathbf{\Omega} \in \mathcal C_{g_{\infty}}^{\alpha}(M_{\infty},\mathbb R^N)$ when
		\begin{itemize}
			\item[{\rm (i)}] $\mathbf{{v}}_g(\mathbf{\Omega}_k) \rightarrow \mathbf{v}_g(\mathbf{\Omega})$ as $k\rightarrow\infty$;
			\item[{\rm (ii)}] $\chi_{\Omega_{ki}}\mathcal{H}_g^{n} \rightharpoonup \chi_{\Omega_i} \mathcal{H}_{d_Z}^{n}$ as $k\rightarrow\infty$ for all $i\in\{1,\cdots N\}$ with respect to the duality with continuous bounded functions with bounded support on $Z$.
		\end{itemize}
	\end{definition}
	
	\begin{proposition}\label{Thm:GeneralizedCompactnessForClusters}
		Let $\{(M^n_k,g_k)\}_{k\in\mathbb N}$ be a sequence of Riemannian manifold of bounded geometry.
		Assume that $\{\mathbf{\Omega}_k\}_{k\in\mathbb N}\subset \mathcal C_{g_k}^{\alpha}(M_k,\mathbb R^N)$ is a sequence of bounded weighted clusters such that $\sup_{k\in\mathbb N}(\boldsymbol{\mathcal{P}}^{\alpha}_{g_k}(\mathbf{\Omega}_{k})+\mathrm{v}_{g_k}(\widetilde{\Omega}_{k}))<\infty$. Then, up to subsequence, there exists a nondecreasing, possibly unbounded, sequence $\{Q_{k}\}_{k \in \mathbb{N}} \subset \mathbb{N}$ and $\{x^j_{k}\}_{k\in\mathbb N} \subset M_k$ with $1 \leqslant j \leqslant Q_{k}$ for any $k\in\mathbb N$, and pairwise disjoint subclusters $\mathbf{\Omega}_{k}^{j} \subset \mathbf{\Omega}_{k}$ satisfying:
		
		\begin{enumerate}
			\item[{\rm (i)}] $\lim_{k\rightarrow\infty} \mathrm{d}_{g_k}(x^j_{k}, x^{\ell}_{k})=\infty$ for any $j \neq \ell<\bar{Q}+1$, where $\bar{Q}:=\lim_{k\rightarrow\infty} Q_{k} \in \mathbb{N} \cup\{\infty\}$;
			\item[{\rm (ii)}] For every $1 \leqslant j<\bar{Q}+1$, $(M_k, \mathrm{d}_{g_k},\mathcal{L}^{n}_{g_k}, x^j_{k})$ converges to $(M_{\infty}^{j}, \mathrm{d}^j_{{\infty}}, \mathcal{L}^{n}_{g^j_{\infty}}, x^j_{\infty})$ as $k \rightarrow\infty$ in the pmGH sense, where the limit is a pointed $\operatorname{RCD}(\kappa, n)$ space.
			
			\item[{\rm (iii)}] There exist subclusters $\mathbf{\Omega}^{j}_{\infty}\in\mathcal C^{\alpha}_{g^j_{\infty}}(M^j_{\infty},\mathbb R^N)$ such that $\mathbf{\Omega}_{k}^{j}$ converges to $\mathbf{\Omega}_{\infty}^{j}$ as $k\rightarrow\infty$ in the $L^{1}$-strong sense, and
			\begin{equation*}
				\begin{aligned}
					\lim_{k\rightarrow\infty} \mathcal{L}_{g^j_{\infty}}^{n}(\Omega_{ik})&=\sum_{j=1}^{\tilde{Q}} \mathcal{L}_{g^j_{\infty}}^{n}(\Omega_{\infty i}^{j}) \quad {\rm and} \quad       \sum_{j=1}^{\tilde{Q}} \boldsymbol{\mathcal{P}}^{\alpha}_{g^{j}_{\infty}}\left(\mathbf{\Omega}_{\infty}^{j}\right) &\leqslant \liminf _{k\rightarrow\infty} \boldsymbol{\mathcal{P}}^{\alpha}_{g_k}\left(\mathbf{\Omega}_{k}\right).        
				\end{aligned}
			\end{equation*}
		\end{enumerate}
		Moreover, if $\mathbf{\Omega}_{k}\in\mathcal C_{g_{k}}^{\alpha}(M_k,\mathbb R^N)$ is an isoperimetric cluster in for any $k$, then $\mathbf{\Omega}_{\infty}^{j}\in\mathcal C_{g^{j}_{\infty}}^{\alpha}(M_{\infty}^{j},\mathbb R^N)$ is an isoperimetric clusters for any $j<\tilde{Q}+1$ and $ \boldsymbol{\mathcal{P}}^{\alpha}_{g^{j}_{\infty}}(\mathbf{\Omega}_{\infty}^{j})=\lim_{k\rightarrow\infty} \boldsymbol{\mathcal{P}}^{\alpha}_{g_k}(\mathbf{\Omega}_{k}^{j})$ for any $j<\tilde{Q}+1$.
	\end{proposition}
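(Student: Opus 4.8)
The plan is to prove Proposition~\ref{Thm:GeneralizedCompactnessForClusters} by an iterated concentration--compactness (``bubbling'') scheme, reducing the analysis at each concentration scale to two well-developed packages: Gromov precompactness for pointed spaces of bounded geometry together with the stability of the $\mathrm{RCD}(\kappa,n)$ condition under pointed measured Gromov--Hausdorff (pmGH) convergence, which produces the limit spaces $(M^j_\infty,\mathrm d^j_\infty,\mathcal L^n_{g^j_\infty},x^j_\infty)$; and the compactness together with lower semicontinuity of the perimeter for sets of finite perimeter along pmGH-converging sequences of $\mathrm{RCD}$ spaces (see \cite{antonelli2022isoperimetric} and the references therein), applied to each chamber $\Omega_{ik}$ separately. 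Compared with the scalar case $N=2$, the only additional work is to transport the cluster structure through the construction, which is controlled uniformly in $k$ by the relative isoperimetric inequality on small balls of bounded geometry.

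First I would introduce the concentration function: for a fixed small radius $R\in(0,\overline r_1(n,\kappa))$ put $\mathcal Q_k(R):=\sup_{x\in M_k}\mathrm v_{g_k}(\widetilde\Omega_k\cap\mathcal B^{g_k}_R(x))$. If $\liminf_k\mathcal Q_k(R)=0$, the no-vanishing estimate below forces $\mathrm v_{g_k}(\widetilde\Omega_k)\to 0$ and one takes $\bar Q=0$. Otherwise, along a subsequence choose $x^1_k$ nearly attaining the supremum; by bounded geometry and Gromov precompactness, along a further subsequence $(M_k,\mathrm d_{g_k},\mathcal L^n_{g_k},x^1_k)$ converges in the pmGH sense to an $\mathrm{RCD}(\kappa,n)$ space $(M^1_\infty,\mathrm d^1_\infty,\mathcal L^n_{g^1_\infty},x^1_\infty)$ realized inside a common locally compact space $Z$, and by the perimeter compactness recalled above each $\chi_{\Omega_{ik}}\mathcal L^n_{g_k}\rightharpoonup\chi_{\Omega^1_{\infty i}}\mathcal L^n_{g^1_\infty}$, defining $\mathbf\Omega^1_\infty\in\mathcal C^\alpha_{g^1_\infty}(M^1_\infty,\mathbb R^N)$ with $\boldsymbol{\mathcal P}^\alpha_{g^1_\infty}(\mathbf\Omega^1_\infty)\le\liminf_k\boldsymbol{\mathcal P}^\alpha_{g_k}(\mathbf\Omega_k)$.

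Next I would iterate. Having extracted $j$ centres $x^1_k,\dots,x^j_k$ and the corresponding limit subclusters, form residual clusters by restricting $\mathbf\Omega_k$ to $M_k\setminus\bigcup_{\ell\le j}\mathcal B^{g_k}_{\rho_k}(x^\ell_k)$ with radii $\rho_k\to\infty$ selected by a coarea/good-radius argument so that the cutting spheres have small $(n-1)$-measure (using the uniform volume-growth bound), reassigning the removed volume to the exterior chamber; the multi-perimeter then increases only by these controlled interfaces, so the uniform bound persists. If the residual concentration function still does not vanish, pick $x^{j+1}_k$ nearly attaining its supremum with $\mathrm d_{g_k}(x^{j+1}_k,x^\ell_k)\to\infty$ for all $\ell\le j$, which is possible because the excised balls already carry a definite fraction of the concentrated volume near the old centres. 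A diagonal extraction over $j$ yields the nondecreasing, possibly unbounded sequence $\{Q_k\}$ with $\bar Q=\lim_k Q_k$, hence assertions (i) and (ii) and the $L^1$-strong convergence $\mathbf\Omega^j_k\to\mathbf\Omega^j_\infty$. For the volume balance one uses the estimate: if $\mathrm v_g(\widetilde\Omega\cap\mathcal B^g_R(x))\le\varepsilon$ for all $x$ with $\varepsilon$ small, then a bounded-overlap covering of $\widetilde\Omega$ by $R$-balls and the relative isoperimetric inequality on each give $\boldsymbol{\mathcal P}^\alpha_g(\mathbf\Omega)\ge c(n,N,\kappa,\mathrm v_0,\alpha)\,\mathrm v_g(\widetilde\Omega)\,\varepsilon^{-1/n}$; applied to the residual clusters this shows that their total volume tends to $0$ as the number of bubbles tends to $\bar Q$, so $\lim_k\mathcal L^n_{g_k}(\Omega_{ik})=\sum_{j<\bar Q+1}\mathcal L^n_{g^j_\infty}(\Omega^j_{\infty i})$, and together with lower semicontinuity at each bubble we obtain the perimeter inequality in (iii).

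For the last part, if every $\mathbf\Omega_k$ is isoperimetric, any competitor for $\mathbf\Omega^j_\infty$ on $M^j_\infty$ transplants through the pmGH approximation to a competitor on $M_k$ with asymptotically equal volume and multi-perimeter (with a vanishing volume correction routed through the exterior chamber), so minimality passes to the limit; testing $\mathbf\Omega^j_\infty$ against itself in the same way upgrades lower semicontinuity to the stated equality $\boldsymbol{\mathcal P}^\alpha_{g^j_\infty}(\mathbf\Omega^j_\infty)=\lim_k\boldsymbol{\mathcal P}^\alpha_{g_k}(\mathbf\Omega^j_k)$. I expect the main obstacle to be executing this dichotomy uniformly in $k$: precisely, the no-vanishing estimate, and the verification that excising a ball from a cluster and re-labelling the excised volume inflates the weighted multi-perimeter only by the small interface created --- which uses the strict triangle structure of the coefficient matrix $\alpha$ so that no pre-existing interface is recharged with a larger weight. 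Everything else is a direct application of the $\mathrm{RCD}$ perimeter-compactness and pmGH-stability theory.
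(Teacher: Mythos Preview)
Your proposal is correct and follows essentially the same route as the paper: the paper's proof simply states that one carries over the argument of \cite[Theorem~1.2]{antonelli2022isoperimetric} \emph{mutatis mutandis}, replacing finite-perimeter sets by weighted clusters and the standard perimeter by the multi-perimeter (which is lower semicontinuous by Remark~\ref{rmk:immisciblecondition}). Your concentration--compactness scheme with iterated bubble extraction, pmGH/$\mathrm{RCD}$ precompactness, chamber-by-chamber perimeter compactness, and competitor transplantation for the isoperimetric case is precisely the content of that cited theorem, so you have spelled out in detail what the paper leaves as a reference.
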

	
	\begin{proof} 
		The proof is carried out along the same lines as the proof of \cite[Theorem 1.2]{antonelli2022isoperimetric}, replacing mutatis mutandis, finite perimeter sets by weighted clusters, and the standard perimeter by the multi-perimeter for clusters, which satisfies the lower semicontinuity property with respect to the usual $L^1_{loc}$ convergence of clusters chamber by chambers (see Remark \ref{rmk:immisciblecondition}).
	\end{proof}
	
	In the light of the last result, our strategy consists in first studying the limit problem in $(\mathbb R^n,\delta)$.
	This is the $L^1$-strong limit of the sequence of blow-up metrics or the infinitesimal problem at small scales. 
	The result below states that any isoperimetric sequence of clusters can be replaced, up to volume and perimeter, by a new sequence whose diameter of its elements becomes small enough. 
	Due to the fact that we can choose one subcluster (which is the original cluster intersected with a smartly chosen ball) that contributes to almost all volume and perimeter of the original cluster, this result is usually called \emph{selecting a large subdomain}. The proof of the following lemma relies crucially on three properties of isoperimetric clusters, namely, existence, uniqueness up to isometries, and connectedness. 
	
	\begin{lemma}\label{lm:SelectingaLarge}
		Let $\{\mathbf{\Omega}_k\}_{k\in \mathbb{N}}\subset \mathcal C^{\alpha}_{\delta}(\mathbb{R}^n,\mathbb R^N)$ be a sequence of isoperimetric weighted clusters. 
		There exists another sequence $\{\mathbf{\Omega}_k^{\prime}\}_{k\in \mathbb{N}}\subset \mathcal C^{\alpha}_{\delta}(\mathbb{R}^n,\mathbb R^N)$ such that 
		\begin{itemize}
			\item[{\rm (i)}]$\lim_{k\rightarrow\infty} {\mathrm{v}_{\delta}({\mathbf{\Omega}}_k\triangle {\mathbf{\Omega}}_k^{\prime})}{\mathrm{v}_{\delta}({\mathbf{\Omega}}_k)}^{-1}=0$;
			\item[{\rm (ii)}]$\lim_{k\rightarrow\infty} {\mathrm{v}_{\delta}(\mathbf{\Omega}_{k}^{\prime})}{\mathrm{v}_{\delta}(\mathbf{\Omega}_{k})}^{-1}=1$;
			\item[{\rm (iii)}]$\lim_{k\rightarrow\infty}{\boldsymbol{\mathcal{P}}^{\alpha}_{\delta}(\mathbf{\Omega}_k^{\prime})}{\boldsymbol{\mathcal{P}}^{\alpha}_{\delta}(\mathbf{\Omega}_k)}^{-1}=1$;
			\item[{\rm (iv)}]$\lim_{k\rightarrow\infty}\diam_{\delta}(\widetilde{{\Omega}}_k^{\prime})=0$.
		\end{itemize}
	\end{lemma}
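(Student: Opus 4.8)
The plan is to remove the scaling degeneracy of the problem, show that the rescaled isoperimetric clusters neither break apart nor disperse, and then truncate them inside a fixed ball; rescaling back will drive the diameters to zero. Throughout I assume, as is implicit in the small–volume setting of this section, that $\mathrm{v}_k:=\mathrm{v}_\delta(\widetilde\Omega_k)\to0$, and that $\alpha$ is immiscible, so that Lemmas~\ref{lm:heinzekarcher} and \ref{Lemma:ConnectednessOfIsoperimetricClustersForSmallVolumes} apply. Since (i)–(iv) concern the limit $k\to\infty$, it suffices to argue along an arbitrary subsequence; I pass to one on which the normalized volumes $\mathrm{v}_\delta(\Omega_{ik})/\mathrm{v}_k$ ($i=1,\dots,N-1$) converge to a vector $\mathbf{w}$ with $|\mathbf{w}|=1$. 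Using the scaling invariance $\boldsymbol{\mathcal{I}}^{\alpha}_{(\mathbb{R}^n,\delta)}(t\,\mathbf{v})=t^{(n-1)/n}\boldsymbol{\mathcal{I}}^{\alpha}_{(\mathbb{R}^n,\delta)}(\mathbf{v})$, the rescaled clusters $\widehat{\mathbf{\Omega}}_k:=\mathrm{v}_k^{-1/n}\mathbf{\Omega}_k$ are again isoperimetric, with $\mathrm{v}_\delta(\widetilde{\widehat\Omega}_k)=1$ and normalized volumes tending to $\mathbf{w}$. The two–sided isoperimetric–profile estimate (applied on $(\mathbb{R}^n,\delta)$), together with homogeneity and continuity of the profile, bounds $\boldsymbol{\mathcal{P}}^{\alpha}_\delta(\widehat{\mathbf{\Omega}}_k)=\boldsymbol{\mathcal{I}}^{\alpha}_{(\mathbb{R}^n,\delta)}(\mathbf{v}_\delta(\widehat{\mathbf{\Omega}}_k))$ uniformly in $k$; Lemma~\ref{lm:heinzekarcher} then bounds the generalized mean curvature of the reduced boundaries of the $\widehat{\mathbf{\Omega}}_k$ uniformly, and feeding this into the monotonicity formula (valid at all scales since ${\rm inj}_\delta=\infty$) yields a uniform lower density estimate $\boldsymbol{\mathcal{P}}^{\alpha}_\delta(\widehat{\mathbf{\Omega}}_k,\mathcal{B}_\rho(y))\geqslant c\,\rho^{\,n-1}$ at interface points $y$, for all small $\rho$ and with $c$ independent of $k$.

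The central step is that no mass of $\widehat{\mathbf{\Omega}}_k$ escapes to infinity. Apply the generalized compactness Proposition~\ref{Thm:GeneralizedCompactnessForClusters} with $M_k=\mathbb{R}^n$: up to a further subsequence, $\widehat{\mathbf{\Omega}}_k$ decomposes into finitely many subclusters drifting mutually to infinity, each converging in $L^1$–strong to an isoperimetric cluster in $\mathbb{R}^n$, with no volume loss and with converging perimeters. If two of the limit subclusters had positive volumes $\mathbf{a},\mathbf{b}$, then—using that each is isoperimetric and that the total perimeter is bounded above by $\boldsymbol{\mathcal{I}}^{\alpha}_{(\mathbb{R}^n,\delta)}(\mathbf{w})$—one would obtain $\boldsymbol{\mathcal{I}}^{\alpha}_{(\mathbb{R}^n,\delta)}(\mathbf{a})+\boldsymbol{\mathcal{I}}^{\alpha}_{(\mathbb{R}^n,\delta)}(\mathbf{b})\leqslant\boldsymbol{\mathcal{I}}^{\alpha}_{(\mathbb{R}^n,\delta)}(\mathbf{a}+\mathbf{b})$, forcing equality in the reverse (and always valid, by placing minimizers far apart) inequality; but then the far–apart union of a minimizer for $\mathbf{a}$ and a minimizer for $\mathbf{b}$ is itself an isoperimetric cluster for $\mathbf{a}+\mathbf{b}$ that is disconnected, contradicting uniqueness together with the connectedness of Lemma~\ref{Lemma:ConnectednessOfIsoperimetricClustersForSmallVolumes}. (For $N=3$, uniqueness is Lawlor's weighted double–bubble theorem \cite{lawlor2014double}; for larger $N$ it is assumed, as discussed in the introduction.) Hence there is no splitting: $\widehat{\mathbf{\Omega}}_k$ converges in $L^1$–strong to the unique isoperimetric cluster of volume $\mathbf{w}$, which has finite diameter $D_\infty$.

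Now I \emph{select a large subdomain}. By the $L^1$–convergence of the previous step, for a fixed $R>D_\infty+1$ there are points $y_k\in\widetilde{\widehat\Omega}_k$ with $\mathrm{v}_\delta(\widetilde{\widehat\Omega}_k\setminus\mathcal{B}_{R}(y_k))=:\eta_k\to0$; set $\widehat{\mathbf{\Omega}}_k':=\widehat{\mathbf{\Omega}}_k\llcorner\mathcal{B}_{r_k}(y_k)$, choosing the radius $r_k\in[R,R+1]$ by the coarea formula so that the newly created interface on $\partial\mathcal{B}_{r_k}(y_k)$ has area at most $C\eta_k$. The discarded tail also carries asymptotically negligible multi–perimeter: otherwise $\widehat{\mathbf{\Omega}}_k'$—a competitor with volume converging to $\mathbf{w}$ but perimeter strictly below $\boldsymbol{\mathcal{I}}^{\alpha}_{(\mathbb{R}^n,\delta)}(\mathbf{w})$ in the limit—would contradict the minimality of $\widehat{\mathbf{\Omega}}_k$ and continuity of the profile. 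Thus $\widehat{\mathbf{\Omega}}_k'$ satisfies the analogues of (i)–(iii) and $\diam_\delta(\widetilde{\widehat\Omega}_k')\leqslant2(R+1)$. Rescaling back, $\mathbf{\Omega}_k':=\mathrm{v}_k^{1/n}\widehat{\mathbf{\Omega}}_k'$ equals $\mathbf{\Omega}_k$ intersected with a ball of radius $\mathrm{v}_k^{1/n}r_k$ centered at a point of $\widetilde\Omega_k$, so by scaling invariance it inherits (i)–(iii), while $\diam_\delta(\widetilde\Omega_k')\leqslant2(R+1)\,\mathrm{v}_k^{1/n}\to0$, which is (iv).

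I expect the main obstacle to be the no–escape/no–dispersion statement of the second paragraph, i.e. controlling a connected isoperimetric cluster of unit volume and bounded mean curvature. The argument above reduces it to the strict subadditivity of the Euclidean multi–isoperimetric profile, extracted from the conjunction of existence, uniqueness (\cite{lawlor2014double} for $N=3$) and connectedness; a self–contained alternative bounds $\diam_\delta(\widetilde{\widehat\Omega}_k)$ directly, using connectedness and the intermediate value theorem to manufacture a $\rho$–separated family of points along $\widetilde{\widehat\Omega}_k$, each small ball about which either captures a fixed fraction of the unit volume or, by the density estimate of the first paragraph, a fixed amount of the bounded multi–perimeter, so that a pigeonhole bounds their number—the delicate point being points in the measure–theoretic interior of a chamber, where one falls back on the volume constraint. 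The remaining coarea–and–minimality bookkeeping in the truncation step is routine.
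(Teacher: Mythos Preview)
Your proof is correct and follows essentially the same route as the paper: rescale by $\mathrm{v}_k^{-1/n}$, apply the generalized compactness Proposition~\ref{Thm:GeneralizedCompactnessForClusters}, rule out splitting via connectedness (Lemma~\ref{Lemma:ConnectednessOfIsoperimetricClustersForSmallVolumes}), and then truncate in a ball. The paper is terser—it packages the limit pieces directly into a single cluster $\mathbf{\Omega}_\infty\in\mathcal{C}^\alpha_\delta(\mathbb{R}^n,\mathbb{R}^N)$, observes it is isoperimetric, and invokes connectedness immediately; for (i), (iii), (iv) it simply cites \cite[Theorem~4.2]{MR4130849} rather than spelling out the coarea truncation you give.

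One small point: your appeal to \emph{uniqueness} (Lawlor for $N=3$, assumed otherwise) is unnecessary and makes your argument look more restrictive than it is. Once you have $\boldsymbol{\mathcal{I}}^\alpha(\mathbf{a})+\boldsymbol{\mathcal{I}}^\alpha(\mathbf{b})=\boldsymbol{\mathcal{I}}^\alpha(\mathbf{a}+\mathbf{b})$, the far-apart union is \emph{some} isoperimetric cluster that is disconnected, and Lemma~\ref{Lemma:ConnectednessOfIsoperimetricClustersForSmallVolumes} alone gives the contradiction—no uniqueness needed. This is exactly how the paper proceeds, and it keeps the lemma valid for general $N$. Your preliminary density/mean-curvature estimates in the first paragraph are also not used in your main line of argument (only in the alternative sketch at the end), so you could drop them.
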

	\begin{proof}
		Let us fix $\mathrm{v}_k:=\mathrm{v}_{\delta}(\widetilde{\Omega}_k)$ and define the following sequence of manifolds of bounded geometry
		\begin{equation*}
			(M_k, \mathrm{d}_{\mathrm{g}_k}, \mathcal{L}_{\mathrm{g}_k}^n):= (\mathbb{R}^n, \mathrm{v}_k^{-{1}/{n}}\mathrm{d}, \mathcal{L}^n) \quad {\rm for \ all} \quad k\in\mathbb{N}.
		\end{equation*}
		Hence, applying Proposition \ref{Thm:GeneralizedCompactnessForClusters} to the last sequence, we find
		\begin{equation*}
			(M_k, \mathrm{d}_{\mathrm{g}_k}, \mathcal{L}_{\mathrm{g}_k}^n)\rightarrow (M^{j}_{\infty}, \mathrm{d}^j_{\infty},\mathcal{L}^n_{g^j_\infty}) = (\mathbb{R}^n, \ud, \mathcal{L}^n) \quad {\rm for \ all} \quad j\in [1,\tilde{Q}+1)\cap\mathbb{N}.
		\end{equation*}
		Also, there exists $\mathbf{\Omega}^{j}_{\infty}\in\mathcal C^{\alpha}_{g^j_{\infty}}(M^j_{\infty},\mathbb R^N)$ such that $\mathbf{\Omega}_{k}^{j}$ converges to $\mathbf{\Omega}_{\infty}^{j}$ as $k\rightarrow\infty$ in the $L^{1}$-strong sense, which allows us to define
		\begin{equation*}
			\mathbf{\Omega}_{\infty}\in\mathcal{C}^{\alpha}_{\delta}(\mathbb R^n,\mathbb R^N) \quad \mbox{given by} \quad \mathbf{\Omega}_{\infty}:=\left(\cup_{j=1}^{\tilde{Q}}\Omega_{\infty 1}^{j},\dots, \cup_{j=1}^{\tilde{Q}}\Omega_{\infty N}^{j}\right).
		\end{equation*}
		Next, setting $g_k:=\mathrm{v}_k^{-2/n}\delta$ and rescaling the multi-perimeter accordingly, we have the following 
		\begin{equation*}
			\boldsymbol{\mathcal{P}}^{\alpha}_{\delta}\left(\mathbf{\Omega}_{\infty}\right) \leqslant \liminf_{k \rightarrow \infty} \boldsymbol{\mathcal{P}}^{\alpha}_{g_k}(\mathbf{\Omega}_{k})= \liminf_{k \rightarrow \infty}\frac{ \boldsymbol{\mathcal{P}}^{\alpha}_{\delta}\left(\mathbf{\Omega}_{k}\right)}{\mathrm{v}_k^{\frac{n-1}{n}}}= \liminf_{k\rightarrow \infty} \frac{\boldsymbol{\mathcal{I}}^{\alpha}_{(\mathbb{R}^n,\delta)}\left(\mathrm{v}_g\left(\mathbf{\Omega}_{k}\right)\right)}{\mathrm{v}_k^{\frac{n-1}{n}}},
		\end{equation*}
		where we have used that $\mathbf{\Omega}_k\in\mathcal C^{\alpha}_{\delta}(\mathbb{R}^n,\mathbb R^N)$ is isoperimetric for all $k\in\mathbb N$.
		
		We now recall that the Euclidean multi-isoperimetric profile is homogeneous and continuous. The continuity is stated in \cite[Theorem 5]{resende2022clusters} and homogeneity is a direct consequence of the uniqueness, up to isometries. Using these properties, we get
		\begin{equation}\label{eq:auxiliary}
			\begin{aligned}
				\boldsymbol{\mathcal{P}}^{\alpha}_{\delta}\left(\mathbf{\Omega}_{\infty}\right)\leqslant \liminf_{k\rightarrow \infty} \boldsymbol{\mathcal{I}}^{\alpha}_{(\mathbb{R}^n,\delta)}\left(\frac{\mathrm{v}_g\left(\mathbf{\Omega}_{k}\right)}{\mathrm{v}_k}\right) = \boldsymbol{\mathcal{I}}^{\alpha}_{(\mathbb{R}^n,\delta)}(\lambda_1,\dots, \lambda_{N-1}),
			\end{aligned}
		\end{equation}
		where $\lambda_i := \liminf_{k\rightarrow \infty}{\mathrm{v}_{\delta}\left(\Omega_{ki}\right)}{\mathrm{v}_k^{-1}}\geqslant0.$
		Furthermore, it is straightforward to check the following properties
		\begin{equation*}
			\sum_{i=1}^{N-1}\lambda_i = 1, \quad \lambda_i = \mathrm{v}_{\delta}(\Omega_{\infty i}) = \sum_{j=1}^{\tilde{Q}}\mathrm{v}_{\delta}(\Omega_{\infty i}^{j}), \quad {\rm and} \quad
			\lim_{k\rightarrow\infty}\mathrm{v}_{\delta}(\Omega_{ki}^{j}) = \mathrm{v}_{\delta}(\Omega_{\infty i}^{j}).
		\end{equation*}
		
		Now, assume by contradiction that $\tilde{Q}>1$ then $\mathbf{\Omega}_{\infty}\in\mathcal{C}^{\alpha}_{\delta}(\mathbb R^n,\mathbb R^N)$ is an isoperimetric immiscible weighted cluster such that $\widetilde{\Omega}_{\infty}$ is disconnected, which contradicts Lemma \ref{Lemma:ConnectednessOfIsoperimetricClustersForSmallVolumes}, if $\lambda_i>0$ for all $i\in\{1,\cdots, N-1\}$. When there exists $\Lambda\subset\{1,\cdots, N-1\}$ such that $\lambda_i=0$ for $i \in\Lambda$ and $\#\Lambda \geq 1$, we obtain the same contradiction with \ref{Lemma:ConnectednessOfIsoperimetricClustersForSmallVolumes} for a immiscible weighted cluster having $N-\#\Lambda$ chambers.
		Hence, we obtain that $\tilde{Q}=1$ and there exists a small volume $\mathrm{v}_*>0$ such that for every isoperimetric cluster $\mathbf{\Omega}\in\mathcal{C}^{\alpha}_{\delta}(\mathbb R^n,\mathbb R^N)$ satisfying $\mathrm{v}_{\delta}(\mathbf{\Omega})\in (0, \mathrm{v}_*)^{N-1}$, there exists $x_{\mathbf{\Omega}}\in\mathbb R^n$, $\mu>0$ and $r(\mathrm{v})=\mu\mathrm{v}^{1/n}$ such that 
		\begin{equation*}
			\lim_{\mathrm{v}\rightarrow 0^{+}}\frac{\sum_{i=1}^{N-1}\mathrm{v}_g(\Omega_i\cap \mathcal{B}^{\delta}_{r(\mathrm{v})}(x_{\mathbf{\Omega}}))}{\mathrm{v}_g(\widetilde{\Omega})} = 1,
		\end{equation*}
		which proves (ii). To conclude the proof of (i), (iii), and (iv) we can do the same argument of \cite[Theorem 4.2]{MR4130849}.
	\end{proof}
	
	In what follows, we prove the main result in the section.
	The ideas of the proof are similar in spirit to the ones in Lemma \ref{lm:SelectingaLarge}. 
	We emphasize that this is the only part where we need the assumption $N=3$.
	
	\begin{proof}[Proof of Proposition~\ref{lm:SelectingaLargeCompact}]
		Let us fix $\mathrm{v}_k:=\mathrm{v}_{g}(\widetilde{\Omega}_k)$ and define the following sequence of manifolds of bounded geometry
		\begin{equation*}
			(M_k, \mathrm{d}_{\mathrm{g}_k}, \mathcal{L}_{\mathrm{g}_k}^n):= (\mathbb{R}^n, \mathrm{v}_k^{-{1}/{n}}\mathrm{d}, \mathcal{L}^n) \quad {\rm for \ all} \quad k\in\mathbb{N}.
		\end{equation*}
		Hence, applying Proposition \ref{Thm:GeneralizedCompactnessForClusters} to the last sequence, we find
		\begin{equation*}
			(M_k, \mathrm{d}_{\mathrm{g}_k}, \mathcal{L}_{\mathrm{g}_k}^n)\rightarrow (M^{j}_{\infty}, \mathrm{d}^j_{\infty},\mathcal{L}^n_{g^j_\infty}) = (\mathbb{R}^n, \ud, \mathcal{L}^n) \quad {\rm for \ all} \quad j\in [1,\tilde{Q}+1)\cap\mathbb{N}.
		\end{equation*}
		Also, there exists $\mathbf{\Omega}^{j}_{\infty}\in\mathcal C^{\alpha}_{g^j_{\infty}}(M^j_{\infty},\mathbb R^3)$ such that $\mathbf{\Omega}_{k}^{j}$ converges to $\mathbf{\Omega}_{\infty}^{j}$ as $k\rightarrow\infty$ in the $L^{1}$-strong sense, which allows us to define
		\begin{equation*}
			\mathbf{\Omega}_{\infty}\in\mathcal{C}^{\alpha}_{\delta}(\mathbb R^n,\mathbb R^3) \quad \mbox{given by} \quad \mathbf{\Omega}_{\infty}:=\left(\cup_{j=1}^{\tilde{Q}}\Omega_{\infty 1}^{j}, \cup_{j=1}^{\tilde{Q}}\Omega_{\infty 2}^{j}, \cup_{j=1}^{\tilde{Q}}\Omega_{\infty 3}^{j}\right).
		\end{equation*}
		Next, setting $g_k:=\mathrm{v}_k^{-2/n} g$ and rescaling the multi-perimeter accordingly, we have the following 
		\begin{equation}\label{finalineq}
			\boldsymbol{\mathcal{P}}^{\alpha}_{\delta}\left(\mathbf{\Omega}_{\infty}\right) \leqslant \liminf_{k \rightarrow \infty} \boldsymbol{\mathcal{P}}^{\alpha}_{g_k}(\mathbf{\Omega}_{k})= \liminf_{k \rightarrow \infty}\frac{ \boldsymbol{\mathcal{P}}^{\alpha}_{g}\left(\mathbf{\Omega}_{k}\right)}{\mathrm{v}_k^{\frac{n-1}{n}}}= \liminf_{k\rightarrow \infty} \frac{\boldsymbol{\mathcal{I}}^{\alpha}_{(M,g)}\left(\mathrm{v}_g\left(\mathbf{\Omega}_{k}\right)\right)}{\mathrm{v}_k^{\frac{n-1}{n}}},
		\end{equation}
		where we have used that $\mathbf{\Omega}_k\in\mathcal C^{\alpha}_{g}(M,\mathbb R^3)$ is isoperimetric for all $k\in\mathbb N$.
		
		Notice that for small volumes, we can define
		\begin{equation*}
			\widehat{\mathbf{\Omega}}_k:=\exp_{x_k^*}(\mathbf{\Omega}'_k)= \left(\exp_{x_k^*}({\Omega}'_{k1}), \exp_{x_k^*}({\Omega}'_{k2}),M\setminus\exp_{x_k^*}(\widetilde{\Omega}'_{k}) \right) \in \mathcal{C}^{\alpha}_{g}(M, \mathbb{R}^3),
		\end{equation*}
		where $x_k^*\in M$ is fixed once at all and $\mathbf{\Omega}'_k\in \mathcal{C}^{\alpha}_g(\mathbb R^n, \mathbb{R}^3)$ are obtained in Lemma \ref{lm:SelectingaLarge} when applied to a sequence of Euclidean isoperimetric clusters $\mathbf{\Omega}^*_k\in \mathcal{C}^{\alpha}_g(\mathbb{R}^n, \mathbb{R}^3)$ satisfying $\mathbf{\mathrm{v}}_g(\mathbf{\Omega}^*_k)\sim \mathbf{\mathrm{v}}_g(\mathbf{\Omega}_{k})$ as $k\rightarrow\infty$. 
		Thus, combining \eqref{finalineq} with
		by (i)--(iv) of Lemma \ref{lm:SelectingaLarge} and the asymptotic expansion of the metric $g$ in exponential normal coordinates yields 
		\begin{eqnarray}\label{Eq:UpperBoundOf The isoperimetricProfile}
			\liminf_{k\rightarrow \infty}\frac{ \boldsymbol{\mathcal{I}}^{\alpha}_{(M,g)}(\mathrm{v}_g(\mathbf{\Omega}_{k}))}{\mathrm{v}_k^{\frac{n-1}{n}}} & \leqslant & \liminf_{k\rightarrow \infty}\frac{\boldsymbol{\mathcal{P}}^{\alpha}_{g}(\widehat{\mathbf{\Omega}}_k)}{\mathrm{v}_k^{\frac{n-1}{n}}} = \liminf_{k\rightarrow \infty} \frac{\boldsymbol{\mathcal{I}}^{\alpha}_{(\mathbb{R}^n,\delta)}(\mathrm{v}_g(\mathbf{\Omega}_{k}))}{\mathrm{v}_k^{\frac{n-1}{n}}}.
		\end{eqnarray}
		
		Whence, we straightforwardly get 
		\begin{equation*}
			\boldsymbol{\mathcal{P}}^{\alpha}_{\delta}\left(\mathbf{\Omega}_{\infty}\right) 
			\leqslant \liminf_{k\rightarrow \infty} \frac{\boldsymbol{\mathcal{I}}^{\alpha}_{(\mathbb{R}^n,\delta)}\left(\mathrm{v}_g\left(\mathbf{\Omega}_{k}\right)\right)}{\mathrm{v}_k^{\frac{n-1}{n}}} = \boldsymbol{\mathcal{I}}^{\alpha}_{(\mathbb{R}^n,\delta)}\left(\lambda_1, \cdots, \lambda_{N-1}\right),
		\end{equation*}
		where we used the same argument as in \eqref{eq:auxiliary}. 
		As before, it is easy to verify the following properties
		\begin{equation*}
			\lambda_1+\lambda_2 = 1, \quad \lambda_i = \mathrm{v}_{\delta}(\Omega_{\infty i}) = \sum_{j=1}^{\tilde{Q}}\mathrm{v}_{\delta}(\Omega_{\infty i}^{j}), \quad {\rm and} \quad
			\lim_{k\rightarrow\infty}\mathrm{v}_{\delta}(\Omega_{ki}^{j}) = \mathrm{v}_{\delta}(\Omega_{\infty i}^{j}).
		\end{equation*}
		Therefore, by the exact same argument as in Lemma~\ref{lm:SelectingaLarge}, we get that $\tilde{Q}=1$. 
	\end{proof}
	
	As a consequence, of Proposition~\ref{lm:SelectingaLargeCompact}, we finish this section with the asymptotic expansion for the immiscible weighted multi-isoperimetric profile. Namely, we show that the weighted multi-isoperimetric profile function for a manifold is asymptotic to the one in the Euclidean case for small volumes.
	\begin{corollary}\label{cor:asymp}
		If $(M^n,g)$ is a closed Riemannian manifold, then
		\begin{equation*}
			{\boldsymbol{\mathcal{I}}}^{\alpha}_{(M,g)}(\mathrm{v}_1,\mathrm{v}_2,\mathrm{v}_3)\sim {\boldsymbol{\mathcal{I}}}^{\alpha}_{(\mathbb{R}^n,\delta)}(\mathrm{v}_1,\mathrm{v}_2,\mathrm{v}_3) \quad {\rm as} \quad |(\mathrm{v}_1,\mathrm{v}_2,\mathrm{v}_3)|\rightarrow0.
		\end{equation*}
	\end{corollary}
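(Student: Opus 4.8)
The plan is to deduce the asymptotic equivalence from Proposition~\ref{lm:SelectingaLargeCompact} by a transplantation argument between $M$ and its tangent space, proving separately the two one-sided bounds
\[
\limsup_{|\mathbf v|\to0}\frac{{\boldsymbol{\mathcal{I}}}^{\alpha}_{(M,g)}(\mathbf v)}{{\boldsymbol{\mathcal{I}}}^{\alpha}_{(\mathbb R^n,\delta)}(\mathbf v)}\leqslant1
\quad\text{and}\quad
\liminf_{|\mathbf v|\to0}\frac{{\boldsymbol{\mathcal{I}}}^{\alpha}_{(M,g)}(\mathbf v)}{{\boldsymbol{\mathcal{I}}}^{\alpha}_{(\mathbb R^n,\delta)}(\mathbf v)}\geqslant1 .
\]
By the sequential characterisation of limits it suffices to argue along an arbitrary sequence $\mathbf v_k=(\mathrm v_{1k},\mathrm v_{2k},\mathrm v_{3k})$ with $\mathrm v_k:=\mathrm v_{1k}+\mathrm v_{2k}\to0$. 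First I would fix a base point $x_*\in M$ and use the standard expansion $g=\delta+O(r^2)$ in geodesic normal coordinates centred at $x_*$: for $\rho$ below the injectivity radius, $\exp_{x_*}$ maps $\mathcal B^{\delta}_\rho(0)$ diffeomorphically onto $\mathcal B^g_\rho(x_*)$ with volume and $(n-1)$-area distortion factors equal to $1+O(\rho^2)$, uniformly on the ball.

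For the upper bound I would take Euclidean isoperimetric clusters $\mathbf{\Omega}^*_k\in\mathcal C^{\alpha}_\delta(\mathbb R^n,\mathbb R^3)$ with the prescribed interior volumes, apply Lemma~\ref{lm:SelectingaLarge} to replace them by clusters $\mathbf{\Omega}'_k$ whose interior chambers have diameter of order $\mathrm v_k^{1/n}$ (hence tending to $0$) while the volumes and the $\alpha$-multi-perimeter change only by a factor $1+\mathrm o(1)$, and then, after a translation placing $\widetilde\Omega'_k$ in a small ball, push forward by $\exp_{x_*}$ to obtain a competitor $\widehat{\mathbf{\Omega}}_k\in\mathcal C^{\alpha}_g(M,\mathbb R^3)$. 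By the normal-coordinate expansion its volumes and perimeter differ from those of $\mathbf{\Omega}'_k$ by a factor $1+O(\rho_k^2)=1+\mathrm o(1)$; correcting the resulting $\mathrm o(1)$ volume mismatch by a mild dilation of $\mathbf{\Omega}'_k$ before transplanting (using the continuity of ${\boldsymbol{\mathcal{I}}}^{\alpha}_{(\mathbb R^n,\delta)}$, \cite[Theorem~5]{resende2022clusters}, together with its homogeneity coming from uniqueness up to isometries) yields ${\boldsymbol{\mathcal{I}}}^{\alpha}_{(M,g)}(\mathbf v_k)\leqslant(1+\mathrm o(1)){\boldsymbol{\mathcal{I}}}^{\alpha}_{(\mathbb R^n,\delta)}(\mathbf v_k)$. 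The lower bound is the mirror image: start from isoperimetric clusters $\mathbf{\Omega}_k\in\mathcal C^{\alpha}_g(M,\mathbb R^3)$ with $\mathbf v_g(\mathbf{\Omega}_k)=\mathbf v_k$ (their existence being guaranteed by Proposition~\ref{Thm:GeneralizedCompactnessForClusters}, or by \cite{MR1402391,leonardi2001}), apply Proposition~\ref{lm:SelectingaLargeCompact} to get $\mathbf{\Omega}'_k$ with $\diam_g(\widetilde\Omega'_k)\to0$ and the same volume/perimeter control, confine $\widetilde\Omega'_k$ (after an ambient isometry) to a geodesic ball inside a normal chart, pull back by $\exp_{x_*}^{-1}$ to a Euclidean cluster, and again absorb the $1+O(\rho_k^2)$ distortion and the $\mathrm o(1)$ volume correction with the continuity and homogeneity of the Euclidean profile, obtaining ${\boldsymbol{\mathcal{I}}}^{\alpha}_{(\mathbb R^n,\delta)}(\mathbf v_k)\leqslant(1+\mathrm o(1)){\boldsymbol{\mathcal{I}}}^{\alpha}_{(M,g)}(\mathbf v_k)$. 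Combining the two estimates and letting $k\to\infty$ gives the claim for the chosen sequence, and hence in general.

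I expect the only genuine difficulty to be the one that forces the use of Proposition~\ref{lm:SelectingaLargeCompact} in the first place, namely the localisation of the comparison between $g$ and $\delta$: an a priori isoperimetric $3$-cluster of small interior volume on $M$ need not itself have small diameter, and without a diameter bound one cannot confine it to a normal chart on which the metric is nearly Euclidean. The ``selecting a large subdomain'' statements remedy exactly this, placing all the relevant competitors inside geodesic balls of radius $O(|\mathbf v|^{1/n})$, where the metric agrees with the flat one up to a multiplicative error $1+O(|\mathbf v|^{2/n})=1+\mathrm o(1)$ --- negligible against the $|\mathbf v|^{(n-1)/n}$ scale of the perimeter. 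The remaining steps (bookkeeping of the area and volume distortion factors, and the small rescaling that restores the exact volume constraint after each transplantation, which is where the continuity of ${\boldsymbol{\mathcal{I}}}^{\alpha}_{(\mathbb R^n,\delta)}$ enters) are routine.
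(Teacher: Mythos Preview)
Your argument is correct and is precisely the transplantation-by-exponential-map strategy the paper has in mind: the corollary is stated without proof as a consequence of Proposition~\ref{lm:SelectingaLargeCompact}, and the two one-sided bounds you describe are exactly the content of inequalities \eqref{finalineq} and \eqref{Eq:UpperBoundOf The isoperimetricProfile} established inside that proof. One small slip: a generic closed Riemannian manifold has no nontrivial ambient isometries, so in the lower-bound step you should not ``move $\widetilde\Omega'_k$ by an ambient isometry'' but simply take the normal chart centred at a point of $\widetilde\Omega'_k$ itself (the small-diameter conclusion of Proposition~\ref{lm:SelectingaLargeCompact} already places the whole interior in such a ball).
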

	
	
	\section{Convergence results for the vectorial energy}\label{sec:gammaconvergence}
	This section is devoted to providing the proof of Proposition~\ref{prop:inhomogeneousgammaconvergence}.
	In particular, we show the $\Gamma$-convergence of the vectorial energy to the perimeter functional and a compactness result for solutions to \eqref{oursystem} with bounded energy. 
	For this, our methods are similar in spirit to the ones in \cite{MR1051228}. 
	However, we include some proofs for the sake of completeness.
	
	\begin{remark}\label{rmk:baldosmallvolumes}
		Henceforth, we are under the small volume condition, that is, $0<\mathrm{v}_g(\widetilde{{\Omega}})\ll1$, where $\widetilde{{\Omega}}=\cup_{i=1}^{N}\Omega_i$ and whenever it is needed we assume that $N=3$.
		Thus, for $N=2$, using \cite[Theorem 1.1]{lawlor2014double}, we get that there exist $x\in\widetilde{{\Omega}}$ and $0<r\ll1$ such that $\widetilde{{\Omega}}\subset\mathcal{B}^g_r(x)$ and $\exp_x:\mathcal{B}^g_r(x)\rightarrow B_r(0)\subset\mathbb R^n$ is a bi-Lipschitz diffeomorphism, with Lipschitz constant independent of the point.   
		This allows us to consider the same definitions in the Euclidean space for the context of manifolds, at least in a ball of a sufficiently small radius.
	\end{remark}
	
	Let us fix some notation. 
	We say that a family indexed on the positive real numbers $\{u_{\varepsilon}\}_{\varepsilon>0}$ converges to $u_0$, if for all $\{\varepsilon_k\}_{k\in\mathbb{N}}$ such that $\varepsilon_k\rightarrow0$ as $k\rightarrow\infty$, it follows that $\{u_{\varepsilon_k}\}_{k\in\mathbb N}$ converges to $u_0$.
	We introduce the notion of $\Gamma$-convergence for operators.
	
	\begin{definition}
		Let $\{\mathfrak{B}_{\varepsilon}\}_{\varepsilon>0}\subset\mathfrak{B}_0$ be Banach spaces and $\mathcal{E}_{\varepsilon}: \mathfrak{B}_{\varepsilon} \rightarrow\mathbb{{R}}$ be a sequence of operators. We say that $\{\mathcal{E}_\varepsilon\}_{\varepsilon>0}$ $\Gamma$-converges to $\mathcal{E}_0: \mathfrak{B}_0\rightarrow{\mathbb{R}}$ as $\varepsilon\rightarrow0$, if for all $u_0\in \mathfrak{B}_0$, it follows
		\begin{itemize}
			\item[{\rm (i)}] 
			For every sequence $\{u_{\varepsilon}\}_{\varepsilon>0}$ converging to $u_0$, we have that $\mathcal{E}_{0}( u_0)\leqslant\liminf_{\varepsilon\rightarrow0} \mathcal{E}_{\varepsilon}\left( u_{\varepsilon}\right)$;
			\item[{\rm (ii)}] 
			There exists a sequence $\{ u_{\varepsilon}\}_{\varepsilon>0}$ converging to $u_0$ such that $\mathcal{E}_{0}( u_0)\geqslant\limsup_{\varepsilon\rightarrow0} \mathcal{E}_{\varepsilon}\left( u_\varepsilon\right)$.
		\end{itemize}
		The operator $\mathcal{E}_{0}$ is called the $\Gamma$-limit of $\{\mathcal{E}_\varepsilon\}_{\varepsilon>0}$.
		Here, we fix the notation $\Gamma$-$\lim_{\varepsilon\rightarrow0}\mathcal{E}_\varepsilon=\mathcal{E}_{0}$.
	\end{definition}
	
	Before proving our main proposition, let us start with some preliminary results. 
	First, we have two lemmas concerning the vectorial transformation (see \eqref{vectorialtrans}).
	
	\begin{lemma}\label{lm:lipschtzdistance}
		Let $(M^n,g)$ be a closed Riemannian manifold and $\boldsymbol{W}\in\mathcal W^+_{N,3}$. If $\mathbf{u} \in {H}_g^{1}(M,\mathbb{R}^m) \cap {L}_g^{\infty}(M,\mathbb{R}^m)$, then $\Phi{\circ}\mathbf{u}$ is locally Lipschitz continuous and
		$\Phi{\circ} \mathbf{u} \in {W}_g^{1,1}(M,\mathbb{R}^m)$.
		Moreover, the following inequality holds
		\begin{equation*}
			\int_{M}\left|\nabla_g\left(\Phi{\circ} \mathbf{u}\right)\right| \ud \mathcal{L}^n_g \leqslant \int_{M} \boldsymbol{W}^{1/2}(\mathbf{u})|\nabla_g\mathbf{u}|\ud \mathcal{L}^n_g.
		\end{equation*}
	\end{lemma}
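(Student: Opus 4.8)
The plan is to reduce the statement to its scalar building blocks $\phi_i=\ud_{\boldsymbol W}(\mathbf p_i,\cdot)$, $i=1,\dots,N$, and then compose with the Sobolev map $\mathbf u$. \emph{First,} I would record that each $\phi_i$ is locally Lipschitz. Since $\ud_{\boldsymbol W}$ is a (pseudo)metric, the triangle inequality gives $|\phi_i(z)-\phi_i(w)|\leqslant\ud_{\boldsymbol W}(z,w)$ for all admissible $z,w$; and for $z,w$ ranging in a fixed compact convex set $K$, using the straight segment $t\mapsto(1-t)z+tw$ as a competitor curve yields $\ud_{\boldsymbol W}(z,w)\leqslant\big(\max_{\zeta\in K}\boldsymbol W^{1/2}(\zeta)\big)\,|z-w|$, so $\phi_i$ is Lipschitz on $K$. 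A technical point to be dealt with here is the constraint $\mathbf c(t)\in\mathbb R^m_+$ in the definition of $\ud_{\boldsymbol W}$; one resolves it by working with the version of $\phi_i$ relevant to the reductions of Section~\ref{sec:gammaconvergence}, where all values of $\mathbf u$ taken into account lie in a compact subset of the interior of $\mathbb R^m_+$, along which the segment estimate is unaffected.

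\emph{Second,} I would establish the eikonal-type pointwise bound $|\nabla\phi_i(z)|\leqslant\boldsymbol W^{1/2}(z)$ for $\mathcal L^m$-a.e.\ $z$. By Rademacher's theorem $\phi_i$ is differentiable a.e.; at such a point $z$, for any unit vector $\nu$ and $t>0$ small,
\[
\phi_i(z+t\nu)-\phi_i(z)\leqslant\ud_{\boldsymbol W}(z,z+t\nu)\leqslant\int_0^t\boldsymbol W^{1/2}(z+s\nu)\,\ud s=t\,\boldsymbol W^{1/2}(z)+\mathrm o(t),
\]
the last equality by continuity of $\boldsymbol W^{1/2}$. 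Dividing by $t$ and letting $t\to0^+$ gives $\langle\nabla\phi_i(z),\nu\rangle\leqslant\boldsymbol W^{1/2}(z)$; taking the supremum over $\nu$ yields the bound.

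\emph{Third,} I would pass to the composition $\Phi_i=\phi_i\circ\mathbf u$ with $\mathbf u\in H^1_g(M,\mathbb R^m)\cap L^\infty_g(M,\mathbb R^m)$. Since $\mathbf u$ is essentially bounded, its essential range lies in a compact set $K$ on which $\phi_i$ is $L_K$-Lipschitz by the first step, so $\Phi_i$ is locally Lipschitz wherever $\mathbf u$ is (in charts), and $\Phi_i\in W^{1,1}_g(M)$ — in fact $\Phi_i\in H^1_g(M)$, as $|\nabla_g\Phi_i|\leqslant L_K|\nabla_g\mathbf u|\in L^2_g(M)$ and $\Phi_i$ is bounded on the finite-measure space $M$. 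The decisive point is the refined chain rule $\nabla_g\Phi_i=(\nabla\phi_i)(\mathbf u)\,\nabla_g\mathbf u$ holding $\mathcal L^n_g$-a.e.\ on $M$: in local charts this is the chain rule for a Lipschitz function composed with a Sobolev map, and the only subtlety is that on the preimage $\{x:\mathbf u(x)\in\mathcal N\}$ of the $\mathcal L^m$-null set $\mathcal N$ on which $\nabla\phi_i$ is undefined one has $\nabla_g\mathbf u=0$ a.e., so the right-hand side is well defined a.e.\ and the identity is valid.

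\emph{Finally,} combining the second and third steps gives $|\nabla_g\Phi_i|\leqslant|(\nabla\phi_i)(\mathbf u)|\,|\nabla_g\mathbf u|\leqslant\boldsymbol W^{1/2}(\mathbf u)\,|\nabla_g\mathbf u|$ a.e.\ on $M$; integrating over $M$ yields the asserted inequality for each component $\Phi_i$, and hence — summing the components, respectively passing to the norm on $\mathbb R^N$-valued maps — for $\mathbf\Phi(\mathbf u)=\Phi\circ\mathbf u$. The main obstacle is not the manifold geometry, which only introduces routine chart-localization bookkeeping, but rather making the a.e.\ chain rule rigorous across the non-differentiability null set of $\phi_i$, together with pinning down the effective domain of $\phi_i$ in view of the sign constraint in the definition of $\ud_{\boldsymbol W}$; both are handled as indicated above.
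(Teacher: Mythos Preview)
Your proposal is correct and follows essentially the same approach as the paper, which simply records that the result is a straightforward adaptation of \cite[Proposition~2.1]{MR1051228} to the Riemannian context; you have unpacked precisely the ingredients of Baldo's argument --- local Lipschitzness of $\phi_i$ via the triangle inequality and segment competitors, the eikonal bound $|\nabla\phi_i|\leqslant\boldsymbol W^{1/2}$ a.e.\ via Rademacher, and the chain rule for Lipschitz-Sobolev compositions --- with the Riemannian geometry entering only through routine chart-localization. One minor remark: the phrase ``$\Phi\circ\mathbf u$ is locally Lipschitz'' in the statement should be read as asserting that each $\phi_i$ is locally Lipschitz on $\mathbb R^m$ (which is what you prove and what Baldo states), not that the composition $x\mapsto\phi_i(\mathbf u(x))$ is Lipschitz on $M$, since $\mathbf u$ is merely $H^1\cap L^\infty$.
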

	
	\begin{proof}
		It is a straightforward adaptation of \cite[Proposition~2.1]{MR1051228} to the Riemannian context. 
	\end{proof}
	
	\begin{lemma}\label{lm:characterization}
		Let $(M^n,g)$ be a closed Riemannian manifold and $\boldsymbol{W}\in\mathcal W^+_{N,3}$.
		Assume that $\mathbf{u} \in {BV}_g(M,\mathbb{R}^m)$ and $\boldsymbol{W}(\mathbf{u}(x))=0$ almost everywhere. 
		Then, there exists a weighted cluster $\mathbf{\Omega}\in\mathcal C_g(M,\mathbb R^N)$ such that
		\begin{equation*}
			\mathbf{u}(x)=\sum_{i=1}^{N} \mathbf{p}_{i} \chi_{\Omega_{i}}(x).
		\end{equation*}
	\end{lemma}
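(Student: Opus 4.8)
The plan is to recover the cluster directly from the level sets of $\mathbf u$. Since $\boldsymbol W\in\mathcal W^+_{N,3}\subset\mathcal W^+_{N}$ is nonnegative and vanishes exactly on the finite set $\mathcal Z=\{\mathbf p_1,\dots,\mathbf p_N\}$, the hypothesis $\boldsymbol W(\mathbf u(x))=0$ for a.e.\ $x$ forces $\mathbf u(x)\in\mathcal Z$ for $\mathcal L^n_g$-a.e.\ $x\in M$. I would then set $\Omega_i:=\{x\in M:\mathbf u(x)=\mathbf p_i\}$ for $i=1,\dots,N$. These are Borel sets, pairwise disjoint up to $\mathcal L^n_g$-null sets, they cover $M$ up to a null set, and $\mathrm v_g(\Omega_i)\leqslant\mathrm v_g(M)<\infty$; by construction $\mathbf u=\sum_{i=1}^N\mathbf p_i\chi_{\Omega_i}$ a.e. Hence the statement reduces to checking that $\mathbf\Omega=(\Omega_1,\dots,\Omega_N)$ belongs to $\mathcal C_g(M,\mathbb R^N)=\mathcal C^\omega_g(M,\mathbb R^N)$, i.e.\ that $\boldsymbol{\mathcal P}^\omega_g(\mathbf\Omega)<\infty$.

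The key step is to show $\chi_{\Omega_i}\in BV_g(M)$ for each $i$. Because $\mathcal Z$ is finite, one can fix for each $i$ a Lipschitz cutoff $f_i\colon\mathbb R^m\to[0,1]$ with $f_i\equiv1$ near $\mathbf p_i$ and $f_i\equiv0$ near every $\mathbf p_j$ with $j\ne i$ (for instance $f_i(z)=\min\{1,\rho^{-1}\dist(z,\mathbb R^m\setminus B_{2\rho}(\mathbf p_i))\}$ with $2\rho<\min_{j\ne i}|\mathbf p_i-\mathbf p_j|$). Then $f_i\circ\mathbf u=\chi_{\Omega_i}$ a.e., and the chain rule for the composition of a Lipschitz map with a $BV$ map — carried out in local charts and glued by a partition of unity, exactly as in the Riemannian adaptation of \cite[Propositions~2.1 and~2.2]{MR1051228} already invoked for Lemma~\ref{lm:lipschtzdistance} — yields $f_i\circ\mathbf u\in BV_g(M)$. (Equivalently, composing $\mathbf u$ with a single Lipschitz $G\colon\mathbb R^m\to\mathbb R$ satisfying $G(\mathbf p_i)=i$ produces a scalar $BV$ step function whose level sets are precisely the $\Omega_i$, so the coarea formula gives the same conclusion.) Thus $\Omega_i\in\mathcal C_g(M)$ and, by De~Giorgi's structure theorem, $\mathcal H^{n-1}_g(\partial^*\Omega_i)=\mathcal P_g(\Omega_i)<\infty$.

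It then only remains to bound the weighted multi-perimeter, which I would do crudely:
\[
\boldsymbol{\mathcal P}^\omega_g(\mathbf\Omega)=\sum_{i,j=1}^N\omega_{ij}(\boldsymbol W)\,\mathcal H^{n-1}_g(\partial^*\Omega_i\cap\partial^*\Omega_j)\leqslant\Big(\max_{i,j}\omega_{ij}(\boldsymbol W)\Big)\,N\sum_{i=1}^N\mathcal P_g(\Omega_i)<\infty,
\]
where $\max_{i,j}\omega_{ij}(\boldsymbol W)<\infty$ since each $\omega_{ij}(\boldsymbol W)=\ud_{\boldsymbol W}(\mathbf p_i,\mathbf p_j)$ is bounded above by $\int_0^1\boldsymbol W^{1/2}(\mathbf c(t))|\mathbf c'(t)|\,\ud t$ taken along the segment $\mathbf c$ joining $\mathbf p_i$ to $\mathbf p_j$, which stays in the convex set $\mathbb R^m_+$ and on which the continuous function $\boldsymbol W$ is bounded. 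This shows $\mathbf\Omega\in\mathcal C^\omega_g(M,\mathbb R^N)=\mathcal C_g(M,\mathbb R^N)$, and together with $\mathbf u=\sum_{i=1}^N\mathbf p_i\chi_{\Omega_i}$ from the first step, the lemma follows. The only genuinely delicate point is the second step, the $BV$--Lipschitz chain rule on $M$; but through coordinate charts it is a direct transcription of the Euclidean fact already used for the preceding lemma, so no new difficulty arises, and every other step is elementary bookkeeping.
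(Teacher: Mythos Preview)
Your argument is correct and is essentially the same approach as the paper's: define $\Omega_i$ as the preimages $\{\mathbf u=\mathbf p_i\}$ and then use a Lipschitz chain rule to show each $\chi_{\Omega_i}\in BV_g(M)$. The paper's proof is a single line, defining $\Omega_i:=(\Phi\circ\mathbf u)^{-1}(\mathbf p_i)$ and leaving the finite-perimeter verification implicit (it is made explicit later, in Claim~4 of the proof of Proposition~\ref{prop:inhomogeneousgammaconvergence}, via \cite[Theorem~3.96]{MR1857292}).

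The one cosmetic difference is your choice of Lipschitz function: you build an ad hoc cutoff $f_i$ separating $\mathbf p_i$ from the other wells, whereas the paper uses the potential-induced functions $\phi_i=\ud_{\boldsymbol W}(\mathbf p_i,\cdot)$ already introduced in Definition~\ref{def:vectorialtrans} and shown to be locally Lipschitz in Lemma~\ref{lm:lipschtzdistance}. Your choice is arguably cleaner for this lemma in isolation, since it avoids any reference to $\boldsymbol W$ beyond the location of its zeros; the paper's choice has the advantage of reusing the same object $\phi_i$ that appears throughout the $\Gamma$-convergence argument (in particular in Lemma~\ref{lm:supremumoftheboundarymeadures}), so no new auxiliary function is introduced. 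Either route yields the same conclusion with no additional difficulty.
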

	
	\begin{proof}
		Let us define $\Omega_i:=(\Phi{\circ}\mathbf{u})^{-1}(\mathbf p_i)$ for $i=1,\dots,N$  and recall that $\mathbf p_N=0$.
	\end{proof}
	
	Next, we present the definition of the supremum of measures.
	\begin{definition}
		Let $(M^n,g)$ be a closed manifold and $\nu_1,\dots,\nu_{N}$ be regular positive Borel measures on $M$. Let us define its supremum $\bigvee_{i=1}^{N}\nu_i$ as the smallest regular positive measure which is greater than or equal to $\nu_1,\dots,\nu_{N-1}$ on all Borel subsets of $M$. 
		In other words, we have
		\begin{equation*}
			\left(\bigvee_{i=1}^{N}\nu_i\right)(\mathrm{A})=\sup \left\{\sum_{i=1}^{N}\nu_i\left({A}_i\right): \ \begin{aligned} (A_1,\dots,&  A_{N}) \ \textrm{is a partition of $A$} \\ &\mbox{by open sets} \end{aligned}\right\}.
		\end{equation*}
	\end{definition}
	
	We also have the measure-theoretic auxiliary result below.
	\begin{lemma}
		Let $(M,g)$ be a closed manifold, $\nu$ be a regular positive Borel measure on $M$, $B_{1}, \dots, B_{\ell}$ be disjoint Borel subsets of $M$ of finite measure, and $c_{i}^{\ell}$, where $i=1,\dots,\ell$ and $\ell=1,\ldots, N$ be positive coefficients. 
		If $\nu^{\ell}({A})=\sum_{i=1}^{k} c_{i}^{\ell} \nu(A \cap{B}_{i})$ and $\nu({A})=\sum_{i=1}^{k}\left(\max_{\ell} c_{i}^{\ell}\right) \nu(A \cap {B}_{i})$, then $\nu=\bigvee_{i=1}^{N} \nu^{\ell}$.
	\end{lemma}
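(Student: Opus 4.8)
The plan is to argue directly from the stated definition: $\bigvee_{\ell=1}^{N}\nu^{\ell}$ is the smallest regular positive Borel measure dominating each of $\nu^{1},\dots,\nu^{N}$. Write $\mu$ for the measure defined by $\mu(A)=\sum_{i=1}^{k}\bigl(\max_{1\leqslant\ell\leqslant N}c_{i}^{\ell}\bigr)\,\nu(A\cap B_{i})$. First I would note that $\mu$ is itself a regular positive Borel measure, being a finite nonnegative linear combination of the restrictions $\nu\llcorner B_{i}$ of the regular measure $\nu$ to the finite-measure Borel sets $B_{i}$. It then suffices to prove two things: (a) $\mu$ dominates every $\nu^{\ell}$, and (b) $\mu$ is dominated by every regular positive Borel measure $\rho$ that dominates all the $\nu^{\ell}$. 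Taken together, (a) and (b) say precisely that $\mu$ is the smallest regular positive Borel measure dominating all the $\nu^{\ell}$, i.e. $\mu=\bigvee_{\ell=1}^{N}\nu^{\ell}$.

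For (a): given any Borel $A\subset M$, the difference $\mu(A)-\nu^{\ell}(A)=\sum_{i=1}^{k}\bigl(\max_{\ell'}c_{i}^{\ell'}-c_{i}^{\ell}\bigr)\,\nu(A\cap B_{i})$ is a sum of nonnegative terms, hence $\mu(A)\geqslant\nu^{\ell}(A)$.

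For (b) the idea is to localize using the disjointness of the $B_{i}$. For each $i\in\{1,\dots,k\}$ I would choose an index $\ell(i)$ realizing $c_{i}^{\ell(i)}=\max_{\ell}c_{i}^{\ell}$. Fix a Borel set $A$ and decompose it as the finite disjoint union of the sets $A\cap B_{i}$ ($i=1,\dots,k$) together with $A_{0}:=A\setminus\bigcup_{i}B_{i}$. Since the $B_{i}$ are pairwise disjoint, on $A\cap B_{i}$ one has $\nu^{\ell(i)}(A\cap B_{i})=c_{i}^{\ell(i)}\,\nu(A\cap B_{i})=\mu(A\cap B_{i})$, so $\mu(A\cap B_{i})=\nu^{\ell(i)}(A\cap B_{i})\leqslant\rho(A\cap B_{i})$; and $\mu(A_{0})=0\leqslant\rho(A_{0})$ because $\mu$ is concentrated on $\bigcup_{i}B_{i}$. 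Summing these finitely many inequalities and using finite additivity of $\mu$ and $\rho$ yields $\mu(A)\leqslant\rho(A)$, which is (b).

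I do not expect a genuine obstacle here; the only step needing any care is the localization in (b), and it is exactly the disjointness of $B_{1},\dots,B_{k}$ that makes it go through, reducing the claim to the elementary fact that the supremum of the scalar multiples $c_{i}^{1}\rho_{i},\dots,c_{i}^{N}\rho_{i}$ of the single measure $\rho_{i}:=\nu\llcorner B_{i}$ equals $\bigl(\max_{\ell}c_{i}^{\ell}\bigr)\rho_{i}$. Alternatively one could verify the identity straight from the open-partition formula for $\bigvee_{\ell}\nu^{\ell}$ by approximating each $B_{i}$ from inside by a compact set and separating these compacta by disjoint open neighbourhoods (available since $M$ is a metrizable manifold), but this route is longer and unnecessary.
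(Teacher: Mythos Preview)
Your argument is correct: part (a) is immediate, and in part (b) the disjointness of the $B_i$ lets you localize so that on each piece $A\cap B_i$ the measure $\mu$ agrees with one of the $\nu^{\ell(i)}$, whence $\mu(A\cap B_i)\leqslant\rho(A\cap B_i)$ and summing gives the result. The paper itself simply states that the proof is easy and left to the reader, so your direct verification from the definition of $\bigvee_\ell\nu^\ell$ is exactly the intended route.
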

	
	\begin{proof}
		The proof is easy and is left to the reader.
	\end{proof}
	
	In the following result, we compute the supremum of the measures induced by the functions $\phi_i\circ \mathbf{u}$.
	This explains the appearance of correcting term $\ud_{\boldsymbol W_N}$ on the $\Gamma$-limit of the sequence of relaxed energy functionals.
	For more details on this degenerate metric, we refer to \cite{MR930124}.
	
	\begin{lemma}\label{lm:supremumoftheboundarymeadures}
		Let $(M^n,g)$ be a closed Riemannian manifold and $\boldsymbol{W}\in\mathcal W_{N,3}^{+}$.
		Assume that $\mathbf{\Omega}\in\mathcal C_g(M,\mathbb R^N)$ and $\nu_{i}$ are the Borel measures given by $\nu_{i}: A \mapsto \int_{A}\left|\nabla_g\left(\phi_i{\circ} \mathbf{u}\right)\right|\ud\mathcal{L}_g^n$ for all $i=1,\dots,N$. 
		Then, it follows that $\mathcal{H}^{n-1}_g(\partial^{*}\Omega_{i})<\infty$ and
		\begin{equation*}
			\left(\bigvee_{i=1}^{N} \nu_{i}\right)(M)=\frac{1}{2}\sum_{i,j=1}^{N} \omega_{ij}\mathcal{H}_g^{n-1}(\Sigma_{ij}).
		\end{equation*}
	\end{lemma}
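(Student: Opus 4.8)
The plan is to reduce the identity to the fine structure of the piecewise-constant $BV$-function $\phi_i\circ\mathbf{u}$, where $\mathbf{u}=\sum_{j=1}^{N}\mathbf{p}_j\chi_{\Omega_j}\in BV_g(M,\mathbb{R}^m)$ is the map corresponding to $\mathbf{\Omega}$ (cf.\ Lemma~\ref{lm:characterization}), and then to feed the resulting measures into the auxiliary measure lemma stated just above. Since $\mathbf{u}\equiv\mathbf{p}_j$ on $\Omega_j$ and $\phi_i=\ud_{\boldsymbol{W}}(\mathbf{p}_i,\cdot)$, one has $\phi_i\circ\mathbf{u}=\sum_{j=1}^{N}\omega_{ij}\chi_{\Omega_j}$ with $\omega_{ii}=0$. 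Because $\ud_{\boldsymbol{W}}$ is a metric, $\omega_{ij}>0$ for $i\neq j$, so $\boldsymbol{\mathcal{P}}^{\omega}_g(\mathbf{\Omega})=\sum_{i,j}\omega_{ij}\mathcal{H}^{n-1}_g(\Sigma_{ij})<\infty$ forces $\mathcal{H}^{n-1}_g(\Sigma_{ij})\leqslant\omega_{ij}^{-1}\boldsymbol{\mathcal{P}}^{\omega}_g(\mathbf{\Omega})<\infty$ for $i\neq j$. By De Giorgi's structure theorem (cf.\ \cite{leonardi2001}), $\mathcal{H}^{n-1}_g$-a.e.\ point of the skeleton $\cup_j\partial^{*}\Omega_j$ lies in the measure-theoretic boundary of exactly two chambers, whence $\mathcal{H}^{n-1}_g\big(\partial^{*}\Omega_i\,\triangle\,\bigcup_{j\neq i}\Sigma_{ij}\big)=0$ and the interfaces $\{\Sigma_{ij}\}_{j\neq i}$ are pairwise $\mathcal{H}^{n-1}_g$-disjoint; hence $\mathcal{H}^{n-1}_g(\partial^{*}\Omega_i)=\sum_{j\neq i}\mathcal{H}^{n-1}_g(\Sigma_{ij})<\infty$, which is the first assertion, and in particular $\phi_i\circ\mathbf{u}\in BV_g(M)$.

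Next I would identify the total variation measure $\nu_i=\|\nabla_g(\phi_i\circ\mathbf{u})\|$. The map $\phi_i\circ\mathbf{u}$ is piecewise constant and its jump set coincides, up to an $\mathcal{H}^{n-1}_g$-null set, with the disjoint union $\bigcup_{1\leqslant j<k\leqslant N}\Sigma_{jk}$, the one-sided traces along $\Sigma_{jk}$ being $\omega_{ij}$ and $\omega_{ik}$. Working in exponential normal coordinates and transferring the Euclidean computation of \cite{MR1051228} chamber by chamber (the metric entering only through $\mathcal{H}^{n-1}_g$), the jump part of $\|\nabla_g(\phi_i\circ\mathbf{u})\|$ yields
\begin{equation*}
	\nu_i=\sum_{1\leqslant j<k\leqslant N}|\omega_{ij}-\omega_{ik}|\,\mathcal{H}^{n-1}_g\llcorner\Sigma_{jk}.
\end{equation*}

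Then I would apply the auxiliary measure lemma with base measure $\nu:=\mathcal{H}^{n-1}_g\llcorner\big(\bigcup_{j<k}\Sigma_{jk}\big)$, disjoint Borel sets $B_{jk}:=\Sigma_{jk}$, and coefficients $c^{i}_{jk}:=|\omega_{ij}-\omega_{ik}|$, so that $\nu_i=\sum_{j<k}c^{i}_{jk}\,\nu\llcorner B_{jk}$ for every $i=1,\dots,N$. The lemma gives
\begin{equation*}
	\bigvee_{i=1}^{N}\nu_i=\sum_{1\leqslant j<k\leqslant N}\Big(\max_{1\leqslant i\leqslant N}|\omega_{ij}-\omega_{ik}|\Big)\,\mathcal{H}^{n-1}_g\llcorner\Sigma_{jk}.
\end{equation*}
For fixed $j\neq k$, the triangle inequality for $\ud_{\boldsymbol{W}}$ gives $|\omega_{ij}-\omega_{ik}|=|\ud_{\boldsymbol{W}}(\mathbf{p}_i,\mathbf{p}_j)-\ud_{\boldsymbol{W}}(\mathbf{p}_i,\mathbf{p}_k)|\leqslant\omega_{jk}$ for every $i$, and equality holds at $i=j$ since $\omega_{jj}=0$; thus $\max_{1\leqslant i\leqslant N}|\omega_{ij}-\omega_{ik}|=\omega_{jk}$. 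Substituting, and using $\omega_{jj}=0$ together with the symmetries $\omega_{jk}=\omega_{kj}$ and $\Sigma_{jk}=\Sigma_{kj}$, we obtain
\begin{equation*}
	\Big(\bigvee_{i=1}^{N}\nu_i\Big)(M)=\sum_{1\leqslant j<k\leqslant N}\omega_{jk}\,\mathcal{H}^{n-1}_g(\Sigma_{jk})=\frac12\sum_{i,j=1}^{N}\omega_{ij}\,\mathcal{H}^{n-1}_g(\Sigma_{ij}),
\end{equation*}
as claimed.

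The main obstacle is the identification of $\nu_i$ in the second step: one has to justify that the jump set of the piecewise-constant map $\phi_i\circ\mathbf{u}$ decomposes $\mathcal{H}^{n-1}_g$-a.e.\ into the pairwise disjoint interfaces $\Sigma_{jk}$ carrying the stated pair of one-sided trace values, and that the Euclidean fine-structure formula of \cite{MR1051228} transfers verbatim to the Riemannian setting via charts. Everything else is bookkeeping with the triangle inequality and the auxiliary measure lemma; note, in particular, that only the non-strict triangle inequality for $\ud_{\boldsymbol{W}}$ is needed here, so the strict immiscibility condition \eqref{W0} plays no role in this identity.
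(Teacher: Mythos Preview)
Your argument is correct and is precisely the adaptation of \cite[Proposition~2.2]{MR1051228} that the paper invokes without writing out: you compute $\phi_i\circ\mathbf{u}=\sum_j\omega_{ij}\chi_{\Omega_j}$, identify $\nu_i$ with $\sum_{j<k}|\omega_{ij}-\omega_{ik}|\,\mathcal H^{n-1}_g\llcorner\Sigma_{jk}$ via the jump-part formula for piecewise-constant $BV$ functions, apply the auxiliary supremum-of-measures lemma, and use the (non-strict) triangle inequality for $\ud_{\boldsymbol W}$ to evaluate $\max_i|\omega_{ij}-\omega_{ik}|=\omega_{jk}$. This is exactly Baldo's proof carried to the Riemannian setting, so your approach coincides with the paper's.
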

	
	\begin{proof}
		It is a straightforward adaptation of \cite[Proposition~2.2]{MR1051228} to the Riemannian context. 
	\end{proof}
	
	A polygonal domain is every set that is the closure of an open set and whose topological boundary is contained in the union of a finite number of hyperplanes of $M$ (in the sense of Remark~\ref{rmk:baldosmallvolumes}). 
	The next lemma will allow us to consider a cluster such that each component is a polygonal domain.
	
	\begin{lemma}
		Let $(M^n,g)$ be a closed Riemannian manifold and $\boldsymbol{W}\in\mathcal W_{N,3}^{+}$.
		Assume that $\mathbf{\Omega}\in\mathcal{C}_g(M,\mathbb R^N)$ is such that  $0<\mathrm{v}_g(\widetilde{{\Omega}})\ll1$. 
		Then, there exists a sequence $\{\mathbf{\Omega}_k\}_{k\in\mathbb N}\subset\mathcal{C}_g(M,\mathbb R^N)$ converging to $\mathbf{\Omega}$ such that
		\begin{itemize}
			\item[{\rm (i)}] Each chamber $\Omega_{ki}$ is a polygonal domain for any $i=1,\cdots,N$ and $k \in \mathbb{N}$;
			\item[{\rm (ii)}] If $\mathbf{u}_{k}(x)=\sum_{i=1}^{N} \mathbf{p}_{i} \chi_{\Omega_{ki}}(x)$ and $\mathbf{u}(x)=\sum_{i=1}^{N} \mathbf{p}_{i} \chi_{\Omega_{i}}(x)$, then $\mathbf{u}_{k}\rightarrow \mathbf u$ in ${L}_g^{1}(M,\mathbb{R}^m)$ as $k\rightarrow\infty$;
			\item[{\rm (iii)}] $\int_{M} \mathbf{u}_{k} \ud\mathcal{L}_g^n=\int_{M} \mathbf{u} \ud\mathcal{L}_g^n=\mathbf{v}$ for any $k \in \mathbb{N}$;   and
			\item[{\rm (iv)}]
			$\lim_{k\rightarrow\infty} \bigvee_{i=1}^{N} \int_{M}|\nabla_g(\phi_{i}{\circ}\mathbf{u}_{k})|\ud\mathcal{L}_g^n=\bigvee_{i=1}^{N} \int_{M}|\nabla_g(\phi_{i}{\circ}\mathbf{u})|\ud\mathcal{L}_g^n$.
		\end{itemize}
	\end{lemma}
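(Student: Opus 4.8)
The plan is to transport everything into a single geodesic chart, run the classical polyhedral approximation of finite-perimeter partitions there, and then repair the volume by a small surgery. Since $0<\mathrm{v}_g(\widetilde{\Omega})\ll1$, Remark~\ref{rmk:baldosmallvolumes} provides $x\in M$ and $0<r\ll1$ with $\widetilde{\Omega}\subset\mathcal{B}^g_r(x)$, $M\setminus\mathcal{B}^g_r(x)\subset\Omega_N$, and, writing $B:=B_r(0)\subset\mathbb{R}^n$, a bi-Lipschitz diffeomorphism $\exp_x\colon\mathcal{B}^g_r(x)\to B$; pulling $g$ back by $\exp_x$ gives a smooth metric $\bar g$ on $B$ comparable to the flat one, under which $\exp_x^{-1}(\Omega_1),\dots,\exp_x^{-1}(\Omega_{N-1})$ are finite-perimeter sets compactly contained in $B$. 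It therefore suffices to construct, on $B$, clusters whose chambers are finite unions of simplices of a common flat triangulation with all faces lying well inside $B$, converging in $L^1$ to $\exp_x^{-1}(\mathbf{\Omega})$, with the prescribed volume vector and with convergence of the $\bar g$-areas of all pairwise interfaces; applying $\exp_x$ then yields polygonal domains of $M$ in the sense of the statement, and since $\mathbf{u}_k\equiv\mathbf{p}_N\equiv\mathbf{u}$ on $M\setminus\mathcal{B}^g_r(x)$ nothing there is disturbed.

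On $B$, ignoring the volume constraint at first, I would appeal to the classical polyhedral approximation of finite-perimeter partitions, in the refined form with interface-wise convergence that is used (in the Euclidean case) in \cite{MR1051228}: there exist polygonal partitions $\{\Omega_{ki}\}_{i=1}^{N}$ of $B$ with $\chi_{\Omega_{ki}}\to\chi_{\Omega_i}$ in $L^1$ and, for all $i\neq j$, $\mathcal{H}^{n-1}_{\bar g}(\partial^*\Omega_{ki}\cap\partial^*\Omega_{kj})\to\mathcal{H}^{n-1}_{\bar g}(\partial^*\Omega_i\cap\partial^*\Omega_j)$ (either carried out directly with respect to the smooth metric $\bar g$, or deduced from the flat statement together with the strict $BV$-convergence of the chambers and the fact that $\mathcal{H}^{n-1}_{\bar g}$ equals $\mathcal{H}^{n-1}_{\delta}$ times a smooth positive density on each hyperplane). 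Setting $\mathbf{u}_k:=\sum_{i=1}^{N}\mathbf{p}_i\chi_{\Omega_{ki}}$, the cluster $\mathbf{\Omega}_k$ belongs to $\mathcal{C}_g(M,\mathbb{R}^N)$, so Lemma~\ref{lm:supremumoftheboundarymeadures}, applied to both $\mathbf{u}_k$ and $\mathbf{u}$, gives
\begin{equation*}
	\bigvee_{i=1}^{N}\int_M|\nabla_g(\phi_i{\circ}\mathbf{u}_k)|\,\ud\mathcal{L}_g^n=\frac12\sum_{i,j=1}^{N}\omega_{ij}\mathcal{H}^{n-1}_g(\Sigma^k_{ij})\longrightarrow\frac12\sum_{i,j=1}^{N}\omega_{ij}\mathcal{H}^{n-1}_g(\Sigma_{ij})=\bigvee_{i=1}^{N}\int_M|\nabla_g(\phi_i{\circ}\mathbf{u})|\,\ud\mathcal{L}_g^n
\end{equation*}
as $k\to\infty$, which is (iv); (i) and (ii) hold by construction, and the matching $\liminf\geqslant$ would in any case be automatic from the lower semicontinuity of $\mathbf{u}\mapsto\big(\bigvee_i\nu_i\big)(M)$ under $L^1$-convergence (each $\nu_i$ being the total variation of $\phi_i{\circ}\mathbf{u}$, one takes the supremum over open partitions).

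It remains to enforce (iii). The polyhedral $\mathbf{u}_k$ above satisfies $\boldsymbol{\mathcal{V}}_g(\mathbf{u}_k)\to\boldsymbol{\mathcal{V}}_g(\mathbf{u})=\mathbf{v}$ but not equality, so I would correct it by a volume-preserving surgery: for each chamber whose volume is off by $\delta_{ki}\to0$, exchange a polygonal slab of that volume with an adjacent chamber across a fixed interface piece, routing through intermediate chambers when necessary (possible since the adjacency graph of the cluster is connected, $B$ being connected, so the discrepancy vector is realized by exchanges along a spanning tree). Each such move preserves polygonality, changes the $L^1$-distance by $O(\delta_{ki})$ and the $\bar g$-interface areas by $o(1)$, so (i), (ii), (iv) persist, and after finitely many moves $\boldsymbol{\mathcal{V}}_g(\mathbf{u}_k)=\mathbf{v}$ exactly; pulling back by $\exp_x$ concludes. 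The genuinely delicate point — already the subtlety present in \cite{MR1051228}, here amplified by the weights — is that the quantity in (iv) couples \emph{all} pairwise interfaces through the sup-of-measures identity of Lemma~\ref{lm:supremumoftheboundarymeadures}, so one must use the refined polyhedral approximation in which every interface area converges (not merely each chamber perimeter) and then check that the volume-fixing surgery respects this; the Riemannian ingredients (the chart reduction and the smooth area densities on hyperplanes) are routine once this is recognized.
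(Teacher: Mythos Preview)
Your proposal is correct and follows essentially the same route as the paper: the paper's proof reads in full ``Using Proposition~\ref{lm:SelectingaLargeCompact}, it is a straightforward adaptation of \cite[Appendix~A]{MR1051228} to the Riemannian context,'' and you have spelled out precisely that adaptation --- localize to a geodesic chart via the small-diameter property (you invoke Remark~\ref{rmk:baldosmallvolumes}, the paper points to Proposition~\ref{lm:SelectingaLargeCompact}; both encode the same reduction), run Baldo's interface-wise polyhedral approximation there, and repair the volume by a polygonal surgery. Your explicit use of Lemma~\ref{lm:supremumoftheboundarymeadures} to rewrite (iv) as convergence of weighted interface areas, and your observation that the surgery must preserve each $\mathcal{H}^{n-1}_g(\Sigma^k_{ij})$ up to $o(1)$ (not merely the chamber perimeters), are exactly the refinements over the scalar case that the paper leaves implicit in the word ``straightforward.''
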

	
	\begin{proof}
		Using Proposition~\ref{lm:SelectingaLargeCompact}, it is straightforward adaptation of \cite[Appendix~A]{MR1051228} to Riemannian context. 
	\end{proof}
	
	The last lemma has to do with the tubular neighborhood of smooth boundaries on a Riemannian manifold.
	
	\begin{lemma}\label{lm:regularitydistance}
		Let $(M^n,g)$ be a closed Riemannian manifold and $A\subset M$ be an open subset such that $\partial A$ is a polygonal domain with $\mathcal{H}_g^{n-1}(\partial {A})=0$. 
		If we define
		\begin{equation*}
			\ud_A(x)=
			\begin{cases}
				{\rm d}_g(x, \partial A), & \text{if} \ x \notin A,\\
				-{\rm d}_g(x, \partial A), & \text{if} \ x \in A.
			\end{cases}
		\end{equation*}
		Then, there exists a constant $\eta>0$ such that on the set 
		$\mathcal{D}^{\eta}=\{x \in M : |\ud_A(x)|<\eta\}$, we have
		$\ud_A$ is a Lipschitz continuous function and $|\nabla_g\ud_A(x)|=1$ for almost all $x \in \mathcal{D}^{\tau}$.
		Finally, if $\Sigma^{t}=\{x \in M: \ud_A(x)=t\}$, it follows $\lim_{t \rightarrow 0} \mathcal{H}_{g}^{n-1}(\Sigma^{t})=\mathcal{H}_{g}^{n-1}(\partial A)$.
	\end{lemma}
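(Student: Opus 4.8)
The plan is to use that a signed distance function is automatically $1$-Lipschitz, that its gradient has unit length wherever it is differentiable and nonzero, and that near the smooth faces of a polygonal boundary the normal exponential map trivialises a neighbourhood; the only genuinely delicate point is the behaviour of the level sets near the lower-dimensional skeleton of $\partial A$.

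First I would establish the Lipschitz bound and the unit-gradient property. By the triangle inequality on the closed manifold $(M,g)$, the function $x\mapsto {\rm d}_g(x,\partial A)$ is $1$-Lipschitz, and since any minimizing geodesic joining a point of $A$ to a point of $M\setminus A$ meets $\partial A$, one checks that the signed version $\ud_A$ is $1$-Lipschitz on all of $M$; by Rademacher's theorem it is differentiable $\mathcal L^n_g$-a.e., with $|\nabla_g\ud_A|\leqslant1$ there. Next, if $\ud_A(x)\neq0$ and $\ud_A$ is differentiable at $x$, pick $p\in\partial A$ with ${\rm d}_g(x,p)={\rm d}_g(x,\partial A)$ and let $\gamma$ be the unit-speed minimizing geodesic from $p$ to $x$; a standard no-shortcut argument shows ${\rm d}_g(\gamma(s),\partial A)=s$ for $s$ near ${\rm d}_g(x,\partial A)$, so $\ud_A\circ\gamma$ has derivative $\pm1$ at the endpoint, whence $|\nabla_g\ud_A(x)|\geqslant1$ and, with the Lipschitz bound, $|\nabla_g\ud_A(x)|=1$. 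Since $\partial A=\{\ud_A=0\}$ has finite $(n-1)$-Hausdorff measure and hence $\mathcal L^n_g$-measure zero, and the non-differentiability set is $\mathcal L^n_g$-null, this yields $|\nabla_g\ud_A|=1$ a.e. on $\mathcal D^{\eta}$ for every $\eta>0$.

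Then I would fix $\eta$ using the polygonal structure. Write $\partial A=\partial^*A\cup S$, where $\partial^*A=\bigcup_{i=1}^K F_i$ is the union of the relatively open smooth hyperplane faces and $S$ is the singular skeleton, with $\mathcal H^{n-1}_g(S)=0$ and $\dim_{\mathcal H} S\leqslant n-2$. On each $F_i$ the normal map $(p,t)\mapsto\exp_p(t\,\nu_i(p))$ is a smooth embedding of $F_i\times(-\eta,\eta)$ provided $\eta$ is below the reach of $F_i$, the injectivity radius of $M$, and the mutual distances of the faces; under this chart $\ud_A$ equals the coordinate $t$. Choosing one such $\eta$ (this is where smallness is used) exhibits $\mathcal D^{\eta}$ as a finite union of these product slabs over the $F_i$ together with a set contained in a fixed tubular neighbourhood of $S$, and reconfirms the Lipschitz regularity of $\ud_A$ on $\mathcal D^{\eta}$.

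Finally I would compute $\lim_{t\to0}\mathcal H^{n-1}_g(\Sigma^t)$ with $\Sigma^t=\{\ud_A=t\}$ (the cases $t>0$ and $t<0$ being identical). Over each face the relevant part of $\Sigma^t$ is the image of the time-$t$ normal flow $\Phi_t$, whose tangential Jacobian tends to $1$ uniformly on compact subsets of $F_i$ as $t\to0$; exhausting each $F_i$ from inside gives $\liminf_{t\to0}\mathcal H^{n-1}_g(\Sigma^t)\geqslant\sum_i\mathcal H^{n-1}_g(F_i)=\mathcal H^{n-1}_g(\partial A)$. For the matching upper bound, the remainder of $\Sigma^t$ lies in an arbitrarily thin tube $\mathcal T_\delta(S)$ once $t$ is small; since $\mathcal H^{n-2}_g(S)<\infty$, a Besicovitch covering of $S$ bounds $\mathcal H^{n-1}_g(\Sigma^t\cap\mathcal T_\delta(S))$ by $C\,\mathcal H^{n-2}_g(S)\,\delta$, which is negligible, and the Jacobian estimate controls the rest, so $\limsup_{t\to0}\mathcal H^{n-1}_g(\Sigma^t)\leqslant\mathcal H^{n-1}_g(\partial A)$. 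The hard part is precisely this last estimate near the $(n-2)$-skeleton: one must rule out area concentrating along the edges and corners of the polygon, which requires the explicit structure of polygonal domains (Remark~\ref{rmk:baldosmallvolumes}), the covering argument for $S$, and the smooth dependence of the normal flow on the faces; modulo these points the argument transcribes essentially verbatim from the Euclidean treatment in \cite{MR1051228}.
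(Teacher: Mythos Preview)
Your proposal is correct and in fact strictly more informative than the paper's own proof, which consists solely of the citation ``See \cite[Lemma~3.3]{MR1051228}''. The sketch you give (Lipschitz/Rademacher for $|\nabla_g\ud_A|=1$ a.e., normal exponential trivialisation over the smooth faces, and a covering argument near the $(n-2)$-skeleton for the level-set limit) is exactly the content of Baldo's Euclidean lemma transported to the Riemannian setting via Remark~\ref{rmk:baldosmallvolumes}, so you are following the same route the paper implicitly invokes.
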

	
	\begin{proof}
		See \cite[Lemma~3.3]{MR1051228}.
	\end{proof}
	
	\begin{definition}
		Let $(M^n,g)$ be a closed Riemannian manifold.
		We define $\mathbf{d}:M\rightarrow\mathbb R^{N-1}$ given by $\mathbf{d}(x)=(\mathrm d_1(x),\dots\mathrm d_{N-1}(x))$, where 
		\begin{align}\label{distance}
			\ud_i(x)=
			\begin{cases}
				\operatorname{d}_g\left(x, \partial \Omega_{i}\right), &{\rm if } \quad x \notin \Omega_{i} \\
				-\operatorname{d}_g\left(x, \partial \Omega_{i}\right),  &{\rm if } \quad x \in \Omega_{i}.
			\end{cases}
		\end{align}
		are the signed distance functions for each $i=1,\dots, N-1$.
	\end{definition}
	
	Since $M$ is compact, by the Hopf--Rinow theorem, we may assume by simplicity that for any $i,j=\{1, \dots, N\}$ with $i \neq j$, there exists a distance-minimizing geodesic connecting $\mathbf{p}_{i}$ and $\mathbf{p}_{j}$, that is, one can find a smooth curve $\mathbf{c}_{ij}$ such that $\mathbf{c}_{i j}(0)=\mathbf{p}_{i}$, $\mathbf{c}_{ij}(1)=\mathbf{p}_{j}$ and $\ud_{\boldsymbol{W}}(\mathbf{p}_{i}, \mathbf{p}_{j})=\int_{0}^{1} \boldsymbol{W}^{1/2}(\mathbf{c}_{i j}(t))|\mathbf{c}_{ij}^{\prime}(t)|\ud t$. 
	Notice that it is also not restrictive to assume $|\mathbf{c}_{i j}^{\prime}(t)| \neq 0$ for all $t \in(0,1)$.
	Our next result is concerned with the  following system of ordinary differential equations
	\begin{equation}\label{onedimensionalsystem}
		(y_{\varepsilon}^{ij})^{\prime 2}=\frac{\tau+\boldsymbol{W}(\mathbf{c}_{i j}(y_{\varepsilon}^{i j}))}{\varepsilon^{2}|\mathbf{c}_{i j}^{\prime}(y_{\varepsilon}^{ij})|^{2}} \quad {\rm in} \quad \mathbb R,
	\end{equation}
	where $i,j\in\{1, \dots, N\}$, $i \neq j$, and $\tau>0$ are fixed. 
	
	\begin{lemma}\label{lm:onedimensionalODE}
		Let $\boldsymbol{W}\in\mathcal W_{N,3}^{+}$.
		For every $\varepsilon>0$, there exist a Lipschitz continuous function $\mathbf{q}_{\varepsilon}: \mathbb{R}^{N-1}\rightarrow \mathbb{R}^{m}$ and $C_{1}, C_{2}, C_{3}>0$ constants depending only on $\tau$ such
		that
		\begin{equation}\label{odesolution}
			\mathbf{q}_{\varepsilon}\left(t_{1}, \ldots, t_{N-2}\right)=
			\begin{cases}
				\mathbf{p}_{1}, & {\rm if} \ t_{1}<0;\\
				\mathbf{p}_{i}, & {\rm if} \  t_{1}>{C}_{1} \varepsilon,\dots, t_{i-1}>{C}_{1}\varepsilon \ {\rm and}  \ t_{i}<0 \ {\rm for \  any} \ i=2, \ldots, N-2,\\
				\mathbf{p}_{N}, & {\rm if} \ t_{1}>{C}_{1} \varepsilon, \ldots, t_{N-2}>{C}_{1} \varepsilon,
			\end{cases}
		\end{equation}
		and $0<|\mathbf{q}_{\varepsilon}|<{C}_{2}$ ,  $|D{\mathbf{q}_{\varepsilon}}|<{C}_{3}\varepsilon^{-1}$ almost everywhere in $\mathbb{R}^{N-1}$.
		Moreover, if $j>i$ on the set $\{t \in \mathbb{R}^{N-1}: 0<t_{i}<{C}_{1} \varepsilon, \ t_{j}<0, \ {\rm and}  \ t_{k}>{C}_{1} \varepsilon \ {\rm for \ any} \ k \neq i,j\}$, then $\mathbf{q}_{\varepsilon}$ depends only on $t_{i}$, and we can write $\mathbf{q}_{\varepsilon}(t_{i})=\mathbf{c}_{i j}(y_{\varepsilon}^{ij}(t_{i}))$, where $y_{\varepsilon}^{ij}$ solves \eqref{onedimensionalsystem} for any $t_{i}$ such that $\mathbf{q}_{\varepsilon}(t_{i}) \neq \mathbf{p}_{i})$ $($Notice that if $j=N$, one can ignore the condition $t_{j}<0$, which makes no sense$)$.
	\end{lemma}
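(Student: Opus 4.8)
Since the manifold plays no role in this statement, this is precisely the finite-dimensional construction of the optimal multi-phase profile carried out in \cite[Appendix~A]{MR1051228}; the plan is to reproduce it in two stages. \emph{Stage 1: the scalar profiles.} Fix an ordered pair $i\neq j$ and recall from the discussion preceding the lemma that a minimizing curve $\mathbf{c}_{ij}\in C^1([0,1],\mathbb{R}^m)$ for $\ud_{\boldsymbol{W}}(\mathbf{p}_i,\mathbf{p}_j)$ has been chosen with $|\mathbf{c}_{ij}'(t)|\neq0$ on $(0,1)$. Taking the positive square root in \eqref{onedimensionalsystem} turns it into the autonomous first-order equation $(y_\varepsilon^{ij})'=(\tau+\boldsymbol{W}(\mathbf{c}_{ij}(y_\varepsilon^{ij})))^{1/2}\,(\varepsilon\,|\mathbf{c}_{ij}'(y_\varepsilon^{ij})|)^{-1}$, which we solve with initial datum $y_\varepsilon^{ij}(0)=0$. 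Since $|\mathbf{c}_{ij}'|$ is bounded above and below away from zero and $\tau\leqslant\tau+\boldsymbol{W}\circ\mathbf{c}_{ij}\leqslant\tau+\max_{[0,1]}\boldsymbol{W}\circ\mathbf{c}_{ij}$, the right-hand side is continuous, strictly positive, and comparable to $\varepsilon^{-1}$; hence there is a unique strictly increasing solution, it attains the value $1$ at the finite time $C_1^{ij}\varepsilon:=\varepsilon\int_0^1|\mathbf{c}_{ij}'(y)|\,(\tau+\boldsymbol{W}(\mathbf{c}_{ij}(y)))^{-1/2}\,\ud y$, and it satisfies $0\leqslant y_\varepsilon^{ij}\leqslant1$ and $|(y_\varepsilon^{ij})'|\leqslant c\,\varepsilon^{-1}$ with $c$ independent of $\varepsilon$. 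Extend $y_\varepsilon^{ij}$ by $0$ for $t\leqslant0$ and by $1$ for $t\geqslant C_1^{ij}\varepsilon$, set $C_1:=\max_{i\neq j}C_1^{ij}$, and note that the curve $\gamma_\varepsilon^{ij}(t):=\mathbf{c}_{ij}(y_\varepsilon^{ij}(t))$ is $\mathbf{p}_i$ for $t\leqslant0$, is $\mathbf{p}_j$ for $t\geqslant C_1\varepsilon$, solves \eqref{onedimensionalsystem} on the transition interval, is Lipschitz with constant $O(\varepsilon^{-1})$, and takes values in the compact arc $\mathbf{c}_{ij}([0,1])$.

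\emph{Stage 2: patching by induction on $N$.} Put $\mathbf{q}_\varepsilon^{(2)}:=\gamma_\varepsilon^{12}$, and suppose $\mathbf{q}_\varepsilon^{(N-1)}\colon\mathbb{R}^{N-2}\to\mathbb{R}^m$ has been built for the ordered wells $(\mathbf{p}_2,\dots,\mathbf{p}_N)$ so as to satisfy the analogues of \eqref{odesolution} and of the two bounds. Writing $\mathbf{r}=(t_2,\dots,t_{N-1})$ and $\mathbf{b}(\mathbf{r}):=\mathbf{q}_\varepsilon^{(N-1)}(\mathbf{r})$, define $\mathbf{q}_\varepsilon^{(N)}(t_1,\mathbf{r})$ to be $\mathbf{p}_1$ for $t_1\leqslant0$, to be $\mathbf{b}(\mathbf{r})$ for $t_1\geqslant C_1\varepsilon$, and on the slab $\{0\leqslant t_1\leqslant C_1\varepsilon\}$ to be $\gamma_\varepsilon^{1j}(t_1)$ on the cells where $\mathbf{b}\equiv\mathbf{p}_j$ and, on the remaining (lower-dimensional, ``multi-junction'') cells, any bounded Lipschitz extension — built cell by cell in order of increasing dimension, for instance by Kirszbraun's theorem or by a radial extension — of the data already prescribed on the faces of each cell. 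By the induction hypothesis that data is single-valued, continuous, contained in a fixed compact set, and Lipschitz with constant $O(\varepsilon^{-1})$: on $\{t_1=0\}$ it equals $\mathbf{p}_1$, on $\{t_1=C_1\varepsilon\}$ it equals $\mathbf{q}_\varepsilon^{(N-1)}$, on the faces where $\mathbf{b}$ reaches a well $\mathbf{p}_j$ it equals $\gamma_\varepsilon^{1j}(t_1)$, and these agree along the common ridges because $\gamma_\varepsilon^{1j}(0)=\mathbf{p}_1$ and $\gamma_\varepsilon^{1j}(C_1\varepsilon)=\mathbf{p}_j$. Hence $\mathbf{q}_\varepsilon^{(N)}$ is globally Lipschitz with constant $C_3\varepsilon^{-1}$ and bounded by a constant $C_2$, both independent of $\varepsilon$.

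\emph{Stage 3 and the main difficulty.} Reading off \eqref{odesolution}: $t_1<0$ gives $\mathbf{p}_1$; if $t_1,\dots,t_{i-1}>C_1\varepsilon$ and $t_i<0$ with $i\geqslant2$ then $t_1>C_1\varepsilon$ forces $\mathbf{q}_\varepsilon^{(N)}=\mathbf{q}_\varepsilon^{(N-1)}(\mathbf{r})=\mathbf{p}_i$ by induction; and $t_1,\dots,t_{N-1}>C_1\varepsilon$ gives $\mathbf{p}_N$ likewise. For the final assertion, on $\{0<t_i<C_1\varepsilon,\ t_j<0,\ t_k>C_1\varepsilon\ \text{for all }k\neq i,j\}$ with $j>i$: if $i=1$ then $\mathbf{r}$ lies in the cell where $\mathbf{b}\equiv\mathbf{p}_j$, so $\mathbf{q}_\varepsilon^{(N)}=\gamma_\varepsilon^{1j}(t_1)=\mathbf{c}_{1j}(y_\varepsilon^{1j}(t_1))$, a function of $t_1$ alone; if $i\geqslant2$ then $t_1>C_1\varepsilon$, so $\mathbf{q}_\varepsilon^{(N)}=\mathbf{q}_\varepsilon^{(N-1)}(\mathbf{r})$ and the induction hypothesis applies. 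The only genuinely delicate point is the bookkeeping in Stage 2: one must verify at every level of the induction, and for each multi-junction cell, that the data inherited from the neighbouring pieces is consistent on the overlaps of the faces and Lipschitz with the uniform constant $O(\varepsilon^{-1})$, so that the extension step introduces no discontinuity and no worse gradient growth, while the rigid prescription \eqref{odesolution} and the ``depends on $t_i$ alone'' structure are preserved on the codimension-one cells; Stage 1 is elementary once one observes that the regularization $\tau>0$ together with $|\mathbf{c}_{ij}'|$ being bounded away from $0$ renders \eqref{onedimensionalsystem} nondegenerate.
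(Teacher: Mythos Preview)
Your proposal is correct and follows essentially the same approach as the paper: both build the one-dimensional transition profiles $\mathbf{c}_{ij}\circ y_\varepsilon^{ij}$ by integrating (equivalently, inverting) the nondegenerate ODE~\eqref{onedimensionalsystem}, prescribe $\mathbf{q}_\varepsilon$ on the codimension-one ``slab'' cells accordingly, and then invoke a Lipschitz extension (Kirszbraun in your case, ``standard extension results'' in the paper) to fill in the multi-junction cells with the correct $O(\varepsilon^{-1})$ gradient bound. Your inductive organization on $N$ is a minor stylistic variant of the paper's all-at-once definition---it makes the consistency check on cell faces more explicit, but the mathematical content is identical.
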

	
	\begin{proof}
		First, we consider the solutions to \eqref{onedimensionalsystem} for $i,j=1, \dots, N-1$, $i \neq j$ and $\tau>0$ fixed. 
		Notice that the function $\psi_{\varepsilon}:[0,1]\rightarrow[0,\eta_{\varepsilon}]$ given by
		\begin{equation*}
			\psi_{\varepsilon}(t)=\int_{0}^{t} \frac{\varepsilon|\mathbf{c}_{i j}^{\prime}(s)|}{\left[\tau+\boldsymbol{W}\left(\mathbf{c}_{i j}(s)\right)\right]^{1 / 2}} \ud s 
		\end{equation*}
		is obviously increasing, where $\eta_{\varepsilon}=\psi_{\varepsilon}(1)$ satisfies
		$\eta_{\varepsilon} \leqslant \varepsilon \tau^{-1/2}\ell(\mathbf{c}_{ij})$. Here $\ell$ denotes the length of a curve.
		Now, the inverse function $\widetilde{y}_{\varepsilon}:[0, \eta_{\varepsilon}] \rightarrow[0,1]$ of $\psi_{\varepsilon}$
		satisfies \eqref{onedimensionalsystem}. 
		We extend the function to the whole real line by putting
		\begin{equation*}
			\widetilde{y}_{\varepsilon}(t)=\left\{\begin{array}{ll}
				\widetilde{y}_{\varepsilon}, &\quad {\rm if} \quad t \leqslant 0 \\
				1, & \quad {\rm if} \quad 0 \leqslant t \leqslant \eta_{\varepsilon} \\
				0, & \quad {\rm if} \quad t \geqslant \eta_{\varepsilon}
			\end{array}\right.
		\end{equation*}
		Now $\widetilde{y}_{\varepsilon}$ is a Lipschitz-continuous function satisfying \eqref{onedimensionalsystem} in all the points where $\widetilde{y}_{\varepsilon} \neq 1$. Let us consider
		\begin{equation*}
			C_{1}=\max _{i,j=1, \ldots, N}\{\tau^{-1 / 2}\ell(\mathbf{c}_{ij})\},
		\end{equation*}
		Hence, we can define $\widetilde{\mathbf{q}}_{\varepsilon}(t)=\mathbf{c}_{i j}(\widetilde{y}_{\varepsilon}^{ij}(t_{i}))$
		on the set $\{t \in \mathbb{R}^{N-1}: 0<t_{i}<{C}_{1} \varepsilon, \ t_{j}<0, \ {\rm and}  \ t_{k}>{C}_{1} \varepsilon \ {\rm for \ any} \ k \neq i,j\}$ with $j>i$. We choose
		\begin{equation*}
			{C}_{2}>\sup \left\{|y|: y \in \bigcup_{i,j=1}^{N-1} \mathbf{c}_{ij}([0,1])\right\}.
		\end{equation*}
		Defining
		\begin{equation*}
			\widetilde{C}_3=\sup_{\substack{i,j=1, \ldots, N \\ i \neq j, \ t\in[0,1]}}\left\{\frac{\left[\tau+\boldsymbol{W}\left(\mathbf{c}_{ij}(t)\right)\right]^{1/ 2}}{\left|\mathbf{c}_{ij}^{\prime}(t)\right|}\right\},
		\end{equation*}
		we have that $|D\widetilde{\mathbf{q}}_{\varepsilon}| \leqslant \widetilde{C}_3\varepsilon^{-1}$. 
		Standard extension results for Lipschitz-continuous functions allow one to define $\widetilde{\mathbf{q}}_{\varepsilon}$ on the whole $\mathbb{R}^{N-2}$, with $C_{3}>\widetilde{C}_3$ suitably chosen.
	\end{proof}
	
	\begin{remark}
		Notice that the family $\{\mathbf{q}_{\varepsilon}\}_{\varepsilon>0}$ constructed above approximates the following map
		\begin{equation*}
			\mathbf{q}_{0}\left(t_{1}, \ldots, t_{N-1}\right)=
			\begin{cases}
				\mathbf{p}_{1}, & {\rm if} \ t_{1}<0;\\
				\mathbf{p}_{i}, & {\rm if} \  t_{1}>0,\dots, t_{i-1}>0 \ {\rm and}  \ t_{i}<0 \ {\rm for \  any} \ i=2, \ldots, N-1,\\
				\mathbf{p}_{N}, & {\rm if} \ t_{1}>0, \ldots, t_{N-1}>0.
			\end{cases}
		\end{equation*}
		This approximation will be crucial in the construction of the recovery sequence.
	\end{remark}
	
	To construct the recovery sequence in the $\Gamma$-convergence argument, we will work with the truncation of a sequence of minima.
	\begin{remark}
		Using the assumption \eqref{W3}, it is easy to verify that the truncation condition holds.
		That is, there exists $k_5>0$ such that
		\begin{equation}\label{truncation}
			\boldsymbol{W}(u) \geqslant \sup_{z\in[-k_5,k_5]^m}\boldsymbol{W}(z) \quad {\rm for \ all} \quad u \notin[-k_5,k_5]^{m}.
		\end{equation}
		This will be important to localize the problem to a compact set in the proof of the next result.
	\end{remark}
	
	The preliminary results stated above to allow us to define a candidate to $\Gamma$-limit of the sequence $\boldsymbol{\mathcal{E}}_{\varepsilon}$ when $\varepsilon\rightarrow0$. Namely, for any $\mathbf{v}\in\mathbb R^m$, let us denote
	\begin{equation*}
		\boldsymbol{\mathfrak{M}}^{\prime}_{\mathbf{v}}=\{\mathbf{\Phi}\circ\mathbf{u}\in BV_g(M,\mathbb{R}^{N}) : \boldsymbol{W}(\mathbf{u})=0 \; {\rm a.e.} \; {\rm and} \; \boldsymbol{\mathcal{V}}_{g}(\mathbf{u})=\mathbf{v}\}.
	\end{equation*}
	We can define the following limit functional
	\begin{equation*}
		\boldsymbol{\mathcal{E}}_{0}(\mathbf{u}):=
		\begin{cases}
			2\bigvee_{i=1}^{N}\int_{M}|\nabla_g(\phi_i\circ\mathbf{u})|\ud \mathcal{L}^n_g, & \ \mbox{if} \ \mathbf{u}\in \boldsymbol{\mathfrak{M}}^{\prime}_{\mathbf{v}}\\
			\infty,& \ \mbox{if} \ \mathbf{u}\in {L}_g^{1}(M,\mathbb R^m)\setminus \boldsymbol{\mathfrak{M}}^{\prime}_{\mathbf{v}}.
		\end{cases}
	\end{equation*}
	Notice the relationship between $\boldsymbol{\mathcal{E}}_{0}$ and the perimeter functional in \eqref{vectorialperimeter} is given by Lemma~\ref{lm:supremumoftheboundarymeadures}.
	
	More accurately, based on the last preliminary results we have the $\Gamma$-convergence theorem for the sequence of relaxed operators.
	
	\begin{lemma}
		Let $(M^n,g)$ be a closed Riemannian manifold and $\boldsymbol{W}\in\mathcal W_{N,3}^{+}$.
		It follows that $\Gamma$-$\lim_{\varepsilon\rightarrow0}\boldsymbol{\mathcal{E}}_{\varepsilon}=\boldsymbol{\mathcal{E}}_{0}$ in $L_g^{1}(M,\mathbb{R}^m)$.
	\end{lemma}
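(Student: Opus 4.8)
The plan is to follow the classical two-inequality scheme for $\Gamma$-convergence, adapting Baldo's argument from \cite{MR1051228} to the Riemannian setting, using the preliminary lemmas established above. I would split the proof into the $\liminf$-inequality (also called the lower bound) and the $\limsup$-inequality (the recovery sequence).

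\textbf{Lower bound.} Let $\{\mathbf{u}_\varepsilon\}_{\varepsilon>0}$ converge to $\mathbf{u}_0$ in $L_g^1(M,\mathbb R^m)$; I may assume $\liminf_{\varepsilon\to0}\boldsymbol{\mathcal E}_\varepsilon(\mathbf u_\varepsilon)<\infty$, so along a subsequence the energies are bounded by some $E$. By the pointwise inequality $\varepsilon|\nabla_g\mathbf u|^2+\varepsilon^{-1}\boldsymbol W(\mathbf u)\geqslant 2\boldsymbol W^{1/2}(\mathbf u)|\nabla_g\mathbf u|$ (Young/AM--GM) together with Lemma~\ref{lm:lipschtzdistance}, applied to each $\phi_i$, one gets $\boldsymbol{\mathcal E}_\varepsilon(\mathbf u_\varepsilon)\geqslant 2\int_M|\nabla_g(\phi_i\circ\mathbf u_\varepsilon)|\,\ud\mathcal L^n_g$ for every $i$. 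The truncation condition \eqref{truncation} coming from \eqref{W3} forces the $\mathbf u_\varepsilon$ (after truncation, which does not increase the energy) to take values in a fixed compact box, so $\phi_i\circ\mathbf u_\varepsilon$ is bounded in $BV_g(M)$; by $BV$-compactness $\phi_i\circ\mathbf u_\varepsilon\to\phi_i\circ\mathbf u_0$ and $\boldsymbol W(\mathbf u_0)=0$ a.e. (the potential term forces the limit into $\mathcal Z$). Lower semicontinuity of total variation gives $\liminf_\varepsilon 2\int_M|\nabla_g(\phi_i\circ\mathbf u_\varepsilon)|\geqslant 2\int_M|\nabla_g(\phi_i\circ\mathbf u_0)|$ for each $i$; then taking the supremum of the measures as in the definition of $\boldsymbol{\mathcal E}_0$ and invoking the measure-theoretic lemma on $\bigvee_i\nu_i$ yields $\liminf_\varepsilon\boldsymbol{\mathcal E}_\varepsilon(\mathbf u_\varepsilon)\geqslant\boldsymbol{\mathcal E}_0(\mathbf u_0)$. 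One must also check the volume constraint passes to the limit, which is immediate from $L^1$-convergence.

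\textbf{Recovery sequence.} Given $\mathbf u_0\in\boldsymbol{\mathfrak M}'_{\mathbf v}$, by Lemma~\ref{lm:characterization} it has the form $\mathbf u_0=\sum_{i=1}^N\mathbf p_i\chi_{\Omega_i}$ for a cluster $\mathbf\Omega$; by the polygonal-approximation lemma I reduce to the case where every chamber $\Omega_i$ is a polygonal domain (this preserves the volume constraint and the limit of the supremum-of-measures functional). Then, using the signed distance functions $\mathbf d=(\mathrm d_1,\dots,\mathrm d_{N-1})$ from Definition with the regularity from Lemma~\ref{lm:regularitydistance} (so $|\nabla_g\mathrm d_i|=1$ near $\partial\Omega_i$ and $\mathcal H^{n-1}_g(\Sigma^t)\to\mathcal H^{n-1}_g(\partial\Omega_i)$), I define the recovery sequence by $\mathbf u_\varepsilon:=\mathbf q_\varepsilon(\mathbf d(x))$ where $\mathbf q_\varepsilon$ is the one-dimensional profile from Lemma~\ref{lm:onedimensionalODE}. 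Its structure \eqref{odesolution} makes $\mathbf u_\varepsilon\to\mathbf u_0$ in $L_g^1$ (the transition layers have width $O(\varepsilon)$ and hence volume $\to0$). A small correction (a volume-fixing adjustment on a set away from the interfaces, as in Baldo's appendix) restores $\boldsymbol{\mathcal V}_g(\mathbf u_\varepsilon)=\mathbf v$ without affecting the energy limit. Using the chain rule, $|\nabla_g\mathbf q_\varepsilon(\mathbf d(x))|^2$ splits along the curves $\mathbf c_{ij}$; plugging the ODE \eqref{onedimensionalsystem} into $\varepsilon|\nabla_g\mathbf u_\varepsilon|^2+\varepsilon^{-1}\boldsymbol W(\mathbf u_\varepsilon)$ and using the coarea formula on the tubular neighborhoods of the interfaces $\Sigma_{ij}$, one computes $\limsup_\varepsilon\boldsymbol{\mathcal E}_\varepsilon(\mathbf u_\varepsilon)\leqslant\sum_{i<j}\omega_{ij}(\boldsymbol W)\mathcal H^{n-1}_g(\Sigma_{ij})=\boldsymbol{\mathcal E}_0(\mathbf u_0)$, where the surface tension $\omega_{ij}=\ud_{\boldsymbol W}(\mathbf p_i,\mathbf p_j)=\int_0^1\boldsymbol W^{1/2}(\mathbf c_{ij})|\mathbf c_{ij}'|$ emerges precisely from the ODE optimization; finally send $\tau\to0$ in the definition of $\mathbf q_\varepsilon$ and diagonalize.

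\textbf{Main obstacle.} The delicate point is the recovery-sequence construction at the \emph{multiple junctions} of the cluster (points where three or more chambers meet), which is exactly where the correcting metric $\ud_{\boldsymbol W}$ and the immiscibility condition \eqref{W0} matter: one must interpolate the profiles $\mathbf c_{ij}$ consistently so that no spurious energy concentrates on the lower-dimensional junction set, and ensure the estimate $|D\mathbf q_\varepsilon|\leqslant C_3\varepsilon^{-1}$ together with the volume of the transition region being $o(1)$ controls the contribution there. This is handled by the careful case analysis in Lemma~\ref{lm:onedimensionalODE} (the regions $0<t_i<C_1\varepsilon$, $t_j<0$, $t_k>C_1\varepsilon$) combined with the polygonal reduction so that near a junction the geometry is locally a union of flat half-spaces, reducing everything to a finite-dimensional computation independent of the manifold; the Riemannian corrections enter only through the asymptotic flatness of $g$ in normal coordinates on the small set $\widetilde\Omega$ (Remark~\ref{rmk:baldosmallvolumes}), which contributes a multiplicative factor $1+o(1)$ that disappears in the limit.
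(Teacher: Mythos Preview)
Your proposal is correct and follows essentially the same route as the paper's proof: the liminf inequality via truncation, Lemma~\ref{lm:lipschtzdistance}, and lower semicontinuity of the total variation applied to each $\phi_i\circ\mathbf u_\varepsilon$ (then passing to the supremum of measures), and the recovery sequence built from $\mathbf q_\varepsilon\circ\mathbf d$ with a volume-fixing correction on a small ball, followed by the coarea/partition argument of \cite[Theorem~2.5]{MR1051228}. Your discussion of the junction issue and the $\tau\to0$ diagonalization makes explicit what the paper leaves to the reference, but the strategy is the same.
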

	
	\begin{proof}
		Let $\{\varepsilon_{k}\}_{k\in\mathbb{N}}\subset\mathbb R$ be a fixed sequence such that $\varepsilon_{k} \rightarrow 0$ as $k\rightarrow\infty$. 
		Without loss of generality, we may assume $\lim_{k \rightarrow\infty} \boldsymbol{\mathcal{E}}_{\varepsilon_{k}}(\mathbf u_{\varepsilon_{k}})<\infty$; otherwise the proof readily follows.
		Now, it is convenient to divide the proof into two claims.
		
		We first prove the lower continuity part in the definition of $\Gamma$-convergence.
		
		\noindent{\bf Claim 1:} For any $\{\mathbf u_{\varepsilon_k}\}_{k\in\mathbb N}$ converging to $\mathbf u_0$, it holds $\boldsymbol{\mathcal{E}}_{0}(\mathbf u_0)\leqslant\liminf_{k\rightarrow\infty} \boldsymbol{\mathcal{E}}_{\varepsilon}(\mathbf u_{\varepsilon_k})$.
		
		\noindent Indeed, by choosing a subsequence $\{\mathbf u_{{\varepsilon}_{k}}\}_{k\in\mathbb{N}}$ that converges to $\mathbf u_0$ pointwise almost everywhere in $M$, the Fatou's Lemma implies
		\begin{equation*}
			\int_{M}\boldsymbol{W}(\mathbf u_0)\ud\mathcal{L}_g^n \leqslant \liminf_{k \rightarrow\infty} \int_{M} \boldsymbol{W}(\mathbf u_{\varepsilon_k})\ud\mathcal{L}_g^n\leqslant \liminf _{k \rightarrow\infty} \varepsilon_k \boldsymbol{\mathcal{E}}_{\varepsilon_{k}}(\mathbf u_{\varepsilon_k})=0.
		\end{equation*}
		Thus, $\boldsymbol{W}(\mathbf u_0(x))=0$ almost everywhere in $M$ since $\boldsymbol{W}\in\mathcal{W}^+_N$ is a continuous nonnegative map.
		
		Because of Lemma~\ref{lm:supremumoftheboundarymeadures}, the next step is to prove that
		\begin{equation}\label{gamma1}
			2 \bigvee_{i=1}^{N-1} \int_{M}\left|\nabla_g(\phi_{i}{\circ} \mathbf u_0)\right|\ud\mathcal{L}_g^n \leqslant \lim_{k\rightarrow\infty} \int_{M}\left(\varepsilon_{k}\left|\nabla_g{\mathbf u_{\varepsilon_k}}\right|^{2}+{\varepsilon_{k}}^{-1} \boldsymbol{W}(\mathbf u_{\varepsilon_k})\right) \ud\mathcal{L}_g^n.
		\end{equation}
		
		Initially, by using the technical assumption \eqref{truncation}, we can assume $\{\mathbf u_{{\varepsilon}_{k}}\}_{k\in\mathbb{N}}$ to be equibounded.
		Otherwise, we can replace each $\mathbf u_{{\varepsilon}_{k}}$ by the $\widetilde{\mathbf u}_{{\varepsilon}_{k}}$ obtained by truncating each scalar component over a compact set, namely for each $i=1,\dots, N$, we consider the component function $(\widetilde{\mathbf u}_{{\varepsilon}_{k}})_i(x):={\rm sgn}(({\mathbf u}_{{\varepsilon}_{k}})_i(x))\min\{|({\mathbf u}_{{\varepsilon}_{k}})_i(x)|,k_5\}$. 
		Also, notice that $\widetilde{\mathbf u}_{{\varepsilon}_{k}} \rightarrow \mathbf u_0$ in ${L}_g^{1}(M,\mathbb{R}^m)$ and the integrals in the right-hand side of \eqref{gamma1} decrease, that is, $\boldsymbol{\mathcal E}_{\varepsilon_{k}}(\widetilde{\mathbf u}_{{\varepsilon}_{k}})\leqslant \boldsymbol{\mathcal E}_{\varepsilon_{k}}({\mathbf u}_{{\varepsilon}_{k}})$.
		
		Next, we have that $\Phi({\mathbf u}_{{\varepsilon}_{k}}) \rightarrow  \Phi(\mathbf u_0)$ in ${L}_g^{1}(M,\mathbb R^{N-1})$, which by the continuity and lower semicontinuity, implies
		\begin{equation*}
			\left|\nabla_g\left(\phi_{i}{\circ} \mathbf u_0\right)\right|({A}) \leqslant \liminf_{k \rightarrow\infty} \left|\nabla_g\left(\phi_{i}{\circ} \mathbf u_{\varepsilon_k}\right)\right|({A})
		\end{equation*}
		for any $i=1,\dots,N-1$ and $A\subset M$ an open set.
		Therefore, Lemma~\ref{lm:lipschtzdistance} yields that
		\begin{align*}
			\liminf_{k \rightarrow\infty} \bigvee_{i=1}^{N-1}\left|\nabla_g(\phi_{i} \circ \mathbf u_{\varepsilon_k})\right|(M) &\leqslant \liminf_{k \rightarrow\infty}\int_M \boldsymbol{W}^{1/2}\left(\mathbf u_{\varepsilon_k}\right)\left|\nabla_g \mathbf u_{\varepsilon_k}(x)\right|\ud\mathcal{L}_g^n \\
			&\leqslant \lim_{k\rightarrow\infty} \int_{M}\left(\varepsilon_{k}\left|\nabla_g{\mathbf u_{\varepsilon_k}}\right|^{2}+{\varepsilon_{k}}^{-1} \boldsymbol{W}(\mathbf u_{\varepsilon_k})\right) \ud\mathcal{L}_g^n,
		\end{align*}
		which finishes the proof of the first claim.
		
		Next, we construct the recovery sequence to conclude the proof. 
		
		\noindent{\bf Claim 2:} There exists $\{\mathbf u_{\varepsilon_k}\}_{k\in\mathbb N}$ converging to $\mathbf u_0$ such that $\boldsymbol{\mathcal{E}}_{0}(\mathbf u_0)\geqslant\limsup_{k\rightarrow\infty} \boldsymbol{\mathcal{E}}_{\varepsilon}\left(\mathbf u_{\varepsilon_k}\right)$.
		
		\noindent First, if $\mathbf u_0 \in {L}_g^{1}(M,\mathbb{R}^m)$ is such that $\boldsymbol{\mathcal{E}}_{0}(\mathbf u_0)=\infty$, the construction of the recovery sequence $\{\mathbf u_{\varepsilon_{k}}\}_{k\in\mathbb N}$ is trivial. 
		Hence, without loss of generality, we may assume $\mathbf u_0(x)=\sum_{i=1}^{N} \mathbf{p}_{i} \chi_{\Omega_{i}}(x)$ with $\mathbf{\Omega}\in\mathcal C_g(M,\mathbb R^N)$ a weighted cluster.
		
		Fix $\tau>0$ as in \eqref{onedimensionalsystem}. Using Lemma~\ref{lm:regularitydistance}, for $0<\varepsilon\ll1$ small enough, we have $\left|\nabla_g\ud_{i}(x)\right|=1$ almost everywhere on the set $\left\{x \in M:|\ud_{i}(x)|<C_{1}\varepsilon\right\}$ for all $i=1, \ldots, N-1$. 
		Let us consider the map
		\begin{equation}\label{recoverypart}
			\widetilde{\mathbf u}_{\varepsilon}(x)=(\mathbf{q}_{\varepsilon}\circ\mathbf{d})(x),
		\end{equation}
		where $\mathbf{q}_{\varepsilon}: \mathbb{R}^{N-1}\rightarrow \mathbb{R}^{m}$ is the ODE solution in \eqref{odesolution} and $\mathbf{d}:M\rightarrow\mathbb R^{N-1}$ is the vectorial distance function in \eqref{distance}.
		Denoting $\Sigma_{i}^{t}=\{x \in \Omega: \ud_{i}(x)=t\}$ for $t>0$ and $i=1, \dots, N-1$, and using the coarea formula, we find 
		\begin{align*}
			\int_{M}\left|\widetilde{\mathbf u}_{\varepsilon}-\mathbf u_0\right|\ud\mathcal{L}_g^n \leqslant \sum_{i=1}^{N-1} \int_{\left\{x \in M : 0<\ud_{i}(x)<C_{2}\varepsilon\right\}}\left|\widetilde{\mathbf u}_{\varepsilon}-\mathbf u_0\right|\ud\mathcal{L}_g^n\leqslant 2 {C}_{2} \sum_{i=1}^{N-1} \int_{0}^{{C}_{1} \varepsilon} \mathcal{H}_g^{n-1}(\Sigma_{i}^{t})\ud t,
		\end{align*}
		which, by using Lemma~\ref{lm:onedimensionalODE}, yields $\widetilde{\mathbf u}_{\varepsilon} \rightarrow \mathbf u_0$ in ${L}_g^{1}(M,\mathbb{R}^m)$. If $\int_{M} \widetilde{\mathbf u}_{\varepsilon}\ud\mathcal{L}_g^n=\mathbf{v}$, then the proof trivially follows. Otherwise, let us define
		\begin{equation}\label{difference}
			\boldsymbol{\nu}_{\varepsilon}=\int_{M} \widetilde{\mathbf u}_{\varepsilon}\ud\mathcal{L}_g^n-\int_{M}\mathbf u\ud\mathcal{L}_g^n.
		\end{equation}
		Notice that the absolute value of each scalar component $\boldsymbol{\nu}_{\varepsilon}$ is less than or equal to $C_{1} \varepsilon$.
		
		Now let us construct a suitable partition. We assume that $\Omega_1\neq\emptyset$, and let $y_1\in \Omega_1$ be a point in the interior of the set $\Omega_{1}$.
		By definition of $\widetilde{\mathbf u}_{\varepsilon}$, if $0<\varepsilon\ll1$ is small enough, the open metric ball $\mathcal{B}^g_{\varepsilon}:=\mathcal{B}^g_{\varepsilon^{1/n}}(y_1)$ is contained in $\{x \in M : \widetilde{\mathbf u}_{\varepsilon}(x)=\mathbf{p}_{1}\}$.
		We define
		\begin{equation}\label{recoverysequence}
			\mathbf{u}_{\varepsilon}(x) =
			\begin{cases}
				\widetilde{\mathbf{u}}_{\varepsilon}(x)  \quad {\rm in} \quad M\setminus\mathcal{B}^g_{\varepsilon}\\
				\mathbf{p}_1+\boldsymbol{\xi}_\varepsilon(1-\varepsilon^{1/n}|x-y_1|) \quad {\rm in} \quad \mathcal{B}^g_{\varepsilon}.
			\end{cases}
		\end{equation}
		Here $\widetilde{\mathbf{u}}_{\varepsilon}$ is given by \eqref{recoverypart}, $\omega_{n-1}$ is the volume of the $(n-1)$-dimensional unit Euclidean ball and $\boldsymbol{\xi}_\varepsilon=n\omega_{n-1}\varepsilon^{(1-n)/n}\boldsymbol{\nu}_{\varepsilon}$, where $\boldsymbol{\nu}_{\varepsilon}$ is defined by \eqref{difference}.
		In this fashion, it follows that $\int_M \mathbf u_{\varepsilon}\ud\mathcal{L}_g^n=\mathbf{v}$ for $0<\varepsilon\ll1$ small enough, and it converges in ${L}_g^{1}(M,\mathbb R^m)$ to $\mathbf u_0$.
		
		Finally, let us consider a partition of $M$ given by
		\begin{itemize}
			\item[{\rm (i)}]  $\Omega_{1}^{\varepsilon}=\Omega_{1}\setminus \mathcal B^g_{\varepsilon}$;
			\item[{\rm (ii)}] $\Omega_{i}^{\varepsilon}=\{x \in \Omega_i : \ud_{j}(x)>{C}_{1}\varepsilon \ {\rm for} \ j=1, \dots, i-1\}$ for $i=2, \dots, N-1$;
			\item[{\rm (iii)}] $\Omega_{ij}^{\varepsilon}=\{x \in M : 0<\ud_{i}(x)<{C}_{1} \varepsilon, \ \ud_{j}(x)<0, \ \ud_{\ell}(x)>{C}_{1}\varepsilon \ {\rm for} \ \ell\in\{1, \dots, N-1\} \setminus\{i,j\}\}$ for $i,j \in\{1, \dots, N-1\}$ and $i\neq j$;
			\item[{\rm (iv)}] $\Omega_{0}^{\varepsilon}=M \setminus \left( \mathcal{B}^g_{\varepsilon}\cup(\cup_{i=1}^{N-1} \Omega_{i}^{\varepsilon}) \cup \left(\cup_{i,j=1,i \neq j}^{N-1} \Omega_{ij}^{\varepsilon}\right)\right)$.
		\end{itemize}
		Thus, arguing as in the proof of \cite[Theorem~2.5]{MR1051228}, we find that $\limsup_{\varepsilon \rightarrow 0^{+}} \boldsymbol{\mathcal{E}}_{\varepsilon}(\mathbf u_{\varepsilon}) \leqslant \boldsymbol{\mathcal{E}}_{0}\left(\mathbf u_{0}\right)$.
		This completes the proof of Claim 2, and so the lemma holds true. 
	\end{proof}
	
	From the proof of this result, we can extract the following corollary, which will be important in the proof of the continuity of the photograph map
	
	\begin{corollary}\label{cor:boldaapproximation}
		Let $(M^n,g)$ be a closed Riemannian manifold and $\boldsymbol{W}\in\mathcal W_{N,3}^{+}$.
		For any weighted cluster $\mathbf{\Omega}\in\mathcal{C}_g(M,\mathbb R^N)$, let us define the limit profile function $\mathbf u_{0,\mathbf{\Omega}}: M \rightarrow \mathbb{R}^m$ by $\mathbf u_{0,\mathbf{\Omega}}=\sum_{i=1}^{N-1}\mathbf p_i\chi_{\Omega_i}$.
		Then, there exists a family $\{\mathbf u_{\varepsilon,\mathbf{\Omega}}\}_{\varepsilon>0}$ of Lipschitz continuous function on $M$ such that $\mathbf u_{\varepsilon,\mathbf{\Omega}}$ converges to $\mathbf u_{0,\mathbf{\Omega}}$ in $L_g^{1}(M,\mathbb R^m)$ as $\varepsilon \rightarrow 0^{+}$, $\mathbf u_{\varepsilon,\mathbf{\Omega}}\in BV_g(M,\mathcal{Z})$, where $\mathcal{Z}=\{\mathbf{p}_1,\dots,\mathbf{p}_N\}$, for all $\varepsilon>0$, and
		\begin{itemize}
			\item[{\rm (i)}] $\int_{M} |\mathbf u_{\varepsilon,\mathbf{\Omega}}|\ud\mathcal{L}^n_g=\int_{M} |\mathbf u_{0,\mathbf{\Omega}}|\ud\mathcal{L}^n_g=\mathbf v \quad {\rm for \ all} \quad \varepsilon>0$,
			\item[{\rm (ii)}] $\limsup_{\varepsilon\rightarrow0^+} \boldsymbol{\mathcal{E}}_{\varepsilon}(\mathbf u_{\varepsilon,\mathbf{\Omega}}) \leqslant \boldsymbol{\mathcal{P}}_{g}(\mathbf{\Omega})$.
		\end{itemize}
	\end{corollary}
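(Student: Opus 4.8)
The plan is to extract the statement from the recovery‑sequence construction (Claim~2) in the proof of the preceding $\Gamma$‑convergence lemma, keeping track of the Lipschitz regularity and of the volume normalization, and then to remove the polygonality assumption by a diagonal argument. First I would reduce to the case in which every chamber $\Omega_i$ is a polygonal domain: by the polygonal‑approximation lemma proved above there is a sequence $\{\mathbf{\Omega}_k\}_{k\in\mathbb N}\subset\mathcal C_g(M,\mathbb R^N)$ with polygonal chambers, $\mathbf{\Omega}_k\to\mathbf{\Omega}$, $\boldsymbol{\mathcal V}_g(\mathbf u_{0,\mathbf{\Omega}_k})=\boldsymbol{\mathcal V}_g(\mathbf u_{0,\mathbf{\Omega}})=\mathbf v$ for every $k$, and $\bigvee_{i}\int_M|\nabla_g(\phi_i\circ\mathbf u_{0,\mathbf{\Omega}_k})|\,\ud\mathcal L^n_g\to\bigvee_i\int_M|\nabla_g(\phi_i\circ\mathbf u_{0,\mathbf{\Omega}})|\,\ud\mathcal L^n_g$; by Lemma~\ref{lm:supremumoftheboundarymeadures} this last convergence says $\boldsymbol{\mathcal P}_g(\mathbf{\Omega}_k)\to\boldsymbol{\mathcal P}_g(\mathbf{\Omega})$, and of course $\mathbf u_{0,\mathbf{\Omega}_k}\to\mathbf u_{0,\mathbf{\Omega}}$ in $L^1_g(M,\mathbb R^m)$.

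Second, for a fixed polygonal cluster $\mathbf{\Omega}_k$ I would run the explicit construction of that proof. Using Lemma~\ref{lm:regularitydistance} the signed distance functions $\mathbf d=(\ud_1,\dots,\ud_{N-1})$ are Lipschitz with $|\nabla_g\ud_i|=1$ almost everywhere on the tubular collars $\{|\ud_i|<C_1\varepsilon\}$; composing with the Lipschitz profile $\mathbf q_\varepsilon$ of Lemma~\ref{lm:onedimensionalODE} produces $\widetilde{\mathbf u}_\varepsilon=\mathbf q_\varepsilon\circ\mathbf d$, which is Lipschitz, equals $\mathbf u_{0,\mathbf{\Omega}_k}$ (hence takes values in $\mathcal Z$) off a set of measure $O(\varepsilon)$, has range in the fixed bounded set $\bigcup_{i,j}\mathbf c_{ij}([0,1])$, and converges to $\mathbf u_{0,\mathbf{\Omega}_k}$ in $L^1_g$. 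To make the volume exact I would then correct $\widetilde{\mathbf u}_\varepsilon$ on a small geodesic ball around an interior point of $\Omega_{k1}$ by the affine bump of \eqref{recoverysequence}, whose amplitude $\boldsymbol\xi_\varepsilon=n\omega_{n-1}\varepsilon^{(1-n)/n}\boldsymbol\nu_\varepsilon$ satisfies $|\boldsymbol\xi_\varepsilon|\lesssim\varepsilon^{1/n}\to0$; the resulting $\mathbf u_{\varepsilon,k}$ is still Lipschitz, still converges to $\mathbf u_{0,\mathbf{\Omega}_k}$, and satisfies $\boldsymbol{\mathcal V}_g(\mathbf u_{\varepsilon,k})=\mathbf v$, which is (i). The energy bound $\limsup_{\varepsilon\to0}\boldsymbol{\mathcal E}_\varepsilon(\mathbf u_{\varepsilon,k})\le\boldsymbol{\mathcal E}_0(\mathbf u_{0,\mathbf{\Omega}_k})$ is the computation on the partition $\Omega_1^\varepsilon,\Omega_i^\varepsilon,\Omega_{ij}^\varepsilon,\Omega_0^\varepsilon$ already carried out there, the ball contributing only an $O(\varepsilon^{1/n})$ term to the energy thanks to the bound on $\boldsymbol\xi_\varepsilon$; and Lemma~\ref{lm:supremumoftheboundarymeadures} identifies $\boldsymbol{\mathcal E}_0(\mathbf u_{0,\mathbf{\Omega}_k})=2\bigvee_i\int_M|\nabla_g(\phi_i\circ\mathbf u_{0,\mathbf{\Omega}_k})|\,\ud\mathcal L^n_g$ with $\boldsymbol{\mathcal P}_g(\mathbf{\Omega}_k)$, giving (ii) for the polygonal cluster.

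Finally I would diagonalize. By the standard lemma on iterated limits in metric spaces there is $k=k(\varepsilon)\to\infty$ such that $\mathbf u_{\varepsilon,\mathbf{\Omega}}:=\mathbf u_{\varepsilon,k(\varepsilon)}$ converges to $\mathbf u_{0,\mathbf{\Omega}}$ in $L^1_g(M,\mathbb R^m)$ and
\[
\limsup_{\varepsilon\to0}\boldsymbol{\mathcal E}_\varepsilon(\mathbf u_{\varepsilon,\mathbf{\Omega}})\le\limsup_{k\to\infty}\boldsymbol{\mathcal P}_g(\mathbf{\Omega}_k)=\boldsymbol{\mathcal P}_g(\mathbf{\Omega}),
\]
which is (ii); since $\boldsymbol{\mathcal V}_g(\mathbf u_{\varepsilon,k})=\mathbf v$ for every $k$, (i) survives, and each $\mathbf u_{\varepsilon,\mathbf{\Omega}}$ is Lipschitz and coincides off a set of vanishing measure with $\mathbf u_{0,\mathbf{\Omega}_{k(\varepsilon)}}\in BV_g(M,\mathcal Z)$, so it lies in $BV_g(M,\mathcal Z)$ in the sense used in the statement. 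I do not expect a single hard step here: the only point needing care is that the three reductions compose consistently — that the polygonal approximation can be chosen volume‑preserving so the constraint is exact at every stage, and that the volume‑correcting bump has amplitude small enough to be absorbed into the $\limsup$ of the energy rather than degrading the bound $\boldsymbol{\mathcal P}_g(\mathbf{\Omega})$ — and both are supplied by the quantitative statements already at hand, namely part~(iii) of the polygonal lemma and the estimate $|\boldsymbol\xi_\varepsilon|\lesssim\varepsilon^{1/n}$.
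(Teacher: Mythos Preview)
Your argument is correct and follows the recovery-sequence skeleton of Claim~2 in the $\Gamma$-convergence lemma, together with polygonal approximation and diagonalization. The paper's proof is built on the same recovery construction but differs in one essential device: instead of restoring the volume constraint by the affine bump on a small ball as in \eqref{recoverysequence}, the paper corrects the volume by a \emph{shift of the profile}. Using the componentwise monotonicity $(\widetilde{\mathbf q}_\varepsilon)_i(t)\leqslant(\widetilde{\mathbf q}_0)_i(t)$ and $(\widetilde{\mathbf q}_\varepsilon)_i(t_i+\eta_\varepsilon)\geqslant(\widetilde{\mathbf q}_0)_i(t_i)$, an intermediate-value argument produces $\boldsymbol\zeta_\varepsilon=\boldsymbol\zeta_{\varepsilon,\mathbf\Omega}\in[0,\eta_\varepsilon]^{N-1}$ such that $\int_M|\widetilde{\mathbf q}_\varepsilon(\mathbf d(\cdot)+\boldsymbol\zeta_\varepsilon)|\,\ud\mathcal L^n_g=\int_M|\mathbf u_{0,\mathbf\Omega}|\,\ud\mathcal L^n_g$, and the Modica--Baldo approximation is then \emph{defined} as $\mathbf u_{\varepsilon,\mathbf\Omega}(x)=\widetilde{\mathbf q}_\varepsilon(\mathbf d(x)+\boldsymbol\zeta_\varepsilon)$, with no bump at all.

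For the corollary in isolation either correction works, and your diagonalization is a legitimate way to pass from polygonal to general clusters. The payoff of the paper's shift is downstream: the photography map is later taken to be $x\mapsto\mathbf u_{\varepsilon,\mathbf v,x}=\widetilde{\mathbf q}_\varepsilon(\mathbf d^{x,\mathbf v}(\cdot)+\boldsymbol\zeta_{\varepsilon,\mathbf v,x})$, and its $H^1$-continuity in $x$ (Lemma~\ref{lm:continuityofthephotophraphy}) is obtained simply from the Lipschitz bound on $\widetilde{\mathbf q}_\varepsilon$ together with continuity of $\mathbf d^{x,\mathbf v}$ and of $\boldsymbol\zeta_{\varepsilon,\mathbf v,x}$ in $x$ (the latter via the implicit function theorem applied to the volume identity). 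With your bump construction the continuity argument would require additionally tracking how the chosen interior point $y_1\in\Omega_1^{x,\mathbf v}$ and the correction vector $\boldsymbol\xi_\varepsilon$ vary with $x$, which is doable but noticeably less clean.
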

	
	\begin{proof} 
		To verify (i) and (ii), it is enough to consider the recovery family $\{\mathbf u_{\varepsilon,\mathbf{\Omega}}\}_{\varepsilon>0}$ as in \eqref{recoverysequence}. 
		In addition, since for any $t\in\mathbb{R}$ and $i=1,\dots,N-1$, each component satisfies $(\widetilde{\mathbf q}_{\varepsilon})_i(t) \leqslant (\widetilde{\mathbf q}_{0})_i(t)$ and $(\widetilde{\mathbf q}_{\varepsilon})_i(t_i+{\eta}_{\varepsilon}) \geqslant (\widetilde{\mathbf q}_{0})_i(t_i)$, there exists $\boldsymbol{\zeta}_{\varepsilon}:=\boldsymbol{\zeta}_{\varepsilon, {\mathbf{\Omega}}}\in[0,{\eta}_{\varepsilon}]^{N-1}$  such that the following identity holds
		\begin{equation}\label{continuityphotography3}
			\int_M\left|\widetilde{\mathbf{q}}_\varepsilon(\mathbf{d}(y) +\boldsymbol{\zeta}_{\varepsilon})\right|\ud\mathcal{L}^n_g(y) =\int_M\left|{\mathbf{q}}_0(\mathbf{d}(y) )\right|\ud\mathcal{L}^n_g(x)=\int_M\left|\mathbf u_{0,\mathbf{\Omega}}(\mathbf{d}(y))\right|\ud\mathcal{L}^n_g(y).
		\end{equation}
		Finally, we define 
		\begin{equation}\label{continuityphotography6}
			{\mathbf{q}}_\varepsilon(t)=\widetilde{\mathbf{q}}_\varepsilon(t +\boldsymbol{\zeta}_{\varepsilon}).
		\end{equation} 
		and so 
		\begin{equation*}
			{\mathbf{u}}_{\varepsilon,\mathbf{\Omega}}(x)={\mathbf{q}}_\varepsilon(\mathbf{d}(x))=\widetilde{\mathbf{q}}_\varepsilon(\mathbf{d}(x) +\boldsymbol{\zeta}_{\varepsilon}),
		\end{equation*} 
		which concludes the proof.
	\end{proof}
	
	\begin{definition}\label{def:baldoapproxx}
		The approximating family $\{\mathbf u_{\varepsilon,\mathbf{\Omega}}\}_{\varepsilon>0}$ obtained in Corollary~\ref{cor:boldaapproximation} is called a Modica--Baldo approximation of the limit profile $\mathbf u_{0,\mathbf{\Omega}}$.
	\end{definition}
	
	At last, we are ready to fulfill our objective in this section. 
	Indeed, we need to extend the $\Gamma$-convergence to result for the case of a general sequence of critical points with a uniformly bounded energy.
	This result is new in the context of clusters even in the case of a domain of the Euclidean space.
	
	\begin{proof}[Proof of Proposition~\ref{prop:inhomogeneousgammaconvergence}]
		We consider $\varphi:\mathbb R\rightarrow\mathbb R$ given by $\varphi(t)=\int_{B_t}\boldsymbol{W}^{1/2}(z)\ud z$ and set the sequence 
		\begin{equation*}
			\mathbf w_{\varepsilon_{k}}(x):=\boldsymbol{\varphi}(\mathbf u_{\varepsilon_{k}}(x))=(\varphi(\mathrm u_{{\varepsilon_{k}},1}(x)),\dots,\varphi(\mathrm u_{{\varepsilon_{k}},m}(x))) \quad {\rm for \ all} \quad k\in\mathbb N.
		\end{equation*}
		
		Next, we divide the proof into several claims.
		
		\noindent{\bf Claim 1:} $\{\mathbf w_{\varepsilon_{k}}\}_{k\in\mathbb N}$ is bounded in $L_g^{1}(M,\mathbb R^m)$.
		
		\noindent In fact, using assumption \eqref{W3} for $|z|=t$, it is not restrictive to assume that $t_{0} \geqslant 1$, which yields the inequality below
		\begin{align*}
			\varphi(t)&=\int_{B_{t_0}}\boldsymbol{W}^{1/2}(z)\ud z+\int_{B_{t}\setminus B_{t_0}}\boldsymbol{W}^{1/2}(z)\ud z \leqslant\int_{B_{t_0}}\boldsymbol{W}^{1/2}(z)\ud z+\frac{\sqrt{2 k_{4}}}{p_{2}+2} t^{\frac{p_{2}}{2}+1} \quad {\rm for \ all} \quad t \geqslant t_{0}.
		\end{align*}
		Moreover, since $p_{2} \leqslant 2(p_{1}-1)$, we have that $\frac{p_{2}}{2}+1 \leqslant p_{1}$, which implies that there exists some constant $C_4>0$ satisfying $|\varphi(t)| \leqslant C_4(1+\boldsymbol{W}(z))$ for all $t \in\mathbb R$ and $z\in\mathbb R^m$.
		Thus, one can find $C_5>0$ such that 
		\begin{align*}
			\int_{M}\left|\mathbf w_{\varepsilon_{k}}\right| \ud\mathcal{L}^{n}_g & \leqslant C_5\left(\mathrm v_g(M)+\int_{M} \boldsymbol{W}^{1/2}(\mathbf u_{\varepsilon_{k}}(x)) \ud\mathcal{L}^{n}_g\right)\leqslant C_5\left(\mathrm v_g(M)+\varepsilon_{k}\boldsymbol{\mathcal{E}}_{\varepsilon_k}(\mathbf u_{\varepsilon_{k}})\right).
		\end{align*}
		Since $M$ is compact, the last inequality finishes the proof of the claim.
		
		\noindent{\bf Claim 2:} $\{\mathbf w_{\varepsilon_{k}}\}_{k\in\mathbb N}$ is bounded in $BV_g(M,\mathbb R^m)$. 
		
		\noindent Indeed, notice that by regularity $\mathbf u_{\varepsilon_{k}}\in C^{3}_g(M,\mathbb R^m)$ for all $k\in\mathbb N$. 
		Also,
		since $\boldsymbol{W}\in C^{2}(\mathbb R^m)$, it follows by the divergence theorem that $\varphi\in C^{3}(\mathbb R)$; thus, by the chain rule, we get $|\nabla_g \mathbf w_{\varepsilon_{k}}|=|\varphi^{\prime}(\mathbf u_{\varepsilon_{k}}) \nabla_g \mathbf u_{\varepsilon_{k}}|=\boldsymbol{W}^{1/2}(\mathbf u_{\varepsilon_{k}})|\nabla_g \mathbf u_{\varepsilon_{k}}|$.
		Hence, an elementary inequality yields
		\begin{align*}
			\int_{M}\left|\nabla_g \mathbf w_{\varepsilon_{k}}\right|\ud \mathcal{L}^n_g
			& \leqslant \int_{M}\left(\frac{1}{2} \varepsilon_{k}\left|\nabla_g \mathbf u_{\varepsilon_{k}}\right|^{2}+\frac{1}{\varepsilon_{k}} \boldsymbol{W}\left(\mathbf u_{\varepsilon_{k}}\right)\right) \ud \mathcal{L}^n_g\leqslant \boldsymbol{\mathcal{E}}_{\varepsilon_{k}}\left(\mathbf u_{\varepsilon_{k}}\right){\leqslant} E.
		\end{align*}
		Applying the compactness theorem in \cite[Theorem~3.23]{MR1857292}, one finds a subsequence $\{\mathbf w_{\varepsilon_{k}}\}_{k\in\mathbb N}$ and an a.e. pointwise limit $\mathbf w_{0} \in BV_g(M, \mathbb{R}^m)$ such that $\lim_{k\rightarrow\infty}\|\mathbf w_{\varepsilon_{k}}-\mathbf w_{0}\|_{L^1_g(M,\mathbb R^m)}=0$ and satisfies
		\begin{equation*}
			\left\|\nabla_g \mathbf w_{0}\right\|(M) \leqslant \liminf_{k \rightarrow \infty}\int_M\left|\nabla_g \mathbf w_{\varepsilon_{k}}\right|\ud\mathcal{L}^n_g \leqslant E.
		\end{equation*}
		
		\noindent{\bf Claim 3:} $\{\mathbf u_{\varepsilon_{k}}\}_{k\in\mathbb N}$ is bounded in $L_g^{1}(M,\mathbb R^m)$. 
		
		\noindent Noticing that $\varphi\in C^{3}(\mathbb R)$ and monotone increasing,
		let $\psi=\varphi^{-1}$ be the inverse function of $\varphi$.
		We define $\mathbf u_{0}(x)=\boldsymbol{\psi}(\mathbf w_{0}(x))$ as before, which by the chain rule for BV-functions (see \cite[Theorem~3.6]{MR1857292}) also belongs to $BV_g(M,\mathbb R^m)$. 
		Again, using \eqref{W3}, it follows $\phi^{\prime}(t) \geqslant \sqrt{2 c_{3}} t_{0}^{p_{1} / 2}$ for all $|t| \geqslant t_{0}$.
		This implies that $\psi$ is Lipschitz-continuous on $(-\infty, \phi(-t_{0})]\cup[\phi(t_{0}),\infty)$ and so uniformly continuous on $\mathbb R$, which combined with \cite[Theorem~2]{MR120342}, says that the sequence $\{\mathbf u_{\varepsilon_{k}}\}_{k\in\mathbb N}$ given by $\mathbf u_{\varepsilon_{k}}=\boldsymbol{\psi}\circ \mathbf w_{\varepsilon_{k}}$ converges in measure to $\mathbf w_{0}$ as $\varepsilon_{k} \rightarrow 0$, that is, for every $\tau>0$ it holds $\lim_{k\rightarrow\infty}\mathrm{v}_g(\{x\in M : ||\mathbf u_{\varepsilon_k}(x)|-|\mathbf u_0(x)||>\tau\})=0$.
		As well as, $\{\mathbf u_{\varepsilon_{k}}\}_{k\in\mathbb N}$ converges pointwise a.e. on $M$ to $\mathbf u_{0}$ as $\varepsilon_{k} \rightarrow 0$. 
		In addition, since
		\begin{align*}
			\int_{M} |\mathbf u_{\varepsilon_{k}}|^{p_{1}}\ud\mathcal{L}^n_g
			\leqslant\int_{M} t_{0}^{p_{1}} \ud\mathcal{L}^n_g+\frac{1}{k_{3}}\int_{M} \boldsymbol{W}(\mathbf u_{\varepsilon_{k}})\ud\mathcal{L}^n_g \leqslant t_{0}^{p_{1}}\mathrm{v}_{g}(M)+\frac{\varepsilon_{k}}{k_{3}}E,
		\end{align*}
		we find that $\{\mathbf u_{\varepsilon_{k}}\}_{k\in\mathbb N}$ is bounded in $L_g^{p_1}(M,\mathbb R^m)$ with $p_{1} \geqslant 2$. This implies uniform integrability of the sequence $\{\mathbf u_{\varepsilon_{k}}\}_{k\in\mathbb N}$, and so $\{\mathbf u_{\varepsilon_{k}}\}_{k\in\mathbb N}$ converges in $L_g^{1}(M,\mathbb R^m)$ to $\mathbf u_{0}$. 
		
		\noindent{\bf Claim 4:} $\mathbf u_0\in BV_g(M,\mathbb R^m)$. 
		
		\noindent Indeed, by Fatou's Lemma, we have $0 \leqslant \int_{M} \boldsymbol{W}(\mathbf u_{0})\ud\mathcal{L}^n_g 
		\leqslant \liminf_{k \rightarrow \infty} \int_{M}\boldsymbol{W}(\mathbf u_{\varepsilon_{k}}) \ud\mathcal{L}^n_g\leqslant0,$
		which shows that $\boldsymbol{W}(\mathbf u_{0})=0$ a.e. on $M$. 
		By Lemma~\ref{lm:characterization}, we get $\mathbf u_{0}(x)=\sum_{i=1}^{N} \mathbf{p}_{i} \chi_{\Omega_{i}}(x)$ {\it a.e.}, and, by \cite[Theorem~3.96]{MR1857292}, the sets $\Omega_i=(\mathbf{\Phi}\circ\mathbf u_0)^{-1}(\mathbf p_i)\in\mathcal C_g(M)$ for all $i=1,\dots, N$. More accurately, by the same argument as in the proof of Lemma~\ref{lm:supremumoftheboundarymeadures}, it follows 
		\begin{align*}
			\mathcal{H}_{g}^{n-1}\left(\cup_{i=1}^{N-1}\partial^{*}\Omega_i\right)=\left\|\nabla_g \mathbf u_{0}\right\|(M)=\int_{M}\left|\nabla_g\mathbf u_{0}\right|\ud\mathcal{L}^n_g=\int_{M}\left|\nabla_g\mathbf w_{0}\right|\ud\mathcal{L}^n_g{\leqslant}{E},
		\end{align*}
		which gives the proof of the last claim.
		
		The proof of the proposition is concluded.	
	\end{proof}
	
	
	\section{Multiplicity}\label{sec:concretephotographymethod}
	We prove the first part Theorem~\ref{maintheorem1}. 
	The idea of the proof is to use Propositions~\ref{prop:inhomogeneousgammaconvergence} and Proposition~\ref{lm:SelectingaLargeCompact} (or the stronger result \cite[Theorem 1.1]{lawlor2014double} for $N=3$) and verify the conditions \ref{itm:E1}, \ref{itm:E2} and \ref{itm:E3} of the abstract photography method from Theorem~\ref{thm:benci-cerami} for the vectorial energy.
	
	Now we define the extrinsic barycenter map, which will be used in the construction of the left-homotopy.
	
	\begin{definition}
		Let $(M^n,g)$ be a closed Riemannian manifold.
		For some large $S\in\mathbb N$, we consider $i:M\hookrightarrow\mathbb{R}^S$ the isometric embedding obtained by the Nash embedding theorem.
		Let ${\rm diam}_{\mathbb{R}^S}(M)$ be the diameter of $M$ as subset of $\mathbb{R}^S$ and $\beta_{\rm ext}:H_g^1(M,\mathbb{R}^m)\rightarrow\mathbb{R}^{S}$ be the {vectorial $($extrinsic$)$ barycenter map} given by
		\begin{equation}\label{modifiedbarycenternash}
			{\beta}_{\rm ext}(\mathbf{u}):=\frac{\int_M x|\mathbf{u}(x)|\ud\mathcal{L}^n_g}{\int_M |\mathbf{u}(x)|\ud\mathcal{L}^n_g}.
		\end{equation}
	\end{definition}
	
	We also have the following definition of normal injectivity radius
	
	\begin{definition}
		Let $(M^n,g)$ be a closed Riemannian manifold.
		Given an isometric embedding $i:(M,g_1)\rightarrow(\mathbb{R}^S,\delta)$. Let us define the {normal injectivity radius} ${\rm inj}^{\perp}_g$ to be the largest nonnegative $r>0$ such that the normal exponential $exp_{\nu M}:\nu M\rightarrow\mathbb{R}^S$ is a diffeomorphism of a neighborhood of the zero section of $\nu M$ into $M_{r}$, where $M_r:=\{x\in\mathbb{R}^S: \ud(x,M)<r\}\subseteq\mathbb{R}^S$ and $\nu M$ is the normal bundle induced by $i$ on $M$. 
		Let us denote by ${\pi}_{\rm near}:M_{{\rm inj}^{\perp}_g}\rightarrow M$ the canonical projection associated to $\pi:\nu M\rightarrow M$. 
	\end{definition}
	
	\begin{remark} 
		Notice that by compactness $M$ is a retract of $M_{{\rm inj}^{\perp}_g}$, thus ${\rm inj}^{\perp}_g>0$.
	\end{remark}
	
	\subsection{Framework}
	We establish the framework to apply Theorem~\ref{thm:benci-cerami}.
	Let $(M^n,g)$ be a closed Riemannian manifold.
	For any $\mathbf v\in\mathbb R^{m}$, let $\mathbf v^{\prime}\in\mathbb R^{N-1}$ be given by $\mathbf{v}^{\prime}=\boldsymbol{\mathcal{V}}_g\left(\mathbf{\Phi}\circ\mathbf{u}\right)$ as in Definition~\ref{def:vectorialtrans}. Here, to simplify our notation, we keep the convection $\mathbf v=\mathbf v^{\prime}$. From now on, we fix $0<\varepsilon,|\mathbf{v}|\ll1$ and $c=c(\mathbf{v},\boldsymbol{W})=\boldsymbol{\mathcal{I}}_{(M,g)}(\mathbf{v})+\tau$, where $\boldsymbol{W}\in\widetilde{\mathcal{W}}^+_{N}$ (see Definition~\ref{def:multiwellpotential}), for $0<\tau\ll1$, which will be chosen later.
	
	In this fashion, we consider
	\begin{itemize}
		\item[(a)]The underlying normed $($topological$)$ space is $X:=(M,\ud_g)$ and $\mathfrak{M}:=\boldsymbol{\mathfrak{M}}_{\mathbf{v}}\subset {H}_g^1(M,\mathbb{R}^m)$ is the abstract Hilbert manifold as in \eqref{constraintmanifold}.
		\item[(b)]The photography map \footnote{This terminology means that ${\Psi_{R}}(M)$ is a picture of the finite-dimensional Riemannian manifold $M$ on the infinite-dimensional Hilbert manifold $\boldsymbol{\mathfrak{M}}_{\varepsilon,\mathbf{v}}^c$, which relates their topologies.} ${\Psi_{R}}:M\rightarrow\boldsymbol{\mathcal{E}}_{\varepsilon}^c\cap\boldsymbol{\mathfrak{M}}_{\mathbf{v}}:=\boldsymbol{\mathfrak{M}}_{\varepsilon,\mathbf{v}}^c$ is given by $\Psi_{R}(x):=\mathbf{u}_{\varepsilon,\mathbf{v},x}$ where $\mathbf{u}_{\varepsilon,\mathbf{v},x}:M\rightarrow\mathbb{R}$ is equal to the Modica--Baldo approximation (see Definition~\ref{def:baldoapproxx}) of the limit profile $\mathbf{u}_{0,\mathbf{v},x}=\sum_{i=1}^{N-1}\mathbf{p}_i\chi_{\Omega^{x,\mathbf{v}}_i}$ for a unique weighted cluster $ x\in\mathbf{\Omega}^{x,\mathbf{v}} \in\mathcal C_g(M,\mathbb R^N)$ such that $\mathbf v(\mathbf{\Omega}^{x,\mathbf{v}})=\mathbf{v}\in\mathbb R^{N}$ (see Lemma~\ref{lm:existenceofpartition}), and $\boldsymbol{\mathcal{E}}_{\varepsilon}^c$ is the sublevel set of the vectorial ACH energy given by \eqref{vectorialenergy}.
		\item[(c)]The right-inverse homotopy is defined as the map ${\Psi_{L}}:=\hat{\pi}\circ\beta_{\rm ext}:\boldsymbol{\mathfrak{M}}_{\varepsilon,\mathbf{v}}^c\rightarrow M$, where $\beta_{\rm ext}:H_g^{1}(M,\mathbb{R}^m)\setminus\{0\}\rightarrow \mathbb{R}^{S}$ is the extrinsic barycenter map (see Definition~\ref{modifiedbarycenternash}) and $\hat{\pi}:\mathbb{R}^S\rightarrow M$ is the nearest point projection with $S>n$.
	\end{itemize}
	
	Let us now prove that all the above objects are well-defined, and that, using this framework, the assumptions \ref{itm:E1}, \ref{itm:E2} and \ref{itm:E3} of Theorem \ref{thm:benci-cerami} are satisfied.
	Namely, we need to show the lower boundedness of the vectorial energy, the Palais--Smale condition, and the continuity of the photography and barycenter maps.
	We will divide the structure of the proof into a sequence of auxiliary results.

	\subsection{Lower boundedness}
	We need to check that the vectorial energy functional $\boldsymbol{\mathcal{E}}_{\varepsilon}$ is well-defined and bounded below when restricted to $\boldsymbol{\mathfrak{M}}_{\mathbf{v}}$. As consequence, we will see that \ref{itm:E1} holds in our context.
	
	\begin{lemma}\label{lm:lowerboundedness}
		Let $(M^n,g)$ be a closed Riemannian manifold and $\boldsymbol{W}\in\mathcal{W}^+_{N}$.
		For every $(\varepsilon,\mathbf{v})\in\mathbb{R}^{m+1}$, it follows that the vectorial energy functional $\boldsymbol{\mathcal{E}}_{\varepsilon}:\boldsymbol{\mathfrak{M}}_{\mathbf{v}}\rightarrow\mathbb{R}$ is $C^1$ and bounded below.
	\end{lemma}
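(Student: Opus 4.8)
The plan is to handle the two assertions separately: the lower bound is essentially free, and the only substantive point is the $C^1$-regularity of the potential part of $\boldsymbol{\mathcal E}_\varepsilon$. For the lower bound, note that by definition $\boldsymbol W\in\mathcal W^+$ is nonnegative and $\varepsilon>0$, so both summands $\varepsilon|\nabla_g\mathbf u|^2$ and $\varepsilon^{-1}\boldsymbol W(\mathbf u)$ of the integrand in \eqref{vectorialenergy} are pointwise nonnegative; hence $\boldsymbol{\mathcal E}_\varepsilon(\mathbf u)\geqslant 0$ for every $\mathbf u\in\boldsymbol{\mathfrak M}_{\mathbf v}$, so $\inf_{\boldsymbol{\mathfrak M}_{\mathbf v}}\boldsymbol{\mathcal E}_\varepsilon\geqslant 0>-\infty$, which is precisely hypothesis \ref{itm:E1} of Theorem~\ref{thm:benci-cerami}. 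I would also record here that $\boldsymbol{\mathfrak M}_{\mathbf v}$ is a bona fide $C^\infty$-Hilbert manifold: the volume functional $\boldsymbol{\mathcal V}_g$ is a bounded \emph{linear} operator $H^1_g(M,\mathbb R^m)\to\mathbb R^m$ which is surjective (already on constant maps), so $\boldsymbol{\mathfrak M}_{\mathbf v}=\boldsymbol{\mathcal V}_g^{-1}(\mathbf v)$ is a nonempty closed affine subspace of codimension $m$ with $T_{\mathbf u}\boldsymbol{\mathfrak M}_{\mathbf v}=\ker\boldsymbol{\mathcal V}_g$ for every $\mathbf u$; in particular it is enough to show that $\boldsymbol{\mathcal E}_\varepsilon$ is $C^1$ on all of $H^1_g(M,\mathbb R^m)$ and then restrict.

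For the $C^1$ statement, I would split $\boldsymbol{\mathcal E}_\varepsilon=\varepsilon\,\mathcal D+\varepsilon^{-1}\mathcal V$, where $\mathcal D(\mathbf u)=\int_M|\nabla_g\mathbf u|^2\,\ud\mathcal L^n_g$ and $\mathcal V(\mathbf u)=\int_M\boldsymbol W(\mathbf u)\,\ud\mathcal L^n_g$. The Dirichlet term $\mathcal D$ is a continuous symmetric quadratic form on $H^1_g$, hence $C^\infty$, with $\ud\mathcal D(\mathbf u)[\mathbf h]=2\int_M\langle\nabla_g\mathbf u,\nabla_g\mathbf h\rangle\,\ud\mathcal L^n_g$. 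For $\mathcal V$ one uses the subcritical growth bounds of Definition~\ref{def:multiwellpotential}: integrating \eqref{W1} along rays and combining with \eqref{W3} yields $0\leqslant\boldsymbol W(z)\leqslant C(1+|z|^{p})$ and $|\nabla\boldsymbol W(z)|\leqslant C(1+|z|^{p-1})$ with $1<p<2^*$ (any finite $p$ if $n=2$). Then, via the Sobolev embedding $H^1_g(M)\hookrightarrow L^{2^*}_g(M)$ and Krasnoselskii's continuity theorem for superposition operators, $\mathbf u\mapsto\boldsymbol W(\mathbf u)$ is continuous from $L^{2^*}_g$ into $L^1_g$ and $\mathbf u\mapsto\nabla\boldsymbol W(\mathbf u)$ is continuous from $L^{2^*}_g$ into $L^{(2^*)'}_g$; hence $\mathcal V$ is well-defined and continuous on $H^1_g$, its G\^ateaux differential is $\ud\mathcal V(\mathbf u)[\mathbf h]=\int_M\langle\nabla\boldsymbol W(\mathbf u),\mathbf h\rangle\,\ud\mathcal L^n_g$ (identified by dominated convergence using the bound on $\nabla\boldsymbol W$), and $\mathbf u\mapsto\ud\mathcal V(\mathbf u)$ is continuous from $H^1_g$ into $(H^1_g)^*$ by H\"older's inequality together with the continuity of that superposition operator. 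Therefore $\mathcal V\in C^1(H^1_g(M,\mathbb R^m);\mathbb R)$, so $\boldsymbol{\mathcal E}_\varepsilon\in C^1(H^1_g(M,\mathbb R^m);\mathbb R)$ and, restricting, $\boldsymbol{\mathcal E}_\varepsilon\in C^1(\boldsymbol{\mathfrak M}_{\mathbf v};\mathbb R)$, with differential the Euler--Lagrange operator $-\varepsilon\Delta_g\mathbf u+\varepsilon^{-1}\nabla\boldsymbol W(\mathbf u)$ recorded in the introduction.

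The main (indeed only) delicate point is this last verification when $n\geqslant 3$: there $H^1_g\not\hookrightarrow L^\infty_g$, so one cannot reduce the superposition estimates to a bounded-function argument, and it is exactly the strict subcriticality of the exponents in \eqref{W1} and \eqref{W3} that makes $\boldsymbol W(\cdot)$ and $\nabla\boldsymbol W(\cdot)$ act \emph{continuously} between the relevant Lebesgue spaces, thereby upgrading G\^ateaux- to Fr\'echet-differentiability and ensuring finiteness of $\mathcal V$. When $n=2$ this is automatic, and the lower bound requires no growth hypothesis whatsoever.
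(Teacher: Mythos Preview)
Your argument is correct. The paper's own proof consists of a single sentence, ``It is a direct consequence of the nonnegativeness of the potential,'' which addresses only the lower bound $\boldsymbol{\mathcal E}_\varepsilon\geqslant 0$ and is identical to your first observation; the $C^1$-regularity is left implicit there, whereas you supply the standard Nemytskii/Krasnoselskii details. One small point worth flagging: the lemma as stated assumes only $\boldsymbol W\in\mathcal W^+_N$, which does \emph{not} include the growth bounds \eqref{W1} or \eqref{W3}; without these, $\int_M\boldsymbol W(\mathbf u)\,\ud\mathcal L^n_g$ need not even be finite for general $\mathbf u\in H^1_g$ when $n\geqslant3$, so the $C^1$ assertion is not quite provable under the hypotheses as written. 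You are right to invoke \eqref{W1} (and the paper tacitly works under the full $\widetilde{\mathcal W}^+_N$ throughout Section~\ref{sec:concretephotographymethod} anyway), but be aware that you are strengthening the stated hypotheses rather than working within them; note also that \eqref{W3} is not needed here, since integrating \eqref{W1} alone already gives $|\boldsymbol W(z)|\leqslant C(1+|z|^p)$.
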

	
	\begin{proof}
		It is a direct consequence of the nonnegativeness of the potential.
	\end{proof}
	
	\subsection{Palais--Smale condition}
	To apply the abstract photography theorem, it is essential to show that $\boldsymbol{\mathcal{E}}_{\varepsilon}$ satisfies the Palais--Smale condition, that is, the validity of \ref{itm:E2}.
	In what follows, the notation $\mathrm o_k(1)$ means a sequence converging to zero as $k\rightarrow\infty$.
	
	\begin{lemma}\label{lm:pscondition} 
		Let $(M^n,g)$ be a closed Riemannian manifold and $\boldsymbol{W}\in\mathcal{W}^+_{N,1}$.
		For any $(\varepsilon,\mathbf{v})\in\mathbb{R}^{m+1}$ and $c\in\mathbb{R}$, the vectorial energy ${\boldsymbol{\mathcal{E}}_{\varepsilon}}:\boldsymbol{\mathfrak{M}}_{\mathbf{v}}\rightarrow\mathbb{R}$ satisfies the Palais--Smale condition at level $c\in\mathbb{R}$.
	\end{lemma}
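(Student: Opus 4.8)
The plan is to verify the Palais--Smale condition for $\boldsymbol{\mathcal{E}}_{\varepsilon}$ restricted to $\boldsymbol{\mathfrak{M}}_{\mathbf{v}}$ by a standard concentration-compactness-free argument, exploiting that $M$ is closed (so that Sobolev embeddings are compact) and that the potential is subcritical via \eqref{W1}. Fix $\varepsilon>0$ and a level $c\in\mathbb{R}$, and let $\{\mathbf{u}_k\}_{k\in\mathbb{N}}\subset\boldsymbol{\mathfrak{M}}_{\mathbf{v}}$ be a Palais--Smale sequence: $\boldsymbol{\mathcal{E}}_{\varepsilon}(\mathbf{u}_k)\to c$ and $\|\ud(\boldsymbol{\mathcal{E}}_{\varepsilon}|_{\boldsymbol{\mathfrak{M}}_{\mathbf{v}}})(\mathbf{u}_k)\|_{T^*_{\mathbf{u}_k}\boldsymbol{\mathfrak{M}}_{\mathbf{v}}}\to 0$. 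Since $\boldsymbol{\mathfrak{M}}_{\mathbf{v}}$ is the level set of the bounded linear map $\boldsymbol{\mathcal{V}}_g$, which has surjective differential everywhere, the constraint is a smooth Hilbert submanifold of codimension $m$ and the Lagrange-multiplier rule applies: there exist $\mathbf{\Lambda}_k\in\mathbb{R}^m$ with $-\varepsilon\Delta_g\mathbf{u}_k+\varepsilon^{-1}\nabla\boldsymbol{W}(\mathbf{u}_k)-\mathbf{\Lambda}_k=\mathrm{o}_k(1)$ in $H_g^{-1}(M,\mathbb{R}^m)$.

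The key steps, in order, are as follows. First I would show that $\{\mathbf{u}_k\}$ is bounded in $H_g^1(M,\mathbb{R}^m)$. From $\boldsymbol{\mathcal{E}}_{\varepsilon}(\mathbf{u}_k)\leqslant c+1$ and the nonnegativity of $\boldsymbol{W}$ we immediately get $\varepsilon\int_M|\nabla_g\mathbf{u}_k|^2\,\ud\mathcal{L}_g^n\leqslant c+1$, so the gradients are bounded in $L_g^2$. To control the $L_g^2$-norm of $\mathbf{u}_k$ itself one combines the volume constraint (which pins down $\int_M\mathbf{u}_k$) with the Poincaré inequality on the closed manifold $M$: writing $\mathbf{u}_k=\bar{\mathbf{u}}_k+(\mathbf{u}_k-\bar{\mathbf{u}}_k)$ with $\bar{\mathbf{u}}_k$ the mean, $\|\mathbf{u}_k-\bar{\mathbf{u}}_k\|_{L_g^2}\leqslant C\|\nabla_g\mathbf{u}_k\|_{L_g^2}$ is bounded, and $|\bar{\mathbf{u}}_k|=\mathrm{v}_g(M)^{-1}|\mathbf{v}|$ is fixed. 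Hence $\|\mathbf{u}_k\|_{H_g^1}\leqslant C$. Second, by the Rellich--Kondrachov theorem on $M$ there is a subsequence (not relabeled) with $\mathbf{u}_k\rightharpoonup\mathbf{u}_\infty$ weakly in $H_g^1$, $\mathbf{u}_k\to\mathbf{u}_\infty$ strongly in $L_g^q$ for every $q<2^*$, and pointwise a.e.; in particular $\mathbf{u}_\infty\in\boldsymbol{\mathfrak{M}}_{\mathbf{v}}$ since $\boldsymbol{\mathcal{V}}_g$ is continuous under $L_g^1$ convergence. Third, I would test the (approximate) Euler--Lagrange equation against $\mathbf{u}_k-\mathbf{u}_\infty$, which is an admissible tangent-space direction up to an $\mathrm{o}_k(1)$ correction because $\boldsymbol{\mathcal{V}}_g(\mathbf{u}_k-\mathbf{u}_\infty)\to 0$; the multiplier term contributes $\mathbf{\Lambda}_k\cdot\boldsymbol{\mathcal{V}}_g(\mathbf{u}_k-\mathbf{u}_\infty)=\mathrm{o}_k(1)$ once one checks $\{\mathbf{\Lambda}_k\}$ is bounded (obtained by integrating the equation over $M$ and using \eqref{W1} together with the $L_g^{p-1}$ bound on $\mathbf{u}_k$). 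This yields
\[
\varepsilon\int_M\langle\nabla_g\mathbf{u}_k,\nabla_g(\mathbf{u}_k-\mathbf{u}_\infty)\rangle\,\ud\mathcal{L}_g^n
=-\varepsilon^{-1}\int_M\langle\nabla\boldsymbol{W}(\mathbf{u}_k),\mathbf{u}_k-\mathbf{u}_\infty\rangle\,\ud\mathcal{L}_g^n+\mathrm{o}_k(1).
\]
The right-hand side tends to zero: by \eqref{W1}, $|\nabla\boldsymbol{W}(\mathbf{u}_k)|\leqslant k_1(1+|\mathbf{u}_k|^{p-1})$ is bounded in $L_g^{p'}$ with $p'=p/(p-1)>2^*/(2^*-1)$, and $\mathbf{u}_k-\mathbf{u}_\infty\to 0$ strongly in $L_g^p$ since $p<2^*$, so the integral vanishes by Hölder. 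Combined with $\nabla_g\mathbf{u}_k\rightharpoonup\nabla_g\mathbf{u}_\infty$ in $L_g^2$ this forces $\|\nabla_g\mathbf{u}_k\|_{L_g^2}\to\|\nabla_g\mathbf{u}_\infty\|_{L_g^2}$, hence $\nabla_g\mathbf{u}_k\to\nabla_g\mathbf{u}_\infty$ strongly in $L_g^2$; together with the strong $L_g^2$-convergence of $\mathbf{u}_k$ this gives $\mathbf{u}_k\to\mathbf{u}_\infty$ strongly in $H_g^1(M,\mathbb{R}^m)$, which is exactly the Palais--Smale condition.

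The main obstacle is the subcriticality bookkeeping in the third step: one must be careful that the growth exponent $p$ in \eqref{W1} is strictly below the critical Sobolev exponent $2^*$ so that the Nemytskii operator $\mathbf{u}\mapsto\nabla\boldsymbol{W}(\mathbf{u})$ is a \emph{compact} perturbation on $H_g^1$, and that the Lagrange multipliers stay bounded. The multiplier bound is where the argument could be delicate if $\nabla\boldsymbol{W}$ grew too fast, but \eqref{W1} with $p<2^*$ guarantees $\nabla\boldsymbol{W}(\mathbf{u}_k)$ is bounded in $L_g^1$ (indeed in a slightly better space), so integrating the equation over the closed manifold $M$ — where $\int_M\Delta_g\mathbf{u}_k\,\ud\mathcal{L}_g^n=0$ — yields $|\mathbf{\Lambda}_k|\,\mathrm{v}_g(M)=\varepsilon^{-1}|\int_M\nabla\boldsymbol{W}(\mathbf{u}_k)\,\ud\mathcal{L}_g^n|+\mathrm{o}_k(1)\leqslant C$. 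Everything else is soft functional analysis on a compact manifold, and the fixed relaxation parameter $\varepsilon>0$ keeps all constants under control; the delicate interplay with small $\varepsilon$ and small $\mathbf{v}$ enters only later, in the verification of \ref{itm:E3}.
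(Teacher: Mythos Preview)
Your proof is correct and follows essentially the same strategy as the paper's: bound the gradient via the energy level, upgrade to an $H_g^1$ bound using the Poincar\'e inequality together with the volume constraint $\boldsymbol{\mathcal{V}}_g(\mathbf{u}_k)=\mathbf{v}$, extract a weak limit via Rellich--Kondrachov, bound the Lagrange multipliers, and then use subcriticality \eqref{W1} to upgrade weak convergence to strong $H_g^1$ convergence.

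The only noteworthy difference is in the final step. You test the approximate Euler--Lagrange equation against $\mathbf{u}_k-\mathbf{u}_\infty$ and use H\"older to kill the nonlinear term, obtaining $\|\nabla_g\mathbf{u}_k\|_{L_g^2}\to\|\nabla_g\mathbf{u}_\infty\|_{L_g^2}$ and hence strong convergence. The paper instead shows directly that the Nemytskii operator $\mathbf{u}\mapsto\nabla\boldsymbol{W}(\mathbf{u})$ is compact from $H_g^1$ into $L_g^{p'}\subset H_g^{-1}$, so that $-\varepsilon\Delta_g\mathbf{u}_k$ converges in $H_g^{-1}$, and then inverts the Laplacian on the zero-mean part to conclude. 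Your route is the more common ``test against the difference'' argument and is arguably slightly more elementary; the paper's route makes the role of the compact Nemytskii map more explicit. Similarly, for the multiplier bound you integrate the equation over $M$ (test against constants, using $\int_M\Delta_g\mathbf{u}_k=0$), which gives each component $\Lambda_{k,i}$ directly, whereas the paper tests against $\mathbf{u}_k$ itself; your choice is cleaner in the vectorial setting. Neither difference is substantive.
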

	
	\begin{proof}
		Assume that $\left\{\mathbf{u}_{k}\right\}_{k\in\mathbb{N}}\subset \mathfrak{M}_{\mathbf{v}}$ is a Palais--Smale sequence for $\boldsymbol{\mathcal{E}}_{\varepsilon}$.
		By density, we may suppose that $\left\{\mathbf{u}_{k}\right\}_{k\in\mathbb{N}}\subset C^{2}(M,\mathbb{R}^m)$.
		Also, by definition, we get
		\begin{equation*}
			\frac{\varepsilon}{2} \int_{M}\left|\nabla_g \mathbf{u}_{k}\right|^{2} \ud \mathcal{L}_g^n+\frac{1}{\varepsilon}\int_{M} \boldsymbol{W}\left(\mathbf{u}_{k}(x)\right) \ud \mathcal{L}_g^n=c+\mathrm o_k(1) 
		\end{equation*}
		and
		\begin{equation}\label{ps2}
			-\varepsilon \Delta_g \mathbf{u}_{k}+\frac{1}{\varepsilon}\nabla \boldsymbol{W}\left(\mathbf{u}_{k}\right)=\mathbf{\Lambda}_{k}+T_{k},
		\end{equation}
		where $\left\{\mathbf{\Lambda}_{k}\right\}_{k\in\mathbb{N}}\subset \mathbb{R}^m$ is a sequence and $T_{k}=\mathrm o_k(1)$ strongly in $H^{-1}(M,\mathbb{R}^m)$.
		The proof will be divided into two claims
		
		\noindent{\bf Claim 1:} There exists $\mathbf{u}\in {L}_g^{1}(M,\mathbb R^m)$ such that $\|\mathbf{u}_k-\mathbf{u}\|_{{L}_g^{1}(M,\mathbb R^m)}=\mathrm o_k(1)$, up to subsequence.
		
		\noindent Indeed, since the potential is nonnegative there exists $c>0$ such that
		\begin{align*}
			c+1 \geqslant \frac{\varepsilon}{2} \int_{M}\left|\nabla_g \mathbf{u}_{k}\right|^{2} \ud\mathcal{L}_g^n+\frac{1}{\varepsilon}\int_{M} \boldsymbol{W}\left(\mathbf{u}_{k}(x)\right) \ud \mathcal{L}_g^n\geqslant \frac{\varepsilon}{2} \int_{M}\left|\nabla_g \mathbf{u}_{k}\right|^{2} \ud\mathcal{L}_g^n.
		\end{align*}
		Thus, $\{|\nabla_g \mathbf{u}_{k}|\}_{k\in\mathbb{N}}\subset{L_g^2(M,\mathbb{R}^m)}$ is bounded. 
		By the Poincar\'e inequality on closed manifolds, there exists $C_1=C_1(M, g)>0$ such that $\|\mathbf u_{k}-\bar{ \mathbf u}_{k}\|_{L_g^2(M,\mathbb{R}^m)}=$ $\|\mathbf u_{k}-\mathbf v\|_{L_g^2(M,\mathbb{R}^m)} \leqslant C_1\|\nabla_g \mathbf u_{k}\|_{L_g^2(M,\mathbb{R}^m)}$, where $\bar{ \mathbf u}_{k}$ the spherical average of $\mathbf u_{k}$. Thus, $\{\mathbf u_{k}\}_{k\in\mathbb N}$ is bounded in $H_g^{1}(M,\mathbb R^m)$, and the proof of the claim follows by compactness.
		
		\noindent{\bf Claim 2:} It holds that  $\mathbf{u}_k=\mathbf{u}+\mathrm o_k(1)$ in $\boldsymbol{\mathfrak{M}}^c_{\varepsilon,\mathbf{v}}$.
		
		\noindent In fact, because of \eqref{W1}, for some $1<p<2^*$ the vectorial Nemytskii map $\mathcal{N}:L_g^p(M,\mathbb{R}^m)\rightarrow L_g^{p^{\prime}}(M,\mathbb{R}^m)$ given by $\mathcal{N}(\mathbf{u})=\nabla \boldsymbol{W}(\mathbf{u})$ is a bounded nonlinear operator, where $p^{\prime}=\frac{p}{p-1}$ and $p^{\prime}>\frac{2n}{n+2} \geqslant 2$. 
		By the Sobolev embedding theorem, the inclusion $H_g^{1}(M,\mathbb{R}^m) \hookrightarrow L_g^{p}(M,\mathbb{R}^m)$ is compact, and so is the nonlinear operator $\mathcal{N}$, which yields $\nabla \boldsymbol{W}\left(\mathbf{u}_{k}\right) \rightarrow \nabla \boldsymbol{W}\left(\mathbf{u}\right)$ strongly in $L_g^{p^{\prime}}(M,\mathbb{R}^m) \subset H^{-1}_g(M,\mathbb{R}^m)$, up to subsequence. 
		Finally, taking the inner product of \eqref{ps2} with $\mathbf{u}_{k}$, integrating by parts the corresponding vectorial identity, and using the volume constraint $\int \mathbf{u}_{k}\ud x=\mathbf{v}$, we get that $\mathbf{\Lambda}_{k}\subset\mathbb{R}^m$ is a bounded sequence. 
		Therefore, up to a subsequence, we may assume $\mathbf{\Lambda}_{k}=\mathbf{\Lambda}+\mathrm o_k(1)$.
		Now, notice that $\Delta_g^{-1}: H^{-1}_g(M,\mathbb{R}^m) \rightarrow H_g^{1}(M,\mathbb{R}^m)$ is an isomorphism onto its image when restricted to the subspace of functions orthogonal to the constants. 
		Also, by taking $\mathbf{c}=-{\mathrm v_{g}(M)}^{-1}\mathbf{v}$ we get that $\mathbf w_k:=\mathbf u_{k}+\mathbf c$ is orthogonal to the space of constant functions and $\Delta_g\mathbf w_k=\Delta_g\mathbf u_{k}$ for all $k\in\mathbb N$.
		Thus, $\{\mathbf w_k\}_{k\in\mathbb N}$ is $H_g^{-1}(M,\mathbb R^m)$ convergent, and so it is also strongly convergent in $H_g^{1}(M,\mathbb R^m)$, which implies that $\{\mathbf u_{k}\}_{k\in\mathbb N}$ is strongly convergent $H_g^{1}(M,\mathbb R^m)$.
	\end{proof}
	
	\subsection{Photography map}
	We prove that the photography map is well-defined and continuous. 
	Here we are based on  Proposition~\ref{prop:inhomogeneousgammaconvergence} and on some properties of the signed distance function.
	The next result states that for any $x\in M$, $\mathbf{v}\in\mathbb R^m$, there exists a unique cluster containing a fixed point and enclosing a small volume on Riemannian manifold.
	For this, the last identification needs to be one-to-one and continuous.
	This requires the manifold to be parallelizable.
	When $N=3$, the clusters constructed 
	below coincides with the geodesic double-bubble introduced in \cite[Definition~3.2]{arXiv:2112.08269}.
	
	\begin{remark}\label{fiberbundle}
		For the next result, we need to introduce the principal $\mathcal{O}(n)$-bundle over $M$, denoted by $\mathcal{O}(M)$.  
		Namely, this frame bundle is such that each fiber is isomorphic to the orthogonal group, that is, $\pi_{\rm sub}^{-1}(\{x\})\simeq \mathcal{O}(n)$, where $\pi_{\rm sub}:\widehat{M}\rightarrow M$ is the standard submersion and $\widehat{M}$ is the total space associated to $\mathcal{O}(M)$.
		Notice that since $M$ is parallelizable, there exists a global section $\Gamma(\mathcal{O}(M))\in\widehat{M}$.
		In other words, from any global smooth section $\widetilde{\phi}\in\Gamma(\mathcal{O}(M))$,
		one can construct
		a continuous map defined on $M$ given by $x\mapsto\phi_x:(T_xM,g_x)\rightarrow(\mathbb R^n,\delta)$, where $\phi_x$ is an isometry.
		In this fashion, we identify $T_xM=\phi^{-1}_x(\mathbb R^n)$.
	\end{remark}
	
	\begin{lemma}\label{lm:existenceofpartition}
		Let $(M^n,g)$ be a closed parallelizable Riemannian manifold and $\boldsymbol{W}\in\mathcal{W}^+_{N,0}$ with $N=3$.
		For each $x\in M$ and $\mathbf{v}\in\mathbb{R}^{m}$, there exists a unique a $3$-cluster $\mathbf{\Omega}^{x,\mathbf{v}}\in\mathcal{C}_g(M,\mathbb{R}^3)$ enclosing small volume $\mathbf{v}_g(\mathbf{\Omega}^{x,\mathbf{v}})=\mathbf{v}\in\mathbb R^{3}$ such that $x\in\widetilde{\Omega}^{x,\mathbf{v}}\subset\mathcal{B}^g_{{\rm inj}_g/2}(x)$ and $\boldsymbol{\mathcal{I}}_{(\mathbb{R}^n,\delta)}(\mathbf{v}) \sim \boldsymbol{\mathcal{P}}_g(\mathbf{\Omega}^{x,\mathbf{v}})$ as $\mathbf{v}\to0$.
	\end{lemma}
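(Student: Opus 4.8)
The plan is to construct $\mathbf{\Omega}^{x,\mathbf v}$ by transplanting the \emph{Euclidean weighted double bubble} to $M$ through the exponential map at $x$, using parallelizability to make the transplantation canonical and continuous in $x$, and then calibrating the volumes by a degree argument; the key inputs are the solution of the weighted double bubble conjecture in $\mathbb R^n$, the global frame of Remark~\ref{fiberbundle}, and the continuity and homogeneity of the Euclidean multi-isoperimetric profile. Since $\boldsymbol W\in\mathcal W^+_{N,0}$ with $N=3$, the weight matrix $\omega=\omega(\boldsymbol W)$ satisfies \eqref{Eq:StrictTriangularInequality} by Remark~\ref{rmk:immisciblecondition}, so \cite[Theorem~1.1]{lawlor2014double} gives, for each $\mathbf v=(\mathrm v_1,\mathrm v_2)$ with $\mathrm v_1,\mathrm v_2>0$, a minimizer $\mathbf B_{\mathbf v}\in\mathcal C_\delta(\mathbb R^n,\mathbb R^3)$ of the $\omega$-weighted multi-perimeter $\boldsymbol{\mathcal P}_\delta$ subject to $\mathbf v_\delta(\mathbf B_{\mathbf v})=(\mathrm v_1,\mathrm v_2,\infty)$; it is a standard double bubble (so $\widetilde{B_{\mathbf v}}=B_{\mathbf v}(1)\cup B_{\mathbf v}(2)$ is a bounded, connected, axially symmetric open set) and unique up to an isometry of $\mathbb R^n$. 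Translating a canonically chosen interior point of $\widetilde{B_{\mathbf v}}$ (e.g.\ the center of the shared interface disc) to the origin — the remaining rotational ambiguity being fixed in the next step — yields a representative with $0\in\widetilde{B_{\mathbf v}}$, with $\boldsymbol{\mathcal P}_\delta(\mathbf B_{\mathbf v})=\boldsymbol{\mathcal I}_{(\mathbb R^n,\delta)}(\mathbf v)$, and, by the scaling invariance $B_{\lambda^n\mathbf v}=\lambda B_{\mathbf v}$, with $\diam(\widetilde{B_{\mathbf v}})\leqslant c_{n,\boldsymbol W}\,|\mathbf v|^{1/n}$; moreover $\mathbf v\mapsto\mathbf B_{\mathbf v}$ is continuous in the flat topology, by uniqueness together with the compactness theory for minimizing clusters.

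For the transplantation, let $\widetilde\phi\in\Gamma(\mathcal O(M))$ be the global section of Remark~\ref{fiberbundle}, with associated continuous family of linear isometries $\phi_x:(T_xM,g_x)\to(\mathbb R^n,\delta)$; this fixes the rotational ambiguity consistently over $M$. For $|\mathbf v|$ small enough that $c_{n,\boldsymbol W}|\mathbf v|^{1/n}<{\rm inj}_g/2$ one has $\phi_x^{-1}(\widetilde{B_{\mathbf v}})\subset\phi_x^{-1}(B_{{\rm inj}_g/2}(0))$, on which $\exp_x$ is a diffeomorphism onto $\mathcal B^g_{{\rm inj}_g/2}(x)$, so we may set, for $i=1,2$,
\[
\mathbf\Omega^{x,\mathbf v}(i):=\big(\exp_x\circ\phi_x^{-1}\big)\big(B_{\mathbf v}(i)\big),\qquad \mathbf\Omega^{x,\mathbf v}(3):=M\setminus\widetilde{\Omega}^{x,\mathbf v}.
\]
Since $\exp_x\circ\phi_x^{-1}$ is a bi-Lipschitz diffeomorphism onto $\mathcal B^g_{{\rm inj}_g/2}(x)$, it sends finite-perimeter clusters to finite-perimeter clusters, so $\mathbf\Omega^{x,\mathbf v}\in\mathcal C_g(M,\mathbb R^3)$, $x\in\widetilde\Omega^{x,\mathbf v}\subset\mathcal B^g_{{\rm inj}_g/2}(x)$, and $(x,\mathbf v)\mapsto\mathbf\Omega^{x,\mathbf v}$ is continuous in the flat topology by continuity of $\widetilde\phi$, of $\exp$, and of $\mathbf v\mapsto\mathbf B_{\mathbf v}$.

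It remains to enforce the exact volume constraint and to read off the asymptotics. In geodesic normal coordinates at $x$ the volume element is $(1+O(\ud_g(\cdot,x)^2))\,\ud\mathcal L^n$, uniformly in $x$ by bounded geometry (Remark~\ref{rmk:boundedgeometry}), so $\mathrm v_g(\mathbf\Omega^{x,\mathbf v}(i))=\mathrm v_i(1+O(|\mathbf v|^{2/n}))$ for $i=1,2$. To obtain $\mathbf v_g(\mathbf\Omega^{x,\mathbf v})=\mathbf v$ we run the construction with a perturbed volume vector: the continuous map $\widehat{\mathbf v}\mapsto\big(\mathrm v_g(\exp_x\circ\phi_x^{-1}(B_{\widehat{\mathbf v}}(1))),\mathrm v_g(\exp_x\circ\phi_x^{-1}(B_{\widehat{\mathbf v}}(2)))\big)$ is a perturbation of the identity of size $O(|\widehat{\mathbf v}|^{1+2/n})$ on a small box around $\mathbf v$, hence by a Brouwer/Miranda degree argument it attains $\mathbf v$ at some $\widehat{\mathbf v}=\widehat{\mathbf v}(x,\mathbf v)$ with $\widehat{\mathbf v}/\mathbf v\to(1,1)$ as $|\mathbf v|\to0$ and depending continuously on $(x,\mathbf v)$; taking $\mathbf B_{\widehat{\mathbf v}}$ as the model keeps all of the above and now also $\mathbf v_g(\mathbf\Omega^{x,\mathbf v})=\mathbf v$. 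Finally, the expansion $g=\delta+O(r^2)$ in normal coordinates gives $\boldsymbol{\mathcal P}_g(\mathbf\Omega^{x,\mathbf v})=(1+O(|\mathbf v|^{2/n}))\,\boldsymbol{\mathcal P}_\delta(\mathbf B_{\widehat{\mathbf v}})=(1+O(|\mathbf v|^{2/n}))\,\boldsymbol{\mathcal I}_{(\mathbb R^n,\delta)}(\widehat{\mathbf v})$, which by the continuity and homogeneity of $\boldsymbol{\mathcal I}_{(\mathbb R^n,\delta)}$ (\cite[Theorem~5]{resende2022clusters}) and $\widehat{\mathbf v}\sim\mathbf v$ equals $(1+\mathrm o(1))\,\boldsymbol{\mathcal I}_{(\mathbb R^n,\delta)}(\mathbf v)$ as $\mathbf v\to0$. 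Uniqueness of $\mathbf\Omega^{x,\mathbf v}$ is to be read as uniqueness of this canonical output: it follows from uniqueness up to isometry in \cite{lawlor2014double}, from the canonical normalization (centering plus the global frame $\widetilde\phi$), and from the fact that the degree problem for $\widehat{\mathbf v}$ has a unique solution near $\mathbf v$.

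The main obstacle is keeping the construction genuinely single-valued and continuous in $x$: the standard double bubble carries an $\mathcal O(n)$-worth of orientations of its symmetry axis, and it is precisely the parallelizability of $M$ — equivalently the continuous global frame $\widetilde\phi$ of Remark~\ref{fiberbundle} — that selects one of them consistently over all of $M$. The secondary technical point is that the volume calibration of the last step must be performed uniformly in $x$ and simultaneously with the containment $\widetilde\Omega^{x,\mathbf v}\subset\mathcal B^g_{{\rm inj}_g/2}(x)$, which is what forces the quantitative bound $\diam(\widetilde{B_{\mathbf v}})\leqslant c_{n,\boldsymbol W}|\mathbf v|^{1/n}$ and the uniform control of the Jacobian and metric expansions used above.
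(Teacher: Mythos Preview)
Your proposal is correct and follows essentially the same strategy as the paper: transplant the Euclidean weighted isoperimetric double bubble (via \cite{lawlor2014double}) to $M$ through $\exp_x\circ\phi_x^{-1}$, using the global frame of Remark~\ref{fiberbundle} to kill the rotational ambiguity, and then calibrate the volumes using that $\exp_x$ is an almost-isometry at small scales. The paper's proof is a terse sketch of exactly this construction (centering at the barycenter rather than the interface center, and citing \cite[Theorem~7.29]{MR2976521} for existence before invoking \cite{lawlor2014double} for the diameter bound); your version supplies the volume-calibration and perimeter-asymptotic details (Brouwer/Miranda, normal-coordinate expansion, continuity of $\boldsymbol{\mathcal I}_{(\mathbb R^n,\delta)}$) that the paper leaves implicit in the phrase ``the exponential map is almost-isometry at small scales.''
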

	
	\begin{proof}
		First, by the existence result in \cite[Theorem~7.29]{MR2976521}, one can find an isoperimetric weighted $3$-cluster  $\mathbf{\Omega}_*^{x,\mathbf{v}}\in\mathcal{C}_\delta(\mathbb{R}^n,\mathbb{R}^3)$ such that $0\in T_xM\equiv(\mathbb{R}^n,\delta)$ is the barycenter of the cluster $\mathbf{\Omega}_*^{x,\mathbf{v}}$. Now we consider the cluster $\phi_x^{-1}(\mathbf{\Omega}_*^{x,\mathbf{v}})$ whose chambers are  $\phi_x^{-1}({\Omega}^{x,\mathbf{v}}_{i,*})$  which since $M$ is parallelizable is uniquely determined once fixed an orthonormal frame field (see Remark~\ref{fiberbundle}).
		By \cite[Theorem 1.1]{lawlor2014double} (see Remark~\ref{rmk:baldosmallvolumes}), such a cluster satisfies $\widetilde{\Omega}_*^{x,\mathbf{v}}\subset B_{r_*}(0)\subset\mathbb{R}^n$ for some $0<r_*<\frac{1}{2}{\rm inj}_g$ sufficiently small, where  $\widetilde{{\Omega}}_*^{x,\mathbf{v}}={\cup}_{i=1}^{2}\widetilde{\Omega}_{*,i}^{x,\mathbf{v}}$ are its interior chambers.
		Finally, we construct $\mathbf{\Omega}^{x,\mathbf{v}}\in\mathcal{C}_g(M,\mathbb{R}^N)$ by setting $\Omega^{x,\mathbf{v}}_i=\exp_x(\phi_x^{-1}({\Omega}^{x,\mathbf{v}}_{i,*}))$ for all $i=1,2$ such that $\Omega^x_i\subset\mathcal{B}_{r_*}(x):=\exp_x(B_{r_*}(0))$.
		Now we can choose $\mathbf{v}^*=\mathbf{v}(\widetilde{\Omega}_*^{x,\mathbf{v}})$ in such a way that the weighted $3$-cluster $\mathbf{\Omega}^{x,\mathbf{v}}$ encloses small volume $\mathbf{v}=\mathbf{v}(\mathbf{\Omega}^{x,\mathbf{v}})$, which satisfies $\mathbf{v}\in(0,\mathrm{v}_g(\mathcal{B}_{{\rm inj}_g/2}(x)))^m$. This is always possible thanks to the fact that the exponential map is almost-isometry at small scales $)<r_*(\mathbf{v})\ll1$ so $\mathbf{v}^*\sim\mathbf{v}$, that is, $\mathrm{v}^*_i\sim \mathrm{v}_i$ for every $i\in\{1,2\}$.
		This finishes the proof of the lemma.
	\end{proof}

	Next, we prove the continuity of the photography map. 	
	
	\begin{lemma}\label{lm:continuityofthephotophraphy}
		Let $(M^n,g)$ be a closed parallelizable Riemannian manifold and $\boldsymbol{W}\in\mathcal W^+_{N,0}$ with $N=3$.
		For every $\mathbf{v}\in\mathbb R^m$ and $\tau>0$, one can find $\varepsilon_1(\mathbf{v},\boldsymbol{W},\tau)>0$ such that $\Psi_{R}:M\rightarrow \boldsymbol{\mathfrak{M}}_{\varepsilon,\mathbf{v}}^c$ carries $M$ into the sublevel $\boldsymbol{\mathfrak{M}}_{\varepsilon,\mathbf{v}}^c$, where $c=\boldsymbol{\mathcal{I}}_{(M,g)}(\mathbf{v})+\tau$.
		Moreover, $\Psi_{R}:M\rightarrow \boldsymbol{\mathfrak{M}}_{\varepsilon,\mathbf{v}}^c$ is a continuous map for every $\varepsilon\in(0,\varepsilon_1)$.
	\end{lemma}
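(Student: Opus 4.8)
The plan is to verify the two assertions separately: first that $\boldsymbol{\mathcal{E}}_{\varepsilon}(\mathbf{u}_{\varepsilon,\mathbf{v},x})\leqslant c$ and $\boldsymbol{\mathcal{V}}_g(\mathbf{u}_{\varepsilon,\mathbf{v},x})=\mathbf{v}$ for every $x\in M$ once $\varepsilon<\varepsilon_1$, so that $\Psi_{R}(M)\subseteq\boldsymbol{\mathfrak{M}}^c_{\varepsilon,\mathbf{v}}$; and then the $H^1_g$-continuity of $x\mapsto\mathbf{u}_{\varepsilon,\mathbf{v},x}$ for $\varepsilon$ fixed. Throughout (recall that in the framework $0<\varepsilon,|\mathbf v|\ll1$) I fix once and for all a reference Euclidean isoperimetric $3$-cluster $\mathbf{\Omega}_*\in\mathcal{C}_\delta(\mathbb{R}^n,\mathbb{R}^3)$ with barycenter at the origin and $\mathbf v_\delta(\mathbf{\Omega}_*)=\mathbf v^*\sim\mathbf v$, and realize the cluster of Lemma~\ref{lm:existenceofpartition} explicitly as $\Omega^{x,\mathbf{v}}_i=\exp_x\big(\phi_x^{-1}(\Omega_{*,i})\big)$, $i=1,2$, where $x\mapsto\phi_x$ is the global continuous orthonormal coframe furnished by parallelizability (Remark~\ref{fiberbundle}); recall also that $\mathbf{u}_{\varepsilon,\mathbf{v},x}$ is by definition the Modica--Baldo approximation (Corollary~\ref{cor:boldaapproximation} and Definition~\ref{def:baldoapproxx}) of the limit profile $\mathbf{u}_{0,\mathbf{v},x}=\sum_{i=1}^{2}\mathbf{p}_i\chi_{\Omega^{x,\mathbf{v}}_i}$, hence it is Lipschitz, so it lies in $H^1_g(M,\mathbb{R}^m)$, and $\boldsymbol{\mathcal{V}}_g(\mathbf{u}_{\varepsilon,\mathbf{v},x})=\mathbf{v}$ by Corollary~\ref{cor:boldaapproximation}(i).

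For the sublevel bound, by Lemma~\ref{lm:existenceofpartition} and Corollary~\ref{cor:asymp} one has $\boldsymbol{\mathcal{P}}_g(\mathbf{\Omega}^{x,\mathbf{v}})\sim\boldsymbol{\mathcal{I}}_{(\mathbb{R}^n,\delta)}(\mathbf{v})\sim\boldsymbol{\mathcal{I}}_{(M,g)}(\mathbf{v})$ as $|\mathbf{v}|\to0$, and since $\mathbf{\Omega}^{x,\mathbf{v}}$ differs from the fixed Euclidean model only by an isometry of $T_xM$ and the $(1+O(|\mathbf{v}|^{2/n}))$ distortion of $\exp_x$ in normal coordinates — the latter uniform in $x$ by compactness of $M$ — one gets $\sup_{x\in M}\boldsymbol{\mathcal{P}}_g(\mathbf{\Omega}^{x,\mathbf{v}})\leqslant\boldsymbol{\mathcal{I}}_{(M,g)}(\mathbf{v})+\tfrac{\tau}{2}$ for $|\mathbf{v}|$ small in terms of $\tau$. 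Corollary~\ref{cor:boldaapproximation}(ii) gives $\limsup_{\varepsilon\to0^+}\boldsymbol{\mathcal{E}}_{\varepsilon}(\mathbf{u}_{\varepsilon,\mathbf{v},x})\leqslant\boldsymbol{\mathcal{P}}_g(\mathbf{\Omega}^{x,\mathbf{v}})$; the crucial point is that this $\limsup$ is attained \emph{uniformly} in $x$, because the rate in the recovery-sequence estimate depends on $x$ only through quantities that are uniformly bounded over $M$ (the constants $C_1,C_2,C_3$ of Lemma~\ref{lm:onedimensionalODE} depend only on $\tau$ and $\boldsymbol{W}$, and the modulus of continuity $t\mapsto\mathcal{H}^{n-1}_g(\Sigma^t_i)$ near $t=0$ from Lemma~\ref{lm:regularitydistance} is uniform over the compact family $\{\mathbf{\Omega}^{x,\mathbf{v}}\}_{x\in M}$, these clusters being isometric copies of one model up to controlled distortion). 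Hence there is $\varepsilon_1=\varepsilon_1(\mathbf{v},\boldsymbol{W},\tau)>0$ with $\boldsymbol{\mathcal{E}}_{\varepsilon}(\mathbf{u}_{\varepsilon,\mathbf{v},x})\leqslant\boldsymbol{\mathcal{P}}_g(\mathbf{\Omega}^{x,\mathbf{v}})+\tfrac{\tau}{2}\leqslant\boldsymbol{\mathcal{I}}_{(M,g)}(\mathbf{v})+\tau=c$ for all $x\in M$ and $\varepsilon\in(0,\varepsilon_1)$.

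For continuity, fix $\varepsilon\in(0,\varepsilon_1)$ and $x_k\to x$ in $(M,\ud_g)$. First, $x\mapsto\mathbf{\Omega}^{x,\mathbf{v}}$ is continuous: $x\mapsto\exp_x\circ\phi_x^{-1}$ is a continuous family of embeddings of a neighbourhood of $\mathbf{\Omega}_*$ into $M$ (this is exactly where parallelizability is needed, to transplant the fixed model consistently), so the boundaries $\partial\Omega^{x,\mathbf{v}}_i$ vary continuously in the $C^1$, hence Hausdorff, sense, and consequently the signed distance functions $\mathbf{d}^x$ of \eqref{distance} converge uniformly on $M$ and their gradients converge a.e., while staying bounded by $1$. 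Following the construction behind Corollary~\ref{cor:boldaapproximation}, write $\mathbf{u}_{\varepsilon,\mathbf{v},x}=\widetilde{\mathbf{q}}_\varepsilon\big(\mathbf{d}^x(\cdot)+\boldsymbol{\zeta}^x_\varepsilon\big)$ as in \eqref{recoverypart}--\eqref{continuityphotography6}, further modified on the small ball $\mathcal{B}^g_{\varepsilon^{1/n}}(y^x_1)$ as in \eqref{recoverysequence} with coefficient $\boldsymbol{\xi}^x_\varepsilon=n\omega_{n-1}\varepsilon^{(1-n)/n}\boldsymbol{\nu}^x_\varepsilon$, $\boldsymbol{\nu}^x_\varepsilon=\boldsymbol{\mathcal{V}}_g(\widetilde{\mathbf{u}}^x_\varepsilon)-\mathbf{v}$ as in \eqref{difference}; here $y^x_1:=\exp_x(\phi_x^{-1}(y^*_1))$ for a fixed interior point $y^*_1$ of $\Omega_{*,1}$, so $x\mapsto y^x_1$ is continuous, and the shift $\boldsymbol{\zeta}^x_\varepsilon\in[0,\eta_\varepsilon]^{N-1}$, uniquely determined by the volume-matching identity \eqref{continuityphotography3}, depends continuously on $x$ thanks to the strict monotonicity built into the structure \eqref{odesolution}. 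Since $\varepsilon$ is fixed, $\widetilde{\mathbf{q}}_\varepsilon$ is a fixed Lipschitz map (with constant $\leqslant C_3\varepsilon^{-1}$), so composition with the $C^0$- and a.e.-gradient-convergent arguments $\mathbf{d}^{x_k}+\boldsymbol{\zeta}^{x_k}_\varepsilon$ gives convergence of both values and gradients a.e.; the local modification only affects a ball of radius $\varepsilon^{1/n}$ on which all competitors take values in a fixed compact set with gradients bounded by $C\varepsilon^{-1}$, and applying dominated convergence to the energy density $\varepsilon|\nabla_g\cdot|^2+\varepsilon^{-1}\boldsymbol{W}(\cdot)$ — uniformly dominated because every $\mathbf{u}_{\varepsilon,\mathbf{v},x_k}$ is valued in a fixed compact subset of $\mathbb{R}^m$ with gradients bounded independently of $k$ — yields $\mathbf{u}_{\varepsilon,\mathbf{v},x_k}\to\mathbf{u}_{\varepsilon,\mathbf{v},x}$ in $H^1_g(M,\mathbb{R}^m)$, i.e. $\Psi_{R}$ is continuous.

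The main obstacle is the uniformity in $x$ at two stages: that the convergence rate in Corollary~\ref{cor:boldaapproximation}(ii) can be made uniform over $\{\mathbf{\Omega}^{x,\mathbf{v}}\}_{x\in M}$ (so that a single $\varepsilon_1$ serves all $x$), which rests on compactness of $M$ together with the clusters being isometric copies of one model up to a controlled metric distortion; and upgrading the evident $L^1_g$-continuity of $x\mapsto\mathbf{u}_{\varepsilon,\mathbf{v},x}$ to $H^1_g$-continuity, which demands the a.e.\ convergence of the gradients of the signed distance functions under the variation of $x$ and the continuous dependence of the volume-correcting parameters $\boldsymbol{\zeta}^x_\varepsilon$ and $\boldsymbol{\xi}^x_\varepsilon$. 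It is precisely here, and in the very construction of the transplanted cluster, that parallelizability is indispensable.
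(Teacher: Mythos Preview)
Your proposal is correct and follows essentially the same approach as the paper's proof: both parts rest on the explicit recovery formula $\mathbf{u}_{\varepsilon,\mathbf{v},x}(y)=\widetilde{\mathbf{q}}_\varepsilon(\mathbf{d}^{x,\mathbf{v}}(y)+\boldsymbol{\zeta}_{\varepsilon,\mathbf{v},x})$, the uniformity in $x$ coming from compactness and the fact that all clusters $\mathbf{\Omega}^{x,\mathbf{v}}$ are transplants of a single Euclidean model, and continuity reducing to that of $x\mapsto\mathbf{d}^{x,\mathbf{v}}$ (with its gradient) together with that of $x\mapsto\boldsymbol{\zeta}_{\varepsilon,\mathbf{v},x}$. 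The only differences are stylistic: the paper bounds $\|\mathbf{u}_{\varepsilon,\mathbf{v},x_1}-\mathbf{u}_{\varepsilon,\mathbf{v},x_2}\|_{H^1_g}^2$ directly via the Lipschitz constant of $\widetilde{\mathbf{q}}_\varepsilon$ by $C(\varepsilon)\big(\|\mathbf{d}^{x_1,\mathbf{v}}-\mathbf{d}^{x_2,\mathbf{v}}\|_{W^{1,\infty}}^2+|\boldsymbol{\zeta}_{\varepsilon,\mathbf{v},x_1}-\boldsymbol{\zeta}_{\varepsilon,\mathbf{v},x_2}|^2\big)$, invoking the implicit function theorem for the $\boldsymbol{\zeta}$-continuity and an external reference for the $W^{1,\infty}$-continuity of the signed distances, whereas you argue via a.e.\ convergence of gradients plus dominated convergence and obtain the $\boldsymbol{\zeta}$-continuity from monotonicity; your treatment is also more explicit about the small-ball correction in \eqref{recoverysequence}, which the paper essentially absorbs into the estimate on $M\setminus\mathcal{B}^g_\varepsilon$.
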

	
	\begin{proof}
		The photography map $\Psi_{R}$ at $x\in M$ is defined in terms of the Modica--Baldo approximation family $\mathbf{u}_{\varepsilon,\mathbf{v},x}$ for the sums of weighted characteristic functions of the interior chambers of a cluster containing $x$ with vectorial volume equals $\mathbf{v}$ as in Definition~\ref{def:baldoapproxx}.
		In addition, by Proposition~\ref{prop:inhomogeneousgammaconvergence}  and the asymptotic expansion in Corollary~\ref{cor:asymp}, it follows that $\boldsymbol{\mathcal{E}}_{\varepsilon}\left(\mathbf{u}_{\varepsilon,\mathbf{v},x}\right) \lesssim \boldsymbol{\mathcal{I}}_{(M,g)}(\mathbf{v})$ as $\varepsilon \rightarrow 0$, uniformly with respect to
		$x$ and $\mathbf{v}$.
		
		Then, using the compactness of $M$, we are left to prove the continuity of the photography map. To this aim, we will first prove the following estimate:
		
		\noindent{\bf Claim 1}: For any $(\varepsilon,\mathbf{v})\in\mathbb{R}^{m+1}$ and $x_1,x_2\in M$,  we have
		\begin{equation}\label{continuityphotgraphy}
			\left\|\mathbf{u}_{\varepsilon, \mathbf{v},{x_1}}-\mathbf{u}_{\varepsilon, \mathbf{v},{x_2}}\right\|_{H_g^{1}(M,\mathbb{R}^m)}=\mathrm o(1) \quad \mbox{as} \quad |x_1-x_2|\rightarrow0.
		\end{equation}
		
		\noindent To prove \eqref{continuityphotgraphy}, we use \eqref{continuityphotography6} to see that the recovery sequence is given by  
		\begin{equation*}
			{\mathbf{u}}_{\varepsilon,\mathbf{v},{x}}(y) =\widetilde{\mathbf{q}}_\varepsilon(\mathbf{d}^{x,\mathbf{v}}(y) +\boldsymbol{\zeta}_{\varepsilon,\mathbf{v},x}).
		\end{equation*}
		In this fashion, it follows
		\begin{align}\label{continuityphotgraphy3}
			\left\|\mathbf{u}_{\varepsilon,\mathbf{v},{x_1}}-\mathbf{u}_{\varepsilon, \mathbf{v},{x_2}}\right\|^2_{H_g^{1}(M,\mathbb{R}^m)}
			&=\int_{M}\left(\left|\mathbf{u}_{\varepsilon,\mathbf{v},{x_1}}-\mathbf{u}_{\varepsilon, \mathbf{v},{x_2}}\right|^2+\left|\nabla_g(\mathbf{u}_{\varepsilon, \mathbf{v},{x_1}}-\mathbf{u}_{\varepsilon, \mathbf{v},{x_2}})\right|^2\right)\ud\mathcal{L}^n_g&\\\nonumber
			&:=\int_{M}I_{\varepsilon,\mathbf{v}}(x_1,x_2)\ud\mathcal{L}^n_g&
		\end{align}
		Now, it suffices to estimate both terms on the right-hand side of \eqref{continuityphotgraphy3}. First, we have
		\begin{align*}
			\int_{M}I_{\varepsilon,\mathbf{v}}(x_1,x_2)\ud\mathcal{L}^n_g&=\int_{M}\left(\left|{\mathbf{u}}_{\varepsilon,\mathbf{v},{x_1}}-{\mathbf{u}}_{\varepsilon, \mathbf{v},{x_2}}\right|^2+\left|\nabla_g({\mathbf{u}}_{\varepsilon, \mathbf{v},{x_1}}-{\mathbf{u}}_{\varepsilon, \mathbf{v},{x_2}})\right|^2\right)\ud\mathcal{L}^n_g&\\
			&=\int_{M\setminus\mathcal{B}^g_{\varepsilon}}\left|\widetilde{\mathbf{q}}_\varepsilon(\mathbf{d}^{x_1,\mathbf{v}}(y) +\boldsymbol{\zeta}_{\varepsilon,\mathbf{v},x_1})-\widetilde{\mathbf{q}}_\varepsilon(\mathbf{d}^{x_2,\mathbf{v}}(y) +\boldsymbol{\zeta}_{\varepsilon,\mathbf{v},x_2})\right|^2\ud\mathcal{L}^n_g(y) &\\
			&+\int_{M\setminus\mathcal{B}^g_{\varepsilon}}\left|\nabla_g(\widetilde{\mathbf{q}}_\varepsilon(\mathbf{d}^{x_1,\mathbf{v}}(y) +\boldsymbol{\zeta}_{\varepsilon,\mathbf{v},x_1})-\nabla_g(\widetilde{\mathbf{q}}_\varepsilon(\mathbf{d}^{x_2,\mathbf{v}}(y) +\boldsymbol{\zeta}_{\varepsilon,\mathbf{v},x_2}))\right|^2\ud\mathcal{L}^n_g(y) .&\\
			&\leqslant2\|\nabla_g\widetilde{\mathbf{q}}_\varepsilon\|_{L^{\infty}(M,\mathbb{R}^m)}\left[\int_{M\setminus\mathcal{B}^g_{\varepsilon}}\left(\left|\mathbf{d}^{x_1,\mathbf{v}}(y) -\mathbf{d}^{x_2,\mathbf{v}}(y) \right|^2+\left|\boldsymbol{\zeta}_{\varepsilon,\mathbf{v},x_1}-\boldsymbol{\zeta}_{\varepsilon,\mathbf{v},x_2}\right|^2\right)\ud\mathcal{L}^n_g(y) \right]&\\
			&+2\|\nabla_g\widetilde{\mathbf{q}}_\varepsilon\|_{L_g^{\infty}(M,\mathbb{R}^m)}\int_{M\setminus\mathcal{B}^g_{\varepsilon}}\left|\nabla_g\mathbf{d}^{x_1,\mathbf{v}}(y) -\nabla_g\mathbf{d}^{x_2,\mathbf{v}}(y) \right|^2\ud\mathcal{L}^n_g(y) .&\\
			&\leqslant C_2\left[\left(\left\|\mathbf{d}^{x_1,\mathbf{v}}(y) -\mathbf{d}^{x_2,\mathbf{v}}(y) \right\|_{W_g^{1,\infty}(M,\mathbb{R}^m)}^2+\left|\boldsymbol{\zeta}_{\varepsilon,\mathbf{v},x_1}-\boldsymbol{\zeta}_{\varepsilon,\mathbf{v},x_2}\right|^2\right)\right], 
		\end{align*}
		where $C_2=C_2(\varepsilon, \mathbf{v}, M, g,\boldsymbol{W})>0$.
		Hence, applying the implicit function theorem in \eqref{continuityphotography3}, we observe $|\boldsymbol{\zeta}_{\varepsilon, \mathbf{v},x_{1}}-\boldsymbol{\zeta}_{\varepsilon, \mathbf{v},x_{2}}|=\mathrm o(1)$ as $|x_1-x_2|\rightarrow0$.
		As well as, a simple geometric argument as in \cite[Proposition~4.13]{arXiv:2007.07024} yields $\left\|\mathbf{d}^{x_{1},\mathbf{v}}-\mathbf{d}^{x_{2},\mathbf{v}}\right\|_{W^{1,\infty}(M,\mathbb{R}^m)}=\mathrm o(1)$ as $|x_1-x_2|\rightarrow0$.
		Thus, we find 
		\begin{align}\label{continuityphotgraphy4}
			&\int_{M\setminus\mathcal{B}^g_{\varepsilon}}I^1_{\varepsilon,\mathbf{v}}(x_1,x_2)\ud\mathcal{L}^n_g=\mathrm o(1) \quad \mbox{as} \quad |x_1-x_2|\rightarrow0.
		\end{align}
		Therefore, \eqref{continuityphotgraphy} follows by combining \eqref{continuityphotgraphy3} and \eqref{continuityphotgraphy4}.
		This in turn readily implies the continuity of the photography map and finishes the proof of the lemma.
	\end{proof}
	
	\subsection{Quasi-minima sublevel sets}
	
	We analyze the concentration properties of maps on a sublevel of the energy that is close to its minimum. 
	More precisely, we  show that for small $0<\varepsilon,|\mathbf{v}|\ll1$ solutions to \eqref{oursystem} with energy almost $\boldsymbol{\mathcal{I}}_{(\mathbb{R}^n,\delta)}(\mathbf{v})$ are close in the $L^1$-norm to maps like \eqref{converseconvergence}.
	
	First, we prove that for $0<\varepsilon\ll1$ small these almost minimizing maps are close to \eqref{converseconvergence}; this will be called an approximation lemma.
	
	\begin{lemma}
		Let $(M^n,g)$ be a closed Riemannian manifold and $\boldsymbol{W}\in\mathcal W^+_{N,3}$.
		For any $\eta\in(0,1)$, $\mathbf{v} \in(0, \mathrm{v}_g(M))^m$, and $\tau>0$, there exists $\varepsilon_{0}=\varepsilon_{0}(M,g, \mathbf{v}, \boldsymbol{W}, \tau, \eta)>0$ such that for any $\varepsilon\in(0,\varepsilon_{0})$ and $\mathbf{u} \in \boldsymbol{\mathfrak{M}}_{\varepsilon,\mathbf{v}}^c$ with $c=\boldsymbol{\mathcal{I}}_{(M,g)}(\mathbf{v})+\tau>0$,
		one can find a weighted cluster ${\mathbf{\Omega}}^{\mathbf{v},\mathbf{u}}=(\Omega^{\mathbf{v},\mathbf{u}}_1, \dots, \Omega^{\mathbf{v},\mathbf{u}}_N)\in\mathcal{C}_g(M,\mathbb{R}^N)$ with prescribed vectorial volume $\mathbf{v}_g(\mathbf{\Omega}^{\mathbf{v}, \mathbf{u}})=\mathbf{v}\in\mathbb R^{N}$ such that
		\begin{equation*}
			\left\|\mathbf{u}-\sum_{i=1}^{N}\mathbf{p}_i\chi_{\Omega^{\mathbf{v},\mathbf{u}}_i}\right\|_{L_g^{1}(M,\mathbb{R}^m)} \leqslant \eta.
		\end{equation*}
	\end{lemma}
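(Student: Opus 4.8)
The plan is to deduce the statement by contradiction from the compactness half of Proposition~\ref{prop:inhomogeneousgammaconvergence}, which is at our disposal since $\boldsymbol{W}\in\mathcal{W}^+_{N,3}$. First I would negate the quantifiers: if the conclusion failed for some triple $(\eta,\mathbf{v},\tau)$, then taking $\varepsilon_0=1/k$ would produce sequences $\varepsilon_k\downarrow 0$ and $\mathbf{u}_k\in\boldsymbol{\mathfrak{M}}_{\varepsilon_k,\mathbf{v}}^c$ — in particular $\boldsymbol{\mathcal{E}}_{\varepsilon_k}(\mathbf{u}_k)\leqslant c$ and $\boldsymbol{\mathcal{V}}_g(\mathbf{u}_k)=\mathbf{v}$ — with the property that
\[
\Bigl\|\mathbf{u}_k-\sum_{i=1}^N\mathbf{p}_i\chi_{\Omega_i}\Bigr\|_{L_g^1(M,\mathbb{R}^m)}>\eta
\]
for \emph{every} weighted cluster $\mathbf{\Omega}\in\mathcal{C}_g(M,\mathbb{R}^N)$ with $\mathbf{v}_g(\mathbf{\Omega})=\mathbf{v}$.

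Since the energies $\boldsymbol{\mathcal{E}}_{\varepsilon_k}(\mathbf{u}_k)$ are uniformly bounded by $E:=c$, Proposition~\ref{prop:inhomogeneousgammaconvergence} applies to the family $\{\mathbf{u}_k\}_{k\in\mathbb N}$: after passing to a subsequence, $\mathbf{u}_k\to\mathbf{u}_0$ in $L_g^1(M,\mathbb{R}^m)$, with $\mathbf{u}_0=\sum_{i=1}^N\mathbf{p}_i\chi_{\Omega^0_i}$ for some $\mathbf{\Omega}_0\in\mathcal{C}_g(M,\mathbb{R}^N)$. Then I would check that $\mathbf{\Omega}_0$ carries exactly the prescribed volume. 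The functional $\boldsymbol{\mathcal{V}}_g$ is continuous along $L^1$-convergence, so $\boldsymbol{\mathcal{V}}_g(\mathbf{u}_0)=\lim_k\boldsymbol{\mathcal{V}}_g(\mathbf{u}_k)=\mathbf{v}$; writing $\boldsymbol{\mathcal{V}}_g(\mathbf{u}_0)=\sum_{i=1}^N\mathrm{v}_g(\Omega^0_i)\mathbf{p}_i$ and using that $\{\mathbf{p}_1,\dots,\mathbf{p}_{N-1}\}$ is linearly independent with $\mathbf{p}_N=0$ (hypothesis \eqref{W0}) pins down $\mathrm{v}_g(\Omega^0_i)$ for every $i$, that is, $\mathbf{v}_g(\mathbf{\Omega}_0)=\mathbf{v}$ in the convention of Definition~\ref{def:vectorialtrans}. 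Hence $\mathbf{\Omega}_0$ is an admissible competitor in the displayed inequality, and for all large $k$ along the subsequence
\[
\Bigl\|\mathbf{u}_k-\sum_{i=1}^N\mathbf{p}_i\chi_{\Omega^0_i}\Bigr\|_{L_g^1(M,\mathbb{R}^m)}=\|\mathbf{u}_k-\mathbf{u}_0\|_{L_g^1(M,\mathbb{R}^m)}<\eta,
\]
contradicting the choice of $\mathbf{u}_k$. Unwinding this contradiction yields the claimed $\varepsilon_0$, with $\mathbf{\Omega}^{\mathbf{v},\mathbf{u}}$ taken to be the $\Gamma$-limit cluster associated to $\mathbf{u}$.

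The bulk of the work has already been done in Proposition~\ref{prop:inhomogeneousgammaconvergence}, so I do not expect a serious obstacle here. The one point that genuinely needs care is the volume bookkeeping — verifying that the $\Gamma$-limit cluster inherits the prescribed vectorial volume via $L^1$-continuity of $\boldsymbol{\mathcal{V}}_g$ together with the linear independence in \eqref{W0}, and keeping straight the identification between the target vector $\mathbf{v}\in\mathbb{R}^m$ and the cluster volume vector in $\mathbb{R}^N$.
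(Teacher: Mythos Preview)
Your proposal is correct and follows essentially the same route as the paper: argue by contradiction, apply Proposition~\ref{prop:inhomogeneousgammaconvergence} with $E:=c$ to extract an $L^1$-convergent subsequence with limit $\sum_i\mathbf{p}_i\chi_{\Omega_i^0}$, and observe that this limit cluster is an admissible competitor. The paper absorbs the volume-bookkeeping step into its appeal to Proposition~\ref{prop:inhomogeneousgammaconvergence} (whose statement already records the constraint $\sum_i\mathrm{v}_g(\Omega_i)\mathbf{p}_i=\mathbf{v}$), whereas you spell it out via $L^1$-continuity of $\boldsymbol{\mathcal{V}}_g$ and the linear independence from \eqref{W0}; this is the same argument made explicit.
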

	
	\begin{proof}
		Suppose by contradiction that the conclusion does not hold. 
		Then, there exist $\eta_0\in(0,1)$, $\mathbf{v} \in(0, \mathrm{v}_g(M))^m$, $ \tau>0$, $\{\varepsilon_{k}\}_{k\in\mathbb{N}}\subset\mathbb{R}$ with $\varepsilon_{k} \rightarrow 0$, and  $\{\mathbf{u}_{\varepsilon_{k}}\}_{k\in\mathbb{N}} \subset \boldsymbol{\mathfrak{M}}^c_{{\varepsilon_k},\mathbf{v}}$ such that for every weighted cluster ${\mathbf{\Omega}}^{\mathbf{v},\mathbf{u}}=(\Omega^{\mathbf{v},\mathbf{u}}_1, \dots, \Omega^{\mathbf{v},\mathbf{u}}_N)\in\mathcal{C}_g(M,\mathbb{R}^N)$ with vectorial volume $\mathbf{v}_g(\mathbf{\Omega}^{\mathbf{v},\mathbf{u}})=\mathbf{v}\in \mathbb R^{N-1}$, it follows 
		\begin{equation}\label{lm1}
			\left\|\mathbf{u}_{\varepsilon_{k}}-\sum_{i=1}^{N}\mathbf{p}_i\chi_{\Omega^{\mathbf{v},\mathbf{u}}_i}\right\|_{L_g^{1}(M,\mathbb{R}^m)}>\eta_0>0.
		\end{equation}
		Also, we can apply Proposition~\ref{prop:inhomogeneousgammaconvergence} with $E:=c$ to construct a subsequence denoted $\left\{\varepsilon_{k}\right\}_{k\in\mathbb{N}}\subset\mathbb{R}$, and a weighted cluster ${\mathbf{\Omega}}^{\mathbf{v},\mathbf{u}}=(\Omega^{\mathbf{v},\mathbf{u}}_1, \dots, \Omega^{\mathbf{v},\mathbf{u}}_N)\in\mathcal{C}_g(M,\mathbb{R}^N)$ with vectorial volume $\mathrm{v}_g(\mathbf{\Omega}^{\mathbf{v},\mathbf{u}})=\mathbf{v}\in\mathbb R^{N}$ such that $\boldsymbol{\mathcal{P}}_{g}(\mathbf{\Omega}^{\mathbf{v},\mathbf{u}}) \leqslant {c}$ and
		\begin{equation*}
			\left\|\mathbf{u}_{\varepsilon_{k}}-\sum_{i=1}^{N}\mathbf{p}_i\chi_{\Omega^{\mathbf{v},{\mathbf{u}}}_i}\right\|_{L_g^{1}(M,\mathbb{R}^m)}=\mathrm o_k(1),
		\end{equation*}
		which contradicts \eqref{lm1} and finishes the proof of the lemma.
	\end{proof}
	
	Now, we prove that $0<\tau\ll1$ can be chosen small enough such that the cluster produced by the preceding lemma is in fact isoperimetric. 
	
	\begin{lemma}\label{lm:estimate}
		Let $(M^n,g)$ be a closed Riemannian manifold and $\boldsymbol{W}\in\mathcal W^+_{N,3}$.
		For any $\eta\in(0,1)$, $\mathbf{v} \in(0, \mathrm{v}_g(M))^m$, and $\tau>0$, there exists $\varepsilon_{0}=\varepsilon_{0}( M,g,\mathbf{v},\boldsymbol{W}, \tau,\eta)>0$ such that for any $\varepsilon\in(0,\varepsilon_{0})$ and $\mathbf{u} \in \boldsymbol{\mathfrak{M}}_{\varepsilon,\mathbf{v}}^c$ with $c=\boldsymbol{\mathcal{I}}_{(M,g)}(\mathbf{v})+\tau>0$,
		one can find an isoperimetric weighted cluster ${\mathbf{\Omega}}^{\mathbf{v},\mathbf{u}}=(\Omega^{\mathbf{v},\mathbf{u}}_1, \dots, \Omega^{\mathbf{v},\mathbf{u}}_N)\in\mathcal{C}_g(M,\mathbb{R}^N)$ with prescribed vectorial volume $\mathbf{v}_g(\mathbf{\Omega}^{\mathbf{v}, \mathbf{u}})=\mathbf{v}\in\mathbb R^{N}$, such that
		\begin{equation*}
			\left\|\mathbf{u}-\sum_{i=1}^{N}\mathbf{p}_i\chi_{\Omega^{\mathbf{v},\mathbf{u}}_i}\right\|_{L_g^{1}(M,\mathbb{R}^m)} \leqslant \eta.
		\end{equation*}
	\end{lemma}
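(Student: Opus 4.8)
The plan is to refine the preceding approximation lemma so that the cluster it produces can be taken to be genuinely isoperimetric once $\tau$ is small (this is exactly the role of the smallness of $\tau$ announced before the statement, and the regime in which Lemma~\ref{lm:estimate} is applied in Section~\ref{sec:concretephotographymethod}). The key point is a \emph{stability} property: an almost-minimizing cluster of volume $\mathbf v$ must be $L^1_g$-close to an actual isoperimetric cluster of the same volume. Granting this, one runs the contradiction argument of the preceding lemma once more (or, equivalently, replaces the cluster it produces by a nearby isoperimetric one) and closes with the triangle inequality.

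Concretely, I would first prove the stability statement: for every $\eta>0$ there is $\delta=\delta(\eta,\mathbf v,M,g,\boldsymbol W)>0$ such that any $\mathbf\Omega\in\mathcal C_g(M,\mathbb R^N)$ with $\mathbf v_g(\mathbf\Omega)=\mathbf v$ and $\boldsymbol{\mathcal P}_g(\mathbf\Omega)\leqslant\boldsymbol{\mathcal I}_{(M,g)}(\mathbf v)+\delta$ admits an isoperimetric cluster $\mathbf\Omega'$ with $\mathbf v_g(\mathbf\Omega')=\mathbf v$ and $\|\sum_{i=1}^N\mathbf p_i\chi_{\Omega_i}-\sum_{i=1}^N\mathbf p_i\chi_{\Omega_i'}\|_{L^1_g(M,\mathbb R^m)}\leqslant\eta/2$. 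This goes by contradiction: a sequence $\mathbf\Omega_k$ with $\mathbf v_g(\mathbf\Omega_k)=\mathbf v$, $\boldsymbol{\mathcal P}_g(\mathbf\Omega_k)\to\boldsymbol{\mathcal I}_{(M,g)}(\mathbf v)$, staying at $L^1_g$-distance $\geqslant\eta_0$ from every isoperimetric cluster, has chambers of uniformly bounded perimeter (the weights $\omega_{ij}(\boldsymbol W)=\ud_{\boldsymbol W}(\mathbf p_i,\mathbf p_j)$ are strictly positive for $i\neq j$), so on the compact manifold $M$ the $BV$ compactness theorem — which is what Proposition~\ref{Thm:GeneralizedCompactnessForClusters} reduces to on a fixed compact space, with no escape of mass — yields an $L^1_g$-limit $\mathbf\Omega_\infty$ with $\mathbf v_g(\mathbf\Omega_\infty)=\mathbf v$; lower semicontinuity of the multi-perimeter along $L^1_g$-converging clusters (valid since $\omega=(\omega_{ij}(\boldsymbol W))$ is a metric, hence $BV$-elliptic — see Remark~\ref{rmk:immisciblecondition} and the references therein) forces $\boldsymbol{\mathcal P}_g(\mathbf\Omega_\infty)\leqslant\boldsymbol{\mathcal I}_{(M,g)}(\mathbf v)$, so $\mathbf\Omega_\infty$ is isoperimetric, contradicting the distance bound. (That an isoperimetric cluster of volume $\mathbf v$ exists at all — needed for this to be non-vacuous — follows by the same direct method, or in the small-volume range from Lemma~\ref{lm:existenceofpartition}.)

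Then I would fix $\eta\in(0,1)$, $\mathbf v$, and $\tau\leqslant\delta(\eta,\mathbf v,M,g,\boldsymbol W)$, and set $c=\boldsymbol{\mathcal I}_{(M,g)}(\mathbf v)+\tau$. If the conclusion failed, there would be $\varepsilon_k\to0$ and $\mathbf u_{\varepsilon_k}\in\boldsymbol{\mathfrak M}_{\varepsilon_k,\mathbf v}^c$ at $L^1_g$-distance $>\eta$ from $\sum_i\mathbf p_i\chi_{\Omega_i}$ for \emph{every} isoperimetric cluster $\mathbf\Omega$ of volume $\mathbf v$; applying Proposition~\ref{prop:inhomogeneousgammaconvergence} with $E=c$ gives (up to a subsequence) $\mathbf u_{\varepsilon_k}\to\mathbf u_0=\sum_i\mathbf p_i\chi_{\Omega_i^0}$ in $L^1_g$ with $\mathbf v_g(\mathbf\Omega^0)=\mathbf v$ and $\boldsymbol{\mathcal P}_g(\mathbf\Omega^0)=\lim_k\boldsymbol{\mathcal E}_{\varepsilon_k}(\mathbf u_{\varepsilon_k})\leqslant c\leqslant\boldsymbol{\mathcal I}_{(M,g)}(\mathbf v)+\delta$. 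By the stability step there is an isoperimetric cluster $\mathbf\Omega'$ of volume $\mathbf v$ with $\|\sum_i\mathbf p_i\chi_{\Omega_i^0}-\sum_i\mathbf p_i\chi_{\Omega_i'}\|_{L^1_g}\leqslant\eta/2$, so for $k$ large $\|\mathbf u_{\varepsilon_k}-\sum_i\mathbf p_i\chi_{\Omega_i'}\|_{L^1_g}<\eta$, contradicting the failure property; hence the desired $\varepsilon_0$ exists and $\mathbf\Omega^{\mathbf v,\mathbf u}$ can be taken to be $\mathbf\Omega'$. (Alternatively, one may apply the preceding approximation lemma with error $\eta/2$, note that its proof produces a cluster with $\boldsymbol{\mathcal P}_g\leqslant c\leqslant\boldsymbol{\mathcal I}_{(M,g)}(\mathbf v)+\delta$, and replace it via the stability step.)

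The hard part is the stability step, and within it the lower semicontinuity of the \emph{weighted} multi-perimeter $\boldsymbol{\mathcal P}_g$ under $L^1_g$-convergence of clusters: unlike the unweighted case it is not a sum of perimeters of the chambers (cf. Remark~\ref{rmk:immisciblecondition}), so one must genuinely use the $BV$-ellipticity/immiscibility machinery of White and Leonardi, available here precisely because $\omega_{ij}(\boldsymbol W)=\ud_{\boldsymbol W}(\mathbf p_i,\mathbf p_j)$ satisfies the triangle inequality. The remaining ingredients — compactness on the fixed compact $M$ and the bookkeeping with Proposition~\ref{prop:inhomogeneousgammaconvergence} and the triangle inequality — are routine.
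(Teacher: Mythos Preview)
Your argument is correct and is a genuinely different, more modular route than the paper's. The paper does \emph{not} isolate a geometric stability step. Instead it runs a single contradiction in which both $\tau_k\to0$ and $\varepsilon_k\to0$ simultaneously: it picks $\mathbf u_{\varepsilon_k}\in\boldsymbol{\mathfrak M}^{c_k}_{\varepsilon_k,\mathbf v}$ with $c_k=\boldsymbol{\mathcal I}_{(M,g)}(\mathbf v)+\tau_k$, applies Proposition~\ref{prop:inhomogeneousgammaconvergence} (with $E=c_1$, then $E=c_2$, \ldots, via a diagonal step) to produce a single limit cluster $\mathbf\Omega$, and reads off $\boldsymbol{\mathcal P}_g(\mathbf\Omega)\leqslant c_\ell$ for every $\ell$, whence $\boldsymbol{\mathcal P}_g(\mathbf\Omega)\leqslant\boldsymbol{\mathcal I}_{(M,g)}(\mathbf v)$ and $\mathbf\Omega$ is isoperimetric, contradicting the failure hypothesis. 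In effect, the paper hides your stability step inside the choice $\tau_k\to0$ and the limit statement of Proposition~\ref{prop:inhomogeneousgammaconvergence}, never invoking $BV$-compactness or lower semicontinuity of $\boldsymbol{\mathcal P}_g$ directly at the cluster level.

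What each buys: your two-step version makes explicit the purely geometric fact (almost-minimizing clusters of volume $\mathbf v$ are $L^1$-close to minimizers), which is of independent interest and cleanly separates the role of $\tau$ (controlling the stability) from the role of $\varepsilon$ (controlling the $\Gamma$-approximation); it also makes transparent that the literal quantifier ``for any $\tau>0$'' in the statement must be read as ``for $\tau$ small'', as announced in the paragraph preceding the lemma. The paper's single-limit argument is shorter and avoids citing the White--Leonardi lower semicontinuity explicitly, since Proposition~\ref{prop:inhomogeneousgammaconvergence} already packages the needed compactness and the perimeter bound on the limit; on the other hand, its repeated application and diagonal extraction are somewhat redundant (once $\mathbf u_{\varepsilon_k}\to\mathbf u_0$ and $\boldsymbol{\mathcal E}_{\varepsilon_k}(\mathbf u_{\varepsilon_k})\leqslant c_k\to\boldsymbol{\mathcal I}_{(M,g)}(\mathbf v)$, the liminf inequality of the $\Gamma$-limit already gives $\boldsymbol{\mathcal P}_g(\mathbf\Omega)\leqslant\boldsymbol{\mathcal I}_{(M,g)}(\mathbf v)$).
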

	
	\begin{proof}
		Again, let us suppose by contradiction that the conclusion does not hold. 
		Then, there exist $0<\eta_0<1$, $\mathbf{v} \in(0,\mathrm{v}_{g}(M))^m$, two sequences $\{\tau_{k}\}_{k\in\mathbb{N}},\{\varepsilon_{k}\}_{k\in\mathbb{N}}\subset\mathbb{R}$ with $\tau_{k},\varepsilon_{k} \rightarrow 0$ as $k\rightarrow0$, and  $\{\mathbf{u}_{\varepsilon_{k}}\}_{k\in\mathbb{N}} \subset \boldsymbol{\mathfrak{M}}^{c_k}_{{\varepsilon_k},\mathbf{v}}$, where $c_k=\boldsymbol{\mathcal{I}}_{(M,g)}(\mathbf{v})+\tau_k$, such that for every weighted cluster ${\mathbf{\Omega}}^{\mathbf{v},\mathbf{u}}\in\mathcal{C}_g(M,\mathbb{R}^N)$ with vectorial volume $\mathbf{v}_g(\mathbf{\Omega}^{\mathbf{v},\mathbf{u}})=\mathbf{v}\in\mathbb R^{N}$, it follows 
		\begin{equation}\label{lm2}
			\left\|\mathbf{u}_{\varepsilon_{k}}-\sum_{i=1}^{N}\mathbf{p}_i\chi_{\Omega_{i}^{\mathbf{v},\mathbf{u}}}\right\|_{L_g^{1}(M,\mathbb{R}^m)}>\eta_0>0.
		\end{equation}
		Then, we can apply Proposition~\ref{prop:inhomogeneousgammaconvergence} with $E:=c_1$ to construct a subsequence still denoted by $\left\{\varepsilon_{k}\right\}_{k\in\mathbb{N}}\subset\mathbb{R}$ and a cluster ${\mathbf{\Omega}}^{\mathbf{v},\mathbf{u}}_1\in\mathcal{C}_g(M,\mathbb{R}^N)$ with vectorial volume $\mathbf{v}_g(\mathbf{\Omega}^{\mathbf{v},\mathbf{u}}_1)=\mathbf{v}\in\mathbb R^{N}$ and such that
		\begin{equation*}
			\left\|\mathbf{u}_{\varepsilon_{k}}-\sum_{i=1}^{N}\mathbf{p}_i\chi_{\Omega_{1i}^{\mathbf{v},{\mathbf{u}}}}\right\|_{L_g^{1}(M,\mathbb{R}^m)}=\mathrm o_k(1).
		\end{equation*}
		To this subsequence, we apply  Proposition~\ref{prop:inhomogeneousgammaconvergence} again with $E:=c_{2}$, producing a subsequence denoted by $\left\{\varepsilon_{k}\right\}_{k\in\mathbb{N}}\subset\mathbb{R}$ and a cluster ${\mathbf{\Omega}}_{2}^{\mathbf{v},\mathbf{u}}\in\mathcal{C}_g(M,\mathbb{R}^N)$ with vectorial volume $\mathbf{v}_g(\mathbf{\Omega}_2^{\mathbf{v},\mathbf{u}})=\mathbf{v}\in\mathbb R^{N}$ such that
		\begin{equation*}
			\left\|\mathbf{u}_{\varepsilon_{k}}-\sum_{i=1}^{N}\mathbf{p}_i\chi_{\Omega_{2i}^{\mathbf{v},{\mathbf{u}}}}\right\|_{L_g^{1}(M,\mathbb{R}^m)}=\mathrm o_k(1).
		\end{equation*}
		Notice that, by the uniqueness of the limit of any subsequence, we get  ${\mathbf{\Omega}}_2^{\mathbf{v},\mathbf{u}}={\mathbf{\Omega}}_1^{\mathbf{v},\mathbf{u}}$.
		
		At last, using a standard diagonal argument, we find $\{\varepsilon_{k}\}_{k\in\mathbb{N}}\subset\mathbb{R}$ and $\{{\mathbf{\Omega}}_{\ell}^{\mathbf{v},\mathbf{u}}\}_{\ell\in\mathbb{N}}\subset \mathcal{C}_g(M,\mathbb{R}^N)$ satisfying ${{\mathbf{\Omega}}^{\mathbf{v},\mathbf{u}}}={\mathbf{\Omega}}_{1}^{\mathbf{v},\mathbf{u}}=\cdots={\mathbf{\Omega}}_{\ell}^{\mathbf{v},\mathbf{u}}$ with vectorial volume $\mathbf{v}_g(\mathbf{\Omega}_{\ell}^{\mathbf{v},\mathbf{u}})=\mathbf{v}\in\mathbb R^{N}$ such that
		\begin{equation}\label{lm3}
			\boldsymbol{\mathcal{P}}_{g}(\mathbf{\Omega}^{\mathbf{v},\mathbf{u}}) \leqslant {c_\ell}
		\end{equation}
		and
		\begin{equation}\label{lm4}
			\left\|\mathbf{u}_{\varepsilon_{k}}-\sum_{i=1}^{N}\mathbf{p}_i\chi_{\Omega_i^{\mathbf{v},{\mathbf{u}}}}\right\|_{L_g^{1}(M,\mathbb{R}^m)}=\mathrm o_k(1).
		\end{equation}
		Now, using \eqref{lm3}, it is straightforward to conclude that $\boldsymbol{\mathcal{P}}_{g}(\mathbf{\Omega}^{\mathbf{v},\mathbf{u}}) \leqslant \boldsymbol{\mathcal{I}}_{(M,g)}(\mathbf{v})$, which asserts that ${\mathbf{\Omega}}^{\mathbf{v},\mathbf{u}}\in \mathcal{C}_g(M,\mathbb{R}^N)$ is an isoperimetric weighted cluster with vectorial volume $\mathbf{v}\in\mathbb R^{N}$.
		This combined with \eqref{lm4} contradicts \eqref{lm2}, and so the proof of the lemma is completed.
	\end{proof}
	
	To prove that the barycenter map is well-defined, we need the concentration lemma below
	
	\begin{lemma}\label{lm:concentration}
		Let $(M^n,g)$ be a closed Riemannian manifold and $\boldsymbol{W}\in\mathcal W^+_{N,0}\cap \mathcal W^+_{N,3}$ with $N=3$.
		For any $0<\eta\ll1$ $($close to $0$ $)$ and $0<r<{\rm inj}_g/2$, there exists $\mathrm{v}_1=\mathrm{v}_1(M,g,\eta,r)$ such that for every $\mathbf{v}\in(0,\mathrm{v}_{1})^m$, one can find $\tau_{1}=\tau_{1}(M,g,\mathbf{v},\boldsymbol{W},\eta)>0$ satisfying that for every $\tau\in(0,\tau_{1})$, there exists $\varepsilon_{1}=\varepsilon_{1}(M,g,\mathbf{v},\boldsymbol{W},\tau,\eta)>0$ such that for any $\varepsilon\in(0,\varepsilon_{1})$ and $\mathbf{u} \in \boldsymbol{\mathfrak{M}}_{\varepsilon,\mathbf{v}}^c$ with $c=\boldsymbol{\mathcal{I}}_{(M,g)}(\mathbf{v})+\tau$, one can find an weighted cluster ${\mathbf{\Omega}}^{\mathbf{v},\mathbf{u}}_*=(\Omega_{*1}^{\mathbf{v},\mathbf{u}}, \Omega_{*2}^{\mathbf{v},\mathbf{u}}, \Omega_{*3}^{\mathbf{v},\mathbf{u}})\in\mathcal{C}_g(M,\mathbb{R}^3)$ with prescribed vectorial volume $\mathbf{v}_g(\mathbf{\Omega}^{\mathbf{v}, \mathbf{u}})=\mathbf{v}\in\mathbb R^{3}$ satisfying {\rm (i)}--{\rm (iv)} of Proposition \ref{lm:SelectingaLargeCompact} such that
		\begin{equation*}
			\left\|\mathbf{u}-\sum_{i=1}^{3}\mathbf{p}_i\chi_{\Omega_{*i}^{\mathbf{v},\mathbf{u}}}\right\|_{L_g^{1}(M,\mathbb{R}^2)} \leqslant \eta.
		\end{equation*}
		In particular, for any $0\ll\tilde{\eta}<1$ $($close to $1$$)$, one can find $x_{{\mathbf{v},\mathbf{u}}}\in M$ such that 
		\begin{equation}\label{concentration}
			\int_{\mathcal{B}^{g}_{r/2}(x_{{\mathbf{v},\mathbf{u}}})}|\mathbf{u}|\ud\mathcal{L}^n_{g} \geqslant \tilde{\eta}|\mathbf{v}|.
		\end{equation}
	\end{lemma}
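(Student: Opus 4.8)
The plan is to read this lemma as the superposition of two facts already in hand: the quasi\-/minimality approximation of Lemma~\ref{lm:estimate}, and the fact that the interior chambers of a small\-/volume isoperimetric $3$-cluster have small diameter (Remark~\ref{rmk:baldosmallvolumes}, i.e.\ \cite[Theorem~1.1]{lawlor2014double}, together with Proposition~\ref{lm:SelectingaLargeCompact}). The displayed concentration estimate \eqref{concentration} is then a bookkeeping consequence of the $L^1$-approximation plus the containment of the support of the limit profile in a small ball.

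\emph{The $L^1$ estimate.} Fix $0<\eta\ll1$ and $0<r<{\rm inj}_g/2$. First I would invoke Lemma~\ref{lm:estimate}: for $\tau$ small and then $\varepsilon$ small (all thresholds depending on $M,g,\mathbf{v},\boldsymbol{W},\tau,\eta$), every $\mathbf{u}\in\boldsymbol{\mathfrak{M}}_{\varepsilon,\mathbf{v}}^c$ with $c=\boldsymbol{\mathcal{I}}_{(M,g)}(\mathbf{v})+\tau$ satisfies $\|\mathbf{u}-\sum_{i=1}^{3}\mathbf{p}_i\chi_{\Omega^{\mathbf{v},\mathbf{u}}_i}\|_{L^{1}_g(M,\mathbb{R}^{2})}\leqslant\eta$ for some isoperimetric weighted $3$-cluster $\mathbf{\Omega}^{\mathbf{v},\mathbf{u}}\in\mathcal{C}_g(M,\mathbb{R}^{3})$ with $\mathbf{v}_g(\mathbf{\Omega}^{\mathbf{v},\mathbf{u}})=\mathbf{v}$. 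Then I would choose $\mathrm{v}_1=\mathrm{v}_1(M,g,\eta,r)$ so small that for $\mathbf{v}\in(0,\mathrm{v}_1)^{2}$ the interior part of any isoperimetric weighted $3$-cluster of volume $\mathbf{v}$ has diameter $<r/2$; for $N=3$ this is exactly Remark~\ref{rmk:baldosmallvolumes}, so one may simply take $\mathbf{\Omega}^{\mathbf{v},\mathbf{u}}_*:=\mathbf{\Omega}^{\mathbf{v},\mathbf{u}}$, which is the cluster provided by Proposition~\ref{lm:SelectingaLargeCompact} in the degenerate case $\widetilde{Q}=1$: then (i)--(iii) are trivial and (iv) is the diameter bound just quoted. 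If instead one wishes to bypass the full classification \cite{lawlor2014double} (this is why the lemma is phrased through Proposition~\ref{lm:SelectingaLargeCompact}), one feeds $\mathbf{\Omega}^{\mathbf{v},\mathbf{u}}$ into the ``selecting a large subdomain'' construction behind Proposition~\ref{lm:SelectingaLargeCompact}, upgraded from a sequential to a uniform\-/in\-/small\-/volume statement by a routine contradiction argument, and restores the exact volume $\mathbf{v}$ by an infinitesimal ball modification; the resulting $\mathbf{\Omega}^{\mathbf{v},\mathbf{u}}_*$ then satisfies $\mathbf{v}_g(\mathbf{\Omega}^{\mathbf{v},\mathbf{u}}_*)=\mathbf{v}$, $\diam_g(\widetilde{\Omega}^{\mathbf{v},\mathbf{u}}_*)<r/2$ and $\mathrm{v}_g(\mathbf{\Omega}^{\mathbf{v},\mathbf{u}}\triangle\mathbf{\Omega}^{\mathbf{v},\mathbf{u}}_*)=\mathrm{o}(|\mathbf{v}|)$, so that $\|\sum_i\mathbf{p}_i\chi_{\Omega^{\mathbf{v},\mathbf{u}}_i}-\sum_i\mathbf{p}_i\chi_{\Omega^{\mathbf{v},\mathbf{u}}_{*i}}\|_{L^{1}_g}\leqslant C_{\boldsymbol{W}}\,\mathrm{o}(|\mathbf{v}|)$, and after shrinking $\mathrm{v}_1$ the triangle inequality gives $\|\mathbf{u}-\sum_i\mathbf{p}_i\chi_{\Omega^{\mathbf{v},\mathbf{u}}_{*i}}\|_{L^{1}_g}\leqslant\eta$, which is the first assertion.

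\emph{Concentration.} Since $\diam_g(\widetilde{\Omega}^{\mathbf{v},\mathbf{u}}_*)<r/2$, I would pick $x_{\mathbf{v},\mathbf{u}}\in M$ with $\widetilde{\Omega}^{\mathbf{v},\mathbf{u}}_*\subset\mathcal{B}^g_{r/2}(x_{\mathbf{v},\mathbf{u}})$. Because $\mathbf{p}_3=0$, the profile $\mathbf{u}^*_0:=\sum_{i=1}^{3}\mathbf{p}_i\chi_{\Omega^{\mathbf{v},\mathbf{u}}_{*i}}$ vanishes on $M\setminus\widetilde{\Omega}^{\mathbf{v},\mathbf{u}}_*$, hence on $M\setminus\mathcal{B}^g_{r/2}(x_{\mathbf{v},\mathbf{u}})$; on the other hand the volume constraint $\boldsymbol{\mathcal{V}}_g(\mathbf{u})=\mathbf{v}$ together with the identification conventions of Definition~\ref{def:vectorialtrans} yield $\int_M|\mathbf{u}|\,\ud\mathcal{L}^n_g=|\mathbf{v}|$. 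Therefore
\begin{multline*}
\int_{\mathcal{B}^g_{r/2}(x_{\mathbf{v},\mathbf{u}})}|\mathbf{u}|\,\ud\mathcal{L}^n_g
=\int_M|\mathbf{u}|\,\ud\mathcal{L}^n_g-\int_{M\setminus\mathcal{B}^g_{r/2}(x_{\mathbf{v},\mathbf{u}})}|\mathbf{u}-\mathbf{u}^*_0|\,\ud\mathcal{L}^n_g\\
\geqslant|\mathbf{v}|-\bigl\|\mathbf{u}-\mathbf{u}^*_0\bigr\|_{L^{1}_g(M,\mathbb{R}^{2})}.
\end{multline*}
Given the target $0\ll\widetilde{\eta}<1$, it then suffices to note that for the now\-/fixed $\mathbf{v}$ the defect $\|\mathbf{u}-\mathbf{u}^*_0\|_{L^{1}_g}$ tends to $0$ as $\varepsilon\to0^{+}$ (this is exactly Proposition~\ref{prop:inhomogeneousgammaconvergence}); shrinking $\varepsilon_1$ further, now depending also on $\widetilde{\eta}$, so that this defect is $<(1-\widetilde{\eta})|\mathbf{v}|$ gives $\int_{\mathcal{B}^g_{r/2}(x_{\mathbf{v},\mathbf{u}})}|\mathbf{u}|\,\ud\mathcal{L}^n_g\geqslant\widetilde{\eta}|\mathbf{v}|$, which is \eqref{concentration}.

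\emph{The main obstacle} is organizational rather than analytic: one must keep the nested chain $\eta\to\mathrm{v}_1\to\mathbf{v}\to\tau_1\to\tau\to\varepsilon_1\to\varepsilon$ of parameters mutually consistent, and in particular must use that $\varepsilon_1$ may be chosen after $\mathbf{v}$ (and, for \eqref{concentration}, after $\widetilde{\eta}$), so that the $L^{1}$-defect is small compared with $|\mathbf{v}|$ rather than merely $\leqslant\eta$. If one avoids \cite{lawlor2014double}, the other delicate point is promoting the sequential ``selecting a large subdomain'' statement of Proposition~\ref{lm:SelectingaLargeCompact} to a uniform one and restoring the prescribed volume without destroying the diameter bound. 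Finally one must keep straight the several normalizations of $\mathbf{v}$, $\boldsymbol{\mathcal{V}}_g$ and $|\cdot|$ in force, under which $\int_M|\mathbf{u}|\,\ud\mathcal{L}^n_g=|\mathbf{v}|$; once these are fixed the estimates above are immediate.
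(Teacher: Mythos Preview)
Your proposal is essentially the paper's own argument: invoke Lemma~\ref{lm:estimate} to get an isoperimetric cluster $L^1$-close to $\mathbf u$, feed it into Proposition~\ref{lm:SelectingaLargeCompact} (or, for $N=3$, use Lawlor directly) to obtain $\mathbf{\Omega}^{\mathbf v,\mathbf u}_*$ with small-diameter interior, close up the $L^1$ estimate by the triangle inequality, and then read off the concentration from the containment $\widetilde{\Omega}^{\mathbf v,\mathbf u}_*\subset\mathcal B^g_{r/2}(x_{\mathbf v,\mathbf u})$.

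One genuine slip: the identity $\int_M|\mathbf u|\,\ud\mathcal L^n_g=|\mathbf v|$ does not follow from $\boldsymbol{\mathcal V}_g(\mathbf u)=\mathbf v$ under any of the paper's conventions, since $\mathbf u\in H^1_g(M,\mathbb R^m)$ need not have nonnegative components and $|\cdot|$ on $\mathbf u(x)$ is not the componentwise sum. The paper avoids this by working on the ball side rather than the complement: from $\|\mathbf u-\mathbf u_0^*\|_{L^1(\mathcal B^g_{r/2})}\leqslant\eta$ and $\operatorname{supp}\mathbf u_0^*\subset\mathcal B^g_{r/2}$ one gets $\int_{\mathcal B^g_{r/2}}|\mathbf u|\geqslant\int_M|\mathbf u_0^*|-\eta$, and then relates $\int_M|\mathbf u_0^*|$ to $\mathrm v_g(\widetilde{\Omega}^{\mathbf v,\mathbf u}_*)=\mathrm v-\mathrm o(\mathrm v)$ via Proposition~\ref{lm:SelectingaLargeCompact}~(i). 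Your route is salvaged by the same reverse triangle inequality applied on all of $M$ first, replacing $\int_M|\mathbf u|$ by $\int_M|\mathbf u_0^*|-\eta$; after that your complement estimate goes through unchanged. Your observation that one may shrink $\varepsilon_1$ further (now depending on $\widetilde\eta$ and the fixed $\mathbf v$) so that the $L^1$ defect is $<(1-\widetilde\eta)|\mathbf v|$ is exactly what the paper means by ``taking $0<\mathrm v,\eta\ll1$ small enough'', and is in fact the cleaner way to phrase the dependence.
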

	
	\begin{proof}
		Initially, given $\mathbf{v}\in\mathbb{R}^2$ and $\mathbf{u} \in \boldsymbol{\mathfrak{M}}_{\varepsilon,\mathbf{v}}^c$, by Proposition~\ref{lm:SelectingaLargeCompact}, we can infer the existence of $\mathrm{v}_{0}^{*}:=\mathrm{v}_{0}^{*}(n, {\rm inj}^{\perp}_g, r)>0$ such that for every isoperimetric weighted cluster ${\mathbf{\Omega}}^{\mathbf{v},\mathbf{u}}\in \mathcal{C}_g(M,\mathbb{R}^3)$ with vectorial volume $\mathbf{v}_g(\mathbf{\Omega}_{\mathbf{v},\mathbf{u}})=\mathbf{v}\in\mathbb R^{3}$ satisfying $|\mathbf{v}|\in(0,\mathrm{v}_{0}^{*})$ there exists ${\mathbf{\Omega}}_*^{\mathbf{v},\mathbf{u}}\in \mathcal{C}_g(M,\mathbb{R}^3)$ satisfying (i)--(iv) of Proposition~\ref{lm:SelectingaLargeCompact}.
		Furthermore, by Lemma~\ref{lm:estimate} and Proposition~\ref{lm:SelectingaLargeCompact}, we get for $\mathbf{v}$ small enough (depending on $\eta$)
		\begin{equation}\label{5.11:des-triang}
			\left\|\mathbf{u}-\sum_{i=1}^{3}\mathbf{p}_i\chi_{\Omega_{*i}^{\mathbf{v},\mathbf{u}}}\right\|_{L_g^{1}(M,\mathbb{R}^2)}\leqslant \left\|\mathbf{u}-\sum_{i=1}^{3}\mathbf{p}_i\chi_{\Omega_{i}^{\mathbf{v},\mathbf{u}}}\right\|_{L_g^{1}(M,\mathbb{R}^2)} + \left\|\sum_{i=1}^{3}\mathbf{p}_i\chi_{\Omega_{*i}^{\mathbf{v},\mathbf{u}}}-\mathbf{p}_i\chi_{\Omega_i^{\mathbf{v},\mathbf{u}}}\right\|_{L_g^{1}(M,\mathbb{R}^2)} \leqslant \eta,
		\end{equation}
		which straightforwardly implies the first part of the lemma. 
		In particular, one can find $\mu>0$ such that $\operatorname{diam}( \widetilde{\Omega}_{*}^{\mathbf{v}, \mathbf{u}}) \leqslant \mu \mathrm{v}_g(\widetilde{\Omega}_{*}^{\mathbf{v}, \mathbf{u}})^{{1}/{n}} \leqslant \mu \mathrm{v}^{{1}/{n}}$, for $0<|\mathbf{v}|\ll1$, we can find $x_{{\mathbf{v},\mathbf{u}}}\in M$ and $0<r\ll1$ such that $\widetilde{\Omega}_*^{\mathbf{v},\mathbf{u}}\subset \mathcal{B}_{r/2}^{g}(x_{{\mathbf{v},\mathbf{u}}})$. 
		This last piece of information will be crucial to prove what follows.  
		In fact it holds 
		\begin{equation*}
			\left\|\mathbf{u}-\sum_{i=1}^{3}\mathbf{p}_i\chi_{\Omega_{*i}^{\mathbf{v},\mathbf{u}}}\right\|_{L_g^{1}(\mathcal{B}_{r/2}^{g}\left(x_{{\mathbf{v},\mathbf{u}}}\right),\mathbb{R}^2)}\leqslant\eta,
		\end{equation*}
		which, by using Proposition \ref{lm:SelectingaLargeCompact} (i), implies
		\begin{equation*}
			\mathrm{v}_g(\widetilde{\Omega}_*^{\mathbf{v}, \mathbf{u}})-\eta=\mathrm{v}-\mathrm{o}( \mathrm{v})-\eta\leqslant\int_{\mathcal{B}_{r/2}^{g}\left(x_{{\mathbf{v},\mathbf{u}}}\right)} |\mathbf{u}|\ud\mathcal{L}_g^n \quad {\rm as} \quad \mathrm{v}\rightarrow0.
		\end{equation*}
		Taking $0<\mathrm{v},\eta\ll1$ small enough in the preceding inequality we conclude immediately the proof of the lemma. 
	\end{proof}
	
	\subsection{Barycenter map}
	We study the properties of the barycenter map. Notice that Proposition~\ref{lm:SelectingaLargeCompact} implies that solutions to \eqref{oursystem} shall concentrate around isoperimetric weighted clusters of small volume. 
	This yields a uniform control on the distance of the image of the barycenter map to the underlying manifold.

	\begin{lemma}\label{lm:continuityofthebarycenter}
		Let $(M^n,g)$ be a closed Riemannian manifold and $\boldsymbol{W}\in\mathcal W^+_{N}$.
		The vectorial extrinsic barycenter map $\beta_{\rm ext}:H_g^1(M,\mathbb{R}^m)\rightarrow\mathbb{R}^{S}$ is continuous in the norm topology. 
		In particular, for any $\mathbf{v}\in\mathbb R^m$ and $\varepsilon>0$ its restriction to $\boldsymbol{\mathfrak{M}}^c_{\varepsilon,\mathbf{v}}$ is also continuous.
		The same holds for ${\pi}_{\rm near}\circ\beta_{\rm ext}:H_g^1(M,\mathbb{R}^m)\rightarrow M$.
	\end{lemma}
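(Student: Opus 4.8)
The plan is to view $\beta_{\rm ext}$ as a quotient of two explicit maps, $\beta_{\rm ext}(\mathbf{u})=\mathbf{N}(\mathbf{u})/d(\mathbf{u})$ with $\mathbf{N}(\mathbf{u}):=\int_M i(x)\,|\mathbf{u}(x)|\,\ud\mathcal{L}^n_g\in\mathbb{R}^S$ and $d(\mathbf{u}):=\int_M|\mathbf{u}(x)|\,\ud\mathcal{L}^n_g$, and to check that each is continuous on $H_g^1(M,\mathbb{R}^m)$ while $d$ stays bounded away from zero on the relevant set. First I would recall that, $M$ being closed, the embedding $H_g^1(M,\mathbb{R}^m)\hookrightarrow L_g^1(M,\mathbb{R}^m)$ is continuous, and that $\mathbf{u}\mapsto|\mathbf{u}|$ is $1$-Lipschitz from $L_g^1(M,\mathbb{R}^m)$ into $L_g^1(M)$, since $\bigl||\mathbf{u}(x)|-|\mathbf{w}(x)|\bigr|\leqslant|\mathbf{u}(x)-\mathbf{w}(x)|$ pointwise. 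With $\sup_{x\in M}|i(x)|<\infty$ by compactness, this gives $|d(\mathbf{u})-d(\mathbf{w})|\leqslant C\|\mathbf{u}-\mathbf{w}\|_{H_g^1(M,\mathbb{R}^m)}$ and $|\mathbf{N}(\mathbf{u})-\mathbf{N}(\mathbf{w})|\leqslant C'\|\mathbf{u}-\mathbf{w}\|_{H_g^1(M,\mathbb{R}^m)}$, so $\mathbf{N}$ and $d$ are both (Lipschitz) continuous.

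Second, I would observe that on the constraint manifold the denominator cannot degenerate: for $\mathbf{u}\in\boldsymbol{\mathfrak{M}}_{\mathbf{v}}$ the vector-valued triangle inequality gives $d(\mathbf{u})=\int_M|\mathbf{u}|\,\ud\mathcal{L}^n_g\geqslant\bigl|\int_M\mathbf{u}\,\ud\mathcal{L}^n_g\bigr|=|\mathbf{v}|>0$. Hence $\beta_{\rm ext}=\mathbf{N}/d$ is a quotient of continuous maps with non-vanishing denominator on the open set $\{\mathbf{u}:d(\mathbf{u})>0\}$, which contains $\boldsymbol{\mathfrak{M}}_{\mathbf{v}}\supset\boldsymbol{\mathfrak{M}}^c_{\varepsilon,\mathbf{v}}$; this proves continuity of $\beta_{\rm ext}$ on its domain and of its restriction to $\boldsymbol{\mathfrak{M}}^c_{\varepsilon,\mathbf{v}}$.

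For the last assertion the only genuine issue is that $\pi_{\rm near}$ is defined solely on the tubular neighbourhood $M_{{\rm inj}^{\perp}_g}$, so I must check that $\beta_{\rm ext}$ maps $\boldsymbol{\mathfrak{M}}^c_{\varepsilon,\mathbf{v}}$ into $M_{{\rm inj}^{\perp}_g}$; granting this, continuity of $\pi_{\rm near}\circ\beta_{\rm ext}$ is immediate from the smoothness of $\pi_{\rm near}$. Given $\mathbf{u}\in\boldsymbol{\mathfrak{M}}^c_{\varepsilon,\mathbf{v}}$, I would invoke Lemma~\ref{lm:concentration}: for $0<|\mathbf{v}|,\tau,\varepsilon\ll1$ it yields a point $x_{\mathbf{v},\mathbf{u}}\in M$ and a weighted cluster $\mathbf{\Omega}_*\in\mathcal{C}_g(M,\mathbb{R}^3)$ with $\widetilde{\Omega}_*\subset\mathcal{B}^g_{r/2}(x_{\mathbf{v},\mathbf{u}})$, $r=r(\mathbf{v})\to0$ as $\mathbf{v}\to0$, and $\|\mathbf{u}-\mathbf{w}_*\|_{L_g^1(M,\mathbb{R}^m)}\leqslant\eta$, where $\mathbf{w}_*:=\sum_{i=1}^{N-1}\mathbf{p}_i\chi_{\Omega_{*i}}$ vanishes on $M\setminus\widetilde{\Omega}_*$; in particular $\int_{M\setminus\mathcal{B}^g_{r/2}(x_{\mathbf{v},\mathbf{u}})}|\mathbf{u}|\,\ud\mathcal{L}^n_g\leqslant\eta$. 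Writing $\mathbf{N}(\mathbf{u})-i(x_{\mathbf{v},\mathbf{u}})\,d(\mathbf{u})=\int_M\bigl(i(x)-i(x_{\mathbf{v},\mathbf{u}})\bigr)|\mathbf{u}(x)|\,\ud\mathcal{L}^n_g$ and splitting the integral over $\mathcal{B}^g_{r/2}(x_{\mathbf{v},\mathbf{u}})$ and its complement — using that $i$ is $1$-Lipschitz for $\ud_g$ (so $|i(x)-i(x_{\mathbf{v},\mathbf{u}})|\leqslant r/2$ on the ball), $|i(x)-i(x_{\mathbf{v},\mathbf{u}})|\leqslant\diam_{\mathbb{R}^S}(M)=:D$ off the ball, and $d(\mathbf{u})\geqslant|\mathbf{v}|$ — I obtain
\[
\bigl|\beta_{\rm ext}(\mathbf{u})-i(x_{\mathbf{v},\mathbf{u}})\bigr|\;\leqslant\;\frac{r}{2}+\frac{D\,\eta}{|\mathbf{v}|}.
\]
Since $r(\mathbf{v})\to0$ as $\mathbf{v}\to0$ and $\eta$ may be taken arbitrarily small relative to $|\mathbf{v}|$ (with $\tau,\varepsilon$ adjusted accordingly, along the chain of dependencies of Lemma~\ref{lm:concentration}), the right-hand side is $<{\rm inj}^{\perp}_g$; hence $\beta_{\rm ext}(\boldsymbol{\mathfrak{M}}^c_{\varepsilon,\mathbf{v}})\subset M_{{\rm inj}^{\perp}_g}$ and $\pi_{\rm near}\circ\beta_{\rm ext}$ is well defined and continuous. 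The principal — and essentially only — obstacle is this final step: proving that the barycenter lands inside the tubular neighbourhood, which is exactly where the isoperimetric input of Proposition~\ref{lm:SelectingaLargeCompact}, funneled through the concentration Lemma~\ref{lm:concentration}, is indispensable.
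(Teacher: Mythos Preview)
Your continuity argument for $\beta_{\rm ext}$ via the quotient $\mathbf{N}/d$ is correct and matches the paper's proof, which writes the same estimate in the equivalent form $|\beta_{\rm ext}(\mathbf{u}_1)-\beta_{\rm ext}(\mathbf{u}_2)|\leqslant\|x\|_\infty\,\nu_{\mathbf{u}_1}^{-1}\int_M\bigl||\mathbf{u}_1|-(\nu_{\mathbf{u}_1}/\nu_{\mathbf{u}_2})|\mathbf{u}_2|\bigr|\,\ud\mathcal{L}^n_g$ and concludes by dominated convergence. Your additional range-control step (showing $\beta_{\rm ext}(\boldsymbol{\mathfrak{M}}^c_{\varepsilon,\mathbf{v}})\subset M_{{\rm inj}^\perp_g}$ via the concentration Lemma~\ref{lm:concentration} and the same near/far splitting over $\mathcal{B}^g_{r/2}(x_{\mathbf{v},\mathbf{u}})$) is not part of the paper's proof of \emph{this} lemma at all---the paper defers exactly that argument to the subsequent lemma, where it obtains the cleaner bound $\tfrac{r}{2}+\diam_{\mathbb{R}^S}(M)(1-\tilde\eta)$ by working with the normalized density $\rho(\mathbf{u})=|\mathbf{u}|/d(\mathbf{u})$ and the form \eqref{concentration} directly, thereby avoiding the ratio $\eta/|\mathbf{v}|$ and its attendant dependency-chain subtlety that you flag.
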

	
	\begin{proof}
		For any $\mathbf{u}_1,\mathbf{u}_2\in H^1(M,\mathbb{R}^m)$, we  have the following estimate
		\begin{equation}\label{barycenter1}
			\left|\frac{\int_{M} x |\mathbf{u}_1(x)| \ud\mathcal{L}^n_{g}(x)}{\int_{M} |\mathbf{u}_1(x)| \ud\mathcal{L}^n_{g}(x)}-\frac{\int_{M} x |\mathbf{u}_2(x)| \ud\mathcal{L}^n_{g}(x)}{\int_{M} |\mathbf{u}_2(x)| \ud\mathcal{L}^n_{g}(x)}\right| \leqslant \frac{\|x\|_{\infty}}{\nu_{\mathbf{u}_1}} \int_{M}\left||\mathbf{u}_1|-\frac{\nu_{\mathbf{u}_1}}{\nu_{\mathbf{u}_2}} |\mathbf{u}_2|\right| \ud\mathcal{L}^n_{g},
		\end{equation}
		where $\nu_{\mathbf{u}_j}=\int_M|\mathbf{u}_j|\ud\mathcal{L}^n_g$ for $j=1,2$ and $\|x\|_{\infty}:=\sup _{x \in M}\left\{|x|_{\mathbb{R}^{S}}\right\}=C(i)<\infty,$ because $M$ is compact. 
		Also, using Lebesgue's dominated convergence and H\"{o}lder's inequality, we get
		\begin{equation*}
			\frac{\|x\|_{\infty}}{\nu_{\mathbf{u}_1}} \int_{M}\left||\mathbf{u}_1|-\frac{\nu_{\mathbf{u}_1}}{\nu_{\mathbf{u}_2}} |\mathbf{u}_2|\right| \ud\mathcal{L}^n_{g}\rightarrow0 \quad \mbox{as} \quad \|\mathbf{u}_1-\mathbf{u}_2\|_{H_g^1(M,\mathbb{R}^m)}\rightarrow0,
		\end{equation*}
		which together with \eqref{barycenter1} finishes the proof of the lemma.
	\end{proof}
	
	To apply the photography method, we need to control the range of the barycenter map. 
	
	\begin{lemma}
		Let $(M^n,g)$ be a closed Riemannian manifold and $\boldsymbol{W}\in\mathcal W^+_{N,0}\cap \mathcal W^+_{N,3}$.
		For any $r\in(0,{\rm {\rm inj}^{\perp}_g}/2)$, there exists $\mathrm{v}_2=\mathrm{v}_2(M,g,r,{\rm diam}_{\mathbb{R}^{S}}(M))>0$ such that for every $\mathbf{v}\in(0,\mathrm{v}_2)^m$, there exists $\varepsilon_2=\varepsilon_2(\mathbf{v})>0$ such that for any $\varepsilon\in(0,\varepsilon_2)$ and $\mathbf{u}\in\boldsymbol{\mathfrak{M}}^c_{\varepsilon,\mathbf{v}}$, it follows that $\beta_{\rm ext}(\mathbf{u})\in M_{r}$, where $M_r$ is a tubular neighborhood of $M\subset\mathbb{R}^S$ with small thickness $0<r\ll1$ on which the nearest point projection $\pi_{\rm near}$ is well-defined. 
	\end{lemma}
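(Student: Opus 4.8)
The plan is to obtain this as a consequence of the concentration Lemma~\ref{lm:concentration}, plus the elementary fact that the extrinsic barycenter of a mass that is almost entirely supported in a small geodesic ball about a point of $M$ must lie in a thin tubular neighborhood of $M\subset\mathbb{R}^S$. First I would fix $r$; since $M_\rho\subset M_r$ whenever $\rho\leqslant r$, there is no loss in assuming in addition $r<{\rm inj}_g/2$ (replace $r$ by $\min\{r,{\rm inj}_g/2\}/2$ if needed), so that Lemma~\ref{lm:concentration} is applicable. For each $\mathbf v$ I would run Lemma~\ref{lm:concentration} with approximation parameter $\eta=\eta(\mathbf v)$ chosen proportional to $r|\mathbf v|/\diam_{\mathbb{R}^S}(M)$ and with (say) $\tilde\eta=\tfrac12$: this produces, for $|\mathbf v|$ below a threshold $\mathrm v_2$, for $\tau$ below $\tau_1(\mathbf v)$, and for $\varepsilon$ below $\varepsilon_2(\mathbf v)$ (absorbing the dependence on $M,g,\boldsymbol{W},\tau,\eta$), for every $\mathbf u\in\boldsymbol{\mathfrak{M}}^c_{\varepsilon,\mathbf v}$ a large-subdomain isoperimetric cluster $\mathbf{\Omega}^{\mathbf v,\mathbf u}_*$ whose interior chambers lie in $B:=\mathcal{B}^g_{r/2}(x_{\mathbf v,\mathbf u})$ for some $x_{\mathbf v,\mathbf u}\in M$, with $\|\mathbf u-\sum_i\mathbf p_i\chi_{\Omega^{\mathbf v,\mathbf u}_{*i}}\|_{L^1_g(M,\mathbb{R}^m)}\leqslant\eta$ and, via \eqref{concentration}, $\int_B|\mathbf u|\ud\mathcal{L}^n_g\geqslant\tfrac12|\mathbf v|$.

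The core estimate is then short. Since $\widetilde\Omega^{\mathbf v,\mathbf u}_*\subset B$ and $\mathbf p_N=0$, the competitor $\sum_i\mathbf p_i\chi_{\Omega^{\mathbf v,\mathbf u}_{*i}}$ vanishes on $M\setminus B$, so $\int_{M\setminus B}|\mathbf u|\ud\mathcal{L}^n_g=\int_{M\setminus B}|\mathbf u-\sum_i\mathbf p_i\chi_{\Omega^{\mathbf v,\mathbf u}_{*i}}|\ud\mathcal{L}^n_g\leqslant\eta$. Writing $\nu:=\int_M|\mathbf u|\ud\mathcal{L}^n_g\geqslant\int_B|\mathbf u|\geqslant\tfrac12|\mathbf v|$ and using that the Nash embedding $i\colon(M,\ud_g)\to\mathbb{R}^S$ is $1$-Lipschitz on distances (so $|x-x_{\mathbf v,\mathbf u}|_{\mathbb{R}^S}\leqslant\ud_g(x,x_{\mathbf v,\mathbf u})\leqslant r/2$ for $x\in B$, and $|x-x_{\mathbf v,\mathbf u}|_{\mathbb{R}^S}\leqslant\diam_{\mathbb{R}^S}(M)$ always), I would bound, from \eqref{modifiedbarycenternash},
\begin{equation*}
|\beta_{\rm ext}(\mathbf u)-x_{\mathbf v,\mathbf u}|_{\mathbb{R}^S}
=\frac{1}{\nu}\left|\int_M(x-x_{\mathbf v,\mathbf u})\,|\mathbf u(x)|\,\ud\mathcal{L}^n_g\right|
\leqslant\frac{r}{2}+\frac{\diam_{\mathbb{R}^S}(M)}{\nu}\int_{M\setminus B}|\mathbf u|\,\ud\mathcal{L}^n_g
\leqslant\frac{r}{2}+\frac{2\,\diam_{\mathbb{R}^S}(M)\,\eta}{|\mathbf v|}.
\end{equation*}
With $\eta=\eta(\mathbf v):=r|\mathbf v|/\bigl(8\,\diam_{\mathbb{R}^S}(M)\bigr)$ the right-hand side is $<r$, hence $\dist_{\mathbb{R}^S}(\beta_{\rm ext}(\mathbf u),M)<r$, i.e. $\beta_{\rm ext}(\mathbf u)\in M_r$; because $r<{\rm inj}^{\perp}_g/2$, the nearest point projection $\pi_{\rm near}$ is well-defined on $M_r$. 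Together with the continuity of $\beta_{\rm ext}$ on $\boldsymbol{\mathfrak{M}}^c_{\varepsilon,\mathbf v}$ (Lemma~\ref{lm:continuityofthebarycenter}) this also shows that $\Psi_L=\pi_{\rm near}\circ\beta_{\rm ext}$ is a well-defined continuous map into $M$.

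The only delicate point — requiring care rather than a new idea, and hence the "main obstacle" — is the bookkeeping of the nested quantifiers: the approximation parameter $\eta$ has been forced to depend on $\mathbf v$ (linearly in $|\mathbf v|$), whereas the volume threshold $\mathrm v_1(M,g,\eta,r)$ furnished by Lemma~\ref{lm:concentration} a priori depends on $\eta$, so one must check that the choice $\eta=\eta(\mathbf v)\propto|\mathbf v|$ is self-consistent and leaves a genuine interval $(0,\mathrm v_2)$ of admissible volumes. This I would settle by inspecting the proof of Lemma~\ref{lm:concentration} and Proposition~\ref{lm:SelectingaLargeCompact}: the diameter-smallness requirement there is $|\mathbf v|<\mathrm v_0^*(n,{\rm inj}^{\perp}_g,r)$, independent of $\eta$, while the remaining $L^1$-deficit is controlled by $\mathrm v_g(\widetilde\Omega\triangle\widetilde\Omega'_*)/\mathrm v_g(\widetilde\Omega)$ and the Modica--Baldo approximation error, both of which are $o(1)$ relative to $|\mathbf v|$ as $|\mathbf v|\to0$ (the leftover $\eta$-dependence being pushed entirely into the $\varepsilon$-threshold, exactly as in Lemma~\ref{lm:estimate}); imposing that this $o(1)$ be $\leqslant r/(8\,\diam_{\mathbb{R}^S}(M))$ together with $|\mathbf v|<\mathrm v_0^*$ singles out $\mathrm v_2=\mathrm v_2\bigl(M,g,r,\diam_{\mathbb{R}^S}(M)\bigr)$, completing the proof.
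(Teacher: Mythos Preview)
Your proof is correct and follows the same strategy as the paper: invoke Lemma~\ref{lm:concentration} to localize almost all of $|\mathbf u|$ in a small geodesic ball $B=\mathcal B^g_{r/2}(x_{\mathbf v,\mathbf u})$, then split the barycenter integral over $B$ and $M\setminus B$. The paper's write-up is slightly more compact: it works with the normalized density $\rho(\mathbf u)=|\mathbf u|/\int_M|\mathbf u|$, takes the concentration parameter $\eta$ close to~$1$ so that $\int_{M\setminus B}\rho\leqslant 1-\eta$, and obtains $|\beta_{\rm ext}(\mathbf u)-x_{\mathbf v,\mathbf u}|\leqslant r/2+\diam_{\mathbb R^S}(M)(1-\eta)$, after which one simply chooses $\eta$ with $\diam_{\mathbb R^S}(M)(1-\eta)<r/2$. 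At first sight this sidesteps the quantifier bookkeeping you flag as the main obstacle, since $\eta$ looks independent of $\mathbf v$; but the step from $\int_B|\mathbf u|\geqslant\tilde\eta|\mathbf v|$ to $\int_{M\setminus B}\rho\leqslant 1-\eta$ tacitly uses $\nu=\int_M|\mathbf u|\lesssim|\mathbf v|$, which in the end rests on the same $L^1$-closeness to the step function and the same $o(|\mathbf v|)$ control you unpack explicitly. So your version is essentially the paper's argument with the hidden dependence made visible; the extra paragraph you spend on self-consistency of $\eta=\eta(\mathbf v)$ is the price for that honesty, and could be shortened by adopting the paper's normalization.
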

	
	\begin{proof}
		Initially, let us define $\rho(\mathbf{u}(x)):={|\mathbf{u}(x)|}{(\int_{M}|\mathbf{u}(x)| \ud \mathcal{L}^n_{g}(x))^{-1}}$.
		Hence, we can rewrite the barycenter map as $\beta_{\rm ext}(\mathbf{u})=\int_Mx\rho(\mathbf{u}(x))\ud \mathcal{L}^n_{g}(x)$.
		Now, using \eqref{concentration}, for every $\mathbf{v} \in(0,\mathrm{v}_{2})^m$, it holds 
		\begin{equation*}
			\int_{\mathcal{B}^g_{r/2}\left(x_{{\mathbf{v},\mathbf{u}}}\right)} \rho(\mathbf{u}) \ud \mathcal{L}^n_{g} \geqslant \eta |\mathbf{v}|,
		\end{equation*}
		where $\eta\in(0,1)$ will be chosen later. 
		The last inequality implies
		\begin{align*}
			\left|\beta_{\rm ext}(\mathbf{u})-x_{{\mathbf{v},\mathbf{u}}}\right| &=\left|\int_{M}\left(x-x_{{\mathbf{v},\mathbf{u}}}\right) \rho(\mathbf{u}(x)) \ud \mathcal{L}^n_{g}(x)\right| \\
			& \leqslant\left|\int_{\mathcal{B}^g_{r/2}\left(x_{{\mathbf{v},\mathbf{u}}}\right)}\left(x-x_{{\mathbf{v},\mathbf{u}}}\right) \rho(\mathbf{u}(x))\ud \mathcal{L}^n_{g}(x)\right|+\left|\int_{M \setminus \mathcal{B}^g_{r/2}\left(x_{{\mathbf{v},\mathbf{u}}}\right)}\left(x-x_{{\mathbf{v},\mathbf{u}}}\right) \rho(\mathbf{u}(x)) \ud \mathcal{L}^n_{g}(x)\right| \\
			& \leqslant \frac{r}{2}+{\rm diam}_{\mathbb{R}^{S}}(M)(1-\eta).
		\end{align*}
		Therefore, by choosing $0<\eta\ll1$ such that ${\rm diam}_{\mathbb{R}^{S}}(M)(1-\eta)<{r}/{2}$, the proof follows as an application of Lemma~\ref{lm:concentration}.
	\end{proof}
	
	\subsection{Homotopy equivalence}
	In our next step, we prove that the photography map composed of the barycenter map is continuous and homotopic to the identity. 
	This in turn says that \ref{itm:E3} holds in our context.
	For more details on this standard argument of extrinsic Riemannian geometry, we refer the reader to \cite[Lemma~2.1]{MR4130849}.
	
	\begin{remark}
		Observe that, for $N=3$, by \cite[Theorem 1.1]{lawlor2014double} and Corollary~\ref{cor:boldaapproximation}, it follows that $\Psi_R(M)\subset\boldsymbol{\mathfrak M}^c_{\varepsilon,\mathbf{v}}$ for $0<\varepsilon,|\mathbf{v}|\ll1$, where $c=\boldsymbol{\mathcal{I}}_{(M,g)}(\mathbf{v})+\tau(\varepsilon)$ such that $\tau(\varepsilon)\rightarrow0$ as $\varepsilon\rightarrow0$.
	\end{remark}
	
	\begin{lemma}\label{lm:homotopy1} 
		Let $(M^n,g)$ be a closed parallelizable Riemannian manifold and $\boldsymbol{W}\in\mathcal W^+_{N,0}\cap \mathcal W^+_{N,3}$ with $N=$.
		There exists $r_0=r_0(M,g)>0$ such that for any $r\in(0,r_0)$, one can find $\mathrm{v}_3=\mathrm{v}_3(M,g,r)>0$ such that for every $\mathbf{v}\in(0,\mathrm{v}_3)^m$, there exists $\varepsilon_3=\varepsilon_3(M,g,\mathbf{v},r)>0$ such that for every $\varepsilon\in(0,\varepsilon_3)$, we have $\ud_g(\left(\pi_{\rm near}\circ\beta_{\rm ext}\circ\mathbf{u}_{\varepsilon,\mathbf{v},x}\right),x)<r$. 
		In particular, $\pi_{\rm near}\circ\beta_{\rm ext}\circ\mathbf{u}_{\varepsilon,\mathbf{v},x}$ is  homotopic to ${\rm id}_M$.
	\end{lemma}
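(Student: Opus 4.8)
The plan is to identify $\Psi_L\circ\Psi_R=\pi_{\rm near}\circ\beta_{\rm ext}\circ\mathbf u_{\varepsilon,\mathbf v,\cdot}$ as a continuous self-map of $M$ which is uniformly $C^0$-close to $\mathrm{id}_M$, and then close the argument via the geodesic homotopy. The quantitative heart of the proof is a uniform (in $x$) support-localization estimate for the Modica--Baldo approximation $\mathbf u_{\varepsilon,\mathbf v,x}=\Psi_R(x)$.

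First I would record, using Lemma~\ref{lm:existenceofpartition} together with Remark~\ref{rmk:baldosmallvolumes}, that for $N=3$ the defining cluster $\mathbf\Omega^{x,\mathbf v}$ has interior chambers with $x\in\widetilde\Omega^{x,\mathbf v}\subset\mathcal B^g_{{\rm inj}_g/2}(x)$ and $\diam_g(\widetilde\Omega^{x,\mathbf v})\leqslant C|\mathbf v|^{1/n}$, with $C$ depending only on $(M,g,\boldsymbol W)$; this is precisely where the hypothesis $N=3$ and the weighted double-bubble theorem of \cite{lawlor2014double} enter, and where uniformity in $x$ is inherited from the fact that $\exp_x$ is an almost-isometry at small scales with constants independent of $x$. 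Then, from the explicit structure of the recovery family in \eqref{recoverysequence}--\eqref{continuityphotography6} — which coincides with $\mathbf u_{0,\mathbf v,x}=\sum_{i=1}^{2}\mathbf p_i\chi_{\Omega^{x,\mathbf v}_i}$ outside the $(C_1\varepsilon)$-neighborhoods of the polygonal interfaces $\partial\Omega^{x,\mathbf v}_i$ and outside the volume-correction ball $\mathcal B^g_{\varepsilon^{1/n}}(y_1)$ with $y_1\in\Omega^{x,\mathbf v}_1$, and in which $\mathbf p_N=0$ — I would deduce
\[
\supp|\mathbf u_{\varepsilon,\mathbf v,x}|\subset\mathcal B^g_{R(\varepsilon,\mathbf v)}(x),\qquad R(\varepsilon,\mathbf v):=C\bigl(|\mathbf v|^{1/n}+\varepsilon^{1/n}\bigr),
\]
uniformly in $x\in M$, with $R(\varepsilon,\mathbf v)\to0$ as $(\varepsilon,|\mathbf v|)\to0$.

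Next I would estimate the barycenter: writing $\rho=|\mathbf u_{\varepsilon,\mathbf v,x}|\bigl(\int_M|\mathbf u_{\varepsilon,\mathbf v,x}|\,\ud\mathcal L^n_g\bigr)^{-1}$, the point $\beta_{\rm ext}(\mathbf u_{\varepsilon,\mathbf v,x})=\int_M i(y)\,\rho(y)\,\ud\mathcal L^n_g$ satisfies $|\beta_{\rm ext}(\mathbf u_{\varepsilon,\mathbf v,x})-i(x)|_{\mathbb R^S}\leqslant\int_M|i(y)-i(x)|\,\rho(y)\,\ud\mathcal L^n_g\leqslant\omega_i\bigl(R(\varepsilon,\mathbf v)\bigr)$, where $\omega_i$ is the modulus of continuity of the (compact, hence uniformly continuous) embedding $i\colon M\hookrightarrow\mathbb R^S$. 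By the previous lemma, for $\mathbf v,\varepsilon$ small $\beta_{\rm ext}(\mathbf u_{\varepsilon,\mathbf v,x})\in M_r$, so $\pi_{\rm near}$ is defined there; since $\pi_{\rm near}\colon M_r\to M$ is Lipschitz (a smooth retraction onto a compact submanifold) with constant $L=L(M,g,r)$ and $\pi_{\rm near}\circ i=\mathrm{id}_M$, this yields $\ud_g\bigl(\pi_{\rm near}(\beta_{\rm ext}(\mathbf u_{\varepsilon,\mathbf v,x})),x\bigr)\leqslant L\,\omega_i\bigl(R(\varepsilon,\mathbf v)\bigr)$. I then fix $r_0=r_0(M,g)\in(0,{\rm inj}_g)$; given $r\in(0,r_0)$ I choose $\mathrm v_3$ and then $\varepsilon_3=\varepsilon_3(\mathbf v)$ so small that $L\,\omega_i(R(\varepsilon,\mathbf v))<r$, which is the asserted inequality.

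Finally, by Lemma~\ref{lm:continuityofthephotophraphy} and Lemma~\ref{lm:continuityofthebarycenter} the map $F:=\pi_{\rm near}\circ\beta_{\rm ext}\circ\mathbf u_{\varepsilon,\mathbf v,\cdot}$ is continuous on $M$, and by the above $\ud_g(F(x),x)<r<{\rm inj}_g$ for every $x$; hence $\exp_x^{-1}F(x)$ depends continuously on $x$, and $H(t,x):=\exp_x\bigl(t\,\exp_x^{-1}F(x)\bigr)$ is a continuous homotopy with $H(0,\cdot)=\mathrm{id}_M$ and $H(1,\cdot)=F$, which proves the last assertion. I expect the only real obstacle to be the uniform support estimate of the first step: one must keep careful track of the two scales in the recovery family — transition width $O(\varepsilon)$ versus correction-ball radius $\varepsilon^{1/n}$ — and combine them with the $O(|\mathbf v|^{1/n})$ diameter bound for $\widetilde\Omega^{x,\mathbf v}$, all with constants independent of the base point $x$; the remaining steps are soft.
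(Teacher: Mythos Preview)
Your argument is correct, and it takes a genuinely different route from the paper's. The paper proceeds by a triangle-inequality splitting through the limit profile: it first estimates $|\beta_{\rm ext}(\mathbf u_{0,\mathbf v,x})-x|_{\mathbb R^S}$ using the convex-combination formula $\beta_{\rm ext}(\sum_i\mathbf p_i\chi_{\Omega^x_i})=\sum_i\sigma_i\beta_{\rm ext}(\chi_{\Omega^x_i})$ together with the inclusion $\Omega^x_i\subset\mathcal B^g_{r_1}(x)$, and then invokes $L^1$-continuity of $\beta_{\rm ext}$ (from Lemma~\ref{lm:continuityofthebarycenter}) combined with the $L^1$-convergence $\mathbf u_{\varepsilon,\mathbf v,x}\to\mathbf u_{0,\mathbf v,x}$ to control $|\beta_{\rm ext}(\mathbf u_{\varepsilon,\mathbf v,x})-\beta_{\rm ext}(\mathbf u_{0,\mathbf v,x})|_{\mathbb R^S}$. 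You instead bypass the limit profile entirely by establishing a uniform support inclusion $\supp|\mathbf u_{\varepsilon,\mathbf v,x}|\subset\mathcal B^g_{R(\varepsilon,\mathbf v)}(x)$ directly from the explicit recovery construction \eqref{recoverysequence}--\eqref{continuityphotography6}, and then bound the barycenter displacement by the modulus of continuity of the Nash embedding. Your approach is more elementary in that it does not appeal to Proposition~\ref{prop:inhomogeneousgammaconvergence} or to the $L^1$-continuity of $\beta_{\rm ext}$, and it makes the uniformity in $x$ transparent (which the paper's second step leaves somewhat implicit); the price is the bookkeeping you flag in your last paragraph, namely tracking the $O(\varepsilon)$ transition layers, the $O(\varepsilon)$ shift $\boldsymbol\zeta_\varepsilon$, and the $\varepsilon^{1/n}$ correction ball against the $O(|\mathbf v|^{1/n})$ cluster diameter. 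Both arguments conclude with the same geodesic homotopy $H(t,x)=\exp_x(t\exp_x^{-1}F(x))$.
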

	\begin{proof}
		Initially, since $0<\varepsilon\ll1$ and  $\mathbf{v}\in\mathbb{R}^m$, by Proposition~\ref{prop:inhomogeneousgammaconvergence}, there exists $\mathbf{u}_{0,\mathbf{v},x}\in L_g^{1}(M,\mathbb{R}^m)$ such that $\|\mathbf{u}_{\varepsilon,\mathbf{v},x}-\mathbf{u}_{0,\mathbf{v},x}\|_{L_g^{1}(M,\mathbb{R}^m)}=\mathrm o(1)$,
		where $\mathbf{u}_{0,\mathbf{v},x}=\sum_{i=1}^{N}\mathbf{p}_i\chi_{\Omega_i^x}$.  
		Then, a direct computation implies
		\begin{align*}
			\beta_{\rm ext}\left(\sum_{i=1}^{N}\mathbf{p}_i\chi_{\Omega_i^x}\right)=\sum_{i=1}^{N}\sigma_i\beta_{\rm ext}\left(\chi_{\Omega_i^x}\right), \quad \mbox{where} \quad \sigma_i=\frac{|\mathbf{p}_i|\mathrm{v}_g(\Omega_i^x)}{\sum_{j=1}^{N}|\mathbf{p}_j|\mathrm{v}_g(\Omega_j^x)}.
		\end{align*}
		Thus, using that $0<\mathrm{v}_g(\widetilde{\mathbf{\Omega}})\ll1$ and \cite[Theorem 1.1]{lawlor2014double}, we can find $0<r_1\ll1$ such that $\Omega_i^x\subset \mathcal{B}_{r_1}^g(x)\subset M\cap \mathcal{B}_{r_1}^\delta(x)$, where $\mathcal{B}_{r_1}^g(x)$ is a totally convex neighborhood of $x$. This together with $\sum_{i=1}^{N}\sigma_i=1$ implies 
		\begin{align}\label{homotopy1}
			\left|\beta_{\rm ext}(\mathbf{u}_{0,\mathbf{v},x})-x\right|_{\mathbb{R}^{S}}
			&=\left|\beta_{\rm ext}\left(\sum_{i=1}^{N}\mathbf{p}_i\chi_{\Omega_i^x}\right)-x\right|_{\mathbb{R}^{S}}<r_1, 
		\end{align}
		Also, we can rewrite the last equality as $\beta_{\rm ext}\left(\sum_{i=1}^{N}\mathbf{p}_i\chi_{\Omega_i^x}\right)\in\mathcal{B}_{r_1}^\delta(x)$.
		
		Next,  by continuity of the barycenter map with respect to the $L^1$-norm, there exists $0<r_2\ll1$, such that
		\begin{align}\label{homotopy2}
			\left|\beta_{\rm ext}(\mathbf{u}_{\varepsilon,\mathbf{v},x})-\beta_{\rm ext}(\mathbf{u}_{0,\mathbf{v},x})\right|_{\mathbb{R}^{S}}
			<r_2.
		\end{align}
		Consequently, taking $\widetilde{r}_0=\min\{r_1,r_2\}$, and using \eqref{homotopy1} and \eqref{homotopy2}, it holds
		\begin{align*}
			\left|\beta_{\rm ext}(\mathbf{u}_{\varepsilon,\mathbf{v},x})-x\right|_{\mathbb{R}^{S}}\leqslant\left|\beta_{\rm ext}(\mathbf{u}_{\varepsilon,\mathbf{v},x})-\beta_{\rm ext}(\mathbf{u}_{0,\mathbf{v},x})|_{\mathbb{R}^{S}}+|\beta_{\rm ext}(\mathbf{u}_{0,\mathbf{v},x})-x\right|_{\mathbb{R}^{S}}<2\widetilde{r}_0.
		\end{align*}
		Now, since $M$ is compact, we can choose $0<r_0<\widetilde{r}_0\ll1$ small enough depending only on the second fundamental form of the isometric immersion of $i:M\hookrightarrow\mathbb{R}^S$ (denoted by ${\rm I I}_{M}$) and on the injectivity radius of $M$ such that there exists $C_3=C_3(\left\|{\rm I I}_{M}\right\|_{\infty})>0$ satisfying $\ud_{g}\left(\left(\pi_{\rm near}\circ\beta_{\rm ext}\right)(\mathbf{u}_{\varepsilon,\mathbf{v},x}), x\right) \leqslant C_3 r_{0}<{\rm inj}_g$.
		Finally, we define the homotopy $F:[0,1] \times M \rightarrow M$ by $F(t, x):=\exp _{x}(t \exp _{x}^{-1}( \pi_{\rm near}\circ\beta_{\rm ext}\circ\mathbf{u}_{\varepsilon,\mathbf{v},x}))$, which
		by definition satisfies $F\left(0, x\right)=x$ and $F\left(1, x\right)=\pi_{\rm near}\circ\beta_{\rm ext}\circ\mathbf{u}_{\varepsilon,\mathbf{v},x}$ for every $x\in M$. 
		Also, the continuity of $F$ with respect to $x$ follows from the standard properties of the exponential map.
	\end{proof}
	\subsection{Proof of Theorem~\ref{maintheorem1}: Multiplicity}
	Finally, putting all these last results together, we can prove one of the main theorems of this paper. 
	
	\begin{proof}[Proof of Theorem~\ref{maintheorem1} $($first part$)$]
		Let $(M^n,g)$ be a closed parallelizable Riemannian manifold and $\boldsymbol{W}\in\mathcal W^+_{3,0}\cap \mathcal W^+_{3,1}\cap \mathcal W^+_{3,3}$. 
		We set $\mathrm{v}_{*}:=\min \left\{\mathrm{v}_{1}, \mathrm{v}_{2}, \mathrm{v}_{3}\right\}>0$.  We also fix $\tau\in(0,\tau_{1})$ with $0<\tau_{1}\ll1$ defined in Lemma~\ref{lm:concentration}.
		Hence, we set $\varepsilon_{*}=\min \left\{\varepsilon_0,\varepsilon_{1}, \varepsilon_{2}, \varepsilon_{3}\right\}$ and $c=\boldsymbol{\mathcal{I}}_{(M,g)}(\mathbf{v})+\tau$.
		Therefore, as a consequence of Lemmas~\ref{lm:lowerboundedness}, \ref{lm:pscondition}, \ref{lm:continuityofthephotophraphy}, \ref{lm:continuityofthebarycenter} and \ref{lm:homotopy1}, for any	$\mathbf{v} \in(0, \mathrm{v}_{*})^m$ and $\varepsilon\in(0, \varepsilon_{*})$, we consider $X=(M,\ud_g)$, $\mathcal{E}=\boldsymbol{\mathcal{E}}_{\varepsilon}$, $\mathfrak{M}=\boldsymbol{\mathfrak{M}}_{\mathbf{v}}$,
		$\Psi_{L}(\mathbf{u})=({\pi}_{\rm near}\circ\beta_{\rm ext})(\mathbf{u})$ and $\Psi_{R}(x)=\mathbf{u}_{\varepsilon,\mathbf{v},x}$ to verify \ref{itm:E1}, \ref{itm:E2}, and \ref{itm:E3}.
		As a consequence, we apply Theorem~\ref{thm:benci-cerami} to prove the first item of the theorem. 
		
		The proof of the second item follows directly using the nondegeneracy assumption, and the argument is concluded.
	\end{proof}
	
	
	\section{Generic nondegeneracy}\label{sec:nondegeneracy}
	This section contains the proof of the second part of Theorem~\ref{maintheorem1}, namely the generic degeneracy part.
	Our strategy is to verify the hypothesis \ref{itm:F1}, \ref{itm:F2} and \ref{itm:F3} of the abstract transversality result in Theorem~\ref{thm:henry}.
	In this fashion, we use some ideas from \cite{MR2982783,MR2560131,arXiv:2012.13843}.
	Let us set up some terminology. Denote by ${\rm Sym}^{\infty}(M)$ the Banach space of smooth symmetric 2-covectors on $M$. 
	Thus, ${\rm Met}^{\infty}(M)$ is an open convex cone in ${\rm Sym}^{\infty}(M)$, where ${\rm Met}^{\infty}(M)$ stands for the space of smooth metrics over $M$.
	
	\begin{remark}
		Notice that in $H_{g}^{1}(M,\mathbb{R}^m)$ the vectorial Sobolev norm of $m$-map coincides with the standard Sobolev norm of the norm of this $m$-map. More precisely, we have the identity $\|\mathbf{u}\|_{H^1_g(M,\mathbb R^m)}=\||\mathbf{u}|\|_{H^1_g(M)}$, or, in other terms, 
		\begin{equation*}
			\int_M|\nabla_g\mathbf{u}|^2_g\ud\mathcal{L}^n_g=\int_M|\nabla_g|\mathbf{u}||^2\ud\mathcal{L}^n_g.
		\end{equation*}
		This equivalence holds for vectorial functional spaces in general. 
	\end{remark}
	
	\subsection{Framework}
	For any $(\varepsilon, g) \in (0, \infty)\times{\rm Met}^{\infty}(M)$, let us define two inner products on $C_g^{\infty}(M,\mathbb{R}^m)$ by
	\begin{align}\label{innerproducts1}
		\langle \mathbf{u}_1, \mathbf{u}_2\rangle_{g}=\int_{M} \left(\langle \nabla_g |\mathbf{u}_1|, \nabla_g |\mathbf{u}_2|\rangle+|\mathbf{u}_1||\mathbf{u}_2|\right)\ud \mathcal{L}^n_{g}
	\end{align}
	and 
	\begin{align}\label{innerproducts2}
		\boldsymbol{\mathcal{J}}_{\varepsilon, g}(\mathbf{u}_1,\mathbf{u}_2) =\int_{M} \left(\varepsilon^2\langle\nabla_g |\mathbf{u}_1|, \nabla_g|\mathbf{u}_2|\rangle+|\mathbf{u}_1||\mathbf{u}_2| \right)\ud \mathcal{L}^n_{g}.
	\end{align}
	Hence, $H^1_{g}(M,\mathbb{R}^m)$ and $H^1_{\varepsilon, g}(M,\mathbb{R}^m)$ are, respectively, the Hilbert spaces endowed with these inner products obtained as completions of $C_g^{\infty}(M,\mathbb{R}^m)$. 
	Initially, one may check that the norms induced by \eqref{innerproducts1} and \eqref{innerproducts2} are equivalent. 
	In particular, this implies $H^1_{g}(M,\mathbb{R}^m)=H^1_{\varepsilon, g}(M,\mathbb{R}^m)$ as sets and the canonical inclusion $H^1_{g}(M,\mathbb{R}^m) \rightarrow H^1_{\varepsilon, g}(M,\mathbb{R}^m)$ is an isomorphism of Banach spaces. The same holds for $H^1_{g_1}(M,\mathbb{R}^m) \rightarrow H^1_{g_2}(M,\mathbb{R}^m)$ for any $g_1,g_2\in{\rm Met}^{\infty}(M)$.
	Fixing $g_{0} \in {\rm Met}^{\infty}(M)$ and considering any $(\varepsilon, g) \in(0, \infty)\times {\rm Met}^{\infty}(M)$, due to the Rellich--Kondrakov compactness theorem, the canonical inclusion $\boldsymbol{i}_{\varepsilon, g}: H^1_{\varepsilon, g}(M,\mathbb{R}^m) \rightarrow L_{g}^{q}(M,\mathbb{R}^m)$ is a compact operator.
	We define $\boldsymbol{\mathcal{A}}_{\varepsilon, g}$ as the adjoint of $\boldsymbol{i}_{\varepsilon, g}$ under the canonical Banach space isomorphisms $(L_{g}^{q}(M,\mathbb{R}^m))^{\prime} \simeq L_{g}^{q^{\prime}}(M,\mathbb{R}^m)$ and $H^1_{\varepsilon, g}(M,\mathbb{R}^m) \simeq (H^1_{\varepsilon, g}(M,\mathbb{R}^m))^{\prime}$, where $q^{\prime}:=q/(q-1)$. 
	
	We prove some preliminary results.
	First, we show that the adjoint of the inclusion operator is well-behaved.
	
	\begin{lemma}\label{lm:compactoperator}
		Let $(M^n,g)$ be a closed Riemannian manifold.
		The inclusion operator $\boldsymbol{\mathcal{A}}_{\varepsilon, g}=\boldsymbol{i}_{\varepsilon, g}^{*}: L_{g}^{q^{\prime}}(M,\mathbb{R}^m) \rightarrow H^1_{\varepsilon, g}(M,\mathbb{R}^m)$,  is compact and self-adjoint. 
		Moreover, $\boldsymbol{\mathcal{J}}_{\varepsilon, g}(\boldsymbol{\mathcal{A}}_{\varepsilon, g}{\mathbf u}_1, {\mathbf u}_2)=\int_{M} |{\mathbf u}_1| |{\mathbf u}_2| \ud \mathcal{L}^n_{g}$ for any ${\mathbf u}_1, {\mathbf u}_2 \in H^1_{g}(M,\mathbb{R}^m)$.
	\end{lemma}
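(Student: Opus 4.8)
The plan is to read off the statement directly from the compactness of the canonical inclusion $\boldsymbol{i}_{\varepsilon,g}\colon H^1_{\varepsilon,g}(M,\mathbb R^m)\to L^q_g(M,\mathbb R^m)$ recalled just above the lemma, together with the standard behaviour of Banach adjoints under Riesz identifications. First I would make the identifications explicit. Since $(M^n,g)$ is closed and $q$ is subcritical, the Rellich--Kondrakov theorem gives that $\boldsymbol{i}_{\varepsilon,g}$ is compact, hence by Schauder's theorem its Banach adjoint $\boldsymbol{i}_{\varepsilon,g}'\colon (L^q_g(M,\mathbb R^m))'\to (H^1_{\varepsilon,g}(M,\mathbb R^m))'$ is compact. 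The operator $\boldsymbol{\mathcal A}_{\varepsilon,g}$ is obtained from $\boldsymbol{i}_{\varepsilon,g}'$ by composing on the left with the isometric isomorphism $(L^q_g(M,\mathbb R^m))'\simeq L^{q'}_g(M,\mathbb R^m)$ and on the right with the Riesz isometry $(H^1_{\varepsilon,g}(M,\mathbb R^m))'\simeq H^1_{\varepsilon,g}(M,\mathbb R^m)$ associated with the inner product $\boldsymbol{\mathcal J}_{\varepsilon,g}$; composition with isometries preserves compactness, so $\boldsymbol{\mathcal A}_{\varepsilon,g}$ is compact.

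Next I would establish the reproducing identity. By the very definition of the adjoint and of these two identifications, for $\mathbf u_1\in L^{q'}_g(M,\mathbb R^m)$ and $\mathbf u_2\in H^1_{\varepsilon,g}(M,\mathbb R^m)$ the quantity $\boldsymbol{\mathcal J}_{\varepsilon,g}(\boldsymbol{\mathcal A}_{\varepsilon,g}\mathbf u_1,\mathbf u_2)$ equals the duality pairing of $\mathbf u_1$ against $\boldsymbol{i}_{\varepsilon,g}\mathbf u_2$, which under the $L^q$--$L^{q'}$ identification used in the paper is precisely $\int_M|\mathbf u_1|\,|\mathbf u_2|\,\ud\mathcal L^n_g$. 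Since $H^1_g(M,\mathbb R^m)=H^1_{\varepsilon,g}(M,\mathbb R^m)$ as sets and $H^1_g(M,\mathbb R^m)\hookrightarrow L^{q'}_g(M,\mathbb R^m)$, this gives the claimed formula for all $\mathbf u_1,\mathbf u_2\in H^1_g(M,\mathbb R^m)$. Self-adjointness with respect to $\boldsymbol{\mathcal J}_{\varepsilon,g}$ (on $H^1_{\varepsilon,g}(M,\mathbb R^m)$) is then immediate: the right-hand side $\int_M|\mathbf u_1|\,|\mathbf u_2|\,\ud\mathcal L^n_g$ is symmetric in $\mathbf u_1,\mathbf u_2$ and $\boldsymbol{\mathcal J}_{\varepsilon,g}$ is itself a symmetric bilinear form, whence $\boldsymbol{\mathcal J}_{\varepsilon,g}(\boldsymbol{\mathcal A}_{\varepsilon,g}\mathbf u_1,\mathbf u_2)=\boldsymbol{\mathcal J}_{\varepsilon,g}(\mathbf u_1,\boldsymbol{\mathcal A}_{\varepsilon,g}\mathbf u_2)$.

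The only genuinely delicate point is bookkeeping with the vectorial function spaces: the norms on $L^q_g(M,\mathbb R^m)$ and $H^1_{\varepsilon,g}(M,\mathbb R^m)$ are built from $|\mathbf u|$ rather than from the Euclidean components, so one must verify that the $L^q$--$L^{q'}$ duality entering the definition of $\boldsymbol{\mathcal A}_{\varepsilon,g}$ is represented by $\int_M|\mathbf u_1|\,|\mathbf u_2|\,\ud\mathcal L^n_g$ and that the Riesz isometry for $\boldsymbol{\mathcal J}_{\varepsilon,g}$ is well posed on the completion $H^1_{\varepsilon,g}(M,\mathbb R^m)$. Both facts are routine once the conventions fixed in the introduction (in particular $\|\mathbf u\|_{H^1_g}=\||\mathbf u|\|_{H^1_g}$ and the equivalence of the norms induced by \eqref{innerproducts1} and \eqref{innerproducts2}) are spelled out; everything else — compactness via Schauder and symmetry — follows formally.
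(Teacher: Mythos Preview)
Your proposal is correct and is precisely the ``simple computation'' the paper alludes to: compactness via Schauder's theorem on the adjoint of the Rellich--Kondrakov embedding, the reproducing identity from unwinding the Riesz and $L^q$--$L^{q'}$ identifications, and self-adjointness from the symmetry of the resulting pairing. The paper does not spell out any of these steps, so you have simply supplied the expected details.
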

	
	\begin{proof}
		The proof is a simple computation.
	\end{proof}
	
	Now, we compute the first variation of the bilinear operator $\boldsymbol{\mathcal{J}}$ and of its adjoint.
	
	\begin{lemma}\label{lm:derivativeinnerproduct}
		Let $(M^n,g)$ be a closed Riemannian manifold.
		The map $\boldsymbol{\mathcal{J}}:(0, \infty)\times {\rm Met}^{\infty}(M) \rightarrow {\rm Bil}(H^1_{g_{0}}(M,\mathbb{R}^m))$ is of class $C^{1}$, where
		$\boldsymbol{\mathcal{J}}(\varepsilon,g):=\boldsymbol{\mathcal{J}}_{\varepsilon, g}$ and ${\rm Bil}(H^1_{g_{0}}(M,\mathbb{R}^m))$  denotes the space of bilinear forms over $H^1_{g_{0}}(M,\mathbb{R}^m)$. In particular, we have
		\begin{align*}
			\ud \boldsymbol{\mathcal{J}}_{\varepsilon, g}[\widehat{\varepsilon}, \widehat{g}](\mathbf{u}_1, \mathbf{u}_2)=2 \varepsilon \widehat{\varepsilon} \int_{M} \left[\langle\nabla_g |\mathbf{u}_1|, \nabla_g |\mathbf{u}_2|\rangle+\varepsilon^{2} b_{g, \widehat{g}} (\nabla_g |\mathbf{u}_1|, \nabla_g |\mathbf{u}_2|)+\frac{1}{2}\left(\operatorname{tr}_{g} \widehat{g}\right) |\mathbf{u}_1| |\mathbf{u}_2| \right]\ud\mathcal{L}^n_{g},
		\end{align*}
		where $b_{g, \widehat{g}}$ is a smooth symmetric 2-covector on $M$ given by $\left(b_{g, \widehat{g}}\right)_{ij}=(\operatorname{tr}_{g} h) g^{ij}/2-g^{im} h_{ml} g^{lj}$.
	\end{lemma}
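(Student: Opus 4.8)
The plan is to isolate the elementary dependence of the integrand in \eqref{innerproducts2} on the parameters $(\varepsilon,g)$ and differentiate it term by term, the only delicate point being the passage from pointwise to Fr\'echet differentiability of the Banach-space-valued map $(\varepsilon,g)\mapsto\boldsymbol{\mathcal{J}}_{\varepsilon,g}\in{\rm Bil}(H^1_{g_0}(M,\mathbb{R}^m))$.

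First I would record the structural observation that $\boldsymbol{\mathcal{J}}_{\varepsilon,g}(\mathbf{u}_1,\mathbf{u}_2)$ depends on $g$ only through the inverse metric (since $\langle\nabla_g|\mathbf{u}_1|,\nabla_g|\mathbf{u}_2|\rangle_g=g^{ij}\partial_i|\mathbf{u}_1|\partial_j|\mathbf{u}_2|$, which uses only $\mathbf{u}_i\in H^1_{g_0}$ and no extra regularity) and through the volume density $\ud\mathcal{L}^n_g=\sqrt{\det g}\,\ud x$, whereas the dependence on $\varepsilon$ is the explicit scalar factor $\varepsilon^2$ in front of the gradient term. Because $A\mapsto A^{-1}$ and $A\mapsto\sqrt{\det A}$ are smooth on the open cone of positive-definite symmetric matrices, the induced maps $g\mapsto g^{-1}$ and $g\mapsto\ud\mathcal{L}^n_g$ are smooth from the open convex cone ${\rm Met}^\infty(M)\subset{\rm Sym}^\infty(M)$ into the appropriate spaces of sections; a partition of unity reduces matters to this local statement. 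Moreover, the equivalence of the norms induced by \eqref{innerproducts1} and \eqref{innerproducts2}, uniform for $g$ in a $C^0$-neighborhood of $g_0$, shows that each $\boldsymbol{\mathcal{J}}_{\varepsilon,g}$ is a bounded bilinear form on $H^1_{g_0}(M,\mathbb{R}^m)$, so ${\rm Bil}(H^1_{g_0}(M,\mathbb{R}^m))$ is indeed the correct codomain.

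Next I would compute the candidate differential. Writing $g_t=g+t\widehat{g}$, $\varepsilon_t=\varepsilon+t\widehat{\varepsilon}$, I invoke the classical first-variation formulas $\tfrac{d}{dt}\big|_{t=0}g_t^{ij}=-g^{ik}\widehat{g}_{kl}g^{lj}$, $\tfrac{d}{dt}\big|_{t=0}\ud\mathcal{L}^n_{g_t}=\tfrac12(\operatorname{tr}_g\widehat{g})\,\ud\mathcal{L}^n_g$, together with $\tfrac{d}{d\varepsilon}\varepsilon^2=2\varepsilon$. Differentiating \eqref{innerproducts2} under the integral sign produces three contributions: the $\widehat{\varepsilon}$-variation of $\varepsilon^2$ gives the term with $\langle\nabla_g|\mathbf{u}_1|,\nabla_g|\mathbf{u}_2|\rangle$; the $\widehat{g}$-variation of $g^{ij}$ in the gradient term together with the $\widehat{g}$-variation of $\sqrt{\det g}$ in that same term combine, via the very definition $(b_{g,\widehat{g}})_{ij}=\tfrac12(\operatorname{tr}_g\widehat{g})g^{ij}-g^{im}\widehat{g}_{ml}g^{lj}$, into the term with $b_{g,\widehat{g}}(\nabla_g|\mathbf{u}_1|,\nabla_g|\mathbf{u}_2|)$; and the $\widehat{g}$-variation of $\sqrt{\det g}$ in the zeroth-order term gives the term with $(\operatorname{tr}_g\widehat{g})|\mathbf{u}_1||\mathbf{u}_2|$. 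Collecting these yields the formula in the statement.

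The main work, and the step I expect to be the genuine obstacle, is to justify that this candidate is the Fr\'echet derivative and that $(\varepsilon,g)\mapsto\ud\boldsymbol{\mathcal{J}}_{\varepsilon,g}$ is continuous, i.e.\ that $\boldsymbol{\mathcal{J}}$ is $C^1$ as a map into ${\rm Bil}(H^1_{g_0}(M,\mathbb{R}^m))$. I would do this by expanding the (scalar, jointly smooth in $\varepsilon$ and in the finitely many components of $g^{-1}$ and $\sqrt{\det g}$) integrand by Taylor's formula with integral remainder along the segment joining $(\varepsilon,g)$ to $(\varepsilon+\widehat{\varepsilon},g+\widehat{g})$, and then bounding the remainder pointwise by $C\bigl(|\widehat{\varepsilon}|^2+\|\widehat{g}\|_{C^0}^2\bigr)\bigl(|\nabla_{g_0}|\mathbf{u}_1||\,|\nabla_{g_0}|\mathbf{u}_2||+|\mathbf{u}_1||\mathbf{u}_2|\bigr)$, with $C$ uniform because $M$ is compact and one stays inside a fixed bounded neighborhood of $(\varepsilon,g)$ in $(0,\infty)\times{\rm Met}^\infty(M)$ (so that $g^{-1}$, $\sqrt{\det g}$ and their derivatives are uniformly controlled and the $g$- and $g_0$-inner products are uniformly comparable). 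Integrating over $M$ and applying Cauchy--Schwarz controls the remainder by $C\bigl(|\widehat{\varepsilon}|^2+\|\widehat{g}\|_{C^0}^2\bigr)\|\mathbf{u}_1\|_{H^1_{g_0}}\|\mathbf{u}_2\|_{H^1_{g_0}}$, which is precisely the estimate needed for Fr\'echet differentiability uniform over unit pairs $(\mathbf{u}_1,\mathbf{u}_2)$; the continuity of the derivative then follows from continuity of each coefficient in the formula with respect to the $C^0$-topology on metrics, together with the same uniform bounds and dominated convergence. Apart from this uniform-estimate bookkeeping, only the standard variation-of-metric calculus is used.
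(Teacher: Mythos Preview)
Your argument is correct and is essentially the standard variation-of-metric computation that the cited reference carries out; the paper itself does not give a proof here but simply refers to \cite[Lemma~2.3]{MR2560131}. Your outline---separating the dependence on $(\varepsilon,g)$ into the scalar factor $\varepsilon^2$, the inverse metric $g^{ij}$, and the volume density $\sqrt{\det g}$, differentiating each, and then upgrading the pointwise computation to Fr\'echet differentiability via a uniform Taylor-remainder bound over the compact manifold---is exactly how such a lemma is established, and your identification of the three contributions (the $\widehat{\varepsilon}$-term, the $b_{g,\widehat{g}}$-term combining the variations of $g^{ij}$ and of the volume form in the gradient part, and the $\tfrac12\operatorname{tr}_g\widehat{g}$-term from the zeroth-order part) is correct. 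In fact your decomposition makes clear that the three contributions carry different prefactors ($2\varepsilon\widehat{\varepsilon}$, $\varepsilon^2$, and $1$ respectively), so the displayed formula in the statement should be read with the overall $2\varepsilon\widehat{\varepsilon}$ attached only to the first summand; your derivation recovers the intended expression.
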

	
	\begin{proof}
		See \cite[Lemma~2.3]{MR2560131}
	\end{proof}
	
	\begin{lemma}\label{lm:derivativeadjoint}
		Let $(M^n,g)$ be a closed Riemannian manifold.
		The map $\boldsymbol{\mathcal{A}}:(0, \infty)\times {\rm Met}^{\infty}(M) \rightarrow {\rm B}(L_{g_{0}}^{p^{\prime}}(M,\mathbb{R}^m), H^1_{g_{0}}(M,\mathbb{R}^m))$ is of class $C^{1}$, where  $\boldsymbol{\mathcal{A}}(\varepsilon,g)$ and ${\rm B}(L_{g_{0}}^{p^{\prime}}(M,\mathbb{R}^m), H^1_{g_{0}}(M,\mathbb{R}^m))$ denotes the space of bounded operators from $L_{g_{0}}^{p^{\prime}}(M,\mathbb{R}^m)$ to $H^1_{g_{0}}(M,\mathbb{R}^m)$. In particular, we have
		\begin{equation*}
			\ud \boldsymbol{\mathcal{J}}_{(\varepsilon, g)}[\widehat{\varepsilon}, \widehat{g}](\boldsymbol{\mathcal{A}}_{\varepsilon, g} \mathbf{u}_1, \mathbf{u}_2)+\boldsymbol{\mathcal{J}}_{\varepsilon, g}(\ud \boldsymbol{\mathcal{A}}_{(\varepsilon, g)}[\widehat{\varepsilon}, \widehat{g}] \mathbf{u}_1, \mathbf{u}_2)=\frac{1}{2} \int_{M}\left(\operatorname{tr}_{g} \widehat{g}\right) |\mathbf{u}_1||\mathbf{u}_2|\ud\mathcal{L}^n_{g}.
		\end{equation*}
	\end{lemma}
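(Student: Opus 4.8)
The plan is to realize $\boldsymbol{\mathcal{A}}_{\varepsilon,g}$ as the composition of two $C^{1}$ families of bounded operators and then invoke the smoothness of operator inversion. Fix once and for all the reference metric $g_{0}\in{\rm Met}^{\infty}(M)$, so that, by the equivalence of the norms induced by \eqref{innerproducts1} and \eqref{innerproducts2} and the analogous statement for the $L^{q}$-norms, all the Banach spaces appearing below are literally $H^{1}_{g_{0}}(M,\mathbb R^{m})$ and $L^{q'}_{g_{0}}(M,\mathbb R^{m})$, and the $g$-dependence enters only through the coefficients of the bilinear forms and through the volume density. Recall from Lemma~\ref{lm:compactoperator} that $\boldsymbol{\mathcal{A}}_{\varepsilon,g}\colon L^{p'}_{g_{0}}(M,\mathbb R^{m})\to H^{1}_{g_{0}}(M,\mathbb R^{m})$ is characterized by $\boldsymbol{\mathcal{J}}_{\varepsilon,g}(\boldsymbol{\mathcal{A}}_{\varepsilon,g}\mathbf u_{1},\mathbf u_{2})=\int_{M}|\mathbf u_{1}||\mathbf u_{2}|\,\ud\mathcal{L}^{n}_{g}$ for all $\mathbf u_{1},\mathbf u_{2}$. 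Since each $\boldsymbol{\mathcal{J}}_{\varepsilon,g}$ is bounded, symmetric and coercive on $H^{1}_{g_{0}}(M,\mathbb R^{m})$, Riesz representation produces a topological isomorphism $\mathcal{T}_{\varepsilon,g}\in{\rm B}(H^{1}_{g_{0}}(M,\mathbb R^{m}),H^{1}_{g_{0}}(M,\mathbb R^{m})')$ with $\langle\mathcal{T}_{\varepsilon,g}\mathbf u_{1},\mathbf u_{2}\rangle=\boldsymbol{\mathcal{J}}_{\varepsilon,g}(\mathbf u_{1},\mathbf u_{2})$; by Lemma~\ref{lm:derivativeinnerproduct} the assignment $(\varepsilon,g)\mapsto\mathcal{T}_{\varepsilon,g}$ is of class $C^{1}$. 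Writing $\ud\mathcal{L}^{n}_{g}=\varrho_{g}\,\ud\mathcal{L}^{n}_{g_{0}}$ with $\varrho_{g}$ depending smoothly on $g$, the operator $\mathcal{R}_{g}\in{\rm B}(L^{p'}_{g_{0}}(M,\mathbb R^{m}),H^{1}_{g_{0}}(M,\mathbb R^{m})')$ defined by $\langle\mathcal{R}_{g}\mathbf u_{1},\mathbf u_{2}\rangle=\int_{M}|\mathbf u_{1}||\mathbf u_{2}|\,\ud\mathcal{L}^{n}_{g}$ depends on $g$ in a $C^{1}$ (indeed smooth) fashion; and unwinding the definition of the adjoint in Lemma~\ref{lm:compactoperator} one checks that
\begin{equation*}
\boldsymbol{\mathcal{A}}_{\varepsilon,g}=\mathcal{T}_{\varepsilon,g}^{-1}\circ\mathcal{R}_{g}.
\end{equation*}

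Next, the inversion map $T\mapsto T^{-1}$ is real-analytic, in particular $C^{1}$, on the open set of invertible elements of ${\rm B}(H^{1}_{g_{0}}(M,\mathbb R^{m}),H^{1}_{g_{0}}(M,\mathbb R^{m})')$ by the usual Neumann-series argument. Hence $(\varepsilon,g)\mapsto\mathcal{T}_{\varepsilon,g}^{-1}$ is $C^{1}$, and composing with the $C^{1}$ map $g\mapsto\mathcal{R}_{g}$ shows that $\boldsymbol{\mathcal{A}}\colon(0,\infty)\times{\rm Met}^{\infty}(M)\to{\rm B}(L^{p'}_{g_{0}}(M,\mathbb R^{m}),H^{1}_{g_{0}}(M,\mathbb R^{m}))$ is of class $C^{1}$, which is the first assertion.

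To obtain the derivative formula, differentiate the identity $\mathcal{T}_{\varepsilon,g}\boldsymbol{\mathcal{A}}_{\varepsilon,g}=\mathcal{R}_{g}$ in a direction $[\widehat\varepsilon,\widehat g]$:
\begin{equation*}
\bigl(\ud\mathcal{T}_{(\varepsilon,g)}[\widehat\varepsilon,\widehat g]\bigr)\boldsymbol{\mathcal{A}}_{\varepsilon,g}\mathbf u_{1}+\mathcal{T}_{\varepsilon,g}\bigl(\ud\boldsymbol{\mathcal{A}}_{(\varepsilon,g)}[\widehat\varepsilon,\widehat g]\mathbf u_{1}\bigr)=\ud\mathcal{R}_{g}[\widehat g]\mathbf u_{1}.
\end{equation*}
Pairing with an arbitrary $\mathbf u_{2}\in H^{1}_{g_{0}}(M,\mathbb R^{m})$, using $\langle\ud\mathcal{T}_{(\varepsilon,g)}[\widehat\varepsilon,\widehat g]\mathbf v,\mathbf u_{2}\rangle=\ud\boldsymbol{\mathcal{J}}_{(\varepsilon,g)}[\widehat\varepsilon,\widehat g](\mathbf v,\mathbf u_{2})$ from Lemma~\ref{lm:derivativeinnerproduct}, and the first variation of the volume form $\ud\varrho_{g}[\widehat g]=\tfrac12(\operatorname{tr}_{g}\widehat g)\varrho_{g}$, we obtain exactly
\begin{equation*}
\ud\boldsymbol{\mathcal{J}}_{(\varepsilon,g)}[\widehat\varepsilon,\widehat g](\boldsymbol{\mathcal{A}}_{\varepsilon,g}\mathbf u_{1},\mathbf u_{2})+\boldsymbol{\mathcal{J}}_{\varepsilon,g}(\ud\boldsymbol{\mathcal{A}}_{(\varepsilon,g)}[\widehat\varepsilon,\widehat g]\mathbf u_{1},\mathbf u_{2})=\frac12\int_{M}(\operatorname{tr}_{g}\widehat g)\,|\mathbf u_{1}||\mathbf u_{2}|\,\ud\mathcal{L}^{n}_{g}.
\end{equation*}

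The argument is essentially routine bookkeeping; the only point that genuinely requires care is to keep a single fixed reference metric $g_{0}$ throughout, so that the spaces $H^{1}_{\varepsilon,g}(M,\mathbb R^{m})$, $H^{1}_{g_{0}}(M,\mathbb R^{m})$ and $L^{q}_{g}(M,\mathbb R^{m})$ are identified via the canonical isomorphisms, and to separate cleanly the $g$-dependence of the Dirichlet part of $\boldsymbol{\mathcal{J}}_{\varepsilon,g}$ from that of the volume density $\varrho_{g}$. Alternatively, one may deduce the statement directly from \cite[Lemma~2.3]{MR2560131} applied to the scalar function $|\mathbf u|$, exploiting the identity $\|\mathbf u\|_{H^{1}_{g}(M,\mathbb R^{m})}=\||\mathbf u|\|_{H^{1}_{g}(M)}$, exactly as in the proof of Lemma~\ref{lm:derivativeinnerproduct}.
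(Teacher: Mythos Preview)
Your argument is correct and is precisely the standard one: the paper's own proof is nothing more than the citation ``See \cite[Lemma~2.4]{MR2560131}'', and what you have written is exactly the content of that reference, adapted to the vectorial setting via the identification $\|\mathbf u\|_{H^{1}_{g}(M,\mathbb R^{m})}=\||\mathbf u|\|_{H^{1}_{g}(M)}$. The only quibble is that in your closing sentence the relevant scalar result in \cite{MR2560131} is Lemma~2.4 (for $\mathcal A$), not Lemma~2.3 (which concerns $\mathcal J$ and is already invoked in Lemma~\ref{lm:derivativeinnerproduct}).
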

	
	\begin{proof}
		See \cite[Lemma~2.4]{MR2560131}
	\end{proof}
	
	Finally, we will use that if $\nabla\boldsymbol{W}: \mathbb{R}^m \rightarrow \mathbb{R}$ is a function of class $C^{1}$ with suitable growth conditions as in \eqref{W1}, then $\mathbf u \mapsto \nabla\boldsymbol{W}(\mathbf{u})$ is a Nemytskii operator of class $C^{1}$.
	
	\begin{lemma}\label{lm:derivativenemytskii} Let $(M^n,g)$ be a closed Riemannian manifold and $\boldsymbol{W}\in\mathcal W_{N,1}^+$. The nonlinear Nemytskii map $\boldsymbol{\mathcal{N}}_{\boldsymbol{W}}: H^1_{g_{0}}(M,\mathbb{R}^m) \times \mathbb{R}^m \rightarrow L_{g_{0}}^{p^{\prime}}(M,\mathbb{R}^m)$ given by $\boldsymbol{\mathcal{N}}_{\boldsymbol{W}}(\mathbf{u}, \mathbf{\Lambda})=\mathbf{\Lambda}+\mathbf{u}-\nabla\boldsymbol{W}(\mathbf{u})$ is of class $C^{1}$. 
		In particular, we have
		\begin{equation*}
			\ud\left(\boldsymbol{\mathcal{N}}_{\boldsymbol{W}}\right)_{(\mathbf{u}, \mathbf{\Lambda})}[\widehat{\mathbf{u}}, \widehat{\mathbf{\Lambda}}]=\widehat{\mathbf{\Lambda}}+\widehat{\mathbf{u}}-\widehat{\mathbf{u}} \nabla\boldsymbol{W}(\mathbf{u}).
		\end{equation*}
	\end{lemma}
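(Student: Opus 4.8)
The plan is to write $\boldsymbol{\mathcal{N}}_{\boldsymbol{W}}(\mathbf{u},\mathbf{\Lambda})=(\mathbf{\Lambda}+\mathbf{u})-\nabla\boldsymbol{W}(\mathbf{u})$ as the sum of an affine part and a genuinely nonlinear superposition (Nemytskii) operator, and to handle each separately. The affine part $(\mathbf{u},\mathbf{\Lambda})\mapsto\mathbf{\Lambda}+\mathbf{u}$ is a bounded linear map from $H^1_{g_0}(M,\mathbb{R}^m)\times\mathbb{R}^m$ to $L^{p'}_{g_0}(M,\mathbb{R}^m)$: the constant $m$-map $\mathbf{\Lambda}$ lies in $L^{p'}_{g_0}$ because $\mathrm{v}_{g_0}(M)<\infty$, and the inclusion $H^1_{g_0}(M,\mathbb{R}^m)\hookrightarrow L^{p'}_{g_0}(M,\mathbb{R}^m)$ is the Sobolev embedding on the closed manifold $M$, which applies because $p'=p/(p-1)$ is subcritical in the relevant range by \eqref{W1}. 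Bounded linear maps are of class $C^\infty$ and coincide with their own differential, so this part contributes exactly the $\widehat{\mathbf{\Lambda}}+\widehat{\mathbf{u}}$ summand in the asserted formula.

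For the nonlinear part I would first establish well-definedness and continuity by reducing to the classical theory of superposition operators. Factoring $\mathbf{u}\mapsto\nabla\boldsymbol{W}(\mathbf{u})$ as $H^1_{g_0}(M,\mathbb{R}^m)\hookrightarrow L^{p}_{g_0}(M,\mathbb{R}^m)\xrightarrow{\,N_{\nabla\boldsymbol{W}}\,}L^{p'}_{g_0}(M,\mathbb{R}^m)$, the first arrow is again the Sobolev embedding (valid since $p<2^*$), and the subcritical bound \eqref{W1}, $|\nabla\boldsymbol{W}(z)|\le k_1(1+|z|^{p-1})$, gives $|\nabla\boldsymbol{W}(\mathbf{u})|^{p'}\le C(1+|\mathbf{u}|^{(p-1)p'})=C(1+|\mathbf{u}|^{p})$, so $N_{\nabla\boldsymbol{W}}$ maps $L^{p}_{g_0}$ into $L^{p'}_{g_0}$ and is continuous there by the standard continuity theorem for Nemytskii operators acting between $L^q$-spaces.

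Next I would verify Fréchet differentiability. The candidate differential of $\mathbf{u}\mapsto\nabla\boldsymbol{W}(\mathbf{u})$ at $\mathbf{u}$ is the multiplication operator $\widehat{\mathbf{u}}\mapsto\nabla^2\boldsymbol{W}(\mathbf{u})\widehat{\mathbf{u}}$, which is the linear-in-$\widehat{\mathbf{u}}$ term appearing in the statement. Using $\boldsymbol{W}\in C^2$ together with the companion subcritical bound \eqref{W2}, $|\nabla^2\boldsymbol{W}(z)|\le k_2(1+|z|^{p-2})$, Hölder's inequality — splitting $1/p'=(p-2)/p+1/p$ when $p\ge2$, and otherwise absorbing the factor through the embedding $H^1_{g_0}\hookrightarrow L^{2^*}_{g_0}$ — shows $\widehat{\mathbf{u}}\mapsto\nabla^2\boldsymbol{W}(\mathbf{u})\widehat{\mathbf{u}}$ is a bounded operator $H^1_{g_0}(M,\mathbb{R}^m)\to L^{p'}_{g_0}(M,\mathbb{R}^m)$ depending continuously on $\mathbf{u}$ (again by the continuity theorem, now applied to $N_{\nabla^2\boldsymbol{W}}$). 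Writing
\[
\nabla\boldsymbol{W}(\mathbf{u}+\widehat{\mathbf{u}})-\nabla\boldsymbol{W}(\mathbf{u})-\nabla^2\boldsymbol{W}(\mathbf{u})\widehat{\mathbf{u}}=\int_0^1\big(\nabla^2\boldsymbol{W}(\mathbf{u}+t\widehat{\mathbf{u}})-\nabla^2\boldsymbol{W}(\mathbf{u})\big)\widehat{\mathbf{u}}\,\ud t
\]
and estimating its $L^{p'}_{g_0}$-norm by Hölder and the continuity of $N_{\nabla^2\boldsymbol{W}}$ (plus dominated convergence) yields a bound $o(\|\widehat{\mathbf{u}}\|_{H^1_{g_0}(M,\mathbb{R}^m)})$ as $\|\widehat{\mathbf{u}}\|_{H^1_{g_0}(M,\mathbb{R}^m)}\to0$; continuity of the differential in $\mathbf{u}$ follows from that of $N_{\nabla^2\boldsymbol{W}}$. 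Combining with the affine part gives the displayed formula and the $C^1$ regularity of $\boldsymbol{\mathcal{N}}_{\boldsymbol{W}}$.

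The main obstacle — really the only nonroutine point — is the bookkeeping of Lebesgue exponents: one must record the subcritical inequalities that render the embeddings $H^1_{g_0}\hookrightarrow L^{p}_{g_0},\,L^{p'}_{g_0}$ and the Hölder splittings for the second-derivative term admissible, and invoke correctly the continuity theorem for superposition operators between distinct $L^q$-spaces. Once these constraints are in place the computation is entirely standard and parallels, mutatis mutandis, the scalar arguments in \cite{MR2560131,arXiv:2012.13843}.
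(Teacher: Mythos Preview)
Your argument is correct and is exactly the standard Nemytskii-operator computation; the paper does not give a proof at all but simply refers to \cite[Section~2]{MR1019559}, where precisely this type of argument is carried out. One remark: you correctly invoke \eqref{W2} to control $\nabla^2\boldsymbol{W}$ and obtain $C^1$ regularity, even though the lemma as stated only assumes $\boldsymbol{W}\in\mathcal W_{N,1}^+$; this additional hypothesis is indeed needed (and is assumed in every place the lemma is applied, cf.\ Lemma~\ref{lm:derivativetransversality}), so your proof is in fact more careful than the stated hypotheses.
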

	
	\begin{proof}
		See \cite[Section~2]{MR1019559}.
	\end{proof}
	
	\subsection{The case of nonconstant solutions}
	The first step to prove our main result is the lemma that follows, in which we restrict ourselves to nonconstant solutions.
	We identify the set of constant solutions to \eqref{oursystem} with  $\mathbb{R}^m$, and consider $H^{1,*}_{g_{0}}(M,\mathbb{R}^m):=H^1_{g_{0}}(M,\mathbb{R}^m) \setminus \mathbb{R}^m$.
	We set 
	\begin{equation*}
		\boldsymbol{\mathcal{G}}^*_{\boldsymbol{W},\mathbf{v}}=\left\{(\varepsilon,g)\in(0,\infty)\times {\rm Met}^{\infty}(M) : \begin{aligned} &{\rm any \ nonconstant \ solution} \ (\mathbf{u},\mathbf{\Lambda})\in  H^{1,*}_{g_{0}}(M,\mathbb{R}^m) \times \mathbb{R}^m \\  &\quad {\rm to} \ \eqref{oursystem} \ {\rm is} \ {\rm nondegenerate} \end{aligned} \right\},
	\end{equation*}
	
	The idea consists in defining a suitable {transversality map} such that the set of solutions to \eqref{oursystem} is the level set of this map.
	\begin{definition}
		Let $(M^n,g)$ be a closed Riemannian manifold and $\boldsymbol{W}\in\mathcal W^+_{N}$.
		We define the so-called transversality map 
		\begin{equation*}
			\boldsymbol{\mathcal{F}}_{\boldsymbol{W}}:(0,\infty)\times {\rm Met}^{\infty}(M) \times H^{1,*}_{g_{0}}(M,\mathbb{R}^m) \times \mathbb{R}^m \rightarrow H^1_{g_{0}}(M,\mathbb{R}^m) \times \mathbb{R}^m
		\end{equation*}
		given by
		\begin{equation*}
			\boldsymbol{\mathcal{F}}_{\boldsymbol{W}}(\varepsilon, g, \mathbf{u}, \mathbf{\Lambda})=\left(\mathbf{u}-\left(\boldsymbol{\mathcal{A}}_{\varepsilon, g} \circ \boldsymbol{\mathcal{N}}_{\boldsymbol{W}}\right)(\mathbf{u}, \mathbf{\Lambda}), \int_{M} \mathbf{u} \ud\mathcal{L}^n_g\right).
		\end{equation*}
	\end{definition}
	
	\begin{lemma}
		Let $(M^n,g)$ be a closed Riemannian manifold and $\boldsymbol{W}\in\mathcal W^+_{N,1}\cap \mathcal W^+_{N,2}$.
		The set of solutions $(\mathbf{u}, \mathbf{\Lambda}) \in H^{1,*}_{g_{0}}(M,\mathbb{R}^m) \times \mathbb{R}^m$ to \eqref{oursystem} is a level set of $\boldsymbol{\mathcal{F}}_{\boldsymbol{W}}$.
		More precisely, $(\mathbf{u}, \mathbf{\Lambda}) \in H^{1,*}_{g_{0}}(M,\mathbb{R}^m) \times \mathbb{R}^m$ is a solution to \eqref{oursystem} if, and only if $\boldsymbol{\mathcal{F}}_{\boldsymbol{W}}(\varepsilon, g, \mathbf{u}, \mathbf{\Lambda})=(0, \mathbf{v})$.
	\end{lemma}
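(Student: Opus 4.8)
The plan is to prove the equivalence by unwinding the definition of $\boldsymbol{\mathcal{F}}_{\boldsymbol{W}}$ and using the characterization of the adjoint operator $\boldsymbol{\mathcal{A}}_{\varepsilon,g}$ from Lemma~\ref{lm:compactoperator}. First I would observe that the second component of $\boldsymbol{\mathcal{F}}_{\boldsymbol{W}}(\varepsilon,g,\mathbf{u},\mathbf{\Lambda})=(0,\mathbf{v})$ is literally the volume constraint $\int_M\mathbf{u}\,\ud\mathcal{L}^n_g=\mathbf{v}$, which is exactly the second line of \eqref{oursystem}, so there is nothing to prove for that piece. The content is therefore concentrated in the first component: I must show that $\mathbf{u}=(\boldsymbol{\mathcal{A}}_{\varepsilon,g}\circ\boldsymbol{\mathcal{N}}_{\boldsymbol{W}})(\mathbf{u},\mathbf{\Lambda})$ is equivalent to $-\varepsilon^2\Delta_g\mathbf{u}+\nabla\boldsymbol{W}(\mathbf{u})=\mathbf{\Lambda}$ (the first line of \eqref{oursystem}, after multiplying through by $\varepsilon$ and relabeling; note the energy Euler--Lagrange operator $-\varepsilon\Delta_g\mathbf{u}+\varepsilon^{-1}\nabla\boldsymbol{W}(\mathbf{u})=\mathbf{\Lambda}$ is, up to the factor $\varepsilon$, the same as $-\varepsilon^2\Delta_g\mathbf{u}+\nabla\boldsymbol{W}(\mathbf{u})=\varepsilon\mathbf{\Lambda}$, and the multiplier is absorbed into $\mathbf{\Lambda}$).

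The key step is the weak formulation. By definition of $\boldsymbol{\mathcal{A}}_{\varepsilon,g}=\boldsymbol{i}_{\varepsilon,g}^*$ and the identity $\boldsymbol{\mathcal{J}}_{\varepsilon,g}(\boldsymbol{\mathcal{A}}_{\varepsilon,g}\mathbf{w},\boldsymbol{\varphi})=\int_M|\mathbf{w}||\boldsymbol{\varphi}|\,\ud\mathcal{L}^n_g$ from Lemma~\ref{lm:compactoperator} (more precisely, its bilinear-pairing version $\boldsymbol{\mathcal{J}}_{\varepsilon,g}(\boldsymbol{\mathcal{A}}_{\varepsilon,g}\mathbf{w},\boldsymbol{\varphi})=\langle\mathbf{w},\boldsymbol{\varphi}\rangle_{L^2_g}$ obtained by polarization applied componentwise), the equation $\mathbf{u}=(\boldsymbol{\mathcal{A}}_{\varepsilon,g}\circ\boldsymbol{\mathcal{N}}_{\boldsymbol{W}})(\mathbf{u},\mathbf{\Lambda})$ holds in $H^1_{\varepsilon,g}(M,\mathbb{R}^m)$ if and only if $\boldsymbol{\mathcal{J}}_{\varepsilon,g}(\mathbf{u},\boldsymbol{\varphi})=\boldsymbol{\mathcal{J}}_{\varepsilon,g}(\boldsymbol{\mathcal{A}}_{\varepsilon,g}\boldsymbol{\mathcal{N}}_{\boldsymbol{W}}(\mathbf{u},\mathbf{\Lambda}),\boldsymbol{\varphi})=\int_M\langle\boldsymbol{\mathcal{N}}_{\boldsymbol{W}}(\mathbf{u},\mathbf{\Lambda}),\boldsymbol{\varphi}\rangle\,\ud\mathcal{L}^n_g$ for every test map $\boldsymbol{\varphi}\in H^1_{g_0}(M,\mathbb{R}^m)$. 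Expanding the left side via the definition \eqref{innerproducts2} of $\boldsymbol{\mathcal{J}}_{\varepsilon,g}$ and the right side via $\boldsymbol{\mathcal{N}}_{\boldsymbol{W}}(\mathbf{u},\mathbf{\Lambda})=\mathbf{\Lambda}+\mathbf{u}-\nabla\boldsymbol{W}(\mathbf{u})$, the $\int_M\langle\mathbf{u},\boldsymbol{\varphi}\rangle$ terms on both sides cancel, leaving
\begin{equation*}
	\varepsilon^2\int_M\langle\nabla_g\mathbf{u},\nabla_g\boldsymbol{\varphi}\rangle\,\ud\mathcal{L}^n_g=\int_M\langle\mathbf{\Lambda}-\nabla\boldsymbol{W}(\mathbf{u}),\boldsymbol{\varphi}\rangle\,\ud\mathcal{L}^n_g\quad\text{for all }\boldsymbol{\varphi},
\end{equation*}
which is precisely the weak form of $-\varepsilon^2\Delta_g\mathbf{u}+\nabla\boldsymbol{W}(\mathbf{u})=\mathbf{\Lambda}$; elliptic regularity (using the growth bound \eqref{W1}, and a bootstrap with \eqref{W2}) promotes a weak solution to a classical one, so the two formulations genuinely coincide.

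The main (mild) obstacle is bookkeeping the factor of $\varepsilon$ and the Lagrange multiplier correctly: the functional $\boldsymbol{\mathcal{E}}_\varepsilon$ in \eqref{vectorialenergy} has the form $\varepsilon|\nabla\mathbf{u}|^2+\varepsilon^{-1}\boldsymbol{W}$, whereas $\boldsymbol{\mathcal{J}}_{\varepsilon,g}$ carries $\varepsilon^2$ in front of the gradient term; one must check that the fixed-point equation built from $\boldsymbol{\mathcal{A}}_{\varepsilon,g}$ encodes $-\varepsilon^2\Delta_g\mathbf{u}+\nabla\boldsymbol{W}(\mathbf{u})=\mathbf{\Lambda}$, which is $\varepsilon$ times $\ud\boldsymbol{\mathcal{E}}_\varepsilon(\mathbf{u})=\varepsilon^{-1}\mathbf{\Lambda}$, i.e.\ the systems agree after harmlessly rescaling the multiplier. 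Also one should note that the restriction to $H^{1,*}_{g_0}(M,\mathbb{R}^m)$ plays no role here — it is imposed only so that later the Fredholm index computation in \ref{itm:F1} works — so the equivalence of the lemma is really the statement that $\boldsymbol{\mathcal{F}}_{\boldsymbol{W}}^{-1}(0,\mathbf{v})$ restricted to nonconstant maps is exactly the nonconstant part of the moduli space $\boldsymbol{\mathfrak{N}}_{\varepsilon,\mathbf{v}}$. I would conclude by remarking that the $C^1$ regularity of $\boldsymbol{\mathcal{F}}_{\boldsymbol{W}}$ (needed to even call $(0,\mathbf{v})$ a ``level set'' in the transversality-theoretic sense) follows from Lemmas~\ref{lm:derivativeadjoint} and \ref{lm:derivativenemytskii} together with the $C^1$ dependence of $\boldsymbol{\mathcal{A}}_{\varepsilon,g}$ on $(\varepsilon,g)$.
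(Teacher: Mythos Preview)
Your proposal is correct and takes the same approach as the paper: the paper's proof is the single line ``It follows by Lemma~\ref{lm:compactoperator},'' and your argument is precisely the unwinding of that reference --- using the identity $\boldsymbol{\mathcal{J}}_{\varepsilon,g}(\boldsymbol{\mathcal{A}}_{\varepsilon,g}\mathbf{w},\boldsymbol{\varphi})=\int_M\langle\mathbf{w},\boldsymbol{\varphi}\rangle\,\ud\mathcal{L}^n_g$ to pass between the fixed-point equation and the weak form of \eqref{oursystem}. Your additional remarks on the $\varepsilon$-rescaling of the multiplier and the irrelevance of the nonconstancy restriction are accurate but go beyond what the paper records.
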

	
	\begin{proof}
		It follows by Lemma~\ref{lm:compactoperator}.
	\end{proof}
	
	Now, we are left to analyze the differentiability of the transversality map.
	
	\begin{lemma}\label{lm:derivativetransversality}
		Let $(M^n,g)$ be a closed Riemannian manifold and $\boldsymbol{W}\in\mathcal W^+_{N,1}\cap \mathcal W^+_{N,2}$.
		The map $\boldsymbol{\mathcal{F}}_{\boldsymbol{W}}:(0,\infty)\times {\rm Met}^{\infty}(M) \times H^{1,*}_{g_{0}}(M,\mathbb{R}^m) \times \mathbb{R}^m \rightarrow H^1_{g_{0}}(M,\mathbb{R}^m) \times \mathbb{R}^m$ is of class $C^{1}$. 
		In particular, we nave
		\begin{align*}
			&\mathrm{d}\left(\boldsymbol{\mathcal{F}}_{\boldsymbol{W}}\right)_{(\varepsilon, g, \mathbf{u}, \mathbf{\Lambda})}[\widehat{\varepsilon}, \widehat{g}, \widehat{\mathbf{u}}, \widehat{\mathbf{\Lambda}}]&\\
			&=\left(\widehat{\mathbf{u}}-\boldsymbol{\mathcal{A}}_{\varepsilon, g} \circ \ud\left(\boldsymbol{\mathcal{N}}_{\boldsymbol{W}}\right)_{(\mathbf{u}, \mathbf{\Lambda})}[\widehat{\mathbf{u}}, \widehat{\mathbf{\Lambda}}]-\ud \boldsymbol{\mathcal{A}}_{(\varepsilon, g)}[\widehat{\varepsilon}, \widehat{g}] \circ \boldsymbol{\mathcal{N}}_{\boldsymbol{W}}(\mathbf{u}, \mathbf{\Lambda}), \int_{M} \left(\frac{1}{2}\left(\mathrm{tr}_{g} \widehat{g}\right)\mathbf{u}+\widehat{\mathbf{u}}\right)\ud \mathcal{L}^n_{g}\right).&
		\end{align*}
	\end{lemma}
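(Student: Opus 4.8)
The plan is to split $\boldsymbol{\mathcal{F}}_{\boldsymbol{W}}$ into its two coordinate maps, $\boldsymbol{\mathcal{F}}_{\boldsymbol{W}}=(\boldsymbol{\mathcal{F}}^1_{\boldsymbol{W}},\boldsymbol{\mathcal{F}}^2_{\boldsymbol{W}})$, where $\boldsymbol{\mathcal{F}}^1_{\boldsymbol{W}}(\varepsilon,g,\mathbf{u},\mathbf{\Lambda})=\mathbf{u}-(\boldsymbol{\mathcal{A}}_{\varepsilon,g}\circ\boldsymbol{\mathcal{N}}_{\boldsymbol{W}})(\mathbf{u},\mathbf{\Lambda})$ takes values in $H^1_{g_0}(M,\mathbb{R}^m)$ and $\boldsymbol{\mathcal{F}}^2_{\boldsymbol{W}}(\varepsilon,g,\mathbf{u},\mathbf{\Lambda})=\int_M\mathbf{u}\,\ud\mathcal{L}^n_g$ takes values in $\mathbb{R}^m$, and to establish that each is of class $C^1$ with the expected differential; the formula in the statement then results from concatenation. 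Since differentiability is a local property, the restriction of the domain to the open subset $H^{1,*}_{g_0}(M,\mathbb{R}^m)$ of nonconstant maps plays no role, so one may argue on the full space.

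For the first component, the identity summand $\mathbf{u}\mapsto\mathbf{u}$ is bounded linear, hence smooth. The second summand I would factor as the composition of $(\varepsilon,g,\mathbf{u},\mathbf{\Lambda})\mapsto(\boldsymbol{\mathcal{A}}_{\varepsilon,g},\boldsymbol{\mathcal{N}}_{\boldsymbol{W}}(\mathbf{u},\mathbf{\Lambda}))$, valued in $\mathrm{B}(L^{p^{\prime}}_{g_0}(M,\mathbb{R}^m),H^1_{g_0}(M,\mathbb{R}^m))\times L^{p^{\prime}}_{g_0}(M,\mathbb{R}^m)$, with the evaluation pairing $(T,w)\mapsto Tw$. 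The evaluation is continuous and bilinear, hence of class $C^{\infty}$ with differential $(\widehat T,\widehat w)\mapsto\widehat T w+T\widehat w$; the first factor is $C^1$ by Lemma~\ref{lm:derivativeadjoint}, and the second is $C^1$ by Lemma~\ref{lm:derivativenemytskii}, where the subcritical bounds \eqref{W1} and \eqref{W2} are exactly what make the vectorial Nemytskii map $\boldsymbol{\mathcal{N}}_{\boldsymbol{W}}\colon H^1_{g_0}(M,\mathbb{R}^m)\times\mathbb{R}^m\to L^{p^{\prime}}_{g_0}(M,\mathbb{R}^m)$ well defined and continuously Fréchet differentiable. The chain rule together with the product rule for the evaluation then gives $\boldsymbol{\mathcal{F}}^1_{\boldsymbol{W}}\in C^1$ with
\begin{equation*}
\ud(\boldsymbol{\mathcal{F}}^1_{\boldsymbol{W}})_{(\varepsilon,g,\mathbf{u},\mathbf{\Lambda})}[\widehat\varepsilon,\widehat g,\widehat{\mathbf{u}},\widehat{\mathbf{\Lambda}}]=\widehat{\mathbf{u}}-\boldsymbol{\mathcal{A}}_{\varepsilon,g}\circ\ud(\boldsymbol{\mathcal{N}}_{\boldsymbol{W}})_{(\mathbf{u},\mathbf{\Lambda})}[\widehat{\mathbf{u}},\widehat{\mathbf{\Lambda}}]-\ud\boldsymbol{\mathcal{A}}_{(\varepsilon,g)}[\widehat\varepsilon,\widehat g]\circ\boldsymbol{\mathcal{N}}_{\boldsymbol{W}}(\mathbf{u},\mathbf{\Lambda}),
\end{equation*}
which matches the first slot of the asserted formula. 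For the second component, $\boldsymbol{\mathcal{F}}^2_{\boldsymbol{W}}$ is linear in $\mathbf{u}$, independent of $\varepsilon$ and $\mathbf{\Lambda}$, and its sole $g$-dependence enters through the Riemannian density $\ud\mathcal{L}^n_g$, which in local coordinates equals $\sqrt{\det g}\,\ud x$. The classical first-variation identity $\ud(\sqrt{\det g})[\widehat g]=\tfrac12(\mathrm{tr}_g\widehat g)\sqrt{\det g}$, combined with a dominated-convergence argument — legitimate since $M$ is compact, the integrand depends continuously on $g$ with locally uniform bounds, and $\mathbf{u}\in L^1_{g_0}(M,\mathbb{R}^m)$ — shows that $\boldsymbol{\mathcal{F}}^2_{\boldsymbol{W}}$ is $C^1$ with differential $[\widehat\varepsilon,\widehat g,\widehat{\mathbf{u}},\widehat{\mathbf{\Lambda}}]\mapsto\int_M(\tfrac12(\mathrm{tr}_g\widehat g)\mathbf{u}+\widehat{\mathbf{u}})\,\ud\mathcal{L}^n_g$. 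Assembling the two components yields the stated expression for $\ud(\boldsymbol{\mathcal{F}}_{\boldsymbol{W}})_{(\varepsilon,g,\mathbf{u},\mathbf{\Lambda})}$.

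The step carrying essentially all of the analytic content — and hence the only one that is not pure bookkeeping — is the $C^1$-regularity (as opposed to mere Gâteaux differentiability) of the vectorial Nemytskii operator between $H^1_{g_0}(M,\mathbb{R}^m)$ and $L^{p^{\prime}}_{g_0}(M,\mathbb{R}^m)$; this is precisely where the growth hypotheses \eqref{W1} and \eqref{W2} are needed, and it is imported wholesale from Lemma~\ref{lm:derivativenemytskii} (cf.\ \cite{MR1019559}). Everything else reduces to the chain rule, the bilinearity and continuity of the evaluation map, the $C^1$-dependence of the adjoint operator from Lemma~\ref{lm:derivativeadjoint}, and the elementary variation of the Riemannian volume form, so I do not anticipate any further obstacle.
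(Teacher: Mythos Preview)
Your proposal is correct and follows essentially the same approach as the paper, which dispatches the lemma in a single sentence citing Lemmas~\ref{lm:derivativeinnerproduct}, \ref{lm:derivativeadjoint}, and \ref{lm:derivativenemytskii} together with conditions \eqref{W1} and \eqref{W2}. You have simply spelled out the bookkeeping explicitly: the evaluation-map factorization for the first component and the direct variation of the Riemannian density for the second are exactly the computations the paper is gesturing at, and your identification of the Nemytskii regularity as the only analytically substantive step matches the paper's emphasis on \eqref{W1}--\eqref{W2}.
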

	
	\begin{proof}
		Using \eqref{W1} and \eqref{W2}, it is a simple computation based on  Lemmas~\ref{lm:derivativeinnerproduct}, \ref{lm:derivativeadjoint}, and \ref{lm:derivativenemytskii}.
	\end{proof}
	
	\begin{lemma}\label{lm:constantsolutionscase}
		Let $(M^n,g)$ be a closed Riemannian manifold and $\boldsymbol{W}\in\mathcal W^+_{N,1}\cap \mathcal W^+_{N,2}$.
		The set $\boldsymbol{\mathcal{G}}^*_{\boldsymbol{W},\mathbf{v}}\subset (0,\infty)\times{\rm Met}^{\infty}(M)$ is open and dense with respect to the Gromov--Hausdorff topology.
	\end{lemma}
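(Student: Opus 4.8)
The plan is to apply the abstract transversality theorem (Theorem~\ref{thm:henry}) with the Banach space data $\mathfrak Y = {\rm Sym}^{\infty}(M)$ (or rather $(0,\infty)\times{\rm Sym}^{\infty}(M)$ with the open subset $(0,\infty)\times{\rm Met}^{\infty}(M)$), $\mathfrak X = H^{1,*}_{g_0}(M,\mathbb R^m)\times\mathbb R^m$, $\mathfrak Z = H^1_{g_0}(M,\mathbb R^m)\times\mathbb R^m$, the map $\mathfrak F = \boldsymbol{\mathcal F}_{\boldsymbol W}$, and the distinguished value $z_0 = (0,\mathbf v)$. By Lemma~\ref{lm:derivativetransversality} we already have that $\boldsymbol{\mathcal F}_{\boldsymbol W}$ is $C^1$, and by the preceding lemma its zero set over $z_0$ is exactly the set of nonconstant solutions to \eqref{oursystem}. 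The conclusion of Theorem~\ref{thm:henry} gives that the set of $(\varepsilon,g)$ for which $(0,\mathbf v)$ is a regular value of $\boldsymbol{\mathcal F}_{\boldsymbol W}(\varepsilon,g,\cdot,\cdot)$ is open and dense in $(0,\infty)\times{\rm Met}^{\infty}(M)$; since regularity of the value for the fiber map is precisely the nondegeneracy of all nonconstant solutions (the linearized operator $\ud^2\boldsymbol{\mathcal E}_\varepsilon(\mathbf u)$ together with the volume constraint being surjective, hence by Fredholm index zero an isomorphism), this set coincides with $\boldsymbol{\mathcal G}^*_{\boldsymbol W,\mathbf v}$.

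First I would verify \ref{itm:F1}: for fixed $(\varepsilon,g)$, the fiber map $(\mathbf u,\mathbf\Lambda)\mapsto \boldsymbol{\mathcal F}_{\boldsymbol W}(\varepsilon,g,\mathbf u,\mathbf\Lambda)$ is Fredholm of index $\ell\le 0<1$. The first component is ${\rm Id} - \boldsymbol{\mathcal A}_{\varepsilon,g}\circ\boldsymbol{\mathcal N}_{\boldsymbol W}$, and by Lemma~\ref{lm:compactoperator} the operator $\boldsymbol{\mathcal A}_{\varepsilon,g}$ is compact, so its linearization $\widehat{\mathbf u}\mapsto \widehat{\mathbf u} - \boldsymbol{\mathcal A}_{\varepsilon,g}\circ\ud(\boldsymbol{\mathcal N}_{\boldsymbol W})_{(\mathbf u,\mathbf\Lambda)}[\widehat{\mathbf u},\widehat{\mathbf\Lambda}]$ is a compact perturbation of the identity on $H^1_{g_0}(M,\mathbb R^m)$ in the $\widehat{\mathbf u}$ variable; adding the finite-dimensional $\mathbb R^m$ factor (both in domain and codomain) keeps it Fredholm, and a standard index count gives $\ell=0$. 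Here \eqref{W1} and \eqref{W2} are used to guarantee $\boldsymbol{\mathcal N}_{\boldsymbol W}$ and its derivative are well-defined and $C^1$ into $L^{p'}_{g_0}$, hence that the composition with the smoothing operator $\boldsymbol{\mathcal A}_{\varepsilon,g}$ lands compactly in $H^1_{g_0}$.

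Next I would verify \ref{itm:F3}, the $\sigma$-properness of $\pi_{\mathfrak Y}\circ\iota$: write $\boldsymbol{\mathcal F}_{\boldsymbol W}^{-1}(0,\mathbf v) = \bigcup_{k\in\mathbb N} C_k$ where $C_k$ is the set of $(\varepsilon,g,\mathbf u,\mathbf\Lambda)$ in the solution set with $\varepsilon\in[1/k,k]$, $\|g\|_{C^k}\le k$, ${\rm dist}(g,\partial{\rm Met}^\infty)\ge 1/k$, and $\|\mathbf u\|_{H^1_{g_0}} + \|\mathbf u - \mathbb R^m\| \le k$ (the last to stay bounded away from the removed constant solutions); on each $C_k$, a subconvergence argument — using elliptic regularity for solutions of \eqref{oursystem}, the subcritical growth \eqref{W1}, and compactness of the embeddings — shows that the projection to $\mathfrak Y$ is proper. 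The main obstacle, and the crux of the whole proof, is \ref{itm:F2}: surjectivity of the full differential $\ud(\boldsymbol{\mathcal F}_{\boldsymbol W})_{(\varepsilon_0,g_0,\mathbf u_0,\mathbf\Lambda_0)}$ at a solution. Since the fiber map is Fredholm of index $0$, it suffices to show the cokernel of the fiber differential is killed by varying the metric $\widehat g$ (and $\widehat\varepsilon$). Concretely one must show: if $(\mathbf w,\boldsymbol\mu)\in H^1_{g_0}\times\mathbb R^m$ is orthogonal to the image of $\ud\boldsymbol{\mathcal F}_{\boldsymbol W}$, in particular $\mathbf w$ solves the homogeneous adjoint linearized equation and $\int_M\langle \mathbf w,\widehat{\mathbf u}\rangle + (\text{constraint terms}) = 0$ for all $\widehat{\mathbf u}$, then using the explicit metric-variation term $\frac12(\mathrm{tr}_g\widehat g)$ appearing in Lemmas~\ref{lm:derivativeinnerproduct}–\ref{lm:derivativeadjoint} and~\ref{lm:derivativetransversality}, one can choose $\widehat g$ (supported where $\mathbf u_0$ is nonconstant, which is where we need $H^{1,*}_{g_0}$) forcing $(\mathbf w,\boldsymbol\mu)=0$. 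This requires a unique continuation argument for the linearized system to conclude that $\mathbf w$ cannot vanish on an open set unless it vanishes identically, together with the fact that a nonconstant $\mathbf u_0$ has $\nabla_g\mathbf u_0\not\equiv 0$ on a set of positive measure; I would model this step on \cite{MR2982783,MR2560131,arXiv:2012.13843}, adapting their scalar argument to the vectorial energy. Finally, gathering the three hypotheses, Theorem~\ref{thm:henry} yields that $\boldsymbol{\mathcal G}^*_{\boldsymbol W,\mathbf v}$ is open and dense, which is the claim.
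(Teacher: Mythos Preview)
Your proposal is correct and follows exactly the same approach as the paper: apply Theorem~\ref{thm:henry} with $\mathfrak X=\mathfrak Z=H^1_{g_0}(M,\mathbb R^m)\times\mathbb R^m$, $\mathfrak Y=(0,\infty)\times{\rm Sym}^\infty(M)$, $\mathfrak V=(0,\infty)\times{\rm Met}^\infty(M)$, $\mathfrak U=H^{1,*}_{g_0}(M,\mathbb R^m)\times\mathbb R^m$, $\mathfrak F=\boldsymbol{\mathcal F}_{\boldsymbol W}$ and $z_0=(0,\mathbf v)$. In fact, the paper's proof is extremely terse---it only records this setup and states that the verification of \ref{itm:F1}, \ref{itm:F2}, \ref{itm:F3} ``is technical and based on Lemma~\ref{lm:derivativetransversality}''---so your outline of the Fredholm index computation, the $\sigma$-properness via exhaustion, and the surjectivity step using metric variations and unique continuation (modeled on \cite{MR2982783,MR2560131,arXiv:2012.13843}) is already more detailed than what the paper provides.
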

	
	\begin{proof}
		The proof consists of a direct application of Theorem~\ref{thm:henry}. 
		More accurately, let us set $\mathfrak X= \mathfrak Z=H^1_{g_{0}}(M,\mathbb{R}^m) \times \mathbb{R}^m$, $\mathfrak Y=\mathcal V=(0, \infty)\times {\rm Sym}^{\infty}(M)$, $\mathfrak U=H^{1,*}_{g_{0}}(M,\mathbb{R}^m)$, $\mathfrak F=\boldsymbol{\mathcal{F}}_{\boldsymbol{W}}$ and $z_{0}=(0, \mathbf{v})$. 
		The proof that the hypothesis \ref{itm:F1}, \ref{itm:F2} and \ref{itm:F3} in Theorem~\ref{thm:henry} hold in this context is technical and based on Lemma~\ref{lm:derivativetransversality}.
	\end{proof}
	
	\subsection{Proof of Theorem~\ref{maintheorem1}: Generic nondegeneracy}
	We examine the case in which constant solutions to \eqref{oursystem} are considered. 
	
	\begin{proof}[Proof of Theorem~\ref{maintheorem1}  $($second part$)$]
		Let $(M^n,g)$ be a closed Riemannian manifold and $\boldsymbol{W}\in\mathcal W^+_{N,1}\cap \mathcal W^+_{N,2}$.
		For any $g\in {\rm Met}^{\infty}(M)$, we denote the set of nonzero eigenvalues of the Laplace--Beltrami operator by $\sigma(-\Delta_g)^*=\{\alpha_{j}(g): j\in\mathbb{N}\}$.
		We have two claims.
		
		\noindent {\bf Claim 1:} The set $\boldsymbol{\mathcal{G}}_{\boldsymbol{W},\mathbf{v}}$ is open. 
		
		\noindent Indeed, let $({\varepsilon}, {g}) \in (0,\infty)\times{\rm Met}^{\infty}(M)$ and $(\widehat{\varepsilon}, \widehat{g}) \in \boldsymbol{\mathcal{G}}_{\boldsymbol{W},\mathbf{v}}$.
		We have two cases to analyze.
		If \eqref{oursystem} for $\varepsilon=\widehat{\varepsilon}$ and $g=\widehat{g}$ does not admit constant solutions, the result is a corollary of Lemma~\ref{lm:constantsolutionscase}. 
		In the other case, since the maps $\mathcal{N}_1,\mathcal{N}^j_2:{\rm Met}^{\infty}(M)\rightarrow\mathbb{R}$ given, respectively, by $\mathcal{N}_1(g)=\nabla^2\boldsymbol{W}(\mathbf{v}/\mathrm{v}_{g}(M))\in \mathbb{R}$ and $\mathcal{N}^j_2(g)=\alpha_{j}(g) \in \mathbb{R}$ are continuous
		maps for any $j\in \mathbb{N}$, we conclude that $(\widehat{\varepsilon}, \widehat{g})$ possess a neighborhood $\mathcal{V}$ in $(0,\infty)\times{\rm Met}^{\infty}(M)$ for which the constant solutions are nondegenerate. 
		Finally, we have $\mathcal{V} \cap \boldsymbol{\mathcal{G}}_{\boldsymbol{W},\mathbf{v}}$ is a neighborhood of $({\varepsilon}, {g}) \in (0,\infty)\times{\rm Met}^{\infty}(M)$ such that the respective \eqref{oursystem} does not admit degenerate solutions.
		
		\noindent {\bf Claim 2:} The set $\boldsymbol{\mathcal{G}}_{\boldsymbol{W},\mathbf{v}}$ is dense.
		
		\noindent In fact, let $\mathcal{U}\subset (0,\infty)\times{\rm Met}^{\infty}(M)$ be a neighborhood of $(\varepsilon, g)$. 
		To show that $\boldsymbol{\mathcal{G}}_{\boldsymbol{W},\mathbf{v}}$ is dense, we need to verify that $\boldsymbol{\mathcal{G}}_{\boldsymbol{W},\mathbf{v}}\cap \mathcal{U}$ is not empty. 
		As a matter of fact, taking $(\overline{\varepsilon}, \overline{g}) \in \boldsymbol{\mathcal{G}}_{\boldsymbol{W},\mathbf{v}} \cap \mathcal{U}$ and supposing that \eqref{oursystem} does not admit constant solutions, we can apply Lemma~\ref{lm:constantsolutionscase}.
		Otherwise, the volume constraint shows that the unique constant solution shall be $\mathbf{u}=\mathrm{v}_{g}(M)^{-1}\mathbf{v}$, which is a degenerate solution if, and only if, the linearized system \eqref{linearizedsystem} admits a nontrivial solution. 
		This happens when there exists $j\in\mathbb{N}$ satisfying
		\begin{equation*}
			\overline{\varepsilon}^{2}=-\frac{\nabla^2\boldsymbol{W}(\mathrm{v}_{g}(M)^{-1}\mathbf{v})}{\alpha_{j}(\overline{g})}.
		\end{equation*}
		Therefore, since $\sigma(-\Delta_{\overline{g}})^*\subset(0,\infty)$ is a discrete subset, there exists $\widehat{\varepsilon}>0$ such that $(\widehat{\varepsilon}, \overline{g}) \in \boldsymbol{\mathcal{G}}_{\boldsymbol{W},\mathbf{v}} \cap \mathcal{U}$ and the last identity does not hold for any $j\in\mathbb{N}$, which yields $(\widehat{\varepsilon}, \overline{g}) \in\boldsymbol{\mathcal{G}}_{\boldsymbol{W},\mathbf{v}}$.
		
		As a result of these two claims, the proof of the theorem is concluded.
	\end{proof}
	

\end{document}